\documentclass[10pt,reqno,oneside]{amsart}  \usepackage{graphicx,verbatim} \usepackage{amssymb,cite} \usepackage{epstopdf} \usepackage{graphicx} \usepackage{amsmath} \usepackage{amsthm, enumerate} \usepackage{mathrsfs} \usepackage{caption} \usepackage{subcaption} \usepackage[normalem]{ulem}    \allowdisplaybreaks   \chardef\forshowkeys=0   \chardef\showllabel=0   \chardef\refcheck=0   \chardef\sketches=0 \usepackage{float} \floatstyle{boxed} \usepackage{marginnote} \usepackage[colorlinks=true, pdfstartview=FitV, linkcolor=blue, citecolor=blue, urlcolor=blue]{hyperref} \ifnum\forshowkeys=1      \usepackage[notref,notcite,color]{showkeys} \fi \author[M.~Aydin]{Mustafa Sencer Aydin} \address{Department of Mathematics, University of Southern California, Los Angeles, CA 90089} \email{maydin@usc.edu} \title{On asymptotic properties of the Boussinesq equations} \author[I.~Kukavica]{Igor Kukavica} \address{Department of Mathematics\\ University of Southern California\\ Los Angeles, CA 90089} \email{kukavica@usc.edu} \author[M.~Ziane]{Mohammed Ziane} 
\address{Department of Mathematics\\ University of Southern California\\ Los Angeles, CA 90089} \email{ziane@usc.edu} \usepackage{enumitem} \usepackage{datetime} \usepackage{fancyhdr} \usepackage{comment} \allowdisplaybreaks \usepackage[margin=1in]{geometry} \usepackage{amsmath, amsthm, amssymb} \usepackage{times} \usepackage{graphicx} \usepackage[usenames,dvipsnames,svgnames,table]{xcolor}
\ifnum\refcheck=1   \usepackage{refcheck} \fi \begin{document} \def\YY{X} \def\OO{\mathcal O} \def\SS{\mathbb S} \def\CC{\mathbb C} \def\RR{\mathbb R} \def\TT{\mathbb T} \def\ZZ{\mathbb Z} \def\HH{\mathbb H} \def\RSZ{\mathcal R} \def\LL{\mathcal L}
\def\SL{\LL^1} \def\ZL{\LL^\infty} \def\GG{\mathcal G} \def\tt{\langle t\rangle} \def\erf{\mathrm{Erf}} \def\mgt#1{\textcolor{magenta}{#1}} \def\ff{\rho} \def\gg{G} \def\sqrtnu{\sqrt{\nu}} \def\ww{w} \def\ft#1{#1_\xi} \def\ges{\gtrsim} \renewcommand*{\Re}{\ensuremath{\mathrm{{\mathbb R}e\,}}} \renewcommand*{\Im}{\ensuremath{\mathrm{{\mathbb I}m\,}}}
\ifnum\showllabel=1  \def\llabel#1{\marginnote{\color{gray}\rm(#1)}[-0.0cm]\notag} \else \def\llabel#1{\notag} \fi \newcommand{\norm}[1]{\left\|#1\right\|} \newcommand{\nnorm}[1]{\lVert #1\rVert} \newcommand{\abs}[1]{\left|#1\right|} \newcommand{\NORM}[1]{|\!|\!| #1|\!|\!|} \newtheorem{theorem}{Theorem}[section] \newtheorem{Theorem}{Theorem}[section] \newtheorem{corollary}[theorem]{Corollary} \newtheorem{Corollary}[theorem]{Corollary} \newtheorem{proposition}[theorem]{Proposition}
\newtheorem{Proposition}[theorem]{Proposition} \newtheorem{Lemma}[theorem]{Lemma} \newtheorem{lemma}[theorem]{Lemma} \theoremstyle{definition} \newtheorem{definition}{Definition}[section] \newtheorem{remark}[Theorem]{Remark} \def\theequation{\thesection.\arabic{equation}} \numberwithin{equation}{section} \definecolor{mygray}{rgb}{.6,.6,.6} \definecolor{myblue}{rgb}{9, 0, 1} \definecolor{colorforkeys}{rgb}{1.0,0.0,0.0} \newlength\mytemplen \newsavebox\mytempbox \makeatletter
\makeatother \def\XX{{\mathcal X}} \def\XXT{{\mathcal X}_T} \def\XXTzero{{\mathcal X}_{T_0}} \def\XXi{{\mathcal X}_\infty} \def\YY{{\mathcal Y}} \def\YYT{{\mathcal Y}_T} \def\YYTzero{{\mathcal Y}_{T_0}} \def\YYi{{\mathcal Y}_\infty} \def\cc{\text{c}} \def\rr{r} \def\weaks{\text{\,\,\,\,\,\,weakly-* in }} \def\inn{\text{\,\,\,\,\,\,in }} \def\cof{\mathop{\rm cof\,}\nolimits}
\def\Dn{\frac{\partial}{\partial N}} \def\Dnn#1{\frac{\partial #1}{\partial N}} \def\tdb{\tilde{b}} \def\tda{b} \def\qqq{u} \def\lat{\Delta_2} \def\biglinem{\vskip0.5truecm\par==========================\par\vskip0.5truecm} \def\inon#1{\hbox{\ \ \ \ \ \ \ }\hbox{#1}}                 \def\onon#1{\inon{on~$#1$}} \def\inin#1{\inon{in~$#1$}} \def\FF{F} \def\andand{\text{\indeq and\indeq}} \def\ww{w(y)} \def\ll{{\color{red}\ell}}
\def\ee{\epsilon_0} \def\startnewsection#1#2{ \section{#1}\label{#2}\setcounter{equation}{0}}    \def\loc{\text{loc}} \def\nnewpage{ } \def\sgn{\mathop{\rm sgn\,}\nolimits}     \def\Tr{\mathop{\rm Tr}\nolimits}     \def\div{\mathop{\rm div}\nolimits} \def\curl{\mathop{\rm curl}\nolimits} \def\dist{\mathop{\rm dist}\nolimits}   \def\supp{\mathop{\rm supp}\nolimits} \def\indeq{\quad{}}            \def\period{.}                        \def\semicolon{\,;}                   \def\colr{\color{red}}
\def\colrr{\color{black}} \def\colb{\color{black}} \def\coly{\color{lightgray}} \definecolor{colorgggg}{rgb}{0.1,0.5,0.3} \definecolor{colorllll}{rgb}{0.0,0.7,0.0} \definecolor{colorhhhh}{rgb}{0.3,0.75,0.4} \definecolor{colorpppp}{rgb}{0.7,0.0,0.2} \definecolor{coloroooo}{rgb}{0.45,0.0,0.0} \definecolor{colorqqqq}{rgb}{0.1,0.7,0} \def\colg{\color{colorgggg}} \def\collg{\color{colorllll}} \def\cole{\color{coloroooo}} \def\coleo{\color{colorpppp}} \def\cole{\color{black}}
\def\colu{\color{blue}} \def\colc{\color{colorhhhh}} \def\colW{\colb}    \definecolor{coloraaaa}{rgb}{0.6,0.6,0.6} \def\colw{\color{coloraaaa}} \def\comma{ {\rm ,\qquad{}} }             \def\commaone{ {\rm ,\quad{}} }           \def\lec{\lesssim} \def\nts#1{{\color{red}\hbox{\bf ~#1~}}}  \def\ntsf#1{\footnote{\color{colorgggg}\hbox{#1}}}  \def\blackdot{{\color{red}{\hskip-.0truecm\rule[-1mm]{4mm}{4mm}\hskip.2truecm}}\hskip-.3truecm} \def\bluedot{{\color{blue}{\hskip-.0truecm\rule[-1mm]{4mm}{4mm}\hskip.2truecm}}\hskip-.3truecm} \def\purpledot{{\color{colorpppp}{\hskip-.0truecm\rule[-1mm]{4mm}{4mm}\hskip.2truecm}}\hskip-.3truecm} \def\greendot{{\color{colorgggg}{\hskip-.0truecm\rule[-1mm]{4mm}{4mm}\hskip.2truecm}}\hskip-.3truecm}
\def\cyandot{{\color{cyan}{\hskip-.0truecm\rule[-1mm]{4mm}{4mm}\hskip.2truecm}}\hskip-.3truecm} \def\reddot{{\color{red}{\hskip-.0truecm\rule[-1mm]{4mm}{4mm}\hskip.2truecm}}\hskip-.3truecm} \def\gdot{\greendot} \def\tdot{\gdot} \def\bdot{\bluedot} \def\ydot{\cyandot} \def\rdot{\reddot} \def\fractext#1#2{{#1}/{#2}} \def\ii{\hat\imath} \def\fei#1{\textcolor{blue}{#1}} \def\vlad#1{\textcolor{cyan}{#1}} \def\igor#1{\text{{\textcolor{colorqqqq}{#1}}}} \def\igorf#1{\footnote{\text{{\textcolor{colorqqqq}{#1}}}}} \newcommand{\p}{\partial}
\newcommand{\UE}{U^{\rm E}} \newcommand{\PE}{P^{\rm E}} \newcommand{\KP}{K_{\rm P}} \newcommand{\uNS}{u^{\rm NS}} \newcommand{\vNS}{v^{\rm NS}} \newcommand{\pNS}{p^{\rm NS}} \newcommand{\omegaNS}{\omega^{\rm NS}} \newcommand{\uE}{u^{\rm E}} \newcommand{\vE}{v^{\rm E}} \newcommand{\pE}{p^{\rm E}} \newcommand{\omegaE}{\omega^{\rm E}} \newcommand{\ua}{u_{\rm   a}} \newcommand{\va}{v_{\rm   a}} \newcommand{\omegaa}{\omega_{\rm   a}}
\newcommand{\ue}{u_{\rm   e}} \newcommand{\ve}{v_{\rm   e}} \newcommand{\omegae}{\omega_{\rm e}} \newcommand{\omegaeic}{\omega_{{\rm e}0}} \newcommand{\ueic}{u_{{\rm   e}0}} \newcommand{\veic}{v_{{\rm   e}0}} \newcommand{\up}{u^{\rm P}} \newcommand{\vp}{v^{\rm P}} \newcommand{\tup}{{\tilde u}^{\rm P}} \newcommand{\bvp}{{\bar v}^{\rm P}} \newcommand{\omegap}{\omega^{\rm P}} \newcommand{\tomegap}{\tilde \omega^{\rm P}} \renewcommand{\up}{u^{\rm P}} \renewcommand{\vp}{v^{\rm P}}
\renewcommand{\omegap}{\Omega^{\rm P}} \renewcommand{\tomegap}{\omega^{\rm P}} \begin{abstract}We address the long time behavior of the Boussinesq system coupling
the Navier-Stokes equations driven by density with the non-diffusive
equation for the density. We construct solutions of the system
justifying previously obtained a~priori bounds.
\end{abstract}\maketitle \tableofcontents \startnewsection{Introduction}{sec01} We address the well-posedness and the long-time behavior of the two-dimensional incompressible, viscous Boussinesq equations without thermal diffusivity,  \def\sifpoierjsodfgupoefasdfgjsdfgjsdfgjsdjflxncvzxnvasdjfaopsruosihjsdhghajsdflahgfsif{\cdot} \begin{align} \begin{split} & u_t  - \Delta u + u \sifpoierjsodfgupoefasdfgjsdfgjsdfgjsdjflxncvzxnvasdjfaopsruosihjsdhghajsdflahgfsif \nabla u + \nabla p = \rho e_2 ,
\\& \rho_t + u \sifpoierjsodfgupoefasdfgjsdfgjsdfgjsdjflxncvzxnvasdjfaopsruosihjsdhghajsdflahgfsif \nabla \rho = 0 , \\& \nabla \sifpoierjsodfgupoefasdfgjsdfgjsdfgjsdjflxncvzxnvasdjfaopsruosihjsdhghajsdflahgfsif u=0 , \end{split} \label{sifpoierjsodfgupoefasdfgjsdfgjsdfgjsdjflxncvzxnvasdjfaopsruosihjsdfghajsdflahgfsif01} \end{align} and \begin{align} u |_{\partial \Omega} = 0 \comma (u,\rho)(0) = (u_0, \rho _0) \in D(A) \times H^1 ,
   \llabel{8Th sw ELzX U3X7 Ebd1Kd Z7 v 1rN 3Gi irR XG KWK0 99ov BM0FDJ Cv k opY NQ2 aN9 4Z 7k0U nUKa mE3OjU 8D F YFF okb SI2 J9 V9gV lM8A LWThDP nP u 3EL 7HP D2V Da ZTgg zcCC mbvc70 qq P cC9 mt6 0og cr TiA3 HEjw TK8ymK eu J Mc4 q6d Vz2 00 XnYU tLR9 GYjPXv FO V r6W 1zU K1W bP ToaW JJuK nxBLnd 0f t DEb Mmj 4lo HY yhZy MjM9 1zQS4p 7z 8 eKa 9h0 Jrb ac ekci rexG 0z4n3x z0 Q OWS vFj 3jL hW XUIU 21iI AwJtI3 Rb W a90 I7r zAI qI 3UEl UJG7 tLtUXz w4 K QNE TvX zqW au jEMe nYlN IzLGxg B3 A uJ8 6VS 6Rc PJ 8OXW w8im tcKZEz Ho p 84G 1gS As0 PC owMI 2fLK TdD60y nH g 7lk NFj JLq Oo Qvfk fZBN G3o1Dg Cn 9 hyU h5V SP5 z6 1qvQ wceU dVJJsB vX D G4E LHQ HIa PT bMTr sLsm tXGyOB 7p 2 Os4 3US bq5 ik 4Lin 769O TkUxmp I8 u GYn fBK bYI 9A QzCF w3h0 geJftZ ZK U 74r Yle ajm km ZJdi TGHO OaSt1N nl B 7Y7 h0y oWJ ry rVrT zHO8 2S7oub QA W x9d z2X YWB e5 Kf3A LsUF vqgtM2 O2 I dim rjZ 7RN 28 4KGY trVa WW4nTZ XV b RVo Q77 hVL X6 K2kq FWFm aZnsF9 Ch p 8Kx rsc SGP iS tVXB J3xZ cD5IP4 Fu 9 Lcd TR2 Vwb cL DlGK 1ro3 EEyqEA zw 6 sKe Eg2 sFf jz MtrZ 9kbd xNw66c xf t lzD GZh xQA WQ KkSX jqmm rEpNuG 6P y loq 8hH lSf Ma LXm5 RzEX W4Y1Bq ib 3 UOh Yw9 5h6 f6 o8kw 6frZ wg6fIy XP n ae1 TQJ Mt2 TT fWWf jJrX ilpYGr Ul Q 4uM 7Ds p0r Vg 3gIE mQOz TFh9LA KO 8 csQ u6m h25 r8 WqRI DZWg SYkWDu lL 8 Gpt ZW1 0Gd SY FUXL zyQZ hVZMn9 am P 9aE Wzk au0 6d ZghM ym3R jfdePG ln 8 s7x HYC IV9 Hw Ka6v EjH5 J8Ipr7 Nk C xWR 84T Wnq s0 fsiP qGgs Id1fs5 3A T 71q RIc zPX 77 Si23 GirL 9MQZ4F pi g dru NYt h1K 4M Zilv rRk6 B4W5B8 Id 3 Xq9 nhx EN4 P6 ipZl a2UQ Qx8mda g7 r VD3 zdD rhB vk LDJo tKyV 5IrmyJ R5 e txS 1cv EsY xG zj2T rfSR myZo4L m5 D mqN iZd acg GQ 0KRw QKGX g9o8v8 wm B fUu tCO cKc zz kx4U fhuA a8pYzW Vq 9 Sp6 sifpoierjsodfgupoefasdfgjsdfgjsdfgjsdjflxncvzxnvasdjfaopsruosihjsdfghajsdflahgfsif53} \end{align} subject to the boundary condition $u |_{\partial \Omega} = 0$ and the initial condition $(u,\rho)(0) = (u_0, \rho _0) \in D(A) \times H^1$, where $\Omega \subseteq \mathbb{R}^2 $ is a smooth, open, and bounded domain. The Boussinesq equations in various forms have been around for over two centuries. They gained considerable interest from the physics and mathematics communities as they are used in modeling the behavior of oceanic waves, biophysics, and other fields. They are a coupled system of partial differential equations with unknowns $u$ representing the velocity field, $p$ the pressure, and $\rho$ the temperature or density of the fluid, depending on the context. \par Results on the global existence have been well-established in the presence of the positive thermal diffusivity, namely with $-\kappa \Delta \rho$ in the second equation of the Boussinesq system. On the other hand, the global well-posedness of the system \eqref{sifpoierjsodfgupoefasdfgjsdfgjsdfgjsdjflxncvzxnvasdjfaopsruosihjsdfghajsdflahgfsif01} remains open for the inviscid case with no diffusivity in the density, although several results including the local existence, finite time singularities, and blow-up criteria have been proven; see \cite{CH,EJ}. Chae \cite{C} and Hou and Li \cite{HL} were the first to consider the viscous and zero diffusivity case, obtaining the global existence and persistence of the regularity with $H^{s} \times H^{s-1}$ initial data with $s = 3, 4, \ldots$ on a periodic domain. Further studies such as \cite{LLT} and \cite{HL} extended these results. For $s=2$, \cite{HKZ1} provides the persistence of the regularity for Dirichlet and periodic boundary conditions. Another result on the global well-posedness is due to Doering et al. \cite{DWZZ}, in which the Lions boundary condition was imposed on a Lipschitz domain. For other works on global well-posedness and the regularity in the Sobolev or Besov spaces setting, see \cite{ACW,ACSetal,BFL,BS,BrS,CD,CG,CN,CW,DP,HK1,HK2,HKR,HKZ2,HS,HW,HWW+,JK,JMWZ,KTW,KW1,KW2,KWZ,LPZ,SW}. \par Regarding the long-time behavior of the solution for the Boussinesq system, Ju obtained in \cite{J} an upper bound for the $H^{1}$-norm for the density of the form $C e^{Ct^2}$. Subsequently, the paper \cite{KW2} lowered the upper bound to~$e^{Ct}$, while more recently, \cite{KMZ} obtained a sharper bound, namely~$C_{\epsilon}e^{\epsilon t}$, where $\epsilon>0$ is arbitrary. Note that in a recent work \cite{KPY},
the authors provided an example of an algebraic \emph{lower} bound with the spatial domain $\mathbb{R}^2$ or~$\mathbb{T}^2$.  Regarding the long-time behavior of the velocity field $u$, \cite{DWZZ} shows the dissipation in $H^1$ norm and in addition to this result, \cite{KMZ} also shows that $H^2$ norm of $u$ is globally bounded. \par \def\sifpoierjsodfgupoefasdfgjsdfgjsdfgjsdjflxncvzxnvasdjfaopsruosihjsdfghajsdflahgfsif{\int} \def\sifpoierjsodfgupoefasdfgjsdfgjsdfgjsdjflxncvzxnvasdjfaopsruosihjsdgghajsdflahgfsif{\Vert} \def\sifpoierjsodfgupoefasdfgjsdfgjsdfgjsdjflxncvzxnvasdjfaopsruosihjsdighajsdflahgfsif{\theta} \def\sifpoierjsodfgupoefasdfgjsdfgjsdfgjsdjflxncvzxnvasdjfaopsruosihjsdjghajsdflahgfsif{\rho} \def\sifpoierjsodfgupoefasdfgjsdfgjsdfgjsdjflxncvzxnvasdjfaopsruosihjsdkghajsdflahgfsif{\phi} \def\sifpoierjsodfgupoefasdfgjsdfgjsdfgjsdjflxncvzxnvasdjfaopsruosihjsdlghajsdflahgfsif{\psi}
Our paper contributes to the literature on well-posedness by providing a detailed construction of a global-in-time solution, which complements the a priori estimates obtained in~\cite[Theorem~2.1]{KMZ}. While the existing literature on conservation laws focuses on the continuity of the flow generated by the vector field $u$ due to its connection with the Cauchy problem in ODE theory (see \cite{DL}), our work addresses the challenge of obtaining $H^{1}$ in-space regularity of the solution for the second equation in the Boussinesq system. Thus knowing the existence of a solution for the second equation in the Boussinesq system in the distributional or the renormalized sense is not necessarily accompanied with the $H^1$ in-space regularity of that solution. To overcome this difficulty, we rely on a total Sobolev extension  argument by utilizing the fact that the Boussinesq system does not constrain the density with any boundary condition. Basic energy estimates suggest that in order to control the $H^1$-norm of the density, one needs to bound  the $W^{1,\infty}$-norm of the velocity. To achieve this, we rely on the $L^p$~type  estimates for the Stokes equation due to \cite{GS}. This is the reason that the twice-differentiability assumption for the initial velocity is crucial. It still remains an open question  whether a similar result can be achieved assuming  $H^1$ initial data for the velocity. However, in a recent paper, the authors of \cite{CEIM} construct a Sobolev regular, but non-Lipschitz, vector field such that the corresponding solution
to the transport equation is not $H^1$ regular for any positive time. If one shows that such a velocity field together with $\rho$ is a solution to the Boussinesq system, then one answers the question posed above in a negative way.  \par \startnewsection{Preliminaries and the main result}{sec02} \par Assume that $\Omega$ is a bounded $C^{\infty}$ domain, and as in \cite{CF,T1,T2}, denote \begin{align}  \begin{split}   & H = \{u \in L^2(\Omega) : \nabla \sifpoierjsodfgupoefasdfgjsdfgjsdfgjsdjflxncvzxnvasdjfaopsruosihjsdhghajsdflahgfsif u = 0 \text{ in } \Omega, u \sifpoierjsodfgupoefasdfgjsdfgjsdfgjsdjflxncvzxnvasdjfaopsruosihjsdhghajsdflahgfsif n = 0 \text{ on } \partial \Omega\},   \\&    V = \{u \in H^1_0 (\Omega) : \nabla \sifpoierjsodfgupoefasdfgjsdfgjsdfgjsdjflxncvzxnvasdjfaopsruosihjsdhghajsdflahgfsif u = 0 \text{ in } \Omega\}    ,
 \end{split}    \llabel{ yhZy MjM9 1zQS4p 7z 8 eKa 9h0 Jrb ac ekci rexG 0z4n3x z0 Q OWS vFj 3jL hW XUIU 21iI AwJtI3 Rb W a90 I7r zAI qI 3UEl UJG7 tLtUXz w4 K QNE TvX zqW au jEMe nYlN IzLGxg B3 A uJ8 6VS 6Rc PJ 8OXW w8im tcKZEz Ho p 84G 1gS As0 PC owMI 2fLK TdD60y nH g 7lk NFj JLq Oo Qvfk fZBN G3o1Dg Cn 9 hyU h5V SP5 z6 1qvQ wceU dVJJsB vX D G4E LHQ HIa PT bMTr sLsm tXGyOB 7p 2 Os4 3US bq5 ik 4Lin 769O TkUxmp I8 u GYn fBK bYI 9A QzCF w3h0 geJftZ ZK U 74r Yle ajm km ZJdi TGHO OaSt1N nl B 7Y7 h0y oWJ ry rVrT zHO8 2S7oub QA W x9d z2X YWB e5 Kf3A LsUF vqgtM2 O2 I dim rjZ 7RN 28 4KGY trVa WW4nTZ XV b RVo Q77 hVL X6 K2kq FWFm aZnsF9 Ch p 8Kx rsc SGP iS tVXB J3xZ cD5IP4 Fu 9 Lcd TR2 Vwb cL DlGK 1ro3 EEyqEA zw 6 sKe Eg2 sFf jz MtrZ 9kbd xNw66c xf t lzD GZh xQA WQ KkSX jqmm rEpNuG 6P y loq 8hH lSf Ma LXm5 RzEX W4Y1Bq ib 3 UOh Yw9 5h6 f6 o8kw 6frZ wg6fIy XP n ae1 TQJ Mt2 TT fWWf jJrX ilpYGr Ul Q 4uM 7Ds p0r Vg 3gIE mQOz TFh9LA KO 8 csQ u6m h25 r8 WqRI DZWg SYkWDu lL 8 Gpt ZW1 0Gd SY FUXL zyQZ hVZMn9 am P 9aE Wzk au0 6d ZghM ym3R jfdePG ln 8 s7x HYC IV9 Hw Ka6v EjH5 J8Ipr7 Nk C xWR 84T Wnq s0 fsiP qGgs Id1fs5 3A T 71q RIc zPX 77 Si23 GirL 9MQZ4F pi g dru NYt h1K 4M Zilv rRk6 B4W5B8 Id 3 Xq9 nhx EN4 P6 ipZl a2UQ Qx8mda g7 r VD3 zdD rhB vk LDJo tKyV 5IrmyJ R5 e txS 1cv EsY xG zj2T rfSR myZo4L m5 D mqN iZd acg GQ 0KRw QKGX g9o8v8 wm B fUu tCO cKc zz kx4U fhuA a8pYzW Vq 9 Sp6 CmA cZL Mx ceBX Dwug sjWuii Gl v JDb 08h BOV C1 pni6 4TTq Opzezq ZB J y5o KS8 BhH sd nKkH gnZl UCm7j0 Iv Y jQE 7JN 9fd ED ddys 3y1x 52pbiG Lc a 71j G3e uli Ce uzv2 R40Q 50JZUB uK d U3m May 0uo S7 ulWD h7qG 2FKw2T JX z BES 2Jk Q4U Dy 4aJ2 IXs4 RNH41s py T GNh hk0 w5Z C8 B3nU Bp9p 8eLKh8 UO 4 fMq Y6w lcsifpoierjsodfgupoefasdfgjsdfgjsdfgjsdjflxncvzxnvasdjfaopsruosihjsdfghajsdflahgfsif63} \end{align} where $n$ stands for the outward unit normal vector and   \begin{align} A = -\mathbb{P} \Delta    \llabel{wceU dVJJsB vX D G4E LHQ HIa PT bMTr sLsm tXGyOB 7p 2 Os4 3US bq5 ik 4Lin 769O TkUxmp I8 u GYn fBK bYI 9A QzCF w3h0 geJftZ ZK U 74r Yle ajm km ZJdi TGHO OaSt1N nl B 7Y7 h0y oWJ ry rVrT zHO8 2S7oub QA W x9d z2X YWB e5 Kf3A LsUF vqgtM2 O2 I dim rjZ 7RN 28 4KGY trVa WW4nTZ XV b RVo Q77 hVL X6 K2kq FWFm aZnsF9 Ch p 8Kx rsc SGP iS tVXB J3xZ cD5IP4 Fu 9 Lcd TR2 Vwb cL DlGK 1ro3 EEyqEA zw 6 sKe Eg2 sFf jz MtrZ 9kbd xNw66c xf t lzD GZh xQA WQ KkSX jqmm rEpNuG 6P y loq 8hH lSf Ma LXm5 RzEX W4Y1Bq ib 3 UOh Yw9 5h6 f6 o8kw 6frZ wg6fIy XP n ae1 TQJ Mt2 TT fWWf jJrX ilpYGr Ul Q 4uM 7Ds p0r Vg 3gIE mQOz TFh9LA KO 8 csQ u6m h25 r8 WqRI DZWg SYkWDu lL 8 Gpt ZW1 0Gd SY FUXL zyQZ hVZMn9 am P 9aE Wzk au0 6d ZghM ym3R jfdePG ln 8 s7x HYC IV9 Hw Ka6v EjH5 J8Ipr7 Nk C xWR 84T Wnq s0 fsiP qGgs Id1fs5 3A T 71q RIc zPX 77 Si23 GirL 9MQZ4F pi g dru NYt h1K 4M Zilv rRk6 B4W5B8 Id 3 Xq9 nhx EN4 P6 ipZl a2UQ Qx8mda g7 r VD3 zdD rhB vk LDJo tKyV 5IrmyJ R5 e txS 1cv EsY xG zj2T rfSR myZo4L m5 D mqN iZd acg GQ 0KRw QKGX g9o8v8 wm B fUu tCO cKc zz kx4U fhuA a8pYzW Vq 9 Sp6 CmA cZL Mx ceBX Dwug sjWuii Gl v JDb 08h BOV C1 pni6 4TTq Opzezq ZB J y5o KS8 BhH sd nKkH gnZl UCm7j0 Iv Y jQE 7JN 9fd ED ddys 3y1x 52pbiG Lc a 71j G3e uli Ce uzv2 R40Q 50JZUB uK d U3m May 0uo S7 ulWD h7qG 2FKw2T JX z BES 2Jk Q4U Dy 4aJ2 IXs4 RNH41s py T GNh hk0 w5Z C8 B3nU Bp9p 8eLKh8 UO 4 fMq Y6w lcA GM xCHt vlOx MqAJoQ QU 1 e8a 2aX 9Y6 2r lIS6 dejK Y3KCUm 25 7 oCl VeE e8p 1z UJSv bmLd Fy7ObQ FN l J6F RdF kEm qM N0Fd NZJ0 8DYuq2 pL X JNz 4rO ZkZ X2 IjTD 1fVt z4BmFI Pi 0 GKD R2W PhO zH zTLP lbAE OT9XW0 gb T Lb3 XRQ qGG 8o 4TPE 6WRc uMqMXh s6 x Ofv 8st jDi u8 rtJt TKSK jlGkGw t8 n FDx jA9 fCm iu Fsifpoierjsodfgupoefasdfgjsdfgjsdfgjsdjflxncvzxnvasdjfaopsruosihjsdfghajsdflahgfsif64} \end{align} is the Stokes operator with the domain  $D(A) =H^2(\Omega) \cap V $ where  $\mathbb{P}\colon L^2 \to H$  is the Leray projector.  We shift the density by $x_2$  to get an equivalent system of equations
\begin{align}   \begin{split}    & u_t         - \Delta u        + u \sifpoierjsodfgupoefasdfgjsdfgjsdfgjsdjflxncvzxnvasdjfaopsruosihjsdhghajsdflahgfsif \nabla u        + \nabla P = \sifpoierjsodfgupoefasdfgjsdfgjsdfgjsdjflxncvzxnvasdjfaopsruosihjsdighajsdflahgfsif e_2        ,    \\&       \sifpoierjsodfgupoefasdfgjsdfgjsdfgjsdjflxncvzxnvasdjfaopsruosihjsdighajsdflahgfsif_t        + u \sifpoierjsodfgupoefasdfgjsdfgjsdfgjsdjflxncvzxnvasdjfaopsruosihjsdhghajsdflahgfsif \nabla \sifpoierjsodfgupoefasdfgjsdfgjsdfgjsdjflxncvzxnvasdjfaopsruosihjsdighajsdflahgfsif = -u \sifpoierjsodfgupoefasdfgjsdfgjsdfgjsdjflxncvzxnvasdjfaopsruosihjsdhghajsdflahgfsif e_2        ,    \\&       \nabla \sifpoierjsodfgupoefasdfgjsdfgjsdfgjsdjflxncvzxnvasdjfaopsruosihjsdhghajsdflahgfsif u=0       ,
   \\&       (u,\sifpoierjsodfgupoefasdfgjsdfgjsdfgjsdjflxncvzxnvasdjfaopsruosihjsdighajsdflahgfsif)(0) = (u_0, \sifpoierjsodfgupoefasdfgjsdfgjsdfgjsdjflxncvzxnvasdjfaopsruosihjsdighajsdflahgfsif_0)       ,   \end{split}    \label{sifpoierjsodfgupoefasdfgjsdfgjsdfgjsdjflxncvzxnvasdjfaopsruosihjsdfghajsdflahgfsif02} \end{align} where    \begin{equation}     \sifpoierjsodfgupoefasdfgjsdfgjsdfgjsdjflxncvzxnvasdjfaopsruosihjsdighajsdflahgfsif (x_1,x_2,t) = \sifpoierjsodfgupoefasdfgjsdfgjsdfgjsdjflxncvzxnvasdjfaopsruosihjsdjghajsdflahgfsif (x_1,x_2,t) - x_2       \label{sifpoierjsodfgupoefasdfgjsdfgjsdfgjsdjflxncvzxnvasdjfaopsruosihjsdfghajsdflahgfsif123}   \end{equation} and $P(x_1,x_2,t) = p(x_1,x_2,t) - x^2_2/2$.  We apply $\mathbb{P}$ to the first equation in \eqref{sifpoierjsodfgupoefasdfgjsdfgjsdfgjsdjflxncvzxnvasdjfaopsruosihjsdfghajsdflahgfsif02}, and write
  \begin{align}   u_t   + Au   + \mathbb{P}(u \sifpoierjsodfgupoefasdfgjsdfgjsdfgjsdjflxncvzxnvasdjfaopsruosihjsdhghajsdflahgfsif \nabla u)   =    \mathbb{P}(\sifpoierjsodfgupoefasdfgjsdfgjsdfgjsdjflxncvzxnvasdjfaopsruosihjsdighajsdflahgfsif e_2)   ,   \label{sifpoierjsodfgupoefasdfgjsdfgjsdfgjsdjflxncvzxnvasdjfaopsruosihjsdfghajsdflahgfsif33}   \end{align} which is the usual equivalent formulation for the first and third equations in~\eqref{sifpoierjsodfgupoefasdfgjsdfgjsdfgjsdjflxncvzxnvasdjfaopsruosihjsdfghajsdflahgfsif02}. One of our main goals for the solutions is that  they satisfy the asymptotic properties  stated in~\cite{KMZ}.  To this end, we 
assume $(u_0,\sifpoierjsodfgupoefasdfgjsdfgjsdfgjsdjflxncvzxnvasdjfaopsruosihjsdighajsdflahgfsif_0)\in D(A)\times H^{1}$  and construct a solution $(u,\sifpoierjsodfgupoefasdfgjsdfgjsdfgjsdjflxncvzxnvasdjfaopsruosihjsdighajsdflahgfsif)$ so that $u$ belongs to   \begin{align}    \mathcal{X}_T    =    L^\infty V    \cap     L^2 D(A)    \cap     L^3 W^{2,3} \cap     W^{1,\infty} L^2 \cap     H^1 V \cap     H^2 V'
  \label{sifpoierjsodfgupoefasdfgjsdfgjsdfgjsdjflxncvzxnvasdjfaopsruosihjsdfghajsdflahgfsifSet1}   \end{align} and $\sifpoierjsodfgupoefasdfgjsdfgjsdfgjsdjflxncvzxnvasdjfaopsruosihjsdighajsdflahgfsif$ to   \begin{align}   \mathcal{Y}_T    =    L^\infty H^1 \cap     H^1 L^2    ,   \label{sifpoierjsodfgupoefasdfgjsdfgjsdfgjsdjflxncvzxnvasdjfaopsruosihjsdfghajsdflahgfsifSet2}   \end{align} for every $T>0$. Note that, regarding the space $\XXT$, we have $H^{1}V\subseteq C V$
and, for the space $\YYT$, we have $H^{1}L^2\subseteq C L^2$; for both classes \eqref{sifpoierjsodfgupoefasdfgjsdfgjsdfgjsdjflxncvzxnvasdjfaopsruosihjsdfghajsdflahgfsifSet1} and~\eqref{sifpoierjsodfgupoefasdfgjsdfgjsdfgjsdjflxncvzxnvasdjfaopsruosihjsdfghajsdflahgfsifSet2}, we always assume that we take such continuous representatives. In \eqref{sifpoierjsodfgupoefasdfgjsdfgjsdfgjsdjflxncvzxnvasdjfaopsruosihjsdfghajsdflahgfsifSet1}, \eqref{sifpoierjsodfgupoefasdfgjsdfgjsdfgjsdjflxncvzxnvasdjfaopsruosihjsdfghajsdflahgfsifSet2}, and below,  we abbreviate   \begin{equation}    L^{p}X(\Omega\times[0,T])    = L^{p}([0,T],X)    \llabel{ZnsF9 Ch p 8Kx rsc SGP iS tVXB J3xZ cD5IP4 Fu 9 Lcd TR2 Vwb cL DlGK 1ro3 EEyqEA zw 6 sKe Eg2 sFf jz MtrZ 9kbd xNw66c xf t lzD GZh xQA WQ KkSX jqmm rEpNuG 6P y loq 8hH lSf Ma LXm5 RzEX W4Y1Bq ib 3 UOh Yw9 5h6 f6 o8kw 6frZ wg6fIy XP n ae1 TQJ Mt2 TT fWWf jJrX ilpYGr Ul Q 4uM 7Ds p0r Vg 3gIE mQOz TFh9LA KO 8 csQ u6m h25 r8 WqRI DZWg SYkWDu lL 8 Gpt ZW1 0Gd SY FUXL zyQZ hVZMn9 am P 9aE Wzk au0 6d ZghM ym3R jfdePG ln 8 s7x HYC IV9 Hw Ka6v EjH5 J8Ipr7 Nk C xWR 84T Wnq s0 fsiP qGgs Id1fs5 3A T 71q RIc zPX 77 Si23 GirL 9MQZ4F pi g dru NYt h1K 4M Zilv rRk6 B4W5B8 Id 3 Xq9 nhx EN4 P6 ipZl a2UQ Qx8mda g7 r VD3 zdD rhB vk LDJo tKyV 5IrmyJ R5 e txS 1cv EsY xG zj2T rfSR myZo4L m5 D mqN iZd acg GQ 0KRw QKGX g9o8v8 wm B fUu tCO cKc zz kx4U fhuA a8pYzW Vq 9 Sp6 CmA cZL Mx ceBX Dwug sjWuii Gl v JDb 08h BOV C1 pni6 4TTq Opzezq ZB J y5o KS8 BhH sd nKkH gnZl UCm7j0 Iv Y jQE 7JN 9fd ED ddys 3y1x 52pbiG Lc a 71j G3e uli Ce uzv2 R40Q 50JZUB uK d U3m May 0uo S7 ulWD h7qG 2FKw2T JX z BES 2Jk Q4U Dy 4aJ2 IXs4 RNH41s py T GNh hk0 w5Z C8 B3nU Bp9p 8eLKh8 UO 4 fMq Y6w lcA GM xCHt vlOx MqAJoQ QU 1 e8a 2aX 9Y6 2r lIS6 dejK Y3KCUm 25 7 oCl VeE e8p 1z UJSv bmLd Fy7ObQ FN l J6F RdF kEm qM N0Fd NZJ0 8DYuq2 pL X JNz 4rO ZkZ X2 IjTD 1fVt z4BmFI Pi 0 GKD R2W PhO zH zTLP lbAE OT9XW0 gb T Lb3 XRQ qGG 8o 4TPE 6WRc uMqMXh s6 x Ofv 8st jDi u8 rtJt TKSK jlGkGw t8 n FDx jA9 fCm iu FqMW jeox 5Akw3w Sd 8 1vK 8c4 C0O dj CHIs eHUO hyqGx3 Kw O lDq l1Y 4NY 4I vI7X DE4c FeXdFV bC F HaJ sb4 OC0 hu Mj65 J4fa vgGo7q Y5 X tLy izY DvH TR zd9x SRVg 0Pl6Z8 9X z fLh GlH IYB x9 OELo 5loZ x4wag4 cn F aCE KfA 0uz fw HMUV M9Qy eARFe3 Py 6 kQG GFx rPf 6T ZBQR la1a 6Aeker Xg k blz nSm mhY jc z3io WYsifpoierjsodfgupoefasdfgjsdfgjsdfgjsdjflxncvzxnvasdjfaopsruosihjsdfghajsdflahgfsif122}   \end{equation} and    \begin{equation}    C X(\Omega\times[0,T])
   = C ([0,T],X)    ,    \llabel{KO 8 csQ u6m h25 r8 WqRI DZWg SYkWDu lL 8 Gpt ZW1 0Gd SY FUXL zyQZ hVZMn9 am P 9aE Wzk au0 6d ZghM ym3R jfdePG ln 8 s7x HYC IV9 Hw Ka6v EjH5 J8Ipr7 Nk C xWR 84T Wnq s0 fsiP qGgs Id1fs5 3A T 71q RIc zPX 77 Si23 GirL 9MQZ4F pi g dru NYt h1K 4M Zilv rRk6 B4W5B8 Id 3 Xq9 nhx EN4 P6 ipZl a2UQ Qx8mda g7 r VD3 zdD rhB vk LDJo tKyV 5IrmyJ R5 e txS 1cv EsY xG zj2T rfSR myZo4L m5 D mqN iZd acg GQ 0KRw QKGX g9o8v8 wm B fUu tCO cKc zz kx4U fhuA a8pYzW Vq 9 Sp6 CmA cZL Mx ceBX Dwug sjWuii Gl v JDb 08h BOV C1 pni6 4TTq Opzezq ZB J y5o KS8 BhH sd nKkH gnZl UCm7j0 Iv Y jQE 7JN 9fd ED ddys 3y1x 52pbiG Lc a 71j G3e uli Ce uzv2 R40Q 50JZUB uK d U3m May 0uo S7 ulWD h7qG 2FKw2T JX z BES 2Jk Q4U Dy 4aJ2 IXs4 RNH41s py T GNh hk0 w5Z C8 B3nU Bp9p 8eLKh8 UO 4 fMq Y6w lcA GM xCHt vlOx MqAJoQ QU 1 e8a 2aX 9Y6 2r lIS6 dejK Y3KCUm 25 7 oCl VeE e8p 1z UJSv bmLd Fy7ObQ FN l J6F RdF kEm qM N0Fd NZJ0 8DYuq2 pL X JNz 4rO ZkZ X2 IjTD 1fVt z4BmFI Pi 0 GKD R2W PhO zH zTLP lbAE OT9XW0 gb T Lb3 XRQ qGG 8o 4TPE 6WRc uMqMXh s6 x Ofv 8st jDi u8 rtJt TKSK jlGkGw t8 n FDx jA9 fCm iu FqMW jeox 5Akw3w Sd 8 1vK 8c4 C0O dj CHIs eHUO hyqGx3 Kw O lDq l1Y 4NY 4I vI7X DE4c FeXdFV bC F HaJ sb4 OC0 hu Mj65 J4fa vgGo7q Y5 X tLy izY DvH TR zd9x SRVg 0Pl6Z8 9X z fLh GlH IYB x9 OELo 5loZ x4wag4 cn F aCE KfA 0uz fw HMUV M9Qy eARFe3 Py 6 kQG GFx rPf 6T ZBQR la1a 6Aeker Xg k blz nSm mhY jc z3io WYjz h33sxR JM k Dos EAA hUO Oz aQfK Z0cn 5kqYPn W7 1 vCT 69a EC9 LD EQ5S BK4J fVFLAo Qp N dzZ HAl JaL Mn vRqH 7pBB qOr7fv oa e BSA 8TE btx y3 jwK3 v244 dlfwRL Dc g X14 vTp Wd8 zy YWjw eQmF yD5y5l DN l ZbA Jac cld kx Yn3V QYIV v6fwmH z1 9 w3y D4Y ezR M9 BduE L7D9 2wTHHc Do g ZxZ WRW Jxi pv fz48 ZVB7 FZtsifpoierjsodfgupoefasdfgjsdfgjsdfgjsdjflxncvzxnvasdjfaopsruosihjsdfghajsdflahgfsif29}   \end{equation} omitting the indication for the space $\Omega\times[0,T]$ when it is understood; for instance, in \eqref{sifpoierjsodfgupoefasdfgjsdfgjsdfgjsdjflxncvzxnvasdjfaopsruosihjsdfghajsdflahgfsifSet1} and \eqref{sifpoierjsodfgupoefasdfgjsdfgjsdfgjsdjflxncvzxnvasdjfaopsruosihjsdfghajsdflahgfsifSet2}, the space-time domain is understood to be $\Omega\times[0,T]$. Similarly, $H^{r}X(\Omega\times[0,T])=H^{r}([0,T],X)$, with an analogous definition of $W^{r,\infty}X$, for $r\geq0$. We also write   \begin{align}   \begin{split}    \mathcal{X} &= \XXi=\bigcap_{T>0} \mathcal{X}_T
  \end{split}   \llabel{D3 zdD rhB vk LDJo tKyV 5IrmyJ R5 e txS 1cv EsY xG zj2T rfSR myZo4L m5 D mqN iZd acg GQ 0KRw QKGX g9o8v8 wm B fUu tCO cKc zz kx4U fhuA a8pYzW Vq 9 Sp6 CmA cZL Mx ceBX Dwug sjWuii Gl v JDb 08h BOV C1 pni6 4TTq Opzezq ZB J y5o KS8 BhH sd nKkH gnZl UCm7j0 Iv Y jQE 7JN 9fd ED ddys 3y1x 52pbiG Lc a 71j G3e uli Ce uzv2 R40Q 50JZUB uK d U3m May 0uo S7 ulWD h7qG 2FKw2T JX z BES 2Jk Q4U Dy 4aJ2 IXs4 RNH41s py T GNh hk0 w5Z C8 B3nU Bp9p 8eLKh8 UO 4 fMq Y6w lcA GM xCHt vlOx MqAJoQ QU 1 e8a 2aX 9Y6 2r lIS6 dejK Y3KCUm 25 7 oCl VeE e8p 1z UJSv bmLd Fy7ObQ FN l J6F RdF kEm qM N0Fd NZJ0 8DYuq2 pL X JNz 4rO ZkZ X2 IjTD 1fVt z4BmFI Pi 0 GKD R2W PhO zH zTLP lbAE OT9XW0 gb T Lb3 XRQ qGG 8o 4TPE 6WRc uMqMXh s6 x Ofv 8st jDi u8 rtJt TKSK jlGkGw t8 n FDx jA9 fCm iu FqMW jeox 5Akw3w Sd 8 1vK 8c4 C0O dj CHIs eHUO hyqGx3 Kw O lDq l1Y 4NY 4I vI7X DE4c FeXdFV bC F HaJ sb4 OC0 hu Mj65 J4fa vgGo7q Y5 X tLy izY DvH TR zd9x SRVg 0Pl6Z8 9X z fLh GlH IYB x9 OELo 5loZ x4wag4 cn F aCE KfA 0uz fw HMUV M9Qy eARFe3 Py 6 kQG GFx rPf 6T ZBQR la1a 6Aeker Xg k blz nSm mhY jc z3io WYjz h33sxR JM k Dos EAA hUO Oz aQfK Z0cn 5kqYPn W7 1 vCT 69a EC9 LD EQ5S BK4J fVFLAo Qp N dzZ HAl JaL Mn vRqH 7pBB qOr7fv oa e BSA 8TE btx y3 jwK3 v244 dlfwRL Dc g X14 vTp Wd8 zy YWjw eQmF yD5y5l DN l ZbA Jac cld kx Yn3V QYIV v6fwmH z1 9 w3y D4Y ezR M9 BduE L7D9 2wTHHc Do g ZxZ WRW Jxi pv fz48 ZVB7 FZtgK0 Y1 w oCo hLA i70 NO Ta06 u2sY GlmspV l2 x y0X B37 x43 k5 kaoZ deyE sDglRF Xi 9 6b6 w9B dId Ko gSUM NLLb CRzeQL UZ m i9O 2qv VzD hz v1r6 spSl jwNhG6 s6 i SdX hob hbp 2u sEdl 95LP AtrBBi bP C wSh pFC CUa yz xYS5 78ro f3UwDP sC I pES HB1 qFP SW 5tt0 I7oz jXun6c z4 c QLB J4M NmI 6F 08S2 Il8C 0JQYiU lIsifpoierjsodfgupoefasdfgjsdfgjsdfgjsdjflxncvzxnvasdjfaopsruosihjsdfghajsdflahgfsif118}   \end{align} and   \begin{align}   \begin{split}    \mathcal{Y} &= \YYi=\bigcap_{T>0} \mathcal{Y}_T    .   \end{split}   \llabel{ uli Ce uzv2 R40Q 50JZUB uK d U3m May 0uo S7 ulWD h7qG 2FKw2T JX z BES 2Jk Q4U Dy 4aJ2 IXs4 RNH41s py T GNh hk0 w5Z C8 B3nU Bp9p 8eLKh8 UO 4 fMq Y6w lcA GM xCHt vlOx MqAJoQ QU 1 e8a 2aX 9Y6 2r lIS6 dejK Y3KCUm 25 7 oCl VeE e8p 1z UJSv bmLd Fy7ObQ FN l J6F RdF kEm qM N0Fd NZJ0 8DYuq2 pL X JNz 4rO ZkZ X2 IjTD 1fVt z4BmFI Pi 0 GKD R2W PhO zH zTLP lbAE OT9XW0 gb T Lb3 XRQ qGG 8o 4TPE 6WRc uMqMXh s6 x Ofv 8st jDi u8 rtJt TKSK jlGkGw t8 n FDx jA9 fCm iu FqMW jeox 5Akw3w Sd 8 1vK 8c4 C0O dj CHIs eHUO hyqGx3 Kw O lDq l1Y 4NY 4I vI7X DE4c FeXdFV bC F HaJ sb4 OC0 hu Mj65 J4fa vgGo7q Y5 X tLy izY DvH TR zd9x SRVg 0Pl6Z8 9X z fLh GlH IYB x9 OELo 5loZ x4wag4 cn F aCE KfA 0uz fw HMUV M9Qy eARFe3 Py 6 kQG GFx rPf 6T ZBQR la1a 6Aeker Xg k blz nSm mhY jc z3io WYjz h33sxR JM k Dos EAA hUO Oz aQfK Z0cn 5kqYPn W7 1 vCT 69a EC9 LD EQ5S BK4J fVFLAo Qp N dzZ HAl JaL Mn vRqH 7pBB qOr7fv oa e BSA 8TE btx y3 jwK3 v244 dlfwRL Dc g X14 vTp Wd8 zy YWjw eQmF yD5y5l DN l ZbA Jac cld kx Yn3V QYIV v6fwmH z1 9 w3y D4Y ezR M9 BduE L7D9 2wTHHc Do g ZxZ WRW Jxi pv fz48 ZVB7 FZtgK0 Y1 w oCo hLA i70 NO Ta06 u2sY GlmspV l2 x y0X B37 x43 k5 kaoZ deyE sDglRF Xi 9 6b6 w9B dId Ko gSUM NLLb CRzeQL UZ m i9O 2qv VzD hz v1r6 spSl jwNhG6 s6 i SdX hob hbp 2u sEdl 95LP AtrBBi bP C wSh pFC CUa yz xYS5 78ro f3UwDP sC I pES HB1 qFP SW 5tt0 I7oz jXun6c z4 c QLB J4M NmI 6F 08S2 Il8C 0JQYiU lI 1 YkK oiu bVt fG uOeg Sllv b4HGn3 bS Z LlX efa eN6 v1 B6m3 Ek3J SXUIjX 8P d NKI UFN JvP Ha Vr4T eARP dXEV7B xM 0 A7w 7je p8M 4Q ahOi hEVo Pxbi1V uG e tOt HbP tsO 5r 363R ez9n A5EJ55 pc L lQQ Hg6 X1J EW K8Cf 9kZm 14A5li rN 7 kKZ rY0 K10 It eJd3 kMGw opVnfY EG 2 orG fj0 TTA Xt ecJK eTM0 x1N9f0 lR p QkPsifpoierjsodfgupoefasdfgjsdfgjsdfgjsdjflxncvzxnvasdjfaopsruosihjsdfghajsdflahgfsif120}   \end{align} Note that if a function belongs to $\XXi$ or $\YYi$, it is only bounded on intervals $[0,T]$ for $T<\infty$ with no control asserted at infinity.
\par The following is the main result of the paper. \par \cole \begin{Theorem} \label{T01} Let $(u_0,\sifpoierjsodfgupoefasdfgjsdfgjsdfgjsdjflxncvzxnvasdjfaopsruosihjsdighajsdflahgfsif_0) \in D(A)\times H^{1}$.\\ (i) The Boussinesq system \eqref{sifpoierjsodfgupoefasdfgjsdfgjsdfgjsdjflxncvzxnvasdjfaopsruosihjsdfghajsdflahgfsif02} has a unique global-in-time solution  $(u,\sifpoierjsodfgupoefasdfgjsdfgjsdfgjsdjflxncvzxnvasdjfaopsruosihjsdighajsdflahgfsif)\in\mathcal{X}\times\mathcal{Y}$.\\ (ii) The solution $(u,\sifpoierjsodfgupoefasdfgjsdfgjsdfgjsdjflxncvzxnvasdjfaopsruosihjsdighajsdflahgfsif)$ satisfies   \begin{align}   &\lim_{t\to \infty}   \sifpoierjsodfgupoefasdfgjsdfgjsdfgjsdjflxncvzxnvasdjfaopsruosihjsdgghajsdflahgfsif \nabla u\sifpoierjsodfgupoefasdfgjsdfgjsdfgjsdjflxncvzxnvasdjfaopsruosihjsdgghajsdflahgfsif_{L^2}
  =0   \label{sifpoierjsodfgupoefasdfgjsdfgjsdfgjsdjflxncvzxnvasdjfaopsruosihjsdfghajsdflahgfsif57}         \end{align} and         \begin{align}         &   \lim_{t\to \infty}   \sifpoierjsodfgupoefasdfgjsdfgjsdfgjsdjflxncvzxnvasdjfaopsruosihjsdgghajsdflahgfsif Au - \mathbb{P}(\sifpoierjsodfgupoefasdfgjsdfgjsdfgjsdjflxncvzxnvasdjfaopsruosihjsdjghajsdflahgfsif e_2)\sifpoierjsodfgupoefasdfgjsdfgjsdfgjsdjflxncvzxnvasdjfaopsruosihjsdgghajsdflahgfsif_{L^2}   =0   ,   \label{sifpoierjsodfgupoefasdfgjsdfgjsdfgjsdjflxncvzxnvasdjfaopsruosihjsdfghajsdflahgfsif58}   \end{align} where $\sifpoierjsodfgupoefasdfgjsdfgjsdfgjsdjflxncvzxnvasdjfaopsruosihjsdjghajsdflahgfsif$ is as in \eqref{sifpoierjsodfgupoefasdfgjsdfgjsdfgjsdjflxncvzxnvasdjfaopsruosihjsdfghajsdflahgfsif123}. Furthermore,
  \begin{align}   \sifpoierjsodfgupoefasdfgjsdfgjsdfgjsdjflxncvzxnvasdjfaopsruosihjsdgghajsdflahgfsif Au\sifpoierjsodfgupoefasdfgjsdfgjsdfgjsdjflxncvzxnvasdjfaopsruosihjsdgghajsdflahgfsif_{L^2} \le C   ,   \llabel{2 IjTD 1fVt z4BmFI Pi 0 GKD R2W PhO zH zTLP lbAE OT9XW0 gb T Lb3 XRQ qGG 8o 4TPE 6WRc uMqMXh s6 x Ofv 8st jDi u8 rtJt TKSK jlGkGw t8 n FDx jA9 fCm iu FqMW jeox 5Akw3w Sd 8 1vK 8c4 C0O dj CHIs eHUO hyqGx3 Kw O lDq l1Y 4NY 4I vI7X DE4c FeXdFV bC F HaJ sb4 OC0 hu Mj65 J4fa vgGo7q Y5 X tLy izY DvH TR zd9x SRVg 0Pl6Z8 9X z fLh GlH IYB x9 OELo 5loZ x4wag4 cn F aCE KfA 0uz fw HMUV M9Qy eARFe3 Py 6 kQG GFx rPf 6T ZBQR la1a 6Aeker Xg k blz nSm mhY jc z3io WYjz h33sxR JM k Dos EAA hUO Oz aQfK Z0cn 5kqYPn W7 1 vCT 69a EC9 LD EQ5S BK4J fVFLAo Qp N dzZ HAl JaL Mn vRqH 7pBB qOr7fv oa e BSA 8TE btx y3 jwK3 v244 dlfwRL Dc g X14 vTp Wd8 zy YWjw eQmF yD5y5l DN l ZbA Jac cld kx Yn3V QYIV v6fwmH z1 9 w3y D4Y ezR M9 BduE L7D9 2wTHHc Do g ZxZ WRW Jxi pv fz48 ZVB7 FZtgK0 Y1 w oCo hLA i70 NO Ta06 u2sY GlmspV l2 x y0X B37 x43 k5 kaoZ deyE sDglRF Xi 9 6b6 w9B dId Ko gSUM NLLb CRzeQL UZ m i9O 2qv VzD hz v1r6 spSl jwNhG6 s6 i SdX hob hbp 2u sEdl 95LP AtrBBi bP C wSh pFC CUa yz xYS5 78ro f3UwDP sC I pES HB1 qFP SW 5tt0 I7oz jXun6c z4 c QLB J4M NmI 6F 08S2 Il8C 0JQYiU lI 1 YkK oiu bVt fG uOeg Sllv b4HGn3 bS Z LlX efa eN6 v1 B6m3 Ek3J SXUIjX 8P d NKI UFN JvP Ha Vr4T eARP dXEV7B xM 0 A7w 7je p8M 4Q ahOi hEVo Pxbi1V uG e tOt HbP tsO 5r 363R ez9n A5EJ55 pc L lQQ Hg6 X1J EW K8Cf 9kZm 14A5li rN 7 kKZ rY0 K10 It eJd3 kMGw opVnfY EG 2 orG fj0 TTA Xt ecJK eTM0 x1N9f0 lR p QkP M37 3r0 iA 6EFs 1F6f 4mjOB5 zu 5 GGT Ncl Bmk b5 jOOK 4yny My04oz 6m 6 Akz NnP JXh Bn PHRu N5Ly qSguz5 Nn W 2lU Yx3 fX4 hu LieH L30w g93Xwc gj 1 I9d O9b EPC R0 vc6A 005Q VFy1ly K7 o VRV pbJ zZn xY dcld XgQa DXY3gz x3 6 8OR JFK 9Uh XT e3xY bVHG oYqdHg Vy f 5kK Qzm mK4 9x xiAp jVkw gzJOdE 4v g hAv 9bV Isifpoierjsodfgupoefasdfgjsdfgjsdfgjsdjflxncvzxnvasdjfaopsruosihjsdfghajsdflahgfsif59}   \end{align} and for every $\epsilon > 0$, there exists a $C_\epsilon > 0$ such that   \begin{align}   \sifpoierjsodfgupoefasdfgjsdfgjsdfgjsdjflxncvzxnvasdjfaopsruosihjsdgghajsdflahgfsif \sifpoierjsodfgupoefasdfgjsdfgjsdfgjsdjflxncvzxnvasdjfaopsruosihjsdjghajsdflahgfsif\sifpoierjsodfgupoefasdfgjsdfgjsdfgjsdjflxncvzxnvasdjfaopsruosihjsdgghajsdflahgfsif_{H^1}   \le   C_\epsilon e^{\epsilon t},   \label{sifpoierjsodfgupoefasdfgjsdfgjsdfgjsdjflxncvzxnvasdjfaopsruosihjsdfghajsdflahgfsif60}   \end{align}  where both constants $C$ and $C_{\epsilon}$ depend on the size of the initial data.\\ (iii) The functions $\mathbb{P}(\sifpoierjsodfgupoefasdfgjsdfgjsdfgjsdjflxncvzxnvasdjfaopsruosihjsdighajsdflahgfsif e_2)$ 
and $\mathbb{P}(\sifpoierjsodfgupoefasdfgjsdfgjsdfgjsdjflxncvzxnvasdjfaopsruosihjsdjghajsdflahgfsif e_2)$  weakly converge to 0 in $H$ as~$t \to \infty$.  \par \end{Theorem}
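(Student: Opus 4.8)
The plan is to establish part~(i) by a fixed-point construction on a short interval $[0,T]$, to globalize it using the a~priori bounds of \cite[Theorem~2.1]{KMZ}, and then to read off the asymptotics in parts~(ii) and~(iii) from those same bounds. For the local step I would iterate the decoupled scheme
\begin{align}
\begin{split}
&\partial_t u^{(n+1)} + A u^{(n+1)} = \mathbb{P}(\theta^{(n)} e_2) - \mathbb{P}(u^{(n)}\cdot\nabla u^{(n)}),
\\&
\partial_t \theta^{(n+1)} + u^{(n)}\cdot\nabla \theta^{(n+1)} = - u^{(n)}_2 ,
\end{split}
\end{align}
and seek a fixed point in $\mathcal{X}_T\times\mathcal{Y}_T$. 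The velocity step is a forced Stokes problem; since $\mathbb{P}(\theta^{(n)}e_2)$ and $\mathbb{P}(u^{(n)}\cdot\nabla u^{(n)})$ lie in $L^3L^3$ for data in the iteration class, the maximal $L^p$ regularity for the Stokes operator from \cite{GS} places $u^{(n+1)}$ in $L^3 W^{2,3}$, and since $W^{2,3}(\Omega)\hookrightarrow W^{1,\infty}(\Omega)$ in two dimensions, this yields $u^{(n+1)}\in L^1 W^{1,\infty}$ on $[0,T]$. This Lipschitz control of the velocity is exactly what is needed to close the density step and recover the remaining norms in $\mathcal{X}_T$.

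For the transport equation the difficulty, emphasized in the introduction, is that $\theta$ carries no boundary condition, so an $H^1$ energy estimate cannot be run directly up to $\partial\Omega$. Here I would use the total Sobolev extension: extend the velocity to a divergence-free $W^{1,\infty}$ field on $\mathbb{R}^2$, solve the transport equation for the extended density on the whole plane, where differentiating the equation and testing against $\nabla\theta$ gives
\begin{align}
\tfrac12\tfrac{d}{dt}\|\nabla\theta\|_{L^2}^2 \le \|\nabla u\|_{L^\infty}\|\nabla\theta\|_{L^2}^2 + \|\nabla\theta\|_{L^2}\|\nabla u\|_{L^2},
\end{align}
the convection term integrating to zero because the extended velocity stays divergence free. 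Gr\"onwall's inequality together with $u\in L^1W^{1,\infty}$ then propagates $\theta_0\in H^1$ to $\theta\in L^\infty H^1$, and reading $\theta_t=-u\cdot\nabla\theta-u_2$ off the equation gives $\theta_t\in L^2L^2$, so that $\theta\in\mathcal{Y}_T$. Restricting back to $\Omega$ is legitimate because $u\cdot n=0$ on $\partial\Omega$ makes $\Omega$ invariant under the flow. Comparing two iterates in a suitable lower-order norm (velocity in $L^3L^\infty$, density in $L^\infty L^2$) yields a contraction for $T$ small, producing a unique local solution in $\mathcal{X}_T\times\mathcal{Y}_T$.

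To globalize I would invoke \cite[Theorem~2.1]{KMZ}, whose a~priori estimates bound every norm defining $\mathcal{X}_T$ and $\mathcal{Y}_T$ on each finite interval (in particular $\|Au\|_{L^2}\le C$, the $H^1$-dissipation $\|\nabla u\|_{L^2}\to0$, and $\|\rho\|_{H^1}\le C_\epsilon e^{\epsilon t}$); since these preclude finite-time breakdown of the controlling quantities, the local solution extends to one in $\mathcal{X}\times\mathcal{Y}$. Uniqueness then follows from a difference estimate: subtracting the equations for two solutions and testing, the velocity difference is controlled in $L^2$ by Gr\"onwall, while the density difference is handled using that both velocities are Lipschitz, a consequence of $u\in L^1W^{1,\infty}$, which forces uniqueness of the transported density. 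This proves~(i), and applying \cite[Theorem~2.1]{KMZ} to the constructed solution gives~(ii) directly.

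Finally, for~(iii) I observe that $x_2 e_2=\nabla(x_2^2/2)$ is a gradient, so $\mathbb{P}(\rho e_2)=\mathbb{P}(\theta e_2)$ and the two claimed limits coincide. From~(ii), $\|\nabla u\|_{L^2}\to0$ together with the Poincar\'e inequality gives $u(t)\to0$ in $V$, while $\|Au\|_{L^2}\le C$ keeps $Au(t)$ bounded in $H$; since $A$ is self-adjoint, $\langle Au(t),v\rangle=\langle u(t),Av\rangle\to0$ for every $v\in D(A)$, and density of $D(A)$ yields $Au(t)\rightharpoonup0$ weakly in $H$. Combining this with the strong convergence $\|Au-\mathbb{P}(\rho e_2)\|_{L^2}\to0$ gives $\mathbb{P}(\rho e_2)\rightharpoonup0$ weakly in $H$, which is~(iii). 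The main obstacle throughout is the local construction of an $H^1$-regular density: everything hinges on upgrading the velocity to $L^1 W^{1,\infty}$ through the sharp $L^p$-Stokes theory of \cite{GS}, which is what forces the $D(A)$ assumption on $u_0$ and is the reason the transport estimate must be carried out after a global-in-space extension rather than on $\Omega$ alone.
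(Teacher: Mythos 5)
Your overall strategy --- iterate a linearized scheme, use the Giga--Sohr maximal $L^3W^{2,3}$ regularity to upgrade the velocity to $L^1W^{1,\infty}$ via $W^{2,3}\hookrightarrow W^{1,\infty}$ in 2D, solve the transport equation after a total Sobolev extension to $\mathbb{R}^2$, and deduce (iii) from (ii) by the boundedness of $Au$ in $H$ together with $\mathbb{P}(\rho e_2)=\mathbb{P}(\theta e_2)$ --- is the same as the paper's. The main structural difference is in the iteration and the globalization. The paper uses a semi-implicit scheme ($u^{n-1}\cdot\nabla u^{n}$, $u^{n-1}\cdot\nabla\theta^{n}$), so each approximate system is \emph{linear} and globally solvable; the uniform bounds on every finite $[0,T]$ are then proved directly for the approximations (Lemma~\ref{L04}) before passing to the limit, and no continuation of a nonlinear local solution is ever needed. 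Your fully explicit scheme forces you to produce a local solution first and then continue it, which is where you invoke \cite[Theorem~2.1]{KMZ} as a black box. That buys brevity, but it is logically delicate here: the stated purpose of this paper is that the bounds in \cite{KMZ} were \emph{a priori}, i.e.\ derived formally for a solution not yet known to exist in the required class, and the present construction is what justifies them. To use them for continuation you must at minimum check that every estimate in \cite{KMZ} is rigorous for solutions in $\mathcal{X}_{T}\times\mathcal{Y}_{T}$; the paper instead re-derives the global bounds for the approximating sequence and then, in Section~\ref{sec06}, re-runs the asymptotic arguments (testing by $Au$, differentiating in time and testing by $u_t$, the $L^3$ and $L^4$ bounds on $\theta$, and the real-analysis Lemma~\ref{LA}) for the constructed solution.

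Two concrete points. First, part (ii) is not actually proved in your proposal: the decays \eqref{sifpoierjsodfgupoefasdfgjsdfgjsdfgjsdjflxncvzxnvasdjfaopsruosihjsdfghajsdflahgfsif57}--\eqref{sifpoierjsodfgupoefasdfgjsdfgjsdfgjsdjflxncvzxnvasdjfaopsruosihjsdfghajsdflahgfsif58} require the $u_t$ energy estimate, the uniform bound on $\|Au\|_{L^2}$ read off from the equation, the quantitative smallness of $\int_{t_1}^{t_1+1}\|u\|_{W^{1,\infty}}$ feeding into \eqref{sifpoierjsodfgupoefasdfgjsdfgjsdfgjsdjflxncvzxnvasdjfaopsruosihjsdfghajsdflahgfsif36} to get \eqref{sifpoierjsodfgupoefasdfgjsdfgjsdfgjsdjflxncvzxnvasdjfaopsruosihjsdfghajsdflahgfsif60}, and the ODE lemma (Lemma~\ref{LA}); ``applying \cite{KMZ} directly'' papers over all of this. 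Second, your divergence-free $W^{1,\infty}$ extension of $u$ to $\mathbb{R}^2$ is an unnecessary and nontrivial extra construction (it is not what a total extension operator provides, and building one in a non--simply-connected domain takes work). It is also not needed: with a generic Stein extension $\tilde v$, the convection term produces $-\tfrac12\int(\operatorname{div}\tilde v)\,|\nabla\bar\theta|^2$, which is dominated by $\|\nabla\tilde v\|_{L^\infty}\|\nabla\bar\theta\|_{L^2}^2$ and absorbed into the Gr\"onwall coefficient exactly as in the paper's proof of Lemma~\ref{P02}. The inequality you wrote down is therefore correct, but the justification you gave for it (``the extended velocity stays divergence free'') should be replaced by this absorption argument.
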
 \colb \par To prove Theorem~\ref{T01}(i), we use the  linearization of the system and pass to the limit in the solution of  the approximate equation in \eqref{sifpoierjsodfgupoefasdfgjsdfgjsdfgjsdjflxncvzxnvasdjfaopsruosihjsdfghajsdflahgfsif03} below. After the construction, we provide a short proof of~(ii); the assertion (iii) then quickly follows from~(ii). \par The solution in Theorem~\ref{T01} is constructed using the approximation scheme 
\begin{align} \begin{split} & u^n_t  - \Delta u^n + u^{n-1} \sifpoierjsodfgupoefasdfgjsdfgjsdfgjsdjflxncvzxnvasdjfaopsruosihjsdhghajsdflahgfsif \nabla u^n + \nabla P^n = \sifpoierjsodfgupoefasdfgjsdfgjsdfgjsdjflxncvzxnvasdjfaopsruosihjsdighajsdflahgfsif^n e_2 , \\& \sifpoierjsodfgupoefasdfgjsdfgjsdfgjsdjflxncvzxnvasdjfaopsruosihjsdighajsdflahgfsif^n_t + u^{n-1} \sifpoierjsodfgupoefasdfgjsdfgjsdfgjsdjflxncvzxnvasdjfaopsruosihjsdhghajsdflahgfsif \nabla \sifpoierjsodfgupoefasdfgjsdfgjsdfgjsdjflxncvzxnvasdjfaopsruosihjsdighajsdflahgfsif^n = -u^n \sifpoierjsodfgupoefasdfgjsdfgjsdfgjsdjflxncvzxnvasdjfaopsruosihjsdhghajsdflahgfsif e_2 , \\& \nabla \sifpoierjsodfgupoefasdfgjsdfgjsdfgjsdjflxncvzxnvasdjfaopsruosihjsdhghajsdflahgfsif u^n=0 ,
\\& (u^n,\sifpoierjsodfgupoefasdfgjsdfgjsdfgjsdjflxncvzxnvasdjfaopsruosihjsdighajsdflahgfsif^n)(0) = (u_0, \sifpoierjsodfgupoefasdfgjsdfgjsdfgjsdjflxncvzxnvasdjfaopsruosihjsdighajsdflahgfsif _0) , \\& u^n |_{\partial \Omega} = 0 , \end{split} \label{sifpoierjsodfgupoefasdfgjsdfgjsdfgjsdjflxncvzxnvasdjfaopsruosihjsdfghajsdflahgfsif03} \end{align} for $n \in \mathbb{N}$, while for $n=0$ we solve \begin{align} \begin{split} & u^0_t  - \Delta u^0
+ \nabla P^0 = \sifpoierjsodfgupoefasdfgjsdfgjsdfgjsdjflxncvzxnvasdjfaopsruosihjsdighajsdflahgfsif^0 e_2 , \\& \sifpoierjsodfgupoefasdfgjsdfgjsdfgjsdjflxncvzxnvasdjfaopsruosihjsdighajsdflahgfsif^0_t  = -u^0 \sifpoierjsodfgupoefasdfgjsdfgjsdfgjsdjflxncvzxnvasdjfaopsruosihjsdhghajsdflahgfsif e_2 , \\& \nabla \sifpoierjsodfgupoefasdfgjsdfgjsdfgjsdjflxncvzxnvasdjfaopsruosihjsdhghajsdflahgfsif u^0=0 , \\& (u^0,\sifpoierjsodfgupoefasdfgjsdfgjsdfgjsdjflxncvzxnvasdjfaopsruosihjsdighajsdflahgfsif^0)(0) = (u_0, \sifpoierjsodfgupoefasdfgjsdfgjsdfgjsdjflxncvzxnvasdjfaopsruosihjsdighajsdflahgfsif _0) \\& u^0 |_{\partial \Omega} = 0 .
\end{split} \label{sifpoierjsodfgupoefasdfgjsdfgjsdfgjsdjflxncvzxnvasdjfaopsruosihjsdfghajsdflahgfsif04} \end{align} To justify this procedure,  we separately solve in Section~\ref{sec03} the linearized Navier-Stokes and the density equations. To do so, we use the Galerkin method  to solve the velocity equation,  where the essential step is the $L^{3}W^{2,3}$ estimate on the velocity. On the other hand,  the main device for solving the shifted density
equation is the extension operator and the treatment of the equation in~$\mathbb{R}^2$. Then, in Section~\ref{sec04}, we show that a unique solution to \eqref{sifpoierjsodfgupoefasdfgjsdfgjsdfgjsdjflxncvzxnvasdjfaopsruosihjsdfghajsdflahgfsif03} exists by mixing the contraction mapping and  uniform boundedness arguments. In the fourth section, again by means of the strong and weak convergence,  we show that the limit of solutions of \eqref{sifpoierjsodfgupoefasdfgjsdfgjsdfgjsdjflxncvzxnvasdjfaopsruosihjsdfghajsdflahgfsif03} give us the solution of  \eqref{sifpoierjsodfgupoefasdfgjsdfgjsdfgjsdjflxncvzxnvasdjfaopsruosihjsdfghajsdflahgfsif02}. Finally, in Section~\ref{sec06},  we argue that the asymptotic properties  stated in \cite{KMZ} apply to the constructed
solution.  \par \startnewsection{Existence for the approximate velocity and density equations}{sec03} \par \subsection{The velocity equation} Here we fix $T\in(0,\infty]$ and then given $v \in \XXT$ and  $\sifpoierjsodfgupoefasdfgjsdfgjsdfgjsdjflxncvzxnvasdjfaopsruosihjsdighajsdflahgfsif \in \YYT$, we aim to prove that \begin{align} \begin{split}    & u_t         - \Delta u        + v \sifpoierjsodfgupoefasdfgjsdfgjsdfgjsdjflxncvzxnvasdjfaopsruosihjsdhghajsdflahgfsif \nabla u
       + \nabla P = \sifpoierjsodfgupoefasdfgjsdfgjsdfgjsdjflxncvzxnvasdjfaopsruosihjsdighajsdflahgfsif e_2        ,    \\&    \nabla \sifpoierjsodfgupoefasdfgjsdfgjsdfgjsdjflxncvzxnvasdjfaopsruosihjsdhghajsdflahgfsif u=0    ,    \\&    u(0) = u_0 \in D(A)    ,    \\&    u |_{\partial \Omega} = 0   \end{split}    \label{sifpoierjsodfgupoefasdfgjsdfgjsdfgjsdjflxncvzxnvasdjfaopsruosihjsdfghajsdflahgfsif05} \end{align} has a unique solution~$u \in \XXT$.
\par \cole \begin{Lemma}   \label{L03} Let $T\in(0,\infty]$, and assume that~$u_0\in D(A)$. Given $v\in \XXT$ and $\sifpoierjsodfgupoefasdfgjsdfgjsdfgjsdjflxncvzxnvasdjfaopsruosihjsdighajsdflahgfsif\in \YYT$, the system \eqref{sifpoierjsodfgupoefasdfgjsdfgjsdfgjsdjflxncvzxnvasdjfaopsruosihjsdfghajsdflahgfsif05} has a unique solution~$u \in \XXT$. \end{Lemma}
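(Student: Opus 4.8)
The plan is to project with the Leray operator $\mathbb{P}$ and recast the system~\eqref{sifpoierjsodfgupoefasdfgjsdfgjsdfgjsdjflxncvzxnvasdjfaopsruosihjsdfghajsdflahgfsif05} as the abstract inhomogeneous Stokes equation
\[
u_t + Au + \mathbb{P}(v\cdot\nabla u) = \mathbb{P}(\rho e_2), \qquad u(0) = u_0,
\]
and to construct $u$ by a Galerkin scheme in the eigenbasis $\{w_k\}$ of $A$, establishing bounds uniform in the truncation parameter $m$ and passing to the limit. Because the problem is \emph{linear} in $u$ (the advecting field $v$ and the forcing $\rho$ being given data), the passage to the limit requires only weak/weak-$*$ compactness together with $v\cdot\nabla u_m \rightharpoonup v\cdot\nabla u$, with no Aubin--Lions compactness needed in the $u$ variable. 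Uniqueness is immediate: the difference $w$ of two solutions solves the same equation with $w(0)=0$, and testing with $w$ and using $\div v = 0$ (so that $(v\cdot\nabla w, w) = 0$) gives $\frac{d}{dt}\|w\|_{L^2}^2 + 2\|\nabla w\|_{L^2}^2 \le 0$, whence $w\equiv 0$.

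The first block of estimates is the standard energy hierarchy, performed on $u_m$. Testing with $u_m$ and then with $Au_m$, and controlling $(v\cdot\nabla u_m, Au_m)$ by $\|v\|_{L^\infty}\|\nabla u_m\|_{L^2}\|Au_m\|_{L^2}$ with $v \in L^2 D(A) \hookrightarrow L^2 L^\infty$ (as $H^2\hookrightarrow L^\infty$ in two dimensions), the top-order term is absorbed and Gr\"onwall on each finite interval yields $u \in L^\infty V \cap L^2 D(A)$; here $\rho\in L^\infty H^1\subseteq L^\infty L^2$ handles the forcing. Next I differentiate the equation in $t$ and test with $\partial_t u_m$: the term $(v\cdot\nabla \partial_t u_m, \partial_t u_m)$ vanishes, the commutator $(\partial_t v\cdot\nabla u_m, \partial_t u_m)$ is estimated by Ladyzhenskaya/Gagliardo--Nirenberg and Young's inequality (absorbing $\|\nabla \partial_t u_m\|_{L^2}^2$), and $\partial_t u_m(0)$ is bounded in $L^2$ because $u_0\in D(A)$ and $v(0),\rho(0)$ are regular. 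Using $v\in W^{1,\infty}L^2\cap H^1 V$ and $\rho\in H^1 L^2$, Gr\"onwall gives $u_t\in L^\infty L^2\cap L^2 V$, i.e. the $W^{1,\infty}L^2\cap H^1 V$ components; reading $u_{tt}$ off the differentiated equation and pairing with test functions in $V$ produces the $H^2 V'$ bound.

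The heart of the matter --- and the step I expect to be the main obstacle --- is the $L^3 W^{2,3}$ estimate, which is what ultimately furnishes the $W^{1,\infty}$-in-space control of $u$ needed for the density equation. The idea is to invoke the maximal $L^3$-regularity of the Stokes operator from \cite{GS} for $u_t + Au = f$ with $f = \mathbb{P}(\rho e_2 - v\cdot\nabla u)$, which first requires $f\in L^3 L^3$. This does not follow directly from the energy bounds (they only give $v\cdot\nabla u\in L^2 L^3$), so I would bootstrap: writing $Au = \mathbb{P}(\rho e_2) - \mathbb{P}(v\cdot\nabla u) - u_t$ and using the bounds already obtained, in particular $u_t\in L^\infty L^2$ and $v\cdot\nabla u\in L^4 L^2$ (the latter from $v\in L^\infty L^6$ and $\nabla u\in L^4 L^3$, interpolating $L^\infty L^2$ with $L^2 L^6$), one gets $Au\in L^3 L^2$, hence $u\in L^3 D(A)=L^3 H^2$. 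With this improved regularity $\|v\cdot\nabla u\|_{L^3_x}\le \|v\|_{L^6_x}\|\nabla u\|_{L^6_x}\le \|v\|_{L^\infty L^6}\|u\|_{H^2}$ lies in $L^3_t$, so $v\cdot\nabla u\in L^3 L^3$; since $\rho\in L^\infty H^1\hookrightarrow L^\infty L^3$ and $\mathbb{P}$ is bounded on $L^3$, we obtain $f\in L^3 L^3$. As $u_0\in D(A)=H^2\cap V$ lies in the real-interpolation trace space for the $L^3$ maximal-regularity estimate, \cite{GS} yields $u\in L^3 W^{2,3}$ (and $\partial_t u\in L^3 L^3$), completing the proof that $u\in \XXT$. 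I would apply this upgrade a~posteriori to the constructed limit $u$ rather than to the Galerkin truncations, since the $W^{2,3}$ bound is an intrinsic property of the full Stokes operator. The delicate point throughout is the anisotropic time-space bookkeeping: the norms defining $\XXT$ and $\YYT$ must be combined through H\"older in time and Gagliardo--Nirenberg in space so that each product of data lands in exactly the Lebesgue exponent demanded by the maximal-regularity theorem.
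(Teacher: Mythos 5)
Your proposal follows essentially the same route as the paper: Galerkin approximation in the Stokes eigenbasis, the energy hierarchy (testing with $u^m$, with $Au^m$, and with $\partial_t u^m$ after differentiating in time), passage to the limit using linearity, and finally the Giga--Sohr maximal $L^3$-regularity theorem applied a posteriori to the limit equation to obtain $u\in L^3W^{2,3}$. The only deviation is your bootstrap through $u\in L^3 H^2$ to get $v\cdot\nabla u\in L^3L^3$; this is correct but unnecessary, since the bound $\|v\cdot\nabla u\|_{L^3}^3\lesssim \|v\|_{L^2}\|\nabla v\|_{L^2}^2\,\|\nabla u\|_{L^2}\|Au\|_{L^2}^2$ already places $v\cdot\nabla u$ in $L^3L^3$ directly from the $L^\infty V\cap L^2 D(A)$ estimates, which is what the paper does.
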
 \colb \par \begin{proof}[Proof of Lemma~\ref{L03}] Uniqueness follows easily by testing with the difference of two velocities.  Therefore, it is sufficient to prove the statement for a  fixed $T\in(0,\infty)$, i.e., we may assume that $T$ is finite.
Also, all the Lebesgue spaces in space-time are understood to be over $\Omega\times [0,T]$, while the Lebesgue spaces in time are on~$[0,T]$. We allow all constants in this proof to depend on $\sifpoierjsodfgupoefasdfgjsdfgjsdfgjsdjflxncvzxnvasdjfaopsruosihjsdgghajsdflahgfsif u_0\sifpoierjsodfgupoefasdfgjsdfgjsdfgjsdjflxncvzxnvasdjfaopsruosihjsdgghajsdflahgfsif_{D(A)}$, $\sifpoierjsodfgupoefasdfgjsdfgjsdfgjsdjflxncvzxnvasdjfaopsruosihjsdgghajsdflahgfsif \sifpoierjsodfgupoefasdfgjsdfgjsdfgjsdjflxncvzxnvasdjfaopsruosihjsdighajsdflahgfsif\sifpoierjsodfgupoefasdfgjsdfgjsdfgjsdjflxncvzxnvasdjfaopsruosihjsdgghajsdflahgfsif_{\YYT}$,  and~$\sifpoierjsodfgupoefasdfgjsdfgjsdfgjsdjflxncvzxnvasdjfaopsruosihjsdgghajsdflahgfsif v\sifpoierjsodfgupoefasdfgjsdfgjsdfgjsdjflxncvzxnvasdjfaopsruosihjsdgghajsdflahgfsif_{\XXT}$, in addition to~$T$. \par Denote by $\{w_j\}_{j=1}^\infty$  an orthonormal system for $H$ consisting of the Stokes eigenfunctions with $\{\lambda_j\}_{j=1}^\infty$,  where $0<\lambda_1\leq \lambda_2\leq \cdots$,  are the corresponding eigenvalues.
For $m \in \mathbb{N}$, let $P_m$ be the orthogonal projection  onto the subspace of $H$ spanned by  $\{w_1, \ldots, w_m\}$. For a fixed $m\in\mathbb{N}$, consider the Galerkin system   \begin{align}   \begin{split}   & u_t^m    + Au^m   + P_m \mathbb{P}(v \sifpoierjsodfgupoefasdfgjsdfgjsdfgjsdjflxncvzxnvasdjfaopsruosihjsdhghajsdflahgfsif \nabla u^m) = P_m \mathbb{P}(\sifpoierjsodfgupoefasdfgjsdfgjsdfgjsdjflxncvzxnvasdjfaopsruosihjsdighajsdflahgfsif e_2)   ,   \\&   u^m(0) = P_mu_0
  ,   \\&   u^m |_{\partial \Omega} = 0   .   \end{split}   \label{sifpoierjsodfgupoefasdfgjsdfgjsdfgjsdjflxncvzxnvasdjfaopsruosihjsdfghajsdflahgfsif06}   \end{align} Denote $\xi_j (t) = (u^m,w_j)$,   $\beta_{ij} = (v \sifpoierjsodfgupoefasdfgjsdfgjsdfgjsdjflxncvzxnvasdjfaopsruosihjsdhghajsdflahgfsif \nabla w_j,w_i)$, and  $\eta_j = (\sifpoierjsodfgupoefasdfgjsdfgjsdfgjsdjflxncvzxnvasdjfaopsruosihjsdighajsdflahgfsif e_2, w_j)$ for  $j \in \mathbb{N}$.  Taking the inner product of  the first equation in \eqref{sifpoierjsodfgupoefasdfgjsdfgjsdfgjsdjflxncvzxnvasdjfaopsruosihjsdfghajsdflahgfsif06} with $w_k$, we get  \begin{align}
  \dot{\xi}_k + \lambda_k \xi_k + \sum_{j=1}^{m} \beta_{kj}\xi_j = \eta_k     ,    \llabel{ SRVg 0Pl6Z8 9X z fLh GlH IYB x9 OELo 5loZ x4wag4 cn F aCE KfA 0uz fw HMUV M9Qy eARFe3 Py 6 kQG GFx rPf 6T ZBQR la1a 6Aeker Xg k blz nSm mhY jc z3io WYjz h33sxR JM k Dos EAA hUO Oz aQfK Z0cn 5kqYPn W7 1 vCT 69a EC9 LD EQ5S BK4J fVFLAo Qp N dzZ HAl JaL Mn vRqH 7pBB qOr7fv oa e BSA 8TE btx y3 jwK3 v244 dlfwRL Dc g X14 vTp Wd8 zy YWjw eQmF yD5y5l DN l ZbA Jac cld kx Yn3V QYIV v6fwmH z1 9 w3y D4Y ezR M9 BduE L7D9 2wTHHc Do g ZxZ WRW Jxi pv fz48 ZVB7 FZtgK0 Y1 w oCo hLA i70 NO Ta06 u2sY GlmspV l2 x y0X B37 x43 k5 kaoZ deyE sDglRF Xi 9 6b6 w9B dId Ko gSUM NLLb CRzeQL UZ m i9O 2qv VzD hz v1r6 spSl jwNhG6 s6 i SdX hob hbp 2u sEdl 95LP AtrBBi bP C wSh pFC CUa yz xYS5 78ro f3UwDP sC I pES HB1 qFP SW 5tt0 I7oz jXun6c z4 c QLB J4M NmI 6F 08S2 Il8C 0JQYiU lI 1 YkK oiu bVt fG uOeg Sllv b4HGn3 bS Z LlX efa eN6 v1 B6m3 Ek3J SXUIjX 8P d NKI UFN JvP Ha Vr4T eARP dXEV7B xM 0 A7w 7je p8M 4Q ahOi hEVo Pxbi1V uG e tOt HbP tsO 5r 363R ez9n A5EJ55 pc L lQQ Hg6 X1J EW K8Cf 9kZm 14A5li rN 7 kKZ rY0 K10 It eJd3 kMGw opVnfY EG 2 orG fj0 TTA Xt ecJK eTM0 x1N9f0 lR p QkP M37 3r0 iA 6EFs 1F6f 4mjOB5 zu 5 GGT Ncl Bmk b5 jOOK 4yny My04oz 6m 6 Akz NnP JXh Bn PHRu N5Ly qSguz5 Nn W 2lU Yx3 fX4 hu LieH L30w g93Xwc gj 1 I9d O9b EPC R0 vc6A 005Q VFy1ly K7 o VRV pbJ zZn xY dcld XgQa DXY3gz x3 6 8OR JFK 9Uh XT e3xY bVHG oYqdHg Vy f 5kK Qzm mK4 9x xiAp jVkw gzJOdE 4v g hAv 9bV IHe wc Vqcb SUcF 1pHzol Nj T l1B urc Sam IP zkUS 8wwS a7wVWR 4D L VGf 1RF r59 9H tyGq hDT0 TDlooa mg j 9am png aWe nG XU2T zXLh IYOW5v 2d A rCG sLk s53 pW AuAy DQlF 6spKyd HT 9 Z1X n2s U1g 0D Llao YuLP PB6YKo D1 M 0fi qHU l4A Ia joiV Q6af VT6wvY Md 0 pCY BZp 7RX Hd xTb0 sjJ0 Beqpkc 8b N OgZ 0Tr 0wq h1 sifpoierjsodfgupoefasdfgjsdfgjsdfgjsdjflxncvzxnvasdjfaopsruosihjsdfghajsdflahgfsif68} \end{align} for $k = 1, \ldots, m$.  To represent this system in a vector form,  denote $\xi^m = (\xi_1, \ldots, \xi_m)$,  $\beta^m=~(\beta_{ij})_{1 \leq i,j \leq m}$, and  $\eta^m=(\eta_1,\ldots,\eta_m)$.  Also, let $\Lambda^m$ be the diagonal matrix  whose $j$-th diagonal element  is $\lambda_j$, so that \eqref{sifpoierjsodfgupoefasdfgjsdfgjsdfgjsdjflxncvzxnvasdjfaopsruosihjsdfghajsdflahgfsif06}  as an ODE system may be written as
\begin{align}   \begin{split}   & \dot{\xi}^m(t)    + (\Lambda^m + \beta^m(t))\xi^m = \eta^m   \comma   0 \leq t \leq T,   \\&   \xi^m(0) = \xi^m_0   ,   \end{split}   \label{sifpoierjsodfgupoefasdfgjsdfgjsdfgjsdjflxncvzxnvasdjfaopsruosihjsdfghajsdflahgfsif07} \end{align} where $\xi^m_0 = P_mu_0$. Since
$\sifpoierjsodfgupoefasdfgjsdfgjsdfgjsdjflxncvzxnvasdjfaopsruosihjsdfghajsdflahgfsif_{0}^{T}|\beta^{m}(s)|\,ds<\infty$ by  $\sifpoierjsodfgupoefasdfgjsdfgjsdfgjsdjflxncvzxnvasdjfaopsruosihjsdfghajsdflahgfsif_{0}^{T}\sifpoierjsodfgupoefasdfgjsdfgjsdfgjsdjflxncvzxnvasdjfaopsruosihjsdgghajsdflahgfsif v\sifpoierjsodfgupoefasdfgjsdfgjsdfgjsdjflxncvzxnvasdjfaopsruosihjsdgghajsdflahgfsif_{L^2}\,ds<\infty$, the linear ODE system \eqref{sifpoierjsodfgupoefasdfgjsdfgjsdfgjsdjflxncvzxnvasdjfaopsruosihjsdfghajsdflahgfsif07} has a unique solution on~$[0,T]$.  Now we need to show that  $u^m$ are uniformly bounded in the $\XX_T$-norm.  Testing the first equation in \eqref{sifpoierjsodfgupoefasdfgjsdfgjsdfgjsdjflxncvzxnvasdjfaopsruosihjsdfghajsdflahgfsif06} with $u^m$, we get \begin{align}   \begin{split}    \frac{1}{2} \frac{d}{dt} \sifpoierjsodfgupoefasdfgjsdfgjsdfgjsdjflxncvzxnvasdjfaopsruosihjsdgghajsdflahgfsif u^m\sifpoierjsodfgupoefasdfgjsdfgjsdfgjsdjflxncvzxnvasdjfaopsruosihjsdgghajsdflahgfsif_{L^2}^2 + \sifpoierjsodfgupoefasdfgjsdfgjsdfgjsdjflxncvzxnvasdjfaopsruosihjsdgghajsdflahgfsif \nabla u^m\sifpoierjsodfgupoefasdfgjsdfgjsdfgjsdjflxncvzxnvasdjfaopsruosihjsdgghajsdflahgfsif_{L^2}^2     &= -(P_m \mathbb{P}(v \sifpoierjsodfgupoefasdfgjsdfgjsdfgjsdjflxncvzxnvasdjfaopsruosihjsdhghajsdflahgfsif \nabla u^m),u^m)+(P_m \mathbb{P}\sifpoierjsodfgupoefasdfgjsdfgjsdfgjsdjflxncvzxnvasdjfaopsruosihjsdighajsdflahgfsif e_2,u^m)   \lec \sifpoierjsodfgupoefasdfgjsdfgjsdfgjsdjflxncvzxnvasdjfaopsruosihjsdgghajsdflahgfsif \sifpoierjsodfgupoefasdfgjsdfgjsdfgjsdjflxncvzxnvasdjfaopsruosihjsdighajsdflahgfsif\sifpoierjsodfgupoefasdfgjsdfgjsdfgjsdjflxncvzxnvasdjfaopsruosihjsdgghajsdflahgfsif_{L^2}^2 + \sifpoierjsodfgupoefasdfgjsdfgjsdfgjsdjflxncvzxnvasdjfaopsruosihjsdgghajsdflahgfsif u^m\sifpoierjsodfgupoefasdfgjsdfgjsdfgjsdjflxncvzxnvasdjfaopsruosihjsdgghajsdflahgfsif_{L^2}^2         ,
  \end{split}   \label{sifpoierjsodfgupoefasdfgjsdfgjsdfgjsdjflxncvzxnvasdjfaopsruosihjsdfghajsdflahgfsif08} \end{align} since $(P_m \mathbb{P}(v \sifpoierjsodfgupoefasdfgjsdfgjsdfgjsdjflxncvzxnvasdjfaopsruosihjsdhghajsdflahgfsif \nabla u^m),u^m) = (\mathbb{P}(v \sifpoierjsodfgupoefasdfgjsdfgjsdfgjsdjflxncvzxnvasdjfaopsruosihjsdhghajsdflahgfsif \nabla u^m),u^m) = (v \sifpoierjsodfgupoefasdfgjsdfgjsdfgjsdjflxncvzxnvasdjfaopsruosihjsdhghajsdflahgfsif \nabla u^m,u^m) = 0$. Applying the Gronwall inequality to \eqref{sifpoierjsodfgupoefasdfgjsdfgjsdfgjsdjflxncvzxnvasdjfaopsruosihjsdfghajsdflahgfsif08}, we obtain   \begin{align}   \sifpoierjsodfgupoefasdfgjsdfgjsdfgjsdjflxncvzxnvasdjfaopsruosihjsdgghajsdflahgfsif u^m(t) \sifpoierjsodfgupoefasdfgjsdfgjsdfgjsdjflxncvzxnvasdjfaopsruosihjsdgghajsdflahgfsif_{L^2}^2    \lec    \biggl(\sifpoierjsodfgupoefasdfgjsdfgjsdfgjsdjflxncvzxnvasdjfaopsruosihjsdgghajsdflahgfsif u_0\sifpoierjsodfgupoefasdfgjsdfgjsdfgjsdjflxncvzxnvasdjfaopsruosihjsdgghajsdflahgfsif_{L^2}^2 +    \sifpoierjsodfgupoefasdfgjsdfgjsdfgjsdjflxncvzxnvasdjfaopsruosihjsdfghajsdflahgfsif_{0}^{T}   \sifpoierjsodfgupoefasdfgjsdfgjsdfgjsdjflxncvzxnvasdjfaopsruosihjsdgghajsdflahgfsif \sifpoierjsodfgupoefasdfgjsdfgjsdfgjsdjflxncvzxnvasdjfaopsruosihjsdighajsdflahgfsif\sifpoierjsodfgupoefasdfgjsdfgjsdfgjsdjflxncvzxnvasdjfaopsruosihjsdgghajsdflahgfsif_{L^2}^2    \,ds\biggr)   e^{C T}   \lec 1    \comma t\in [0,T]
   ,    \llabel{dlfwRL Dc g X14 vTp Wd8 zy YWjw eQmF yD5y5l DN l ZbA Jac cld kx Yn3V QYIV v6fwmH z1 9 w3y D4Y ezR M9 BduE L7D9 2wTHHc Do g ZxZ WRW Jxi pv fz48 ZVB7 FZtgK0 Y1 w oCo hLA i70 NO Ta06 u2sY GlmspV l2 x y0X B37 x43 k5 kaoZ deyE sDglRF Xi 9 6b6 w9B dId Ko gSUM NLLb CRzeQL UZ m i9O 2qv VzD hz v1r6 spSl jwNhG6 s6 i SdX hob hbp 2u sEdl 95LP AtrBBi bP C wSh pFC CUa yz xYS5 78ro f3UwDP sC I pES HB1 qFP SW 5tt0 I7oz jXun6c z4 c QLB J4M NmI 6F 08S2 Il8C 0JQYiU lI 1 YkK oiu bVt fG uOeg Sllv b4HGn3 bS Z LlX efa eN6 v1 B6m3 Ek3J SXUIjX 8P d NKI UFN JvP Ha Vr4T eARP dXEV7B xM 0 A7w 7je p8M 4Q ahOi hEVo Pxbi1V uG e tOt HbP tsO 5r 363R ez9n A5EJ55 pc L lQQ Hg6 X1J EW K8Cf 9kZm 14A5li rN 7 kKZ rY0 K10 It eJd3 kMGw opVnfY EG 2 orG fj0 TTA Xt ecJK eTM0 x1N9f0 lR p QkP M37 3r0 iA 6EFs 1F6f 4mjOB5 zu 5 GGT Ncl Bmk b5 jOOK 4yny My04oz 6m 6 Akz NnP JXh Bn PHRu N5Ly qSguz5 Nn W 2lU Yx3 fX4 hu LieH L30w g93Xwc gj 1 I9d O9b EPC R0 vc6A 005Q VFy1ly K7 o VRV pbJ zZn xY dcld XgQa DXY3gz x3 6 8OR JFK 9Uh XT e3xY bVHG oYqdHg Vy f 5kK Qzm mK4 9x xiAp jVkw gzJOdE 4v g hAv 9bV IHe wc Vqcb SUcF 1pHzol Nj T l1B urc Sam IP zkUS 8wwS a7wVWR 4D L VGf 1RF r59 9H tyGq hDT0 TDlooa mg j 9am png aWe nG XU2T zXLh IYOW5v 2d A rCG sLk s53 pW AuAy DQlF 6spKyd HT 9 Z1X n2s U1g 0D Llao YuLP PB6YKo D1 M 0fi qHU l4A Ia joiV Q6af VT6wvY Md 0 pCY BZp 7RX Hd xTb0 sjJ0 Beqpkc 8b N OgZ 0Tr 0wq h1 C2Hn YQXM 8nJ0Pf uG J Be2 vuq Duk LV AJwv 2tYc JOM1uK h7 p cgo iiK t0b 3e URec DVM7 ivRMh1 T6 p AWl upj kEj UL R3xN VAu5 kEbnrV HE 1 OrJ 2bx dUP yD vyVi x6sC BpGDSx jB C n9P Fiu xkF vw 0QPo fRjy 2OFItV eD B tDz lc9 xVy A0 de9Y 5h8c 7dYCFk Fl v WPD SuN VI6 MZ 72u9 MBtK 9BGLNs Yp l X2y b5U HgH AD bW8X Rsifpoierjsodfgupoefasdfgjsdfgjsdfgjsdjflxncvzxnvasdjfaopsruosihjsdfghajsdflahgfsif71}   \end{align} recalling the agreement on constants at the beginning of the proof. Hence, together with \eqref{sifpoierjsodfgupoefasdfgjsdfgjsdfgjsdjflxncvzxnvasdjfaopsruosihjsdfghajsdflahgfsif08} integrated in time, we conclude the uniform boundedness of   $u^m \in L^\infty H \cap L^2 V$. \par Next (these are classical estimates), we show that $\nabla u^m$ and $Au^m$ are uniformly bounded in $L^\infty L^2$ and $L^2L^2$, respectively. To achieve this, let $m\in\mathbb{N}$ and we test the first equation in \eqref{sifpoierjsodfgupoefasdfgjsdfgjsdfgjsdjflxncvzxnvasdjfaopsruosihjsdfghajsdflahgfsif06} by $Au^m$ to get
\begin{align}   \frac{1}{2} \frac{d}{dt} \sifpoierjsodfgupoefasdfgjsdfgjsdfgjsdjflxncvzxnvasdjfaopsruosihjsdgghajsdflahgfsif \nabla u^m \sifpoierjsodfgupoefasdfgjsdfgjsdfgjsdjflxncvzxnvasdjfaopsruosihjsdgghajsdflahgfsif_{L^2}^2   + \sifpoierjsodfgupoefasdfgjsdfgjsdfgjsdjflxncvzxnvasdjfaopsruosihjsdgghajsdflahgfsif Au^m \sifpoierjsodfgupoefasdfgjsdfgjsdfgjsdjflxncvzxnvasdjfaopsruosihjsdgghajsdflahgfsif_{L^2}^2    = -(P_m \mathbb{P}(v \sifpoierjsodfgupoefasdfgjsdfgjsdfgjsdjflxncvzxnvasdjfaopsruosihjsdhghajsdflahgfsif \nabla u^m),Au^m)    + (P_m \mathbb{P}(\sifpoierjsodfgupoefasdfgjsdfgjsdfgjsdjflxncvzxnvasdjfaopsruosihjsdighajsdflahgfsif e_2), Au^m)         .   \label{sifpoierjsodfgupoefasdfgjsdfgjsdfgjsdjflxncvzxnvasdjfaopsruosihjsdfghajsdflahgfsif12} \end{align} Estimating the first term on the right-hand side, we obtain \begin{align}   \begin{split}   & -(P_m \mathbb{P}(v \sifpoierjsodfgupoefasdfgjsdfgjsdfgjsdjflxncvzxnvasdjfaopsruosihjsdhghajsdflahgfsif \nabla u^m),Au^m)    \lec    \sifpoierjsodfgupoefasdfgjsdfgjsdfgjsdjflxncvzxnvasdjfaopsruosihjsdgghajsdflahgfsif v \sifpoierjsodfgupoefasdfgjsdfgjsdfgjsdjflxncvzxnvasdjfaopsruosihjsdgghajsdflahgfsif_{L^4} 
  \sifpoierjsodfgupoefasdfgjsdfgjsdfgjsdjflxncvzxnvasdjfaopsruosihjsdgghajsdflahgfsif \nabla u^m \sifpoierjsodfgupoefasdfgjsdfgjsdfgjsdjflxncvzxnvasdjfaopsruosihjsdgghajsdflahgfsif_{L^4}    \sifpoierjsodfgupoefasdfgjsdfgjsdfgjsdjflxncvzxnvasdjfaopsruosihjsdgghajsdflahgfsif Au^m\sifpoierjsodfgupoefasdfgjsdfgjsdfgjsdjflxncvzxnvasdjfaopsruosihjsdgghajsdflahgfsif_{L^2}   \lec    \sifpoierjsodfgupoefasdfgjsdfgjsdfgjsdjflxncvzxnvasdjfaopsruosihjsdgghajsdflahgfsif v\sifpoierjsodfgupoefasdfgjsdfgjsdfgjsdjflxncvzxnvasdjfaopsruosihjsdgghajsdflahgfsif_{L^2}^{1/2}    \sifpoierjsodfgupoefasdfgjsdfgjsdfgjsdjflxncvzxnvasdjfaopsruosihjsdgghajsdflahgfsif \nabla v\sifpoierjsodfgupoefasdfgjsdfgjsdfgjsdjflxncvzxnvasdjfaopsruosihjsdgghajsdflahgfsif_{L^2}^{1/2}    \sifpoierjsodfgupoefasdfgjsdfgjsdfgjsdjflxncvzxnvasdjfaopsruosihjsdgghajsdflahgfsif \nabla u^m\sifpoierjsodfgupoefasdfgjsdfgjsdfgjsdjflxncvzxnvasdjfaopsruosihjsdgghajsdflahgfsif_{L^2}^{1/2}    \sifpoierjsodfgupoefasdfgjsdfgjsdfgjsdjflxncvzxnvasdjfaopsruosihjsdgghajsdflahgfsif Au^m\sifpoierjsodfgupoefasdfgjsdfgjsdfgjsdjflxncvzxnvasdjfaopsruosihjsdgghajsdflahgfsif_{L^2}^{3/2}   \\&\indeq   \lec    \epsilon    \sifpoierjsodfgupoefasdfgjsdfgjsdfgjsdjflxncvzxnvasdjfaopsruosihjsdgghajsdflahgfsif Au^m\sifpoierjsodfgupoefasdfgjsdfgjsdfgjsdjflxncvzxnvasdjfaopsruosihjsdgghajsdflahgfsif_{L^2}^2         +   C_\epsilon    \sifpoierjsodfgupoefasdfgjsdfgjsdfgjsdjflxncvzxnvasdjfaopsruosihjsdgghajsdflahgfsif v\sifpoierjsodfgupoefasdfgjsdfgjsdfgjsdjflxncvzxnvasdjfaopsruosihjsdgghajsdflahgfsif_{L^2}^2 
  \sifpoierjsodfgupoefasdfgjsdfgjsdfgjsdjflxncvzxnvasdjfaopsruosihjsdgghajsdflahgfsif \nabla v\sifpoierjsodfgupoefasdfgjsdfgjsdfgjsdjflxncvzxnvasdjfaopsruosihjsdgghajsdflahgfsif_{L^2}^2    \sifpoierjsodfgupoefasdfgjsdfgjsdfgjsdjflxncvzxnvasdjfaopsruosihjsdgghajsdflahgfsif \nabla u^m\sifpoierjsodfgupoefasdfgjsdfgjsdfgjsdjflxncvzxnvasdjfaopsruosihjsdgghajsdflahgfsif_{L^2}^2    ,   \end{split}   \label{sifpoierjsodfgupoefasdfgjsdfgjsdfgjsdjflxncvzxnvasdjfaopsruosihjsdfghajsdflahgfsif13} \end{align}  which quickly leads to \begin{align}   \frac{d}{dt}    \sifpoierjsodfgupoefasdfgjsdfgjsdfgjsdjflxncvzxnvasdjfaopsruosihjsdgghajsdflahgfsif \nabla u^m \sifpoierjsodfgupoefasdfgjsdfgjsdfgjsdjflxncvzxnvasdjfaopsruosihjsdgghajsdflahgfsif_{L^2}^2   +    \sifpoierjsodfgupoefasdfgjsdfgjsdfgjsdjflxncvzxnvasdjfaopsruosihjsdgghajsdflahgfsif Au^m \sifpoierjsodfgupoefasdfgjsdfgjsdfgjsdjflxncvzxnvasdjfaopsruosihjsdgghajsdflahgfsif_{L^2}^2   \lec    \sifpoierjsodfgupoefasdfgjsdfgjsdfgjsdjflxncvzxnvasdjfaopsruosihjsdgghajsdflahgfsif v\sifpoierjsodfgupoefasdfgjsdfgjsdfgjsdjflxncvzxnvasdjfaopsruosihjsdgghajsdflahgfsif_{L^2}^2 
  \sifpoierjsodfgupoefasdfgjsdfgjsdfgjsdjflxncvzxnvasdjfaopsruosihjsdgghajsdflahgfsif \nabla v\sifpoierjsodfgupoefasdfgjsdfgjsdfgjsdjflxncvzxnvasdjfaopsruosihjsdgghajsdflahgfsif_{L^2}^2    \sifpoierjsodfgupoefasdfgjsdfgjsdfgjsdjflxncvzxnvasdjfaopsruosihjsdgghajsdflahgfsif \nabla u^m\sifpoierjsodfgupoefasdfgjsdfgjsdfgjsdjflxncvzxnvasdjfaopsruosihjsdgghajsdflahgfsif_{L^2}^2    +    \sifpoierjsodfgupoefasdfgjsdfgjsdfgjsdjflxncvzxnvasdjfaopsruosihjsdgghajsdflahgfsif \sifpoierjsodfgupoefasdfgjsdfgjsdfgjsdjflxncvzxnvasdjfaopsruosihjsdighajsdflahgfsif \sifpoierjsodfgupoefasdfgjsdfgjsdfgjsdjflxncvzxnvasdjfaopsruosihjsdgghajsdflahgfsif_{L^2}^2   .   \label{sifpoierjsodfgupoefasdfgjsdfgjsdfgjsdjflxncvzxnvasdjfaopsruosihjsdfghajsdflahgfsif15} \end{align} Therefore, by the Gronwall inequality, \begin{align}   \sifpoierjsodfgupoefasdfgjsdfgjsdfgjsdjflxncvzxnvasdjfaopsruosihjsdgghajsdflahgfsif  \nabla u^m(t)\sifpoierjsodfgupoefasdfgjsdfgjsdfgjsdjflxncvzxnvasdjfaopsruosihjsdgghajsdflahgfsif_{L^2}^2   \lec    \biggl(\sifpoierjsodfgupoefasdfgjsdfgjsdfgjsdjflxncvzxnvasdjfaopsruosihjsdgghajsdflahgfsif \nabla u_0\sifpoierjsodfgupoefasdfgjsdfgjsdfgjsdjflxncvzxnvasdjfaopsruosihjsdgghajsdflahgfsif_{L^2}^2    + \sifpoierjsodfgupoefasdfgjsdfgjsdfgjsdjflxncvzxnvasdjfaopsruosihjsdfghajsdflahgfsif_{0}^{T}    \sifpoierjsodfgupoefasdfgjsdfgjsdfgjsdjflxncvzxnvasdjfaopsruosihjsdgghajsdflahgfsif \sifpoierjsodfgupoefasdfgjsdfgjsdfgjsdjflxncvzxnvasdjfaopsruosihjsdighajsdflahgfsif  \sifpoierjsodfgupoefasdfgjsdfgjsdfgjsdjflxncvzxnvasdjfaopsruosihjsdgghajsdflahgfsif_{L^2} 
  \,ds\biggr)   \exp\biggl(   C \sifpoierjsodfgupoefasdfgjsdfgjsdfgjsdjflxncvzxnvasdjfaopsruosihjsdfghajsdflahgfsif_{0}^{T}    \sifpoierjsodfgupoefasdfgjsdfgjsdfgjsdjflxncvzxnvasdjfaopsruosihjsdgghajsdflahgfsif v\sifpoierjsodfgupoefasdfgjsdfgjsdfgjsdjflxncvzxnvasdjfaopsruosihjsdgghajsdflahgfsif_{L^2}^2    \sifpoierjsodfgupoefasdfgjsdfgjsdfgjsdjflxncvzxnvasdjfaopsruosihjsdgghajsdflahgfsif \nabla v\sifpoierjsodfgupoefasdfgjsdfgjsdfgjsdjflxncvzxnvasdjfaopsruosihjsdgghajsdflahgfsif_{L^2}^2    \,ds\biggr)   \lec 1   \comma t \in [0,T]   ,   \label{sifpoierjsodfgupoefasdfgjsdfgjsdfgjsdjflxncvzxnvasdjfaopsruosihjsdfghajsdflahgfsif16} \end{align} recalling the agreement on constants. Finally, integrating \eqref{sifpoierjsodfgupoefasdfgjsdfgjsdfgjsdjflxncvzxnvasdjfaopsruosihjsdfghajsdflahgfsif12} in time and using \eqref{sifpoierjsodfgupoefasdfgjsdfgjsdfgjsdjflxncvzxnvasdjfaopsruosihjsdfghajsdflahgfsif16}, it follows that 
  \begin{equation}    \sifpoierjsodfgupoefasdfgjsdfgjsdfgjsdjflxncvzxnvasdjfaopsruosihjsdgghajsdflahgfsif A u^{m}\sifpoierjsodfgupoefasdfgjsdfgjsdfgjsdjflxncvzxnvasdjfaopsruosihjsdgghajsdflahgfsif_{L^2L^2}    \lec 1    .    \llabel{ s6 i SdX hob hbp 2u sEdl 95LP AtrBBi bP C wSh pFC CUa yz xYS5 78ro f3UwDP sC I pES HB1 qFP SW 5tt0 I7oz jXun6c z4 c QLB J4M NmI 6F 08S2 Il8C 0JQYiU lI 1 YkK oiu bVt fG uOeg Sllv b4HGn3 bS Z LlX efa eN6 v1 B6m3 Ek3J SXUIjX 8P d NKI UFN JvP Ha Vr4T eARP dXEV7B xM 0 A7w 7je p8M 4Q ahOi hEVo Pxbi1V uG e tOt HbP tsO 5r 363R ez9n A5EJ55 pc L lQQ Hg6 X1J EW K8Cf 9kZm 14A5li rN 7 kKZ rY0 K10 It eJd3 kMGw opVnfY EG 2 orG fj0 TTA Xt ecJK eTM0 x1N9f0 lR p QkP M37 3r0 iA 6EFs 1F6f 4mjOB5 zu 5 GGT Ncl Bmk b5 jOOK 4yny My04oz 6m 6 Akz NnP JXh Bn PHRu N5Ly qSguz5 Nn W 2lU Yx3 fX4 hu LieH L30w g93Xwc gj 1 I9d O9b EPC R0 vc6A 005Q VFy1ly K7 o VRV pbJ zZn xY dcld XgQa DXY3gz x3 6 8OR JFK 9Uh XT e3xY bVHG oYqdHg Vy f 5kK Qzm mK4 9x xiAp jVkw gzJOdE 4v g hAv 9bV IHe wc Vqcb SUcF 1pHzol Nj T l1B urc Sam IP zkUS 8wwS a7wVWR 4D L VGf 1RF r59 9H tyGq hDT0 TDlooa mg j 9am png aWe nG XU2T zXLh IYOW5v 2d A rCG sLk s53 pW AuAy DQlF 6spKyd HT 9 Z1X n2s U1g 0D Llao YuLP PB6YKo D1 M 0fi qHU l4A Ia joiV Q6af VT6wvY Md 0 pCY BZp 7RX Hd xTb0 sjJ0 Beqpkc 8b N OgZ 0Tr 0wq h1 C2Hn YQXM 8nJ0Pf uG J Be2 vuq Duk LV AJwv 2tYc JOM1uK h7 p cgo iiK t0b 3e URec DVM7 ivRMh1 T6 p AWl upj kEj UL R3xN VAu5 kEbnrV HE 1 OrJ 2bx dUP yD vyVi x6sC BpGDSx jB C n9P Fiu xkF vw 0QPo fRjy 2OFItV eD B tDz lc9 xVy A0 de9Y 5h8c 7dYCFk Fl v WPD SuN VI6 MZ 72u9 MBtK 9BGLNs Yp l X2y b5U HgH AD bW8X Rzkv UJZShW QH G oKX yVA rsH TQ 1Vbd dK2M IxmTf6 wE T 9cX Fbu uVx Cb SBBp 0v2J MQ5Z8z 3p M EGp TU6 KCc YN 2BlW dp2t mliPDH JQ W jIR Rgq i5l AP gikl c8ru HnvYFM AI r Ih7 Ths 9tE hA AYgS swZZ fws19P 5w e JvM imb sFH Th CnSZ HORm yt98w3 U3 z ant zAy Twq 0C jgDI Etkb h98V4u o5 2 jjA Zz1 kLo C8 oHGv Z5Ru Gwsifpoierjsodfgupoefasdfgjsdfgjsdfgjsdjflxncvzxnvasdjfaopsruosihjsdfghajsdflahgfsif128}   \end{equation} \par So far, we have obtained uniform bounds for $u^m$ in $ L^\infty V \cap L^2 D(A)$; before passing to the limit,  we next need to obtain uniform bounds in  $ W^{1,\infty} L^2 \cap H^1 V \cap H^2 V'$.  We start by differentiating \eqref{sifpoierjsodfgupoefasdfgjsdfgjsdfgjsdjflxncvzxnvasdjfaopsruosihjsdfghajsdflahgfsif06} in time, thereby obtaining \begin{align}   u^m_{tt}
  + Au^m_t   + P_m\mathbb{P}(v_t \sifpoierjsodfgupoefasdfgjsdfgjsdfgjsdjflxncvzxnvasdjfaopsruosihjsdhghajsdflahgfsif \nabla u^m)   + P_m\mathbb{P}(v \sifpoierjsodfgupoefasdfgjsdfgjsdfgjsdjflxncvzxnvasdjfaopsruosihjsdhghajsdflahgfsif \nabla u^m_t)   = P_m\mathbb{P}(\sifpoierjsodfgupoefasdfgjsdfgjsdfgjsdjflxncvzxnvasdjfaopsruosihjsdighajsdflahgfsif_t e_2).   \label{sifpoierjsodfgupoefasdfgjsdfgjsdfgjsdjflxncvzxnvasdjfaopsruosihjsdfghajsdflahgfsif18}  \end{align} Testing \eqref{sifpoierjsodfgupoefasdfgjsdfgjsdfgjsdjflxncvzxnvasdjfaopsruosihjsdfghajsdflahgfsif18} with $u^m_t$  (note that \eqref{sifpoierjsodfgupoefasdfgjsdfgjsdfgjsdjflxncvzxnvasdjfaopsruosihjsdfghajsdflahgfsif18} is a system of ODEs)  yields \begin{align}   \begin{split}   & \frac{1}{2} \frac{d}{dt} \sifpoierjsodfgupoefasdfgjsdfgjsdfgjsdjflxncvzxnvasdjfaopsruosihjsdgghajsdflahgfsif u^m_t\sifpoierjsodfgupoefasdfgjsdfgjsdfgjsdjflxncvzxnvasdjfaopsruosihjsdgghajsdflahgfsif_{L^2}^2   + \sifpoierjsodfgupoefasdfgjsdfgjsdfgjsdjflxncvzxnvasdjfaopsruosihjsdgghajsdflahgfsif \nabla u^m_t\sifpoierjsodfgupoefasdfgjsdfgjsdfgjsdjflxncvzxnvasdjfaopsruosihjsdgghajsdflahgfsif_{L^2}^2   \lec 
  \sifpoierjsodfgupoefasdfgjsdfgjsdfgjsdjflxncvzxnvasdjfaopsruosihjsdgghajsdflahgfsif v_t \sifpoierjsodfgupoefasdfgjsdfgjsdfgjsdjflxncvzxnvasdjfaopsruosihjsdhghajsdflahgfsif \nabla u^m\sifpoierjsodfgupoefasdfgjsdfgjsdfgjsdjflxncvzxnvasdjfaopsruosihjsdgghajsdflahgfsif_{L^2}  \sifpoierjsodfgupoefasdfgjsdfgjsdfgjsdjflxncvzxnvasdjfaopsruosihjsdgghajsdflahgfsif u^m_t\sifpoierjsodfgupoefasdfgjsdfgjsdfgjsdjflxncvzxnvasdjfaopsruosihjsdgghajsdflahgfsif_{L^2}   + \sifpoierjsodfgupoefasdfgjsdfgjsdfgjsdjflxncvzxnvasdjfaopsruosihjsdgghajsdflahgfsif \sifpoierjsodfgupoefasdfgjsdfgjsdfgjsdjflxncvzxnvasdjfaopsruosihjsdighajsdflahgfsif_t\sifpoierjsodfgupoefasdfgjsdfgjsdfgjsdjflxncvzxnvasdjfaopsruosihjsdgghajsdflahgfsif_{L^2}  \sifpoierjsodfgupoefasdfgjsdfgjsdfgjsdjflxncvzxnvasdjfaopsruosihjsdgghajsdflahgfsif u^m_t\sifpoierjsodfgupoefasdfgjsdfgjsdfgjsdjflxncvzxnvasdjfaopsruosihjsdgghajsdflahgfsif_{L^2}   \\&\indeq   \lec      \epsilon \sifpoierjsodfgupoefasdfgjsdfgjsdfgjsdjflxncvzxnvasdjfaopsruosihjsdgghajsdflahgfsif u^m_t\sifpoierjsodfgupoefasdfgjsdfgjsdfgjsdjflxncvzxnvasdjfaopsruosihjsdgghajsdflahgfsif_{L^2}^2   + C_{\epsilon} (\sifpoierjsodfgupoefasdfgjsdfgjsdfgjsdjflxncvzxnvasdjfaopsruosihjsdgghajsdflahgfsif \sifpoierjsodfgupoefasdfgjsdfgjsdfgjsdjflxncvzxnvasdjfaopsruosihjsdighajsdflahgfsif_t\sifpoierjsodfgupoefasdfgjsdfgjsdfgjsdjflxncvzxnvasdjfaopsruosihjsdgghajsdflahgfsif_{L^2}^2    + \sifpoierjsodfgupoefasdfgjsdfgjsdfgjsdjflxncvzxnvasdjfaopsruosihjsdgghajsdflahgfsif v_t \sifpoierjsodfgupoefasdfgjsdfgjsdfgjsdjflxncvzxnvasdjfaopsruosihjsdhghajsdflahgfsif \nabla u^m\sifpoierjsodfgupoefasdfgjsdfgjsdfgjsdjflxncvzxnvasdjfaopsruosihjsdgghajsdflahgfsif_{L^2}^2)         ,   \end{split}   \label{sifpoierjsodfgupoefasdfgjsdfgjsdfgjsdjflxncvzxnvasdjfaopsruosihjsdfghajsdflahgfsif19} \end{align} for an arbitrary~$\epsilon>0$. To bound the second term in the parenthesis, observe (note that $v\in \XXT$) \begin{align}
  \begin{split}   & \sifpoierjsodfgupoefasdfgjsdfgjsdfgjsdjflxncvzxnvasdjfaopsruosihjsdgghajsdflahgfsif v_t \sifpoierjsodfgupoefasdfgjsdfgjsdfgjsdjflxncvzxnvasdjfaopsruosihjsdhghajsdflahgfsif \nabla u^m\sifpoierjsodfgupoefasdfgjsdfgjsdfgjsdjflxncvzxnvasdjfaopsruosihjsdgghajsdflahgfsif_{L^2}^2   \lec    \sifpoierjsodfgupoefasdfgjsdfgjsdfgjsdjflxncvzxnvasdjfaopsruosihjsdgghajsdflahgfsif v_t\sifpoierjsodfgupoefasdfgjsdfgjsdfgjsdjflxncvzxnvasdjfaopsruosihjsdgghajsdflahgfsif_{L^4}^2   \sifpoierjsodfgupoefasdfgjsdfgjsdfgjsdjflxncvzxnvasdjfaopsruosihjsdgghajsdflahgfsif \nabla u^m\sifpoierjsodfgupoefasdfgjsdfgjsdfgjsdjflxncvzxnvasdjfaopsruosihjsdgghajsdflahgfsif_{L^4}^2   \lec    \sifpoierjsodfgupoefasdfgjsdfgjsdfgjsdjflxncvzxnvasdjfaopsruosihjsdgghajsdflahgfsif v_t\sifpoierjsodfgupoefasdfgjsdfgjsdfgjsdjflxncvzxnvasdjfaopsruosihjsdgghajsdflahgfsif_{L^2}   \sifpoierjsodfgupoefasdfgjsdfgjsdfgjsdjflxncvzxnvasdjfaopsruosihjsdgghajsdflahgfsif \nabla v_t\sifpoierjsodfgupoefasdfgjsdfgjsdfgjsdjflxncvzxnvasdjfaopsruosihjsdgghajsdflahgfsif_{L^2}   \sifpoierjsodfgupoefasdfgjsdfgjsdfgjsdjflxncvzxnvasdjfaopsruosihjsdgghajsdflahgfsif \nabla u^m\sifpoierjsodfgupoefasdfgjsdfgjsdfgjsdjflxncvzxnvasdjfaopsruosihjsdgghajsdflahgfsif_{L^2}   \sifpoierjsodfgupoefasdfgjsdfgjsdfgjsdjflxncvzxnvasdjfaopsruosihjsdgghajsdflahgfsif Au^m\sifpoierjsodfgupoefasdfgjsdfgjsdfgjsdjflxncvzxnvasdjfaopsruosihjsdgghajsdflahgfsif_{L^2}   \\&\indeq   \lec    \sifpoierjsodfgupoefasdfgjsdfgjsdfgjsdjflxncvzxnvasdjfaopsruosihjsdgghajsdflahgfsif \nabla v_t\sifpoierjsodfgupoefasdfgjsdfgjsdfgjsdjflxncvzxnvasdjfaopsruosihjsdgghajsdflahgfsif_{L^2}   \sifpoierjsodfgupoefasdfgjsdfgjsdfgjsdjflxncvzxnvasdjfaopsruosihjsdgghajsdflahgfsif Au^m\sifpoierjsodfgupoefasdfgjsdfgjsdfgjsdjflxncvzxnvasdjfaopsruosihjsdgghajsdflahgfsif_{L^2}
  \lec    \sifpoierjsodfgupoefasdfgjsdfgjsdfgjsdjflxncvzxnvasdjfaopsruosihjsdgghajsdflahgfsif \nabla v_t\sifpoierjsodfgupoefasdfgjsdfgjsdfgjsdjflxncvzxnvasdjfaopsruosihjsdgghajsdflahgfsif_{L^2}^2   +    \sifpoierjsodfgupoefasdfgjsdfgjsdfgjsdjflxncvzxnvasdjfaopsruosihjsdgghajsdflahgfsif Au^m\sifpoierjsodfgupoefasdfgjsdfgjsdfgjsdjflxncvzxnvasdjfaopsruosihjsdgghajsdflahgfsif_{L^2}^2   ;   \end{split}   \label{sifpoierjsodfgupoefasdfgjsdfgjsdfgjsdjflxncvzxnvasdjfaopsruosihjsdfghajsdflahgfsif20} \end{align} in the third inequality, we used  \eqref{sifpoierjsodfgupoefasdfgjsdfgjsdfgjsdjflxncvzxnvasdjfaopsruosihjsdfghajsdflahgfsif16} and $\sifpoierjsodfgupoefasdfgjsdfgjsdfgjsdjflxncvzxnvasdjfaopsruosihjsdgghajsdflahgfsif v_t\sifpoierjsodfgupoefasdfgjsdfgjsdfgjsdjflxncvzxnvasdjfaopsruosihjsdgghajsdflahgfsif_{L^\infty L^2}\lec 1$. Hence, absorbing $\epsilon\sifpoierjsodfgupoefasdfgjsdfgjsdfgjsdjflxncvzxnvasdjfaopsruosihjsdgghajsdflahgfsif u^m_t\sifpoierjsodfgupoefasdfgjsdfgjsdfgjsdjflxncvzxnvasdjfaopsruosihjsdgghajsdflahgfsif_{L^2}^2\lec \epsilon\sifpoierjsodfgupoefasdfgjsdfgjsdfgjsdjflxncvzxnvasdjfaopsruosihjsdgghajsdflahgfsif \nabla u^m_t\sifpoierjsodfgupoefasdfgjsdfgjsdfgjsdjflxncvzxnvasdjfaopsruosihjsdgghajsdflahgfsif_{L^2}^2$ in \eqref{sifpoierjsodfgupoefasdfgjsdfgjsdfgjsdjflxncvzxnvasdjfaopsruosihjsdfghajsdflahgfsif19}, by setting $\epsilon>0$ sufficiently small, and using \eqref{sifpoierjsodfgupoefasdfgjsdfgjsdfgjsdjflxncvzxnvasdjfaopsruosihjsdfghajsdflahgfsif20} gives, after integration in time,   \begin{align}
  \sifpoierjsodfgupoefasdfgjsdfgjsdfgjsdjflxncvzxnvasdjfaopsruosihjsdgghajsdflahgfsif u^m_t\sifpoierjsodfgupoefasdfgjsdfgjsdfgjsdjflxncvzxnvasdjfaopsruosihjsdgghajsdflahgfsif_{L^\infty L^2}^2   +    \sifpoierjsodfgupoefasdfgjsdfgjsdfgjsdjflxncvzxnvasdjfaopsruosihjsdfghajsdflahgfsif_{0}^{T}    \sifpoierjsodfgupoefasdfgjsdfgjsdfgjsdjflxncvzxnvasdjfaopsruosihjsdgghajsdflahgfsif \nabla u^m_t\sifpoierjsodfgupoefasdfgjsdfgjsdfgjsdjflxncvzxnvasdjfaopsruosihjsdgghajsdflahgfsif_{L^2}^2    \,ds   \lec    \sifpoierjsodfgupoefasdfgjsdfgjsdfgjsdjflxncvzxnvasdjfaopsruosihjsdgghajsdflahgfsif \sifpoierjsodfgupoefasdfgjsdfgjsdfgjsdjflxncvzxnvasdjfaopsruosihjsdighajsdflahgfsif_t\sifpoierjsodfgupoefasdfgjsdfgjsdfgjsdjflxncvzxnvasdjfaopsruosihjsdgghajsdflahgfsif_{L^2 L^2}^2   +    \sifpoierjsodfgupoefasdfgjsdfgjsdfgjsdjflxncvzxnvasdjfaopsruosihjsdgghajsdflahgfsif \nabla v_t\sifpoierjsodfgupoefasdfgjsdfgjsdfgjsdjflxncvzxnvasdjfaopsruosihjsdgghajsdflahgfsif_{L^2 L^2}^2   +    \sifpoierjsodfgupoefasdfgjsdfgjsdfgjsdjflxncvzxnvasdjfaopsruosihjsdgghajsdflahgfsif Au^m\sifpoierjsodfgupoefasdfgjsdfgjsdfgjsdjflxncvzxnvasdjfaopsruosihjsdgghajsdflahgfsif_{L^2 L^2}^2   +    \sifpoierjsodfgupoefasdfgjsdfgjsdfgjsdjflxncvzxnvasdjfaopsruosihjsdgghajsdflahgfsif u^m_t(0)\sifpoierjsodfgupoefasdfgjsdfgjsdfgjsdjflxncvzxnvasdjfaopsruosihjsdgghajsdflahgfsif_{L^2}^2   ,
  \label{sifpoierjsodfgupoefasdfgjsdfgjsdfgjsdjflxncvzxnvasdjfaopsruosihjsdfghajsdflahgfsif21}   \end{align} where    \begin{align}   \begin{split}   & u^m_t(0)   = -Au^m(0)   - P_m\mathbb{P}(v(0) \sifpoierjsodfgupoefasdfgjsdfgjsdfgjsdjflxncvzxnvasdjfaopsruosihjsdhghajsdflahgfsif \nabla u^m(0))   + P_m\mathbb{P}(\sifpoierjsodfgupoefasdfgjsdfgjsdfgjsdjflxncvzxnvasdjfaopsruosihjsdighajsdflahgfsif(0) e_2)   \\&\indeq   = -Au^m_0   - P_m\mathbb{P}(v(0) \sifpoierjsodfgupoefasdfgjsdfgjsdfgjsdjflxncvzxnvasdjfaopsruosihjsdhghajsdflahgfsif \nabla u_0^m)   + P_m\mathbb{P}(\sifpoierjsodfgupoefasdfgjsdfgjsdfgjsdjflxncvzxnvasdjfaopsruosihjsdighajsdflahgfsif_0 e_2) \in L^2   \end{split}
   \llabel{tOt HbP tsO 5r 363R ez9n A5EJ55 pc L lQQ Hg6 X1J EW K8Cf 9kZm 14A5li rN 7 kKZ rY0 K10 It eJd3 kMGw opVnfY EG 2 orG fj0 TTA Xt ecJK eTM0 x1N9f0 lR p QkP M37 3r0 iA 6EFs 1F6f 4mjOB5 zu 5 GGT Ncl Bmk b5 jOOK 4yny My04oz 6m 6 Akz NnP JXh Bn PHRu N5Ly qSguz5 Nn W 2lU Yx3 fX4 hu LieH L30w g93Xwc gj 1 I9d O9b EPC R0 vc6A 005Q VFy1ly K7 o VRV pbJ zZn xY dcld XgQa DXY3gz x3 6 8OR JFK 9Uh XT e3xY bVHG oYqdHg Vy f 5kK Qzm mK4 9x xiAp jVkw gzJOdE 4v g hAv 9bV IHe wc Vqcb SUcF 1pHzol Nj T l1B urc Sam IP zkUS 8wwS a7wVWR 4D L VGf 1RF r59 9H tyGq hDT0 TDlooa mg j 9am png aWe nG XU2T zXLh IYOW5v 2d A rCG sLk s53 pW AuAy DQlF 6spKyd HT 9 Z1X n2s U1g 0D Llao YuLP PB6YKo D1 M 0fi qHU l4A Ia joiV Q6af VT6wvY Md 0 pCY BZp 7RX Hd xTb0 sjJ0 Beqpkc 8b N OgZ 0Tr 0wq h1 C2Hn YQXM 8nJ0Pf uG J Be2 vuq Duk LV AJwv 2tYc JOM1uK h7 p cgo iiK t0b 3e URec DVM7 ivRMh1 T6 p AWl upj kEj UL R3xN VAu5 kEbnrV HE 1 OrJ 2bx dUP yD vyVi x6sC BpGDSx jB C n9P Fiu xkF vw 0QPo fRjy 2OFItV eD B tDz lc9 xVy A0 de9Y 5h8c 7dYCFk Fl v WPD SuN VI6 MZ 72u9 MBtK 9BGLNs Yp l X2y b5U HgH AD bW8X Rzkv UJZShW QH G oKX yVA rsH TQ 1Vbd dK2M IxmTf6 wE T 9cX Fbu uVx Cb SBBp 0v2J MQ5Z8z 3p M EGp TU6 KCc YN 2BlW dp2t mliPDH JQ W jIR Rgq i5l AP gikl c8ru HnvYFM AI r Ih7 Ths 9tE hA AYgS swZZ fws19P 5w e JvM imb sFH Th CnSZ HORm yt98w3 U3 z ant zAy Twq 0C jgDI Etkb h98V4u o5 2 jjA Zz1 kLo C8 oHGv Z5Ru Gwv3kK 4W B 50T oMt q7Q WG 9mtb SIlc 87ruZf Kw Z Ph3 1ZA Osq 8l jVQJ LTXC gyQn0v KE S iSq Bpa wtH xc IJe4 SiE1 izzxim ke P Y3s 7SX 5DA SG XHqC r38V YP3Hxv OI R ZtM fqN oLF oU 7vNd txzw UkX32t 94 n Fdq qTR QOv Yq Ebig jrSZ kTN7Xw tP F gNs O7M 1mb DA btVB 3LGC pgE9hV FK Y LcS GmF 863 7a ZDiz 4CuJ bLnpE7 ysifpoierjsodfgupoefasdfgjsdfgjsdfgjsdjflxncvzxnvasdjfaopsruosihjsdfghajsdflahgfsif76}   \end{align} with $\sifpoierjsodfgupoefasdfgjsdfgjsdfgjsdjflxncvzxnvasdjfaopsruosihjsdgghajsdflahgfsif u^{m}_t(0)\sifpoierjsodfgupoefasdfgjsdfgjsdfgjsdjflxncvzxnvasdjfaopsruosihjsdgghajsdflahgfsif_{L^2}\lec 1$. Note that $v(0)$ is well-defined and belongs to $V$ by~$v\in \XXT$. Therefore, $u^m_t \in L^\infty L^2$ and $\nabla u^m_t \in L^2 L^2$ are uniformly bounded in~$m$, i.e.,   \begin{align}   \begin{split}    \sifpoierjsodfgupoefasdfgjsdfgjsdfgjsdjflxncvzxnvasdjfaopsruosihjsdgghajsdflahgfsif u_{t}^{m}\sifpoierjsodfgupoefasdfgjsdfgjsdfgjsdjflxncvzxnvasdjfaopsruosihjsdgghajsdflahgfsif_{L^{\infty}L^2}    +    \sifpoierjsodfgupoefasdfgjsdfgjsdfgjsdjflxncvzxnvasdjfaopsruosihjsdgghajsdflahgfsif \nabla u_{t}^{m}\sifpoierjsodfgupoefasdfgjsdfgjsdfgjsdjflxncvzxnvasdjfaopsruosihjsdgghajsdflahgfsif_{L^{2}L^2}    \lec 1    .
   \end{split}    \llabel{b EPC R0 vc6A 005Q VFy1ly K7 o VRV pbJ zZn xY dcld XgQa DXY3gz x3 6 8OR JFK 9Uh XT e3xY bVHG oYqdHg Vy f 5kK Qzm mK4 9x xiAp jVkw gzJOdE 4v g hAv 9bV IHe wc Vqcb SUcF 1pHzol Nj T l1B urc Sam IP zkUS 8wwS a7wVWR 4D L VGf 1RF r59 9H tyGq hDT0 TDlooa mg j 9am png aWe nG XU2T zXLh IYOW5v 2d A rCG sLk s53 pW AuAy DQlF 6spKyd HT 9 Z1X n2s U1g 0D Llao YuLP PB6YKo D1 M 0fi qHU l4A Ia joiV Q6af VT6wvY Md 0 pCY BZp 7RX Hd xTb0 sjJ0 Beqpkc 8b N OgZ 0Tr 0wq h1 C2Hn YQXM 8nJ0Pf uG J Be2 vuq Duk LV AJwv 2tYc JOM1uK h7 p cgo iiK t0b 3e URec DVM7 ivRMh1 T6 p AWl upj kEj UL R3xN VAu5 kEbnrV HE 1 OrJ 2bx dUP yD vyVi x6sC BpGDSx jB C n9P Fiu xkF vw 0QPo fRjy 2OFItV eD B tDz lc9 xVy A0 de9Y 5h8c 7dYCFk Fl v WPD SuN VI6 MZ 72u9 MBtK 9BGLNs Yp l X2y b5U HgH AD bW8X Rzkv UJZShW QH G oKX yVA rsH TQ 1Vbd dK2M IxmTf6 wE T 9cX Fbu uVx Cb SBBp 0v2J MQ5Z8z 3p M EGp TU6 KCc YN 2BlW dp2t mliPDH JQ W jIR Rgq i5l AP gikl c8ru HnvYFM AI r Ih7 Ths 9tE hA AYgS swZZ fws19P 5w e JvM imb sFH Th CnSZ HORm yt98w3 U3 z ant zAy Twq 0C jgDI Etkb h98V4u o5 2 jjA Zz1 kLo C8 oHGv Z5Ru Gwv3kK 4W B 50T oMt q7Q WG 9mtb SIlc 87ruZf Kw Z Ph3 1ZA Osq 8l jVQJ LTXC gyQn0v KE S iSq Bpa wtH xc IJe4 SiE1 izzxim ke P Y3s 7SX 5DA SG XHqC r38V YP3Hxv OI R ZtM fqN oLF oU 7vNd txzw UkX32t 94 n Fdq qTR QOv Yq Ebig jrSZ kTN7Xw tP F gNs O7M 1mb DA btVB 3LGC pgE9hV FK Y LcS GmF 863 7a ZDiz 4CuJ bLnpE7 yl 8 5jg Many Thanks, POL OG EPOe Mru1 v25XLJ Fz h wgE lnu Ymq rX 1YKV Kvgm MK7gI4 6h 5 kZB OoJ tfC 5g VvA1 kNJr 2o7om1 XN p Uwt CWX fFT SW DjsI wuxO JxLU1S xA 5 ObG 3IO UdL qJ cCAr gzKM 08DvX2 mu i 13T t71 Iwq oF UI0E Ef5S V2vxcy SY I QGr qrB HID TJ v1OB 1CzD IDdW4E 4j J mv6 Ktx oBO s9 ADWB q218 BJJzRsifpoierjsodfgupoefasdfgjsdfgjsdfgjsdjflxncvzxnvasdjfaopsruosihjsdfghajsdflahgfsif121}    \end{align} Now, we show that $u^m_{tt}$ are uniformly bounded in $L^2 V'$. For this purpose, we obtain from \eqref{sifpoierjsodfgupoefasdfgjsdfgjsdfgjsdjflxncvzxnvasdjfaopsruosihjsdfghajsdflahgfsif18} that for all $h\in V$ \begin{align}   \begin{split}    (u^m_{tt}, h)   &= -(Au^m_t,h)   - (P_m\mathbb{P}(v_t \sifpoierjsodfgupoefasdfgjsdfgjsdfgjsdjflxncvzxnvasdjfaopsruosihjsdhghajsdflahgfsif \nabla u^m),h)   - (P_m\mathbb{P}(v \sifpoierjsodfgupoefasdfgjsdfgjsdfgjsdjflxncvzxnvasdjfaopsruosihjsdhghajsdflahgfsif \nabla u^m_t),h)   + (P_m\mathbb{P}(\sifpoierjsodfgupoefasdfgjsdfgjsdfgjsdjflxncvzxnvasdjfaopsruosihjsdighajsdflahgfsif_t e_2),h)   \\&   \lec \sifpoierjsodfgupoefasdfgjsdfgjsdfgjsdjflxncvzxnvasdjfaopsruosihjsdgghajsdflahgfsif \nabla u^m_t\sifpoierjsodfgupoefasdfgjsdfgjsdfgjsdjflxncvzxnvasdjfaopsruosihjsdgghajsdflahgfsif_{L^2}
  \sifpoierjsodfgupoefasdfgjsdfgjsdfgjsdjflxncvzxnvasdjfaopsruosihjsdgghajsdflahgfsif \nabla h\sifpoierjsodfgupoefasdfgjsdfgjsdfgjsdjflxncvzxnvasdjfaopsruosihjsdgghajsdflahgfsif_{L^2}   +    \sifpoierjsodfgupoefasdfgjsdfgjsdfgjsdjflxncvzxnvasdjfaopsruosihjsdgghajsdflahgfsif v_t\sifpoierjsodfgupoefasdfgjsdfgjsdfgjsdjflxncvzxnvasdjfaopsruosihjsdgghajsdflahgfsif_{L^2}   \sifpoierjsodfgupoefasdfgjsdfgjsdfgjsdjflxncvzxnvasdjfaopsruosihjsdgghajsdflahgfsif u^{m}\sifpoierjsodfgupoefasdfgjsdfgjsdfgjsdjflxncvzxnvasdjfaopsruosihjsdgghajsdflahgfsif_{L^2}^{1/2}   \sifpoierjsodfgupoefasdfgjsdfgjsdfgjsdjflxncvzxnvasdjfaopsruosihjsdgghajsdflahgfsif A u^{m}\sifpoierjsodfgupoefasdfgjsdfgjsdfgjsdjflxncvzxnvasdjfaopsruosihjsdgghajsdflahgfsif_{L^2}^{1/2}   \sifpoierjsodfgupoefasdfgjsdfgjsdfgjsdjflxncvzxnvasdjfaopsruosihjsdgghajsdflahgfsif \nabla h\sifpoierjsodfgupoefasdfgjsdfgjsdfgjsdjflxncvzxnvasdjfaopsruosihjsdgghajsdflahgfsif_{L^2}   \\&\indeq   + \sifpoierjsodfgupoefasdfgjsdfgjsdfgjsdjflxncvzxnvasdjfaopsruosihjsdgghajsdflahgfsif v \sifpoierjsodfgupoefasdfgjsdfgjsdfgjsdjflxncvzxnvasdjfaopsruosihjsdgghajsdflahgfsif_{L^2}^{\fractext{1}{2} }   \sifpoierjsodfgupoefasdfgjsdfgjsdfgjsdjflxncvzxnvasdjfaopsruosihjsdgghajsdflahgfsif Av\sifpoierjsodfgupoefasdfgjsdfgjsdfgjsdjflxncvzxnvasdjfaopsruosihjsdgghajsdflahgfsif_{L^2}^{\fractext{1}{2}}   \sifpoierjsodfgupoefasdfgjsdfgjsdfgjsdjflxncvzxnvasdjfaopsruosihjsdgghajsdflahgfsif u^m_t\sifpoierjsodfgupoefasdfgjsdfgjsdfgjsdjflxncvzxnvasdjfaopsruosihjsdgghajsdflahgfsif_{L^2}   \sifpoierjsodfgupoefasdfgjsdfgjsdfgjsdjflxncvzxnvasdjfaopsruosihjsdgghajsdflahgfsif \nabla h\sifpoierjsodfgupoefasdfgjsdfgjsdfgjsdjflxncvzxnvasdjfaopsruosihjsdgghajsdflahgfsif_{L^2}   + \sifpoierjsodfgupoefasdfgjsdfgjsdfgjsdjflxncvzxnvasdjfaopsruosihjsdgghajsdflahgfsif \sifpoierjsodfgupoefasdfgjsdfgjsdfgjsdjflxncvzxnvasdjfaopsruosihjsdighajsdflahgfsif_t\sifpoierjsodfgupoefasdfgjsdfgjsdfgjsdjflxncvzxnvasdjfaopsruosihjsdgghajsdflahgfsif_{L^2}   \sifpoierjsodfgupoefasdfgjsdfgjsdfgjsdjflxncvzxnvasdjfaopsruosihjsdgghajsdflahgfsif h\sifpoierjsodfgupoefasdfgjsdfgjsdfgjsdjflxncvzxnvasdjfaopsruosihjsdgghajsdflahgfsif_{L^2}         ,
  \end{split}   \label{sifpoierjsodfgupoefasdfgjsdfgjsdfgjsdjflxncvzxnvasdjfaopsruosihjsdfghajsdflahgfsif41} \end{align} where we used  \begin{align}   (P_m\mathbb{P}(v_t \sifpoierjsodfgupoefasdfgjsdfgjsdfgjsdjflxncvzxnvasdjfaopsruosihjsdhghajsdflahgfsif \nabla u^m),h)   = (v_t \sifpoierjsodfgupoefasdfgjsdfgjsdfgjsdjflxncvzxnvasdjfaopsruosihjsdhghajsdflahgfsif \nabla u^m,P_m h)   = - (\partial_{t}v_j u_i^m,\partial_j (P_m h)_i)    \llabel{pW AuAy DQlF 6spKyd HT 9 Z1X n2s U1g 0D Llao YuLP PB6YKo D1 M 0fi qHU l4A Ia joiV Q6af VT6wvY Md 0 pCY BZp 7RX Hd xTb0 sjJ0 Beqpkc 8b N OgZ 0Tr 0wq h1 C2Hn YQXM 8nJ0Pf uG J Be2 vuq Duk LV AJwv 2tYc JOM1uK h7 p cgo iiK t0b 3e URec DVM7 ivRMh1 T6 p AWl upj kEj UL R3xN VAu5 kEbnrV HE 1 OrJ 2bx dUP yD vyVi x6sC BpGDSx jB C n9P Fiu xkF vw 0QPo fRjy 2OFItV eD B tDz lc9 xVy A0 de9Y 5h8c 7dYCFk Fl v WPD SuN VI6 MZ 72u9 MBtK 9BGLNs Yp l X2y b5U HgH AD bW8X Rzkv UJZShW QH G oKX yVA rsH TQ 1Vbd dK2M IxmTf6 wE T 9cX Fbu uVx Cb SBBp 0v2J MQ5Z8z 3p M EGp TU6 KCc YN 2BlW dp2t mliPDH JQ W jIR Rgq i5l AP gikl c8ru HnvYFM AI r Ih7 Ths 9tE hA AYgS swZZ fws19P 5w e JvM imb sFH Th CnSZ HORm yt98w3 U3 z ant zAy Twq 0C jgDI Etkb h98V4u o5 2 jjA Zz1 kLo C8 oHGv Z5Ru Gwv3kK 4W B 50T oMt q7Q WG 9mtb SIlc 87ruZf Kw Z Ph3 1ZA Osq 8l jVQJ LTXC gyQn0v KE S iSq Bpa wtH xc IJe4 SiE1 izzxim ke P Y3s 7SX 5DA SG XHqC r38V YP3Hxv OI R ZtM fqN oLF oU 7vNd txzw UkX32t 94 n Fdq qTR QOv Yq Ebig jrSZ kTN7Xw tP F gNs O7M 1mb DA btVB 3LGC pgE9hV FK Y LcS GmF 863 7a ZDiz 4CuJ bLnpE7 yl 8 5jg Many Thanks, POL OG EPOe Mru1 v25XLJ Fz h wgE lnu Ymq rX 1YKV Kvgm MK7gI4 6h 5 kZB OoJ tfC 5g VvA1 kNJr 2o7om1 XN p Uwt CWX fFT SW DjsI wuxO JxLU1S xA 5 ObG 3IO UdL qJ cCAr gzKM 08DvX2 mu i 13T t71 Iwq oF UI0E Ef5S V2vxcy SY I QGr qrB HID TJ v1OB 1CzD IDdW4E 4j J mv6 Ktx oBO s9 ADWB q218 BJJzRy UQ i 2Gp weE T8L aO 4ho9 5g4v WQmoiq jS w MA9 Cvn Gqx l1 LrYu MjGb oUpuvY Q2 C dBl AB9 7ew jc 5RJE SFGs ORedoM 0b B k25 VEK B8V A9 ytAE Oyof G8QIj2 7a I 3jy Rmz yET Kx pgUq 4Bvb cD1b1g KB y oE3 azg elV Nu 8iZ1 w1tq twKx8C LN 2 8yn jdo jUW vN H9qy HaXZ GhjUgm uL I 87i Y7Q 9MQ Wa iFFS Gzt8 4mSQq2 5O Nsifpoierjsodfgupoefasdfgjsdfgjsdfgjsdjflxncvzxnvasdjfaopsruosihjsdfghajsdflahgfsif124} \end{align} and \begin{align}   (P_m\mathbb{P}(v \sifpoierjsodfgupoefasdfgjsdfgjsdfgjsdjflxncvzxnvasdjfaopsruosihjsdhghajsdflahgfsif \nabla u^m_t),h)   = (v \sifpoierjsodfgupoefasdfgjsdfgjsdfgjsdjflxncvzxnvasdjfaopsruosihjsdhghajsdflahgfsif \nabla u^m_t,P_m h)
  = - (v_j \partial_{t}u_i^m,\partial_j (P_m h)_i)         ,    \llabel{i x6sC BpGDSx jB C n9P Fiu xkF vw 0QPo fRjy 2OFItV eD B tDz lc9 xVy A0 de9Y 5h8c 7dYCFk Fl v WPD SuN VI6 MZ 72u9 MBtK 9BGLNs Yp l X2y b5U HgH AD bW8X Rzkv UJZShW QH G oKX yVA rsH TQ 1Vbd dK2M IxmTf6 wE T 9cX Fbu uVx Cb SBBp 0v2J MQ5Z8z 3p M EGp TU6 KCc YN 2BlW dp2t mliPDH JQ W jIR Rgq i5l AP gikl c8ru HnvYFM AI r Ih7 Ths 9tE hA AYgS swZZ fws19P 5w e JvM imb sFH Th CnSZ HORm yt98w3 U3 z ant zAy Twq 0C jgDI Etkb h98V4u o5 2 jjA Zz1 kLo C8 oHGv Z5Ru Gwv3kK 4W B 50T oMt q7Q WG 9mtb SIlc 87ruZf Kw Z Ph3 1ZA Osq 8l jVQJ LTXC gyQn0v KE S iSq Bpa wtH xc IJe4 SiE1 izzxim ke P Y3s 7SX 5DA SG XHqC r38V YP3Hxv OI R ZtM fqN oLF oU 7vNd txzw UkX32t 94 n Fdq qTR QOv Yq Ebig jrSZ kTN7Xw tP F gNs O7M 1mb DA btVB 3LGC pgE9hV FK Y LcS GmF 863 7a ZDiz 4CuJ bLnpE7 yl 8 5jg Many Thanks, POL OG EPOe Mru1 v25XLJ Fz h wgE lnu Ymq rX 1YKV Kvgm MK7gI4 6h 5 kZB OoJ tfC 5g VvA1 kNJr 2o7om1 XN p Uwt CWX fFT SW DjsI wuxO JxLU1S xA 5 ObG 3IO UdL qJ cCAr gzKM 08DvX2 mu i 13T t71 Iwq oF UI0E Ef5S V2vxcy SY I QGr qrB HID TJ v1OB 1CzD IDdW4E 4j J mv6 Ktx oBO s9 ADWB q218 BJJzRy UQ i 2Gp weE T8L aO 4ho9 5g4v WQmoiq jS w MA9 Cvn Gqx l1 LrYu MjGb oUpuvY Q2 C dBl AB9 7ew jc 5RJE SFGs ORedoM 0b B k25 VEK B8V A9 ytAE Oyof G8QIj2 7a I 3jy Rmz yET Kx pgUq 4Bvb cD1b1g KB y oE3 azg elV Nu 8iZ1 w1tq twKx8C LN 2 8yn jdo jUW vN H9qy HaXZ GhjUgm uL I 87i Y7Q 9MQ Wa iFFS Gzt8 4mSQq2 5O N ltT gbl 8YD QS AzXq pJEK 7bGL1U Jn 0 f59 vPr wdt d6 sDLj Loo1 8tQXf5 5u p mTa dJD sEL pH 2vqY uTAm YzDg95 1P K FP6 pEi zIJ Qd 8Ngn HTND 6z6ExR XV 0 ouU jWT kAK AB eAC9 Rfja c43Ajk Xn H dgS y3v 5cB et s3VX qfpP BqiGf9 0a w g4d W9U kvR iJ y46G bH3U cJ86hW Va C Mje dsU cqD SZ 1DlP 2mfB hzu5dv u1 i 6eW 2sifpoierjsodfgupoefasdfgjsdfgjsdfgjsdjflxncvzxnvasdjfaopsruosihjsdfghajsdflahgfsif77} \end{align} since $\mathbb{P}P_m h=P_m h$ and $  \sifpoierjsodfgupoefasdfgjsdfgjsdfgjsdjflxncvzxnvasdjfaopsruosihjsdgghajsdflahgfsif \nabla P_m h\sifpoierjsodfgupoefasdfgjsdfgjsdfgjsdjflxncvzxnvasdjfaopsruosihjsdgghajsdflahgfsif_{L^2}    =    \sifpoierjsodfgupoefasdfgjsdfgjsdfgjsdjflxncvzxnvasdjfaopsruosihjsdgghajsdflahgfsif P_m h\sifpoierjsodfgupoefasdfgjsdfgjsdfgjsdjflxncvzxnvasdjfaopsruosihjsdgghajsdflahgfsif_{V} \lec \sifpoierjsodfgupoefasdfgjsdfgjsdfgjsdjflxncvzxnvasdjfaopsruosihjsdgghajsdflahgfsif h\sifpoierjsodfgupoefasdfgjsdfgjsdfgjsdjflxncvzxnvasdjfaopsruosihjsdgghajsdflahgfsif_{V}    \lec \sifpoierjsodfgupoefasdfgjsdfgjsdfgjsdjflxncvzxnvasdjfaopsruosihjsdgghajsdflahgfsif \nabla h\sifpoierjsodfgupoefasdfgjsdfgjsdfgjsdjflxncvzxnvasdjfaopsruosihjsdgghajsdflahgfsif_{L^2} $. By \eqref{sifpoierjsodfgupoefasdfgjsdfgjsdfgjsdjflxncvzxnvasdjfaopsruosihjsdfghajsdflahgfsif20}, \eqref{sifpoierjsodfgupoefasdfgjsdfgjsdfgjsdjflxncvzxnvasdjfaopsruosihjsdfghajsdflahgfsif21} and taking the supremum over $h \in V$ with $\sifpoierjsodfgupoefasdfgjsdfgjsdfgjsdjflxncvzxnvasdjfaopsruosihjsdgghajsdflahgfsif h\sifpoierjsodfgupoefasdfgjsdfgjsdfgjsdjflxncvzxnvasdjfaopsruosihjsdgghajsdflahgfsif_{V} \leq 1$, we get \begin{align}   \sifpoierjsodfgupoefasdfgjsdfgjsdfgjsdjflxncvzxnvasdjfaopsruosihjsdfghajsdflahgfsif_{0}^{T} \sifpoierjsodfgupoefasdfgjsdfgjsdfgjsdjflxncvzxnvasdjfaopsruosihjsdgghajsdflahgfsif u^m_{tt}\sifpoierjsodfgupoefasdfgjsdfgjsdfgjsdjflxncvzxnvasdjfaopsruosihjsdgghajsdflahgfsif_{V'}^2 \,ds \lec 1.    \llabel{ HnvYFM AI r Ih7 Ths 9tE hA AYgS swZZ fws19P 5w e JvM imb sFH Th CnSZ HORm yt98w3 U3 z ant zAy Twq 0C jgDI Etkb h98V4u o5 2 jjA Zz1 kLo C8 oHGv Z5Ru Gwv3kK 4W B 50T oMt q7Q WG 9mtb SIlc 87ruZf Kw Z Ph3 1ZA Osq 8l jVQJ LTXC gyQn0v KE S iSq Bpa wtH xc IJe4 SiE1 izzxim ke P Y3s 7SX 5DA SG XHqC r38V YP3Hxv OI R ZtM fqN oLF oU 7vNd txzw UkX32t 94 n Fdq qTR QOv Yq Ebig jrSZ kTN7Xw tP F gNs O7M 1mb DA btVB 3LGC pgE9hV FK Y LcS GmF 863 7a ZDiz 4CuJ bLnpE7 yl 8 5jg Many Thanks, POL OG EPOe Mru1 v25XLJ Fz h wgE lnu Ymq rX 1YKV Kvgm MK7gI4 6h 5 kZB OoJ tfC 5g VvA1 kNJr 2o7om1 XN p Uwt CWX fFT SW DjsI wuxO JxLU1S xA 5 ObG 3IO UdL qJ cCAr gzKM 08DvX2 mu i 13T t71 Iwq oF UI0E Ef5S V2vxcy SY I QGr qrB HID TJ v1OB 1CzD IDdW4E 4j J mv6 Ktx oBO s9 ADWB q218 BJJzRy UQ i 2Gp weE T8L aO 4ho9 5g4v WQmoiq jS w MA9 Cvn Gqx l1 LrYu MjGb oUpuvY Q2 C dBl AB9 7ew jc 5RJE SFGs ORedoM 0b B k25 VEK B8V A9 ytAE Oyof G8QIj2 7a I 3jy Rmz yET Kx pgUq 4Bvb cD1b1g KB y oE3 azg elV Nu 8iZ1 w1tq twKx8C LN 2 8yn jdo jUW vN H9qy HaXZ GhjUgm uL I 87i Y7Q 9MQ Wa iFFS Gzt8 4mSQq2 5O N ltT gbl 8YD QS AzXq pJEK 7bGL1U Jn 0 f59 vPr wdt d6 sDLj Loo1 8tQXf5 5u p mTa dJD sEL pH 2vqY uTAm YzDg95 1P K FP6 pEi zIJ Qd 8Ngn HTND 6z6ExR XV 0 ouU jWT kAK AB eAC9 Rfja c43Ajk Xn H dgS y3v 5cB et s3VX qfpP BqiGf9 0a w g4d W9U kvR iJ y46G bH3U cJ86hW Va C Mje dsU cqD SZ 1DlP 2mfB hzu5dv u1 i 6eW 2YN LhM 3f WOdz KS6Q ov14wx YY d 8sa S38 hIl cP tS4l 9B7h FC3JXJ Gp s tll 7a7 WNr VM wunm nmDc 5duVpZ xT C l8F I01 jhn 5B l4Jz aEV7 CKMThL ji 1 gyZ uXc Iv4 03 3NqZ LITG Ux3ClP CB K O3v RUi mJq l5 blI9 GrWy irWHof lH 7 3ZT eZX kop eq 8XL1 RQ3a Uj6Ess nj 2 0MA 3As rSV ft 3F9w zB1q DQVOnH Cm m P3d WSb jstsifpoierjsodfgupoefasdfgjsdfgjsdfgjsdjflxncvzxnvasdjfaopsruosihjsdfghajsdflahgfsif78}
\end{align} \par The difference $u^{m,n}=u^{m}-u^{n}$ satisfies   \begin{align}   \begin{split}   & u_t^{m,n}    + Au^{m,n}   + P_m \mathbb{P}(v \sifpoierjsodfgupoefasdfgjsdfgjsdfgjsdjflxncvzxnvasdjfaopsruosihjsdhghajsdflahgfsif \nabla u^{m,n})    = (P_{m}-P_{n}) \mathbb{P}(\sifpoierjsodfgupoefasdfgjsdfgjsdfgjsdjflxncvzxnvasdjfaopsruosihjsdighajsdflahgfsif e_2)   - (P_m-P_n) \mathbb{P}(v\sifpoierjsodfgupoefasdfgjsdfgjsdfgjsdjflxncvzxnvasdjfaopsruosihjsdhghajsdflahgfsif \nabla u^{n})   ,   \\&   u^{m,n}(0) = (P_m -P_n)u_0   ,
  \\&   u^{m,n} |_{\partial \Omega} = 0   ,   \end{split}    \llabel{v OI R ZtM fqN oLF oU 7vNd txzw UkX32t 94 n Fdq qTR QOv Yq Ebig jrSZ kTN7Xw tP F gNs O7M 1mb DA btVB 3LGC pgE9hV FK Y LcS GmF 863 7a ZDiz 4CuJ bLnpE7 yl 8 5jg Many Thanks, POL OG EPOe Mru1 v25XLJ Fz h wgE lnu Ymq rX 1YKV Kvgm MK7gI4 6h 5 kZB OoJ tfC 5g VvA1 kNJr 2o7om1 XN p Uwt CWX fFT SW DjsI wuxO JxLU1S xA 5 ObG 3IO UdL qJ cCAr gzKM 08DvX2 mu i 13T t71 Iwq oF UI0E Ef5S V2vxcy SY I QGr qrB HID TJ v1OB 1CzD IDdW4E 4j J mv6 Ktx oBO s9 ADWB q218 BJJzRy UQ i 2Gp weE T8L aO 4ho9 5g4v WQmoiq jS w MA9 Cvn Gqx l1 LrYu MjGb oUpuvY Q2 C dBl AB9 7ew jc 5RJE SFGs ORedoM 0b B k25 VEK B8V A9 ytAE Oyof G8QIj2 7a I 3jy Rmz yET Kx pgUq 4Bvb cD1b1g KB y oE3 azg elV Nu 8iZ1 w1tq twKx8C LN 2 8yn jdo jUW vN H9qy HaXZ GhjUgm uL I 87i Y7Q 9MQ Wa iFFS Gzt8 4mSQq2 5O N ltT gbl 8YD QS AzXq pJEK 7bGL1U Jn 0 f59 vPr wdt d6 sDLj Loo1 8tQXf5 5u p mTa dJD sEL pH 2vqY uTAm YzDg95 1P K FP6 pEi zIJ Qd 8Ngn HTND 6z6ExR XV 0 ouU jWT kAK AB eAC9 Rfja c43Ajk Xn H dgS y3v 5cB et s3VX qfpP BqiGf9 0a w g4d W9U kvR iJ y46G bH3U cJ86hW Va C Mje dsU cqD SZ 1DlP 2mfB hzu5dv u1 i 6eW 2YN LhM 3f WOdz KS6Q ov14wx YY d 8sa S38 hIl cP tS4l 9B7h FC3JXJ Gp s tll 7a7 WNr VM wunm nmDc 5duVpZ xT C l8F I01 jhn 5B l4Jz aEV7 CKMThL ji 1 gyZ uXc Iv4 03 3NqZ LITG Ux3ClP CB K O3v RUi mJq l5 blI9 GrWy irWHof lH 7 3ZT eZX kop eq 8XL1 RQ3a Uj6Ess nj 2 0MA 3As rSV ft 3F9w zB1q DQVOnH Cm m P3d WSb jst oj 3oGj advz qcMB6Y 6k D 9sZ 0bd Mjt UT hULG TWU9 Nmr3E4 CN b zUO vTh hqL 1p xAxT ezrH dVMgLY TT r Sfx LUX CMr WA bE69 K6XH i5re1f x4 G DKk iB7 f2D Xz Xez2 k2Yc Yc4QjU yM Y R1o DeY NWf 74 hByF dsWk 4cUbCR DX a q4e DWd 7qb Ot 7GOu oklg jJ00J9 Il O Jxn tzF VBC Ft pABp VLEE 2y5Qcg b3 5 DU4 igj 4dz zW sosifpoierjsodfgupoefasdfgjsdfgjsdfgjsdjflxncvzxnvasdjfaopsruosihjsdfghajsdflahgfsif112}   \end{align} from where   \begin{align}   \begin{split}    &    \frac12 \frac{d}{dt} \sifpoierjsodfgupoefasdfgjsdfgjsdfgjsdjflxncvzxnvasdjfaopsruosihjsdgghajsdflahgfsif u^{m,n}\sifpoierjsodfgupoefasdfgjsdfgjsdfgjsdjflxncvzxnvasdjfaopsruosihjsdgghajsdflahgfsif_{L^2}^2    + \sifpoierjsodfgupoefasdfgjsdfgjsdfgjsdjflxncvzxnvasdjfaopsruosihjsdgghajsdflahgfsif \nabla u^{m,n}\sifpoierjsodfgupoefasdfgjsdfgjsdfgjsdjflxncvzxnvasdjfaopsruosihjsdgghajsdflahgfsif_{L^2}^2    \lec    (
    \sifpoierjsodfgupoefasdfgjsdfgjsdfgjsdjflxncvzxnvasdjfaopsruosihjsdgghajsdflahgfsif \sifpoierjsodfgupoefasdfgjsdfgjsdfgjsdjflxncvzxnvasdjfaopsruosihjsdighajsdflahgfsif\sifpoierjsodfgupoefasdfgjsdfgjsdfgjsdjflxncvzxnvasdjfaopsruosihjsdgghajsdflahgfsif_{L^2}     + \sifpoierjsodfgupoefasdfgjsdfgjsdfgjsdjflxncvzxnvasdjfaopsruosihjsdgghajsdflahgfsif v\sifpoierjsodfgupoefasdfgjsdfgjsdfgjsdjflxncvzxnvasdjfaopsruosihjsdhghajsdflahgfsif \nabla u^n\sifpoierjsodfgupoefasdfgjsdfgjsdfgjsdjflxncvzxnvasdjfaopsruosihjsdgghajsdflahgfsif_{L^2}    )    \sifpoierjsodfgupoefasdfgjsdfgjsdfgjsdjflxncvzxnvasdjfaopsruosihjsdgghajsdflahgfsif(P_m-P_n) u^{m,n}\sifpoierjsodfgupoefasdfgjsdfgjsdfgjsdjflxncvzxnvasdjfaopsruosihjsdgghajsdflahgfsif_{L^2}    \\&\indeq    \lec    \frac{    1       }{    \lambda_{m,n}^{1/2}}    (     1     + \sifpoierjsodfgupoefasdfgjsdfgjsdfgjsdjflxncvzxnvasdjfaopsruosihjsdgghajsdflahgfsif v\sifpoierjsodfgupoefasdfgjsdfgjsdfgjsdjflxncvzxnvasdjfaopsruosihjsdgghajsdflahgfsif_{L^2}^{1/2}  \sifpoierjsodfgupoefasdfgjsdfgjsdfgjsdjflxncvzxnvasdjfaopsruosihjsdgghajsdflahgfsif Av\sifpoierjsodfgupoefasdfgjsdfgjsdfgjsdjflxncvzxnvasdjfaopsruosihjsdgghajsdflahgfsif_{L^2}^{1/2}      \sifpoierjsodfgupoefasdfgjsdfgjsdfgjsdjflxncvzxnvasdjfaopsruosihjsdgghajsdflahgfsif \nabla u^n\sifpoierjsodfgupoefasdfgjsdfgjsdfgjsdjflxncvzxnvasdjfaopsruosihjsdgghajsdflahgfsif_{L^2}    )    \sifpoierjsodfgupoefasdfgjsdfgjsdfgjsdjflxncvzxnvasdjfaopsruosihjsdgghajsdflahgfsif \nabla (P_m-P_n) u^{m,n}\sifpoierjsodfgupoefasdfgjsdfgjsdfgjsdjflxncvzxnvasdjfaopsruosihjsdgghajsdflahgfsif_{L^2}   \lec
   \frac{    1       }{    \lambda_{m,n}^{1/2}}    (     1     + \sifpoierjsodfgupoefasdfgjsdfgjsdfgjsdjflxncvzxnvasdjfaopsruosihjsdgghajsdflahgfsif Av\sifpoierjsodfgupoefasdfgjsdfgjsdfgjsdjflxncvzxnvasdjfaopsruosihjsdgghajsdflahgfsif_{L^2}^{1/2}    )    \sifpoierjsodfgupoefasdfgjsdfgjsdfgjsdjflxncvzxnvasdjfaopsruosihjsdgghajsdflahgfsif \nabla u^{m,n}\sifpoierjsodfgupoefasdfgjsdfgjsdfgjsdjflxncvzxnvasdjfaopsruosihjsdgghajsdflahgfsif_{L^2}    ,   \end{split}   \llabel{LU1S xA 5 ObG 3IO UdL qJ cCAr gzKM 08DvX2 mu i 13T t71 Iwq oF UI0E Ef5S V2vxcy SY I QGr qrB HID TJ v1OB 1CzD IDdW4E 4j J mv6 Ktx oBO s9 ADWB q218 BJJzRy UQ i 2Gp weE T8L aO 4ho9 5g4v WQmoiq jS w MA9 Cvn Gqx l1 LrYu MjGb oUpuvY Q2 C dBl AB9 7ew jc 5RJE SFGs ORedoM 0b B k25 VEK B8V A9 ytAE Oyof G8QIj2 7a I 3jy Rmz yET Kx pgUq 4Bvb cD1b1g KB y oE3 azg elV Nu 8iZ1 w1tq twKx8C LN 2 8yn jdo jUW vN H9qy HaXZ GhjUgm uL I 87i Y7Q 9MQ Wa iFFS Gzt8 4mSQq2 5O N ltT gbl 8YD QS AzXq pJEK 7bGL1U Jn 0 f59 vPr wdt d6 sDLj Loo1 8tQXf5 5u p mTa dJD sEL pH 2vqY uTAm YzDg95 1P K FP6 pEi zIJ Qd 8Ngn HTND 6z6ExR XV 0 ouU jWT kAK AB eAC9 Rfja c43Ajk Xn H dgS y3v 5cB et s3VX qfpP BqiGf9 0a w g4d W9U kvR iJ y46G bH3U cJ86hW Va C Mje dsU cqD SZ 1DlP 2mfB hzu5dv u1 i 6eW 2YN LhM 3f WOdz KS6Q ov14wx YY d 8sa S38 hIl cP tS4l 9B7h FC3JXJ Gp s tll 7a7 WNr VM wunm nmDc 5duVpZ xT C l8F I01 jhn 5B l4Jz aEV7 CKMThL ji 1 gyZ uXc Iv4 03 3NqZ LITG Ux3ClP CB K O3v RUi mJq l5 blI9 GrWy irWHof lH 7 3ZT eZX kop eq 8XL1 RQ3a Uj6Ess nj 2 0MA 3As rSV ft 3F9w zB1q DQVOnH Cm m P3d WSb jst oj 3oGj advz qcMB6Y 6k D 9sZ 0bd Mjt UT hULG TWU9 Nmr3E4 CN b zUO vTh hqL 1p xAxT ezrH dVMgLY TT r Sfx LUX CMr WA bE69 K6XH i5re1f x4 G DKk iB7 f2D Xz Xez2 k2Yc Yc4QjU yM Y R1o DeY NWf 74 hByF dsWk 4cUbCR DX a q4e DWd 7qb Ot 7GOu oklg jJ00J9 Il O Jxn tzF VBC Ft pABp VLEE 2y5Qcg b3 5 DU4 igj 4dz zW soNF wvqj bNFma0 am F Kiv Aap pzM zr VqYf OulM HafaBk 6J r eOQ BaT EsJ BB tHXj n2EU CNleWp cv W JIg gWX Ksn B3 wvmo WK49 Nl492o gR 6 fvc 8ff jJm sW Jr0j zI9p CBsIUV of D kKH Ub7 vxp uQ UXA6 hMUr yvxEpc Tq l Tkz z0q HbX pO 8jFu h6nw zVPPzp A8 9 61V 78c O2W aw 0yGn CHVq BVjTUH lk p 6dG HOd voE E8 cw7Q DL1sifpoierjsodfgupoefasdfgjsdfgjsdfgjsdjflxncvzxnvasdjfaopsruosihjsdfghajsdflahgfsif126}   \end{align} where $\lambda_{m,n}=\min\{\lambda_n,\lambda_m\}$. Absorbing  $\sifpoierjsodfgupoefasdfgjsdfgjsdfgjsdjflxncvzxnvasdjfaopsruosihjsdgghajsdflahgfsif \nabla u^{m,n}\sifpoierjsodfgupoefasdfgjsdfgjsdfgjsdjflxncvzxnvasdjfaopsruosihjsdgghajsdflahgfsif_{L^2}$ into the left hand side and 
applying the Gronwall inequality, we get   \begin{align}   \sifpoierjsodfgupoefasdfgjsdfgjsdfgjsdjflxncvzxnvasdjfaopsruosihjsdgghajsdflahgfsif  u^{m,n}(t)\sifpoierjsodfgupoefasdfgjsdfgjsdfgjsdjflxncvzxnvasdjfaopsruosihjsdgghajsdflahgfsif_{L^2}^2   \lec    \sifpoierjsodfgupoefasdfgjsdfgjsdfgjsdjflxncvzxnvasdjfaopsruosihjsdgghajsdflahgfsif (P_{m}-P_{n})u_0\sifpoierjsodfgupoefasdfgjsdfgjsdfgjsdjflxncvzxnvasdjfaopsruosihjsdgghajsdflahgfsif_{L^2}^2    + \frac{1}{\lambda_{m,n}^{1/2}}    \sifpoierjsodfgupoefasdfgjsdfgjsdfgjsdjflxncvzxnvasdjfaopsruosihjsdfghajsdflahgfsif_{0}^{T}    ( 1   +     \sifpoierjsodfgupoefasdfgjsdfgjsdfgjsdjflxncvzxnvasdjfaopsruosihjsdgghajsdflahgfsif Av  \sifpoierjsodfgupoefasdfgjsdfgjsdfgjsdjflxncvzxnvasdjfaopsruosihjsdgghajsdflahgfsif_{L^2})   \,ds   ,    \llabel{a I 3jy Rmz yET Kx pgUq 4Bvb cD1b1g KB y oE3 azg elV Nu 8iZ1 w1tq twKx8C LN 2 8yn jdo jUW vN H9qy HaXZ GhjUgm uL I 87i Y7Q 9MQ Wa iFFS Gzt8 4mSQq2 5O N ltT gbl 8YD QS AzXq pJEK 7bGL1U Jn 0 f59 vPr wdt d6 sDLj Loo1 8tQXf5 5u p mTa dJD sEL pH 2vqY uTAm YzDg95 1P K FP6 pEi zIJ Qd 8Ngn HTND 6z6ExR XV 0 ouU jWT kAK AB eAC9 Rfja c43Ajk Xn H dgS y3v 5cB et s3VX qfpP BqiGf9 0a w g4d W9U kvR iJ y46G bH3U cJ86hW Va C Mje dsU cqD SZ 1DlP 2mfB hzu5dv u1 i 6eW 2YN LhM 3f WOdz KS6Q ov14wx YY d 8sa S38 hIl cP tS4l 9B7h FC3JXJ Gp s tll 7a7 WNr VM wunm nmDc 5duVpZ xT C l8F I01 jhn 5B l4Jz aEV7 CKMThL ji 1 gyZ uXc Iv4 03 3NqZ LITG Ux3ClP CB K O3v RUi mJq l5 blI9 GrWy irWHof lH 7 3ZT eZX kop eq 8XL1 RQ3a Uj6Ess nj 2 0MA 3As rSV ft 3F9w zB1q DQVOnH Cm m P3d WSb jst oj 3oGj advz qcMB6Y 6k D 9sZ 0bd Mjt UT hULG TWU9 Nmr3E4 CN b zUO vTh hqL 1p xAxT ezrH dVMgLY TT r Sfx LUX CMr WA bE69 K6XH i5re1f x4 G DKk iB7 f2D Xz Xez2 k2Yc Yc4QjU yM Y R1o DeY NWf 74 hByF dsWk 4cUbCR DX a q4e DWd 7qb Ot 7GOu oklg jJ00J9 Il O Jxn tzF VBC Ft pABp VLEE 2y5Qcg b3 5 DU4 igj 4dz zW soNF wvqj bNFma0 am F Kiv Aap pzM zr VqYf OulM HafaBk 6J r eOQ BaT EsJ BB tHXj n2EU CNleWp cv W JIg gWX Ksn B3 wvmo WK49 Nl492o gR 6 fvc 8ff jJm sW Jr0j zI9p CBsIUV of D kKH Ub7 vxp uQ UXA6 hMUr yvxEpc Tq l Tkz z0q HbX pO 8jFu h6nw zVPPzp A8 9 61V 78c O2W aw 0yGn CHVq BVjTUH lk p 6dG HOd voE E8 cw7Q DL1o 1qg5TX qo V 720 hhQ TyF tp TJDg 9E8D nsp1Qi X9 8 ZVQ N3s duZ qc n9IX ozWh Fd16IB 0K 9 JeB Hvi 364 kQ lFMM JOn0 OUBrnv pY y jUB Ofs Pzx l4 zcMn JHdq OjSi6N Mn 8 bR6 kPe klT Fd VlwD SrhT 8Qr0sC hN h 88j 8ZA vvW VD 03wt ETKK NUdr7W EK 1 jKS IHF Kh2 sr 1RRV Ra8J mBtkWI 1u k uZT F2B 4p8 E7 Y3p0 DX20 JM3Xsifpoierjsodfgupoefasdfgjsdfgjsdfgjsdjflxncvzxnvasdjfaopsruosihjsdfghajsdflahgfsif127}   \end{align}
which shows that $u^{m,n}\to 0$ in $L^{\infty}H$ as $m,n\to \infty$. \par Therefore, by passing to a subsequence,  we obtain that there exists $u$ such that  \begin{align}   u^m \to u \text{ uniformly in } L^\infty H    \llabel{U jWT kAK AB eAC9 Rfja c43Ajk Xn H dgS y3v 5cB et s3VX qfpP BqiGf9 0a w g4d W9U kvR iJ y46G bH3U cJ86hW Va C Mje dsU cqD SZ 1DlP 2mfB hzu5dv u1 i 6eW 2YN LhM 3f WOdz KS6Q ov14wx YY d 8sa S38 hIl cP tS4l 9B7h FC3JXJ Gp s tll 7a7 WNr VM wunm nmDc 5duVpZ xT C l8F I01 jhn 5B l4Jz aEV7 CKMThL ji 1 gyZ uXc Iv4 03 3NqZ LITG Ux3ClP CB K O3v RUi mJq l5 blI9 GrWy irWHof lH 7 3ZT eZX kop eq 8XL1 RQ3a Uj6Ess nj 2 0MA 3As rSV ft 3F9w zB1q DQVOnH Cm m P3d WSb jst oj 3oGj advz qcMB6Y 6k D 9sZ 0bd Mjt UT hULG TWU9 Nmr3E4 CN b zUO vTh hqL 1p xAxT ezrH dVMgLY TT r Sfx LUX CMr WA bE69 K6XH i5re1f x4 G DKk iB7 f2D Xz Xez2 k2Yc Yc4QjU yM Y R1o DeY NWf 74 hByF dsWk 4cUbCR DX a q4e DWd 7qb Ot 7GOu oklg jJ00J9 Il O Jxn tzF VBC Ft pABp VLEE 2y5Qcg b3 5 DU4 igj 4dz zW soNF wvqj bNFma0 am F Kiv Aap pzM zr VqYf OulM HafaBk 6J r eOQ BaT EsJ BB tHXj n2EU CNleWp cv W JIg gWX Ksn B3 wvmo WK49 Nl492o gR 6 fvc 8ff jJm sW Jr0j zI9p CBsIUV of D kKH Ub7 vxp uQ UXA6 hMUr yvxEpc Tq l Tkz z0q HbX pO 8jFu h6nw zVPPzp A8 9 61V 78c O2W aw 0yGn CHVq BVjTUH lk p 6dG HOd voE E8 cw7Q DL1o 1qg5TX qo V 720 hhQ TyF tp TJDg 9E8D nsp1Qi X9 8 ZVQ N3s duZ qc n9IX ozWh Fd16IB 0K 9 JeB Hvi 364 kQ lFMM JOn0 OUBrnv pY y jUB Ofs Pzx l4 zcMn JHdq OjSi6N Mn 8 bR6 kPe klT Fd VlwD SrhT 8Qr0sC hN h 88j 8ZA vvW VD 03wt ETKK NUdr7W EK 1 jKS IHF Kh2 sr 1RRV Ra8J mBtkWI 1u k uZT F2B 4p8 E7 Y3p0 DX20 JM3XzQ tZ 3 bMC vM4 DEA wB Fp8q YKpL So1a5s dR P fTg 5R6 7v1 T4 eCJ1 qg14 CTK7u7 ag j Q0A tZ1 Nh6 hk Sys5 CWon IOqgCL 3u 7 feR BHz odS Jp 7JH8 u6Rw sYE0mc P4 r LaW Atl yRw kH F3ei UyhI iA19ZB u8 m ywf 42n uyX 0e ljCt 3Lkd 1eUQEZ oO Z rA2 Oqf oQ5 Ca hrBy KzFg DOseim 0j Y BmX csL Ayc cC JBTZ PEjy zPb5hZ KW sifpoierjsodfgupoefasdfgjsdfgjsdfgjsdjflxncvzxnvasdjfaopsruosihjsdfghajsdflahgfsif79} \end{align} and \begin{align}   u^m \rightharpoonup u \text{ weakly-* in } L^2 D(A) \cap L^\infty V    \cap W^{1,\infty} L^2 \cap H^1 V \cap H^2 V'         .    \llabel{Iv4 03 3NqZ LITG Ux3ClP CB K O3v RUi mJq l5 blI9 GrWy irWHof lH 7 3ZT eZX kop eq 8XL1 RQ3a Uj6Ess nj 2 0MA 3As rSV ft 3F9w zB1q DQVOnH Cm m P3d WSb jst oj 3oGj advz qcMB6Y 6k D 9sZ 0bd Mjt UT hULG TWU9 Nmr3E4 CN b zUO vTh hqL 1p xAxT ezrH dVMgLY TT r Sfx LUX CMr WA bE69 K6XH i5re1f x4 G DKk iB7 f2D Xz Xez2 k2Yc Yc4QjU yM Y R1o DeY NWf 74 hByF dsWk 4cUbCR DX a q4e DWd 7qb Ot 7GOu oklg jJ00J9 Il O Jxn tzF VBC Ft pABp VLEE 2y5Qcg b3 5 DU4 igj 4dz zW soNF wvqj bNFma0 am F Kiv Aap pzM zr VqYf OulM HafaBk 6J r eOQ BaT EsJ BB tHXj n2EU CNleWp cv W JIg gWX Ksn B3 wvmo WK49 Nl492o gR 6 fvc 8ff jJm sW Jr0j zI9p CBsIUV of D kKH Ub7 vxp uQ UXA6 hMUr yvxEpc Tq l Tkz z0q HbX pO 8jFu h6nw zVPPzp A8 9 61V 78c O2W aw 0yGn CHVq BVjTUH lk p 6dG HOd voE E8 cw7Q DL1o 1qg5TX qo V 720 hhQ TyF tp TJDg 9E8D nsp1Qi X9 8 ZVQ N3s duZ qc n9IX ozWh Fd16IB 0K 9 JeB Hvi 364 kQ lFMM JOn0 OUBrnv pY y jUB Ofs Pzx l4 zcMn JHdq OjSi6N Mn 8 bR6 kPe klT Fd VlwD SrhT 8Qr0sC hN h 88j 8ZA vvW VD 03wt ETKK NUdr7W EK 1 jKS IHF Kh2 sr 1RRV Ra8J mBtkWI 1u k uZT F2B 4p8 E7 Y3p0 DX20 JM3XzQ tZ 3 bMC vM4 DEA wB Fp8q YKpL So1a5s dR P fTg 5R6 7v1 T4 eCJ1 qg14 CTK7u7 ag j Q0A tZ1 Nh6 hk Sys5 CWon IOqgCL 3u 7 feR BHz odS Jp 7JH8 u6Rw sYE0mc P4 r LaW Atl yRw kH F3ei UyhI iA19ZB u8 m ywf 42n uyX 0e ljCt 3Lkd 1eUQEZ oO Z rA2 Oqf oQ5 Ca hrBy KzFg DOseim 0j Y BmX csL Ayc cC JBTZ PEjy zPb5hZ KW O xT6 dyt u82 Ia htpD m75Y DktQvd Nj W jIQ H1B Ace SZ KVVP 136v L8XhMm 1O H Kn2 gUy kFU wN 8JML Bqmn vGuwGR oW U oNZ Y2P nmS 5g QMcR YHxL yHuDo8 ba w aqM NYt onW u2 YIOz eB6R wHuGcn fi o 47U PM5 tOj sz QBNq 7mco fCNjou 83 e mcY 81s vsI 2Y DS3S yloB Nx5FBV Bc 9 6HZ EOX UO3 W1 fIF5 jtEM W6KW7D 63 t H0F sifpoierjsodfgupoefasdfgjsdfgjsdfgjsdjflxncvzxnvasdjfaopsruosihjsdfghajsdflahgfsif114} \end{align}
Using classical arguments, we may pass to the limit in \eqref{sifpoierjsodfgupoefasdfgjsdfgjsdfgjsdjflxncvzxnvasdjfaopsruosihjsdfghajsdflahgfsif06} obtaining   \begin{align}   \begin{split}   & u_t    + Au   + \mathbb{P}(v \sifpoierjsodfgupoefasdfgjsdfgjsdfgjsdjflxncvzxnvasdjfaopsruosihjsdhghajsdflahgfsif \nabla u)          = \mathbb{P}(\sifpoierjsodfgupoefasdfgjsdfgjsdfgjsdjflxncvzxnvasdjfaopsruosihjsdighajsdflahgfsif e_2)   ,   \\&   u(0) = u_0         ,   \end{split}    \label{sifpoierjsodfgupoefasdfgjsdfgjsdfgjsdjflxncvzxnvasdjfaopsruosihjsdfghajsdflahgfsif23}
  \end{align} which then implies that there exists $P$ such that   \begin{equation}    u_t    - \Delta u        + v \sifpoierjsodfgupoefasdfgjsdfgjsdfgjsdjflxncvzxnvasdjfaopsruosihjsdhghajsdflahgfsif \nabla u        + \nabla P = \sifpoierjsodfgupoefasdfgjsdfgjsdfgjsdjflxncvzxnvasdjfaopsruosihjsdighajsdflahgfsif e_2     .    \llabel{ Xez2 k2Yc Yc4QjU yM Y R1o DeY NWf 74 hByF dsWk 4cUbCR DX a q4e DWd 7qb Ot 7GOu oklg jJ00J9 Il O Jxn tzF VBC Ft pABp VLEE 2y5Qcg b3 5 DU4 igj 4dz zW soNF wvqj bNFma0 am F Kiv Aap pzM zr VqYf OulM HafaBk 6J r eOQ BaT EsJ BB tHXj n2EU CNleWp cv W JIg gWX Ksn B3 wvmo WK49 Nl492o gR 6 fvc 8ff jJm sW Jr0j zI9p CBsIUV of D kKH Ub7 vxp uQ UXA6 hMUr yvxEpc Tq l Tkz z0q HbX pO 8jFu h6nw zVPPzp A8 9 61V 78c O2W aw 0yGn CHVq BVjTUH lk p 6dG HOd voE E8 cw7Q DL1o 1qg5TX qo V 720 hhQ TyF tp TJDg 9E8D nsp1Qi X9 8 ZVQ N3s duZ qc n9IX ozWh Fd16IB 0K 9 JeB Hvi 364 kQ lFMM JOn0 OUBrnv pY y jUB Ofs Pzx l4 zcMn JHdq OjSi6N Mn 8 bR6 kPe klT Fd VlwD SrhT 8Qr0sC hN h 88j 8ZA vvW VD 03wt ETKK NUdr7W EK 1 jKS IHF Kh2 sr 1RRV Ra8J mBtkWI 1u k uZT F2B 4p8 E7 Y3p0 DX20 JM3XzQ tZ 3 bMC vM4 DEA wB Fp8q YKpL So1a5s dR P fTg 5R6 7v1 T4 eCJ1 qg14 CTK7u7 ag j Q0A tZ1 Nh6 hk Sys5 CWon IOqgCL 3u 7 feR BHz odS Jp 7JH8 u6Rw sYE0mc P4 r LaW Atl yRw kH F3ei UyhI iA19ZB u8 m ywf 42n uyX 0e ljCt 3Lkd 1eUQEZ oO Z rA2 Oqf oQ5 Ca hrBy KzFg DOseim 0j Y BmX csL Ayc cC JBTZ PEjy zPb5hZ KW O xT6 dyt u82 Ia htpD m75Y DktQvd Nj W jIQ H1B Ace SZ KVVP 136v L8XhMm 1O H Kn2 gUy kFU wN 8JML Bqmn vGuwGR oW U oNZ Y2P nmS 5g QMcR YHxL yHuDo8 ba w aqM NYt onW u2 YIOz eB6R wHuGcn fi o 47U PM5 tOj sz QBNq 7mco fCNjou 83 e mcY 81s vsI 2Y DS3S yloB Nx5FBV Bc 9 6HZ EOX UO3 W1 fIF5 jtEM W6KW7D 63 t H0F CVT Zup Pl A9aI oN2s f1Bw31 gg L FoD O0M x18 oo heEd KgZB Cqdqpa sa H Fhx BrE aRg Au I5dq mWWB MuHfv9 0y S PtG hFF dYJ JL f3Ap k5Ck Szr0Kb Vd i sQk uSA JEn DT YkjP AEMu a0VCtC Ff z 9R6 Vht 8Ua cB e7op AnGa 7AbLWj Hc s nAR GMb n7a 9n paMf lftM 7jvb20 0T W xUC 4lt e92 9j oZrA IuIa o1Zqdr oC L 55L T4Q 8ksifpoierjsodfgupoefasdfgjsdfgjsdfgjsdjflxncvzxnvasdjfaopsruosihjsdfghajsdflahgfsif10}   \end{equation} Next, we need to prove that $u\in L^3 W^{2,3}$. To achieve this, we apply the  $L^{3}W^{2,3}$ estimate in \cite[Theorem~2.8]{GS} and obtain   \begin{align}   \sifpoierjsodfgupoefasdfgjsdfgjsdfgjsdjflxncvzxnvasdjfaopsruosihjsdfghajsdflahgfsif_{0}^{T} 
  \sifpoierjsodfgupoefasdfgjsdfgjsdfgjsdjflxncvzxnvasdjfaopsruosihjsdgghajsdflahgfsif u \sifpoierjsodfgupoefasdfgjsdfgjsdfgjsdjflxncvzxnvasdjfaopsruosihjsdgghajsdflahgfsif_{W^{2,3}}^3    \,ds   \lec    \sifpoierjsodfgupoefasdfgjsdfgjsdfgjsdjflxncvzxnvasdjfaopsruosihjsdgghajsdflahgfsif A_3^{\fractext{5}{6}} u_0 \sifpoierjsodfgupoefasdfgjsdfgjsdfgjsdjflxncvzxnvasdjfaopsruosihjsdgghajsdflahgfsif_{L^3}^3   + \sifpoierjsodfgupoefasdfgjsdfgjsdfgjsdjflxncvzxnvasdjfaopsruosihjsdfghajsdflahgfsif_{0}^{T} (   \sifpoierjsodfgupoefasdfgjsdfgjsdfgjsdjflxncvzxnvasdjfaopsruosihjsdgghajsdflahgfsif v \sifpoierjsodfgupoefasdfgjsdfgjsdfgjsdjflxncvzxnvasdjfaopsruosihjsdhghajsdflahgfsif \nabla u \sifpoierjsodfgupoefasdfgjsdfgjsdfgjsdjflxncvzxnvasdjfaopsruosihjsdgghajsdflahgfsif_{L^3}^3   +    \sifpoierjsodfgupoefasdfgjsdfgjsdfgjsdjflxncvzxnvasdjfaopsruosihjsdgghajsdflahgfsif \sifpoierjsodfgupoefasdfgjsdfgjsdfgjsdjflxncvzxnvasdjfaopsruosihjsdighajsdflahgfsif e_2 \sifpoierjsodfgupoefasdfgjsdfgjsdfgjsdjflxncvzxnvasdjfaopsruosihjsdgghajsdflahgfsif_{L^3}^3    )\,ds   ,   \label{sifpoierjsodfgupoefasdfgjsdfgjsdfgjsdjflxncvzxnvasdjfaopsruosihjsdfghajsdflahgfsif17}     \end{align} where $A_3$ denotes the $L_3$~version of the Stokes operator.  For the first term on the right-hand side of \eqref{sifpoierjsodfgupoefasdfgjsdfgjsdfgjsdjflxncvzxnvasdjfaopsruosihjsdfghajsdflahgfsif17}, we use
the embedding property in \cite[p.~82]{GS}, implying  $\sifpoierjsodfgupoefasdfgjsdfgjsdfgjsdjflxncvzxnvasdjfaopsruosihjsdgghajsdflahgfsif A_3^{\fractext{5}{6}} u_0 \sifpoierjsodfgupoefasdfgjsdfgjsdfgjsdjflxncvzxnvasdjfaopsruosihjsdgghajsdflahgfsif_{L^3} \lec \sifpoierjsodfgupoefasdfgjsdfgjsdfgjsdjflxncvzxnvasdjfaopsruosihjsdgghajsdflahgfsif Au_0\sifpoierjsodfgupoefasdfgjsdfgjsdfgjsdjflxncvzxnvasdjfaopsruosihjsdgghajsdflahgfsif_{L^2} $. Similarly, for the first integral, we have    \begin{align}   \begin{split}   &    \sifpoierjsodfgupoefasdfgjsdfgjsdfgjsdjflxncvzxnvasdjfaopsruosihjsdfghajsdflahgfsif_{0}^{T}    \sifpoierjsodfgupoefasdfgjsdfgjsdfgjsdjflxncvzxnvasdjfaopsruosihjsdgghajsdflahgfsif v \sifpoierjsodfgupoefasdfgjsdfgjsdfgjsdjflxncvzxnvasdjfaopsruosihjsdhghajsdflahgfsif \nabla u \sifpoierjsodfgupoefasdfgjsdfgjsdfgjsdjflxncvzxnvasdjfaopsruosihjsdgghajsdflahgfsif_{L^3}^3    \,ds   \lec    \sifpoierjsodfgupoefasdfgjsdfgjsdfgjsdjflxncvzxnvasdjfaopsruosihjsdfghajsdflahgfsif_{0}^{T}    \sifpoierjsodfgupoefasdfgjsdfgjsdfgjsdjflxncvzxnvasdjfaopsruosihjsdgghajsdflahgfsif v \sifpoierjsodfgupoefasdfgjsdfgjsdfgjsdjflxncvzxnvasdjfaopsruosihjsdgghajsdflahgfsif_{L^6}^3
  \sifpoierjsodfgupoefasdfgjsdfgjsdfgjsdjflxncvzxnvasdjfaopsruosihjsdgghajsdflahgfsif \nabla u\sifpoierjsodfgupoefasdfgjsdfgjsdfgjsdjflxncvzxnvasdjfaopsruosihjsdgghajsdflahgfsif_{L^6}^3    \,ds   \lec    \sifpoierjsodfgupoefasdfgjsdfgjsdfgjsdjflxncvzxnvasdjfaopsruosihjsdfghajsdflahgfsif_{0}^{T}    \sifpoierjsodfgupoefasdfgjsdfgjsdfgjsdjflxncvzxnvasdjfaopsruosihjsdgghajsdflahgfsif v\sifpoierjsodfgupoefasdfgjsdfgjsdfgjsdjflxncvzxnvasdjfaopsruosihjsdgghajsdflahgfsif_{L^2}   \sifpoierjsodfgupoefasdfgjsdfgjsdfgjsdjflxncvzxnvasdjfaopsruosihjsdgghajsdflahgfsif \nabla v\sifpoierjsodfgupoefasdfgjsdfgjsdfgjsdjflxncvzxnvasdjfaopsruosihjsdgghajsdflahgfsif_{L^2}^2   \sifpoierjsodfgupoefasdfgjsdfgjsdfgjsdjflxncvzxnvasdjfaopsruosihjsdgghajsdflahgfsif \nabla u\sifpoierjsodfgupoefasdfgjsdfgjsdfgjsdjflxncvzxnvasdjfaopsruosihjsdgghajsdflahgfsif_{L^2}    \sifpoierjsodfgupoefasdfgjsdfgjsdfgjsdjflxncvzxnvasdjfaopsruosihjsdgghajsdflahgfsif A u\sifpoierjsodfgupoefasdfgjsdfgjsdfgjsdjflxncvzxnvasdjfaopsruosihjsdgghajsdflahgfsif_{L^2}^2   \,ds   \\&\indeq   \lec    \sifpoierjsodfgupoefasdfgjsdfgjsdfgjsdjflxncvzxnvasdjfaopsruosihjsdgghajsdflahgfsif v\sifpoierjsodfgupoefasdfgjsdfgjsdfgjsdjflxncvzxnvasdjfaopsruosihjsdgghajsdflahgfsif_{L^\infty L^2}   \sifpoierjsodfgupoefasdfgjsdfgjsdfgjsdjflxncvzxnvasdjfaopsruosihjsdgghajsdflahgfsif \nabla v\sifpoierjsodfgupoefasdfgjsdfgjsdfgjsdjflxncvzxnvasdjfaopsruosihjsdgghajsdflahgfsif_{L^\infty L^2}^2   \sifpoierjsodfgupoefasdfgjsdfgjsdfgjsdjflxncvzxnvasdjfaopsruosihjsdgghajsdflahgfsif \nabla u\sifpoierjsodfgupoefasdfgjsdfgjsdfgjsdjflxncvzxnvasdjfaopsruosihjsdgghajsdflahgfsif_{L^\infty L^2}
  \sifpoierjsodfgupoefasdfgjsdfgjsdfgjsdjflxncvzxnvasdjfaopsruosihjsdfghajsdflahgfsif_{0}^{T}    \sifpoierjsodfgupoefasdfgjsdfgjsdfgjsdjflxncvzxnvasdjfaopsruosihjsdgghajsdflahgfsif Au \sifpoierjsodfgupoefasdfgjsdfgjsdfgjsdjflxncvzxnvasdjfaopsruosihjsdgghajsdflahgfsif_{L^2}^2   \,ds    \lec 1   ,   \end{split}      \llabel{zI9p CBsIUV of D kKH Ub7 vxp uQ UXA6 hMUr yvxEpc Tq l Tkz z0q HbX pO 8jFu h6nw zVPPzp A8 9 61V 78c O2W aw 0yGn CHVq BVjTUH lk p 6dG HOd voE E8 cw7Q DL1o 1qg5TX qo V 720 hhQ TyF tp TJDg 9E8D nsp1Qi X9 8 ZVQ N3s duZ qc n9IX ozWh Fd16IB 0K 9 JeB Hvi 364 kQ lFMM JOn0 OUBrnv pY y jUB Ofs Pzx l4 zcMn JHdq OjSi6N Mn 8 bR6 kPe klT Fd VlwD SrhT 8Qr0sC hN h 88j 8ZA vvW VD 03wt ETKK NUdr7W EK 1 jKS IHF Kh2 sr 1RRV Ra8J mBtkWI 1u k uZT F2B 4p8 E7 Y3p0 DX20 JM3XzQ tZ 3 bMC vM4 DEA wB Fp8q YKpL So1a5s dR P fTg 5R6 7v1 T4 eCJ1 qg14 CTK7u7 ag j Q0A tZ1 Nh6 hk Sys5 CWon IOqgCL 3u 7 feR BHz odS Jp 7JH8 u6Rw sYE0mc P4 r LaW Atl yRw kH F3ei UyhI iA19ZB u8 m ywf 42n uyX 0e ljCt 3Lkd 1eUQEZ oO Z rA2 Oqf oQ5 Ca hrBy KzFg DOseim 0j Y BmX csL Ayc cC JBTZ PEjy zPb5hZ KW O xT6 dyt u82 Ia htpD m75Y DktQvd Nj W jIQ H1B Ace SZ KVVP 136v L8XhMm 1O H Kn2 gUy kFU wN 8JML Bqmn vGuwGR oW U oNZ Y2P nmS 5g QMcR YHxL yHuDo8 ba w aqM NYt onW u2 YIOz eB6R wHuGcn fi o 47U PM5 tOj sz QBNq 7mco fCNjou 83 e mcY 81s vsI 2Y DS3S yloB Nx5FBV Bc 9 6HZ EOX UO3 W1 fIF5 jtEM W6KW7D 63 t H0F CVT Zup Pl A9aI oN2s f1Bw31 gg L FoD O0M x18 oo heEd KgZB Cqdqpa sa H Fhx BrE aRg Au I5dq mWWB MuHfv9 0y S PtG hFF dYJ JL f3Ap k5Ck Szr0Kb Vd i sQk uSA JEn DT YkjP AEMu a0VCtC Ff z 9R6 Vht 8Ua cB e7op AnGa 7AbLWj Hc s nAR GMb n7a 9n paMf lftM 7jvb20 0T W xUC 4lt e92 9j oZrA IuIa o1Zqdr oC L 55L T4Q 8kN yv sIzP x4i5 9lKTq2 JB B sZb QCE Ctw ar VBMT H1QR 6v5srW hR r D4r wf8 ik7 KH Egee rFVT ErONml Q5 L R8v XNZ LB3 9U DzRH ZbH9 fTBhRw kA 2 n3p g4I grH xd fEFu z6RE tDqPdw N7 H TVt cE1 8hW 6y n4Gn nCE3 MEQ51i Ps G Z2G Lbt CSt hu zvPF eE28 MM23ug TC d j7z 7Av TLa 1A GLiJ 5JwW CiDPyM qa 8 tAK QZ9 cfP 42 ksifpoierjsodfgupoefasdfgjsdfgjsdfgjsdjflxncvzxnvasdjfaopsruosihjsdfghajsdflahgfsif85}   \end{align} where we used $\sifpoierjsodfgupoefasdfgjsdfgjsdfgjsdjflxncvzxnvasdjfaopsruosihjsdgghajsdflahgfsif v\sifpoierjsodfgupoefasdfgjsdfgjsdfgjsdjflxncvzxnvasdjfaopsruosihjsdgghajsdflahgfsif_{L^6}\lec \sifpoierjsodfgupoefasdfgjsdfgjsdfgjsdjflxncvzxnvasdjfaopsruosihjsdgghajsdflahgfsif v\sifpoierjsodfgupoefasdfgjsdfgjsdfgjsdjflxncvzxnvasdjfaopsruosihjsdgghajsdflahgfsif_{L^2}^{1/3}\sifpoierjsodfgupoefasdfgjsdfgjsdfgjsdjflxncvzxnvasdjfaopsruosihjsdgghajsdflahgfsif \nabla v\sifpoierjsodfgupoefasdfgjsdfgjsdfgjsdjflxncvzxnvasdjfaopsruosihjsdgghajsdflahgfsif_{L^2}^{2/3}$. Likewise, for the second integral term in \eqref{sifpoierjsodfgupoefasdfgjsdfgjsdfgjsdjflxncvzxnvasdjfaopsruosihjsdfghajsdflahgfsif17}, observe that   \begin{align}   \sifpoierjsodfgupoefasdfgjsdfgjsdfgjsdjflxncvzxnvasdjfaopsruosihjsdfghajsdflahgfsif_{0}^{T}    \sifpoierjsodfgupoefasdfgjsdfgjsdfgjsdjflxncvzxnvasdjfaopsruosihjsdgghajsdflahgfsif \sifpoierjsodfgupoefasdfgjsdfgjsdfgjsdjflxncvzxnvasdjfaopsruosihjsdighajsdflahgfsif e_2 \sifpoierjsodfgupoefasdfgjsdfgjsdfgjsdjflxncvzxnvasdjfaopsruosihjsdgghajsdflahgfsif_{L^3}^3 
  \,ds   \lec    \sifpoierjsodfgupoefasdfgjsdfgjsdfgjsdjflxncvzxnvasdjfaopsruosihjsdfghajsdflahgfsif_{0}^{T}    (   \sifpoierjsodfgupoefasdfgjsdfgjsdfgjsdjflxncvzxnvasdjfaopsruosihjsdgghajsdflahgfsif \sifpoierjsodfgupoefasdfgjsdfgjsdfgjsdjflxncvzxnvasdjfaopsruosihjsdighajsdflahgfsif \sifpoierjsodfgupoefasdfgjsdfgjsdfgjsdjflxncvzxnvasdjfaopsruosihjsdgghajsdflahgfsif_{L^2}^2   \sifpoierjsodfgupoefasdfgjsdfgjsdfgjsdjflxncvzxnvasdjfaopsruosihjsdgghajsdflahgfsif \nabla \sifpoierjsodfgupoefasdfgjsdfgjsdfgjsdjflxncvzxnvasdjfaopsruosihjsdighajsdflahgfsif \sifpoierjsodfgupoefasdfgjsdfgjsdfgjsdjflxncvzxnvasdjfaopsruosihjsdgghajsdflahgfsif_{L^2}   +   \sifpoierjsodfgupoefasdfgjsdfgjsdfgjsdjflxncvzxnvasdjfaopsruosihjsdgghajsdflahgfsif \sifpoierjsodfgupoefasdfgjsdfgjsdfgjsdjflxncvzxnvasdjfaopsruosihjsdighajsdflahgfsif \sifpoierjsodfgupoefasdfgjsdfgjsdfgjsdjflxncvzxnvasdjfaopsruosihjsdgghajsdflahgfsif_{L^2}^3     )   \,ds   \lec     T   (   \sifpoierjsodfgupoefasdfgjsdfgjsdfgjsdjflxncvzxnvasdjfaopsruosihjsdgghajsdflahgfsif \sifpoierjsodfgupoefasdfgjsdfgjsdfgjsdjflxncvzxnvasdjfaopsruosihjsdighajsdflahgfsif \sifpoierjsodfgupoefasdfgjsdfgjsdfgjsdjflxncvzxnvasdjfaopsruosihjsdgghajsdflahgfsif_{L^{\infty} L^2}^2
  \sifpoierjsodfgupoefasdfgjsdfgjsdfgjsdjflxncvzxnvasdjfaopsruosihjsdgghajsdflahgfsif \nabla \sifpoierjsodfgupoefasdfgjsdfgjsdfgjsdjflxncvzxnvasdjfaopsruosihjsdighajsdflahgfsif \sifpoierjsodfgupoefasdfgjsdfgjsdfgjsdjflxncvzxnvasdjfaopsruosihjsdgghajsdflahgfsif_{L^{\infty} L^2}   +   \sifpoierjsodfgupoefasdfgjsdfgjsdfgjsdjflxncvzxnvasdjfaopsruosihjsdgghajsdflahgfsif \sifpoierjsodfgupoefasdfgjsdfgjsdfgjsdjflxncvzxnvasdjfaopsruosihjsdighajsdflahgfsif \sifpoierjsodfgupoefasdfgjsdfgjsdfgjsdjflxncvzxnvasdjfaopsruosihjsdgghajsdflahgfsif_{L^{\infty} L^2}^3   )   \lec T   .    \llabel{jSi6N Mn 8 bR6 kPe klT Fd VlwD SrhT 8Qr0sC hN h 88j 8ZA vvW VD 03wt ETKK NUdr7W EK 1 jKS IHF Kh2 sr 1RRV Ra8J mBtkWI 1u k uZT F2B 4p8 E7 Y3p0 DX20 JM3XzQ tZ 3 bMC vM4 DEA wB Fp8q YKpL So1a5s dR P fTg 5R6 7v1 T4 eCJ1 qg14 CTK7u7 ag j Q0A tZ1 Nh6 hk Sys5 CWon IOqgCL 3u 7 feR BHz odS Jp 7JH8 u6Rw sYE0mc P4 r LaW Atl yRw kH F3ei UyhI iA19ZB u8 m ywf 42n uyX 0e ljCt 3Lkd 1eUQEZ oO Z rA2 Oqf oQ5 Ca hrBy KzFg DOseim 0j Y BmX csL Ayc cC JBTZ PEjy zPb5hZ KW O xT6 dyt u82 Ia htpD m75Y DktQvd Nj W jIQ H1B Ace SZ KVVP 136v L8XhMm 1O H Kn2 gUy kFU wN 8JML Bqmn vGuwGR oW U oNZ Y2P nmS 5g QMcR YHxL yHuDo8 ba w aqM NYt onW u2 YIOz eB6R wHuGcn fi o 47U PM5 tOj sz QBNq 7mco fCNjou 83 e mcY 81s vsI 2Y DS3S yloB Nx5FBV Bc 9 6HZ EOX UO3 W1 fIF5 jtEM W6KW7D 63 t H0F CVT Zup Pl A9aI oN2s f1Bw31 gg L FoD O0M x18 oo heEd KgZB Cqdqpa sa H Fhx BrE aRg Au I5dq mWWB MuHfv9 0y S PtG hFF dYJ JL f3Ap k5Ck Szr0Kb Vd i sQk uSA JEn DT YkjP AEMu a0VCtC Ff z 9R6 Vht 8Ua cB e7op AnGa 7AbLWj Hc s nAR GMb n7a 9n paMf lftM 7jvb20 0T W xUC 4lt e92 9j oZrA IuIa o1Zqdr oC L 55L T4Q 8kN yv sIzP x4i5 9lKTq2 JB B sZb QCE Ctw ar VBMT H1QR 6v5srW hR r D4r wf8 ik7 KH Egee rFVT ErONml Q5 L R8v XNZ LB3 9U DzRH ZbH9 fTBhRw kA 2 n3p g4I grH xd fEFu z6RE tDqPdw N7 H TVt cE1 8hW 6y n4Gn nCE3 MEQ51i Ps G Z2G Lbt CSt hu zvPF eE28 MM23ug TC d j7z 7Av TLa 1A GLiJ 5JwW CiDPyM qa 8 tAK QZ9 cfP 42 kuUz V3h6 GsGFoW m9 h cfj 51d GtW yZ zC5D aVt2 Wi5IIs gD B 0cX LM1 FtE xE RIZI Z0Rt QUtWcU Cm F mSj xvW pZc gl dopk 0D7a EouRku Id O ZdW FOR uqb PY 6HkW OVi7 FuVMLW nx p SaN omk rC5 uI ZK9C jpJy UIeO6k gb 7 tr2 SCY x5F 11 S6Xq OImr s7vv0u vA g rb9 hGP Fnk RM j92H gczJ 660kHb BB l QSI OY7 FcX 0c uyDl Ljsifpoierjsodfgupoefasdfgjsdfgjsdfgjsdjflxncvzxnvasdjfaopsruosihjsdfghajsdflahgfsif80}   \end{align}  As a consequence of \eqref{sifpoierjsodfgupoefasdfgjsdfgjsdfgjsdjflxncvzxnvasdjfaopsruosihjsdfghajsdflahgfsif17}, the solution $u$ belongs to $L^3 W^{2,3}$. For future reference, we also note that $u\in L^1_{\loc} W^{1,\infty}$, which holds since   \begin{align}    \sifpoierjsodfgupoefasdfgjsdfgjsdfgjsdjflxncvzxnvasdjfaopsruosihjsdgghajsdflahgfsif \nabla u\sifpoierjsodfgupoefasdfgjsdfgjsdfgjsdjflxncvzxnvasdjfaopsruosihjsdgghajsdflahgfsif_{L^\infty}    \lec \sifpoierjsodfgupoefasdfgjsdfgjsdfgjsdjflxncvzxnvasdjfaopsruosihjsdgghajsdflahgfsif \nabla u\sifpoierjsodfgupoefasdfgjsdfgjsdfgjsdjflxncvzxnvasdjfaopsruosihjsdgghajsdflahgfsif_{L^2}^{\fractext{1}{4}}
   \sifpoierjsodfgupoefasdfgjsdfgjsdfgjsdjflxncvzxnvasdjfaopsruosihjsdgghajsdflahgfsif A_3u\sifpoierjsodfgupoefasdfgjsdfgjsdfgjsdjflxncvzxnvasdjfaopsruosihjsdgghajsdflahgfsif_{L^3}^{\fractext{3}{4}}    + \sifpoierjsodfgupoefasdfgjsdfgjsdfgjsdjflxncvzxnvasdjfaopsruosihjsdgghajsdflahgfsif \nabla u\sifpoierjsodfgupoefasdfgjsdfgjsdfgjsdjflxncvzxnvasdjfaopsruosihjsdgghajsdflahgfsif_{L^2}    \lec  \sifpoierjsodfgupoefasdfgjsdfgjsdfgjsdjflxncvzxnvasdjfaopsruosihjsdgghajsdflahgfsif \nabla u\sifpoierjsodfgupoefasdfgjsdfgjsdfgjsdjflxncvzxnvasdjfaopsruosihjsdgghajsdflahgfsif_{L^2}    + \sifpoierjsodfgupoefasdfgjsdfgjsdfgjsdjflxncvzxnvasdjfaopsruosihjsdgghajsdflahgfsif A_3u\sifpoierjsodfgupoefasdfgjsdfgjsdfgjsdjflxncvzxnvasdjfaopsruosihjsdgghajsdflahgfsif_{L^3}    ,    \label{sifpoierjsodfgupoefasdfgjsdfgjsdfgjsdjflxncvzxnvasdjfaopsruosihjsdfghajsdflahgfsif27}   \end{align} which belongs to~$L^1(0,T)$; note that in the first inequality of \eqref{sifpoierjsodfgupoefasdfgjsdfgjsdfgjsdjflxncvzxnvasdjfaopsruosihjsdfghajsdflahgfsif27}, we used the Sobolev embedding $W^{2,3}\subseteq W^{1,\infty}$ and the $W^{2,3}$ regularity of the Stokes problem. \end{proof}  \par \subsection{The density equation} \par
Now, we present an existence and uniqueness result for the density equation. \par \colr \cole \begin{Lemma} \label{P02} Let $T\in(0,\infty]$, and assume that $v \in L^3 W^{2,3}(\Omega\times[0,T_0])$ satisfies $v \sifpoierjsodfgupoefasdfgjsdfgjsdfgjsdjflxncvzxnvasdjfaopsruosihjsdhghajsdflahgfsif n = 0$ on $\partial \Omega$ and $u \in L^2 H^1(\Omega\times[0,T_0])$, for all finite $T_0\in(0,T]$. Then,    \begin{align}   \begin{split}   &\sifpoierjsodfgupoefasdfgjsdfgjsdfgjsdjflxncvzxnvasdjfaopsruosihjsdighajsdflahgfsif_t   + v \sifpoierjsodfgupoefasdfgjsdfgjsdfgjsdjflxncvzxnvasdjfaopsruosihjsdhghajsdflahgfsif \nabla \sifpoierjsodfgupoefasdfgjsdfgjsdfgjsdjflxncvzxnvasdjfaopsruosihjsdighajsdflahgfsif = -u \sifpoierjsodfgupoefasdfgjsdfgjsdfgjsdjflxncvzxnvasdjfaopsruosihjsdhghajsdflahgfsif e_2
  ,   \\&   \sifpoierjsodfgupoefasdfgjsdfgjsdfgjsdjflxncvzxnvasdjfaopsruosihjsdighajsdflahgfsif(0) = \sifpoierjsodfgupoefasdfgjsdfgjsdfgjsdjflxncvzxnvasdjfaopsruosihjsdighajsdflahgfsif_0 \in H^1   \end{split}   \label{sifpoierjsodfgupoefasdfgjsdfgjsdfgjsdjflxncvzxnvasdjfaopsruosihjsdfghajsdflahgfsif22}   \end{align} has a unique solution   \begin{equation}    \sifpoierjsodfgupoefasdfgjsdfgjsdfgjsdjflxncvzxnvasdjfaopsruosihjsdighajsdflahgfsif    \in    L^\infty H^1(\Omega\times[0,T_0]) \cap H^1 L^2(\Omega\times[0,T_0])    ,    \llabel{P4 r LaW Atl yRw kH F3ei UyhI iA19ZB u8 m ywf 42n uyX 0e ljCt 3Lkd 1eUQEZ oO Z rA2 Oqf oQ5 Ca hrBy KzFg DOseim 0j Y BmX csL Ayc cC JBTZ PEjy zPb5hZ KW O xT6 dyt u82 Ia htpD m75Y DktQvd Nj W jIQ H1B Ace SZ KVVP 136v L8XhMm 1O H Kn2 gUy kFU wN 8JML Bqmn vGuwGR oW U oNZ Y2P nmS 5g QMcR YHxL yHuDo8 ba w aqM NYt onW u2 YIOz eB6R wHuGcn fi o 47U PM5 tOj sz QBNq 7mco fCNjou 83 e mcY 81s vsI 2Y DS3S yloB Nx5FBV Bc 9 6HZ EOX UO3 W1 fIF5 jtEM W6KW7D 63 t H0F CVT Zup Pl A9aI oN2s f1Bw31 gg L FoD O0M x18 oo heEd KgZB Cqdqpa sa H Fhx BrE aRg Au I5dq mWWB MuHfv9 0y S PtG hFF dYJ JL f3Ap k5Ck Szr0Kb Vd i sQk uSA JEn DT YkjP AEMu a0VCtC Ff z 9R6 Vht 8Ua cB e7op AnGa 7AbLWj Hc s nAR GMb n7a 9n paMf lftM 7jvb20 0T W xUC 4lt e92 9j oZrA IuIa o1Zqdr oC L 55L T4Q 8kN yv sIzP x4i5 9lKTq2 JB B sZb QCE Ctw ar VBMT H1QR 6v5srW hR r D4r wf8 ik7 KH Egee rFVT ErONml Q5 L R8v XNZ LB3 9U DzRH ZbH9 fTBhRw kA 2 n3p g4I grH xd fEFu z6RE tDqPdw N7 H TVt cE1 8hW 6y n4Gn nCE3 MEQ51i Ps G Z2G Lbt CSt hu zvPF eE28 MM23ug TC d j7z 7Av TLa 1A GLiJ 5JwW CiDPyM qa 8 tAK QZ9 cfP 42 kuUz V3h6 GsGFoW m9 h cfj 51d GtW yZ zC5D aVt2 Wi5IIs gD B 0cX LM1 FtE xE RIZI Z0Rt QUtWcU Cm F mSj xvW pZc gl dopk 0D7a EouRku Id O ZdW FOR uqb PY 6HkW OVi7 FuVMLW nx p SaN omk rC5 uI ZK9C jpJy UIeO6k gb 7 tr2 SCY x5F 11 S6Xq OImr s7vv0u vA g rb9 hGP Fnk RM j92H gczJ 660kHb BB l QSI OY7 FcX 0c uyDl LjbU 3F6vZk Gb a KaM ufj uxp n4 Mi45 7MoL NW3eIm cj 6 OOS e59 afA hg lt9S BOiF cYQipj 5u N 19N KZ5 Czc 23 1wxG x1ut gJB4ue Mx x 5lr s8g VbZ s1 NEfI 02Rb pkfEOZ E4 e seo 9te NRU Ai nujf eJYa Ehns0Y 6X R UF1 PCf 5eE AL 9DL6 a2vm BAU5Au DD t yQN 5YL LWw PW GjMt 4hu4 FIoLCZ Lx e BVY 5lZ DCD 5Y yBwO IJeH VQssifpoierjsodfgupoefasdfgjsdfgjsdfgjsdjflxncvzxnvasdjfaopsruosihjsdfghajsdflahgfsif129}   \end{equation}
for all finite $T_0\in(0,T]$. Moreover, the inequality   \begin{align}   \sifpoierjsodfgupoefasdfgjsdfgjsdfgjsdjflxncvzxnvasdjfaopsruosihjsdgghajsdflahgfsif \nabla \sifpoierjsodfgupoefasdfgjsdfgjsdfgjsdjflxncvzxnvasdjfaopsruosihjsdighajsdflahgfsif\sifpoierjsodfgupoefasdfgjsdfgjsdfgjsdjflxncvzxnvasdjfaopsruosihjsdgghajsdflahgfsif_{L^2}   \lec   \biggl(\sifpoierjsodfgupoefasdfgjsdfgjsdfgjsdjflxncvzxnvasdjfaopsruosihjsdgghajsdflahgfsif \sifpoierjsodfgupoefasdfgjsdfgjsdfgjsdjflxncvzxnvasdjfaopsruosihjsdighajsdflahgfsif_0\sifpoierjsodfgupoefasdfgjsdfgjsdfgjsdjflxncvzxnvasdjfaopsruosihjsdgghajsdflahgfsif_{H^1}            + \sifpoierjsodfgupoefasdfgjsdfgjsdfgjsdjflxncvzxnvasdjfaopsruosihjsdfghajsdflahgfsif_{0}^{t} \sifpoierjsodfgupoefasdfgjsdfgjsdfgjsdjflxncvzxnvasdjfaopsruosihjsdgghajsdflahgfsif u\sifpoierjsodfgupoefasdfgjsdfgjsdfgjsdjflxncvzxnvasdjfaopsruosihjsdgghajsdflahgfsif_{H^1} \,ds   \biggr)   \exp\biggl(C \sifpoierjsodfgupoefasdfgjsdfgjsdfgjsdjflxncvzxnvasdjfaopsruosihjsdfghajsdflahgfsif_{0}^{t} \sifpoierjsodfgupoefasdfgjsdfgjsdfgjsdjflxncvzxnvasdjfaopsruosihjsdgghajsdflahgfsif v\sifpoierjsodfgupoefasdfgjsdfgjsdfgjsdjflxncvzxnvasdjfaopsruosihjsdgghajsdflahgfsif_{W^{1,\infty}} \,ds\biggr)    ,   \label{sifpoierjsodfgupoefasdfgjsdfgjsdfgjsdjflxncvzxnvasdjfaopsruosihjsdfghajsdflahgfsif36}   \end{align} for all finite $t\in[0,T]$. \end{Lemma}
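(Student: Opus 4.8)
\emph{Strategy.} The system \eqref{sifpoierjsodfgupoefasdfgjsdfgjsdfgjsdjflxncvzxnvasdjfaopsruosihjsdfghajsdflahgfsif22} is a linear transport equation $\rho_t+v\cdot\nabla\rho=f$ with forcing $f=-u\cdot e_2$, and the density carries no boundary condition. In two dimensions $W^{2,3}(\Omega)\subseteq W^{1,\infty}(\Omega)$, so the hypothesis $v\in L^3 W^{2,3}$ gives $\int_0^{T_0}\|v\|_{W^{1,\infty}}\,ds<\infty$ for every finite $T_0$; thus $v$ is spatially Lipschitz with an integrable-in-time Lipschitz constant, which is exactly the borderline needed for a well-defined flow. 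The plan is to extend all data to the plane by a bounded Sobolev extension operator, solve the transport equation on $\RR^2$ where no boundary obstructs the $H^1$ energy estimate, and then restrict back to $\Omega$ using that the equation is a pointwise (local) identity. Concretely, I would fix a finite $T_0$ and choose extensions $\tilde v,\tilde u,\tilde\rho_0$ with $\|\tilde v\|_{L^3 W^{2,3}(\RR^2)}\lec\|v\|_{L^3 W^{2,3}(\Omega)}$, $\|\tilde u\|_{L^2 H^1(\RR^2)}\lec\|u\|_{L^2 H^1(\Omega)}$, and $\|\tilde\rho_0\|_{H^1(\RR^2)}\lec\|\rho_0\|_{H^1(\Omega)}$.

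\emph{Existence on $\RR^2$.} First I would mollify $\tilde v,\tilde u,\tilde\rho_0$ in space to smooth approximations $\tilde v^\varepsilon,\tilde u^\varepsilon,\tilde\rho_0^\varepsilon$ and solve the smooth problem $\tilde\rho^\varepsilon_t+\tilde v^\varepsilon\cdot\nabla\tilde\rho^\varepsilon=-\tilde u^\varepsilon\cdot e_2$ classically by characteristics, the flow being well-defined since $\tilde v^\varepsilon\in L^1_t C^1_x$. The central step is a uniform-in-$\varepsilon$ bound: differentiating in $x_i$, pairing with $\partial_i\tilde\rho^\varepsilon$, and integrating over $\RR^2$, the transport term integrates by parts into a contribution controlled by $\|\tilde v^\varepsilon\|_{W^{1,\infty}}\|\nabla\tilde\rho^\varepsilon\|_{L^2}^2$ (through $\div\tilde v^\varepsilon$, with no divergence-free assumption needed) and, crucially, with \emph{no} boundary term because we are on $\RR^2$. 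This gives
\begin{equation}
\frac12\frac{d}{dt}\|\nabla\tilde\rho^\varepsilon\|_{L^2}^2 \lec \|\tilde v^\varepsilon\|_{W^{1,\infty}}\|\nabla\tilde\rho^\varepsilon\|_{L^2}^2 + \|\tilde u^\varepsilon\|_{H^1}\|\nabla\tilde\rho^\varepsilon\|_{L^2},
\end{equation}
and, after dividing by $\|\nabla\tilde\rho^\varepsilon\|_{L^2}$ and applying Gronwall,
\begin{equation}
\|\nabla\tilde\rho^\varepsilon\|_{L^2} \lec \Bigl(\|\tilde\rho_0^\varepsilon\|_{H^1}+\int_0^t\|\tilde u^\varepsilon\|_{H^1}\,ds\Bigr)\exp\Bigl(C\int_0^t\|\tilde v^\varepsilon\|_{W^{1,\infty}}\,ds\Bigr).
\end{equation}
Together with the elementary $L^2$ bound on $\tilde\rho^\varepsilon$, this shows $\tilde\rho^\varepsilon$ is bounded in $L^\infty H^1$ uniformly in $\varepsilon$; passing to a weak-$*$ limit (and using strong convergence of the mollified coefficients to retain the equation in the limit) produces $\tilde\rho\in L^\infty H^1(\RR^2\times[0,T_0])$ solving the extended problem and obeying the $\RR^2$-analogue of \eqref{sifpoierjsodfgupoefasdfgjsdfgjsdfgjsdjflxncvzxnvasdjfaopsruosihjsdfghajsdflahgfsif36}.

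\emph{Restriction, time regularity, and uniqueness.} Since $\tilde v=v$, $\tilde u=u$ on $\Omega$ and differentiation is local, the restriction $\rho:=\tilde\rho|_\Omega$ satisfies $\rho_t+v\cdot\nabla\rho=-u\cdot e_2$ a.e.\ on $\Omega\times[0,T_0]$ with $\rho(0)=\rho_0$, and the extension bounds convert the $\RR^2$ estimate into \eqref{sifpoierjsodfgupoefasdfgjsdfgjsdfgjsdjflxncvzxnvasdjfaopsruosihjsdfghajsdflahgfsif36}. For the time regularity I would read $\rho_t=-v\cdot\nabla\rho-u\cdot e_2$ off the equation: as $v\in L^3 L^\infty$ and $\nabla\rho\in L^\infty L^2$ while $u\in L^2 L^2$, both terms lie in $L^2 L^2$ on finite intervals, so $\rho\in H^1 L^2$. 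Uniqueness I would prove directly on $\Omega$: the difference $w$ of two solutions solves $w_t+v\cdot\nabla w=0$, $w(0)=0$, and testing with $w$ yields $\tfrac12\frac{d}{dt}\|w\|_{L^2}^2=\tfrac12\int_\Omega(\div v)w^2-\tfrac12\int_{\partial\Omega}(v\cdot n)w^2$, where the boundary condition $v\cdot n=0$ kills the surface integral and Gronwall forces $w\equiv0$.

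\emph{Main obstacle.} The hard part is not the formal energy estimate but securing genuine $H^1$-in-space regularity rather than merely a distributional or renormalized solution: the DiPerna--Lions framework supplies well-posedness and a stable flow but does not by itself propagate $H^1$ regularity. The extension-and-mollification scheme on $\RR^2$ is precisely what renders the integration by parts behind \eqref{sifpoierjsodfgupoefasdfgjsdfgjsdfgjsdjflxncvzxnvasdjfaopsruosihjsdfghajsdflahgfsif36} rigorous and boundary-free, while the hypothesis $v\cdot n=0$ is exactly what is needed to close the uniqueness argument back on the bounded domain $\Omega$.
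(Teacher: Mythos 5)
Your proposal is correct and follows essentially the same route as the paper: extend $v$, $u$, and $\rho_0$ to $\mathbb{R}^2$ by a total Sobolev extension operator, regularize, run the boundary-free $H^1$ energy estimate with Gronwall using $W^{2,3}\subseteq W^{1,\infty}$ in two dimensions, pass to the weak-$*$ limit, and restrict to $\Omega$, reading the $H^1L^2$ regularity off the equation. The uniqueness argument via testing the difference on $\Omega$ (with $v\cdot n=0$ killing the boundary term) is also the paper's.
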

\colb \par The proof shows that the assumption $v \in L^3 W^{2,3}(\Omega\times[0,T_0])$ can be weakened, but the stated regularity suffices for our purposes. \par \begin{proof}[Proof of Lemma~\ref{P02}] The uniqueness for the equation with spatial domain 
$\Omega$ immediately follows by testing with the difference of two solutions. Therefore, it is sufficient to prove the statement for a  fixed finite $T>0$. As in the previous proof, all the Lebesgue spaces in space-time are understood to be over $\Omega\times [0,T]$, while the Lebesgue spaces in time are on~$[0,T]$. \par
Consider a total extension operator $E$ extending Sobolev functions from $\Omega$ to $\mathbb{R}^2$; see \cite[p.~181]{S}. First, denote $\tilde{v} = E(v)$, $\tilde{u} = E(u)$, and $\tilde{\sifpoierjsodfgupoefasdfgjsdfgjsdfgjsdjflxncvzxnvasdjfaopsruosihjsdighajsdflahgfsif}_0 = E(\sifpoierjsodfgupoefasdfgjsdfgjsdfgjsdjflxncvzxnvasdjfaopsruosihjsdighajsdflahgfsif_0)$,  and then consider the equation   \begin{align}   \begin{split}   &   \bar{\sifpoierjsodfgupoefasdfgjsdfgjsdfgjsdjflxncvzxnvasdjfaopsruosihjsdighajsdflahgfsif}_t    + \tilde{v} \sifpoierjsodfgupoefasdfgjsdfgjsdfgjsdjflxncvzxnvasdjfaopsruosihjsdhghajsdflahgfsif \nabla \bar{\sifpoierjsodfgupoefasdfgjsdfgjsdfgjsdjflxncvzxnvasdjfaopsruosihjsdighajsdflahgfsif}   = -\tilde{u} \sifpoierjsodfgupoefasdfgjsdfgjsdfgjsdjflxncvzxnvasdjfaopsruosihjsdhghajsdflahgfsif e_2   \comma (x,t) \in \mathbb{R}^2 \times [0,T]   ,   \\&
  \bar{\sifpoierjsodfgupoefasdfgjsdfgjsdfgjsdjflxncvzxnvasdjfaopsruosihjsdighajsdflahgfsif}(0)   = \tilde{\sifpoierjsodfgupoefasdfgjsdfgjsdfgjsdjflxncvzxnvasdjfaopsruosihjsdighajsdflahgfsif}_0   \llabel{qM NYt onW u2 YIOz eB6R wHuGcn fi o 47U PM5 tOj sz QBNq 7mco fCNjou 83 e mcY 81s vsI 2Y DS3S yloB Nx5FBV Bc 9 6HZ EOX UO3 W1 fIF5 jtEM W6KW7D 63 t H0F CVT Zup Pl A9aI oN2s f1Bw31 gg L FoD O0M x18 oo heEd KgZB Cqdqpa sa H Fhx BrE aRg Au I5dq mWWB MuHfv9 0y S PtG hFF dYJ JL f3Ap k5Ck Szr0Kb Vd i sQk uSA JEn DT YkjP AEMu a0VCtC Ff z 9R6 Vht 8Ua cB e7op AnGa 7AbLWj Hc s nAR GMb n7a 9n paMf lftM 7jvb20 0T W xUC 4lt e92 9j oZrA IuIa o1Zqdr oC L 55L T4Q 8kN yv sIzP x4i5 9lKTq2 JB B sZb QCE Ctw ar VBMT H1QR 6v5srW hR r D4r wf8 ik7 KH Egee rFVT ErONml Q5 L R8v XNZ LB3 9U DzRH ZbH9 fTBhRw kA 2 n3p g4I grH xd fEFu z6RE tDqPdw N7 H TVt cE1 8hW 6y n4Gn nCE3 MEQ51i Ps G Z2G Lbt CSt hu zvPF eE28 MM23ug TC d j7z 7Av TLa 1A GLiJ 5JwW CiDPyM qa 8 tAK QZ9 cfP 42 kuUz V3h6 GsGFoW m9 h cfj 51d GtW yZ zC5D aVt2 Wi5IIs gD B 0cX LM1 FtE xE RIZI Z0Rt QUtWcU Cm F mSj xvW pZc gl dopk 0D7a EouRku Id O ZdW FOR uqb PY 6HkW OVi7 FuVMLW nx p SaN omk rC5 uI ZK9C jpJy UIeO6k gb 7 tr2 SCY x5F 11 S6Xq OImr s7vv0u vA g rb9 hGP Fnk RM j92H gczJ 660kHb BB l QSI OY7 FcX 0c uyDl LjbU 3F6vZk Gb a KaM ufj uxp n4 Mi45 7MoL NW3eIm cj 6 OOS e59 afA hg lt9S BOiF cYQipj 5u N 19N KZ5 Czc 23 1wxG x1ut gJB4ue Mx x 5lr s8g VbZ s1 NEfI 02Rb pkfEOZ E4 e seo 9te NRU Ai nujf eJYa Ehns0Y 6X R UF1 PCf 5eE AL 9DL6 a2vm BAU5Au DD t yQN 5YL LWw PW GjMt 4hu4 FIoLCZ Lx e BVY 5lZ DCD 5Y yBwO IJeH VQsKob Yd q fCX 1to mCb Ej 5m1p Nx9p nLn5A3 g7 U v77 7YU gBR lN rTyj shaq BZXeAF tj y FlW jfc 57t 2f abx5 Ns4d clCMJc Tl q kfq uFD iSd DP eX6m YLQz JzUmH0 43 M lgF edN mXQ Pj Aoba 07MY wBaC4C nj I 4dw KCZ PO9 wx 3en8 AoqX 7JjN8K lq j Q5c bMS dhR Fs tQ8Q r2ve 2HT0uO 5W j TAi iIW n1C Wr U1BH BMvJ 3ywmAd qNsifpoierjsodfgupoefasdfgjsdfgjsdfgjsdjflxncvzxnvasdjfaopsruosihjsdfghajsdflahgfsif44}   \end{split}   \end{align} for~$\bar{\sifpoierjsodfgupoefasdfgjsdfgjsdfgjsdjflxncvzxnvasdjfaopsruosihjsdighajsdflahgfsif}$. We regularize $\tilde{v}$, $\tilde{u}$, and $\tilde{\sifpoierjsodfgupoefasdfgjsdfgjsdfgjsdjflxncvzxnvasdjfaopsruosihjsdighajsdflahgfsif}_0$ by taking $\tilde{v}^m, \tilde{u}^m \in C^\infty(\mathbb{R}^2\times[0,T])$ and $\tilde{\sifpoierjsodfgupoefasdfgjsdfgjsdfgjsdjflxncvzxnvasdjfaopsruosihjsdighajsdflahgfsif}_0^m \in  C^\infty(\mathbb{R}^2)$, all with compact support in space, such that $\tilde{v}^m$, $\tilde{u}^m$, and $\tilde{\sifpoierjsodfgupoefasdfgjsdfgjsdfgjsdjflxncvzxnvasdjfaopsruosihjsdighajsdflahgfsif}_0^m$ converge to $\tilde{v}$, $\tilde{u}$, and $\tilde{\sifpoierjsodfgupoefasdfgjsdfgjsdfgjsdjflxncvzxnvasdjfaopsruosihjsdighajsdflahgfsif}_0$ in the spaces $L^3W^{2,3}(\mathbb{R}^2)$, $L^2 H^1(\mathbb{R}^2)$, and $H^1(\mathbb{R}^2)$, respectively.
Then   \begin{align}   \begin{split}   &   \bar{\sifpoierjsodfgupoefasdfgjsdfgjsdfgjsdjflxncvzxnvasdjfaopsruosihjsdighajsdflahgfsif}^m_t    + \tilde{v}^m \sifpoierjsodfgupoefasdfgjsdfgjsdfgjsdjflxncvzxnvasdjfaopsruosihjsdhghajsdflahgfsif \nabla \bar{\sifpoierjsodfgupoefasdfgjsdfgjsdfgjsdjflxncvzxnvasdjfaopsruosihjsdighajsdflahgfsif}^m   = - \tilde{u}^m \sifpoierjsodfgupoefasdfgjsdfgjsdfgjsdjflxncvzxnvasdjfaopsruosihjsdhghajsdflahgfsif e_2   ,   \\&   \bar{\sifpoierjsodfgupoefasdfgjsdfgjsdfgjsdjflxncvzxnvasdjfaopsruosihjsdighajsdflahgfsif}^m(0)   = \tilde{\sifpoierjsodfgupoefasdfgjsdfgjsdfgjsdjflxncvzxnvasdjfaopsruosihjsdighajsdflahgfsif}^m_0   ,   \end{split}   \label{sifpoierjsodfgupoefasdfgjsdfgjsdfgjsdjflxncvzxnvasdjfaopsruosihjsdfghajsdflahgfsif34} 
  \end{align} which is now defined over $\mathbb{R}^2$, rather than $\Omega$,  has a unique smooth solution~$\bar\sifpoierjsodfgupoefasdfgjsdfgjsdfgjsdjflxncvzxnvasdjfaopsruosihjsdighajsdflahgfsif^{m}$. To obtain the bounds needed to pass to the limit, we apply $\nabla$ to the first equation in \eqref{sifpoierjsodfgupoefasdfgjsdfgjsdfgjsdjflxncvzxnvasdjfaopsruosihjsdfghajsdflahgfsif34} and test it with $\nabla \bar{\sifpoierjsodfgupoefasdfgjsdfgjsdfgjsdjflxncvzxnvasdjfaopsruosihjsdighajsdflahgfsif}^{m}$ obtaining   \begin{align}   \frac{1}{2} \frac{d}{dt} \sifpoierjsodfgupoefasdfgjsdfgjsdfgjsdjflxncvzxnvasdjfaopsruosihjsdgghajsdflahgfsif \nabla \bar{\sifpoierjsodfgupoefasdfgjsdfgjsdfgjsdjflxncvzxnvasdjfaopsruosihjsdighajsdflahgfsif}^m\sifpoierjsodfgupoefasdfgjsdfgjsdfgjsdjflxncvzxnvasdjfaopsruosihjsdgghajsdflahgfsif_{L^2}^2   = -(\partial_j \tilde{v}^m_i \partial_i \bar{\sifpoierjsodfgupoefasdfgjsdfgjsdfgjsdjflxncvzxnvasdjfaopsruosihjsdighajsdflahgfsif}^m, \partial_j \bar{\sifpoierjsodfgupoefasdfgjsdfgjsdfgjsdjflxncvzxnvasdjfaopsruosihjsdighajsdflahgfsif}^m)   - (\tilde{v}^m_i \partial_{ij} \bar{\sifpoierjsodfgupoefasdfgjsdfgjsdfgjsdjflxncvzxnvasdjfaopsruosihjsdighajsdflahgfsif}^m, \partial_j \bar{\sifpoierjsodfgupoefasdfgjsdfgjsdfgjsdjflxncvzxnvasdjfaopsruosihjsdighajsdflahgfsif}^m)   - (\nabla \tilde{u}^m \sifpoierjsodfgupoefasdfgjsdfgjsdfgjsdjflxncvzxnvasdjfaopsruosihjsdhghajsdflahgfsif e_2, \nabla \bar{\sifpoierjsodfgupoefasdfgjsdfgjsdfgjsdjflxncvzxnvasdjfaopsruosihjsdighajsdflahgfsif}^m)   ,   \label{sifpoierjsodfgupoefasdfgjsdfgjsdfgjsdjflxncvzxnvasdjfaopsruosihjsdfghajsdflahgfsif35}   \end{align}
where the scalar products are understood to be in~$L^{2}(\mathbb{R}^2)$, Since     $(\tilde{v}^m_i \partial_{ij} \tilde{\sifpoierjsodfgupoefasdfgjsdfgjsdfgjsdjflxncvzxnvasdjfaopsruosihjsdighajsdflahgfsif}^m, \partial_j \bar{\sifpoierjsodfgupoefasdfgjsdfgjsdfgjsdjflxncvzxnvasdjfaopsruosihjsdighajsdflahgfsif}^m)     = - \frac12((\div\tilde{v}^m) \nabla \bar{\sifpoierjsodfgupoefasdfgjsdfgjsdfgjsdjflxncvzxnvasdjfaopsruosihjsdighajsdflahgfsif}^m, \nabla \bar{\sifpoierjsodfgupoefasdfgjsdfgjsdfgjsdjflxncvzxnvasdjfaopsruosihjsdighajsdflahgfsif}^m)$, the equation \eqref{sifpoierjsodfgupoefasdfgjsdfgjsdfgjsdjflxncvzxnvasdjfaopsruosihjsdfghajsdflahgfsif35} implies    \begin{align}   \frac{d}{dt} \sifpoierjsodfgupoefasdfgjsdfgjsdfgjsdjflxncvzxnvasdjfaopsruosihjsdgghajsdflahgfsif \nabla \bar{\sifpoierjsodfgupoefasdfgjsdfgjsdfgjsdjflxncvzxnvasdjfaopsruosihjsdighajsdflahgfsif}^m\sifpoierjsodfgupoefasdfgjsdfgjsdfgjsdjflxncvzxnvasdjfaopsruosihjsdgghajsdflahgfsif_{L^2}^2   \lec        \sifpoierjsodfgupoefasdfgjsdfgjsdfgjsdjflxncvzxnvasdjfaopsruosihjsdgghajsdflahgfsif \nabla \tilde{v}^m\sifpoierjsodfgupoefasdfgjsdfgjsdfgjsdjflxncvzxnvasdjfaopsruosihjsdgghajsdflahgfsif_{L^\infty}   \sifpoierjsodfgupoefasdfgjsdfgjsdfgjsdjflxncvzxnvasdjfaopsruosihjsdgghajsdflahgfsif \nabla \bar{\sifpoierjsodfgupoefasdfgjsdfgjsdfgjsdjflxncvzxnvasdjfaopsruosihjsdighajsdflahgfsif}^m\sifpoierjsodfgupoefasdfgjsdfgjsdfgjsdjflxncvzxnvasdjfaopsruosihjsdgghajsdflahgfsif_{L^2}^2   + \sifpoierjsodfgupoefasdfgjsdfgjsdfgjsdjflxncvzxnvasdjfaopsruosihjsdgghajsdflahgfsif \nabla \tilde{u}^m\sifpoierjsodfgupoefasdfgjsdfgjsdfgjsdjflxncvzxnvasdjfaopsruosihjsdgghajsdflahgfsif_{L^2}   \sifpoierjsodfgupoefasdfgjsdfgjsdfgjsdjflxncvzxnvasdjfaopsruosihjsdgghajsdflahgfsif \nabla \bar{\sifpoierjsodfgupoefasdfgjsdfgjsdfgjsdjflxncvzxnvasdjfaopsruosihjsdighajsdflahgfsif}^m\sifpoierjsodfgupoefasdfgjsdfgjsdfgjsdjflxncvzxnvasdjfaopsruosihjsdgghajsdflahgfsif_{L^2}.      \llabel{ JEn DT YkjP AEMu a0VCtC Ff z 9R6 Vht 8Ua cB e7op AnGa 7AbLWj Hc s nAR GMb n7a 9n paMf lftM 7jvb20 0T W xUC 4lt e92 9j oZrA IuIa o1Zqdr oC L 55L T4Q 8kN yv sIzP x4i5 9lKTq2 JB B sZb QCE Ctw ar VBMT H1QR 6v5srW hR r D4r wf8 ik7 KH Egee rFVT ErONml Q5 L R8v XNZ LB3 9U DzRH ZbH9 fTBhRw kA 2 n3p g4I grH xd fEFu z6RE tDqPdw N7 H TVt cE1 8hW 6y n4Gn nCE3 MEQ51i Ps G Z2G Lbt CSt hu zvPF eE28 MM23ug TC d j7z 7Av TLa 1A GLiJ 5JwW CiDPyM qa 8 tAK QZ9 cfP 42 kuUz V3h6 GsGFoW m9 h cfj 51d GtW yZ zC5D aVt2 Wi5IIs gD B 0cX LM1 FtE xE RIZI Z0Rt QUtWcU Cm F mSj xvW pZc gl dopk 0D7a EouRku Id O ZdW FOR uqb PY 6HkW OVi7 FuVMLW nx p SaN omk rC5 uI ZK9C jpJy UIeO6k gb 7 tr2 SCY x5F 11 S6Xq OImr s7vv0u vA g rb9 hGP Fnk RM j92H gczJ 660kHb BB l QSI OY7 FcX 0c uyDl LjbU 3F6vZk Gb a KaM ufj uxp n4 Mi45 7MoL NW3eIm cj 6 OOS e59 afA hg lt9S BOiF cYQipj 5u N 19N KZ5 Czc 23 1wxG x1ut gJB4ue Mx x 5lr s8g VbZ s1 NEfI 02Rb pkfEOZ E4 e seo 9te NRU Ai nujf eJYa Ehns0Y 6X R UF1 PCf 5eE AL 9DL6 a2vm BAU5Au DD t yQN 5YL LWw PW GjMt 4hu4 FIoLCZ Lx e BVY 5lZ DCD 5Y yBwO IJeH VQsKob Yd q fCX 1to mCb Ej 5m1p Nx9p nLn5A3 g7 U v77 7YU gBR lN rTyj shaq BZXeAF tj y FlW jfc 57t 2f abx5 Ns4d clCMJc Tl q kfq uFD iSd DP eX6m YLQz JzUmH0 43 M lgF edN mXQ Pj Aoba 07MY wBaC4C nj I 4dw KCZ PO9 wx 3en8 AoqX 7JjN8K lq j Q5c bMS dhR Fs tQ8Q r2ve 2HT0uO 5W j TAi iIW n1C Wr U1BH BMvJ 3ywmAd qN D LY8 lbx XMx 0D Dvco 3RL9 Qz5eqy wV Y qEN nO8 MH0 PY zeVN i3yb 2msNYY Wz G 2DC PoG 1Vb Bx e9oZ GcTU 3AZuEK bk p 6rN eTX 0DS Mc zd91 nbSV DKEkVa zI q NKU Qap NBP 5B 32Ey prwP FLvuPi wR P l1G TdQ BZE Aw 3d90 v8P5 CPAnX4 Yo 2 q7s yr5 BW8 Hc T7tM ioha BW9U4q rb u mEQ 6Xz MKR 2B REFX k3ZO MVMYSw 9S F 5eksifpoierjsodfgupoefasdfgjsdfgjsdfgjsdjflxncvzxnvasdjfaopsruosihjsdfghajsdflahgfsif81}   \end{align}
Hence, upon canceling  $\sifpoierjsodfgupoefasdfgjsdfgjsdfgjsdjflxncvzxnvasdjfaopsruosihjsdgghajsdflahgfsif \nabla \bar{\sifpoierjsodfgupoefasdfgjsdfgjsdfgjsdjflxncvzxnvasdjfaopsruosihjsdighajsdflahgfsif}^m\sifpoierjsodfgupoefasdfgjsdfgjsdfgjsdjflxncvzxnvasdjfaopsruosihjsdgghajsdflahgfsif_{L^2}$ and applying the Gronwall inequality, it follows that \begin{align}   \sifpoierjsodfgupoefasdfgjsdfgjsdfgjsdjflxncvzxnvasdjfaopsruosihjsdgghajsdflahgfsif \nabla \bar{\sifpoierjsodfgupoefasdfgjsdfgjsdfgjsdjflxncvzxnvasdjfaopsruosihjsdighajsdflahgfsif}^m\sifpoierjsodfgupoefasdfgjsdfgjsdfgjsdjflxncvzxnvasdjfaopsruosihjsdgghajsdflahgfsif_{L^\infty L^2(\mathbb{R}^2)}   \lec    \biggl(   \sifpoierjsodfgupoefasdfgjsdfgjsdfgjsdjflxncvzxnvasdjfaopsruosihjsdgghajsdflahgfsif \nabla \tilde{\sifpoierjsodfgupoefasdfgjsdfgjsdfgjsdjflxncvzxnvasdjfaopsruosihjsdighajsdflahgfsif}_0^m\sifpoierjsodfgupoefasdfgjsdfgjsdfgjsdjflxncvzxnvasdjfaopsruosihjsdgghajsdflahgfsif_{L^2(\mathbb{R}^2)}   +    \sifpoierjsodfgupoefasdfgjsdfgjsdfgjsdjflxncvzxnvasdjfaopsruosihjsdfghajsdflahgfsif_{0}^{T}    \sifpoierjsodfgupoefasdfgjsdfgjsdfgjsdjflxncvzxnvasdjfaopsruosihjsdgghajsdflahgfsif \nabla \tilde{u}^m\sifpoierjsodfgupoefasdfgjsdfgjsdfgjsdjflxncvzxnvasdjfaopsruosihjsdgghajsdflahgfsif_{L^2(\mathbb{R}^2)}    \,ds\biggr)   \exp\biggl(   C \sifpoierjsodfgupoefasdfgjsdfgjsdfgjsdjflxncvzxnvasdjfaopsruosihjsdfghajsdflahgfsif_{0}^{T} 
  \sifpoierjsodfgupoefasdfgjsdfgjsdfgjsdjflxncvzxnvasdjfaopsruosihjsdgghajsdflahgfsif \nabla \tilde{v}^m\sifpoierjsodfgupoefasdfgjsdfgjsdfgjsdjflxncvzxnvasdjfaopsruosihjsdgghajsdflahgfsif_{L^\infty(\mathbb{R}^2)}   \,ds\biggr)   .   \label{sifpoierjsodfgupoefasdfgjsdfgjsdfgjsdjflxncvzxnvasdjfaopsruosihjsdfghajsdflahgfsif14} \end{align}   Note that the right-hand side of this inequality is  uniformly bounded in~$m \in \mathbb{N}$. Indeed, $\bar{\sifpoierjsodfgupoefasdfgjsdfgjsdfgjsdjflxncvzxnvasdjfaopsruosihjsdighajsdflahgfsif}^m_0$ and  $\nabla \tilde{u}^m$ are convergent  in $H^1(\mathbb{R}^2)$, and $L^2 H^1(\mathbb{R}^2)$ respectively, and $\tilde{v}^m$ converges to $\tilde{v}$  in $L^1 W^{1,\infty}$ due to the Gagliardo-Nirenberg type of  inequality parallel to~\eqref{sifpoierjsodfgupoefasdfgjsdfgjsdfgjsdjflxncvzxnvasdjfaopsruosihjsdfghajsdflahgfsif27}.
By passing to a subsequence, we may assume that $\bar{\sifpoierjsodfgupoefasdfgjsdfgjsdfgjsdjflxncvzxnvasdjfaopsruosihjsdighajsdflahgfsif}^m$ has a weak-* limit  $\bar{\sifpoierjsodfgupoefasdfgjsdfgjsdfgjsdjflxncvzxnvasdjfaopsruosihjsdighajsdflahgfsif}$ in $L^\infty H^1(\mathbb{R}^2)$. Then, for $\sifpoierjsodfgupoefasdfgjsdfgjsdfgjsdjflxncvzxnvasdjfaopsruosihjsdkghajsdflahgfsif \in C_\cc^\infty(\mathbb{R}^2\times [0,T])$, we have   \begin{align}   \sifpoierjsodfgupoefasdfgjsdfgjsdfgjsdjflxncvzxnvasdjfaopsruosihjsdfghajsdflahgfsif_{0}^{T}    (\tilde{v}^m \sifpoierjsodfgupoefasdfgjsdfgjsdfgjsdjflxncvzxnvasdjfaopsruosihjsdhghajsdflahgfsif \nabla \bar{\sifpoierjsodfgupoefasdfgjsdfgjsdfgjsdjflxncvzxnvasdjfaopsruosihjsdighajsdflahgfsif}^m    - \tilde{v} \sifpoierjsodfgupoefasdfgjsdfgjsdfgjsdjflxncvzxnvasdjfaopsruosihjsdhghajsdflahgfsif \nabla \bar{\sifpoierjsodfgupoefasdfgjsdfgjsdfgjsdjflxncvzxnvasdjfaopsruosihjsdighajsdflahgfsif},   \sifpoierjsodfgupoefasdfgjsdfgjsdfgjsdjflxncvzxnvasdjfaopsruosihjsdkghajsdflahgfsif)   \,ds   =   \sifpoierjsodfgupoefasdfgjsdfgjsdfgjsdjflxncvzxnvasdjfaopsruosihjsdfghajsdflahgfsif_{0}^{T}          \Bigl( 
           ((\tilde{v}^m - \tilde{v})            \sifpoierjsodfgupoefasdfgjsdfgjsdfgjsdjflxncvzxnvasdjfaopsruosihjsdhghajsdflahgfsif \nabla \sifpoierjsodfgupoefasdfgjsdfgjsdfgjsdjflxncvzxnvasdjfaopsruosihjsdighajsdflahgfsif^m,            \sifpoierjsodfgupoefasdfgjsdfgjsdfgjsdjflxncvzxnvasdjfaopsruosihjsdkghajsdflahgfsif)             +            (\tilde{v} \sifpoierjsodfgupoefasdfgjsdfgjsdfgjsdjflxncvzxnvasdjfaopsruosihjsdhghajsdflahgfsif \nabla (\bar{\sifpoierjsodfgupoefasdfgjsdfgjsdfgjsdjflxncvzxnvasdjfaopsruosihjsdighajsdflahgfsif}^m - \bar{\sifpoierjsodfgupoefasdfgjsdfgjsdfgjsdjflxncvzxnvasdjfaopsruosihjsdighajsdflahgfsif}),            \sifpoierjsodfgupoefasdfgjsdfgjsdfgjsdjflxncvzxnvasdjfaopsruosihjsdkghajsdflahgfsif)         \Bigr) \,ds   \to   0   \comma    k \to \infty   ,    \llabel{d fEFu z6RE tDqPdw N7 H TVt cE1 8hW 6y n4Gn nCE3 MEQ51i Ps G Z2G Lbt CSt hu zvPF eE28 MM23ug TC d j7z 7Av TLa 1A GLiJ 5JwW CiDPyM qa 8 tAK QZ9 cfP 42 kuUz V3h6 GsGFoW m9 h cfj 51d GtW yZ zC5D aVt2 Wi5IIs gD B 0cX LM1 FtE xE RIZI Z0Rt QUtWcU Cm F mSj xvW pZc gl dopk 0D7a EouRku Id O ZdW FOR uqb PY 6HkW OVi7 FuVMLW nx p SaN omk rC5 uI ZK9C jpJy UIeO6k gb 7 tr2 SCY x5F 11 S6Xq OImr s7vv0u vA g rb9 hGP Fnk RM j92H gczJ 660kHb BB l QSI OY7 FcX 0c uyDl LjbU 3F6vZk Gb a KaM ufj uxp n4 Mi45 7MoL NW3eIm cj 6 OOS e59 afA hg lt9S BOiF cYQipj 5u N 19N KZ5 Czc 23 1wxG x1ut gJB4ue Mx x 5lr s8g VbZ s1 NEfI 02Rb pkfEOZ E4 e seo 9te NRU Ai nujf eJYa Ehns0Y 6X R UF1 PCf 5eE AL 9DL6 a2vm BAU5Au DD t yQN 5YL LWw PW GjMt 4hu4 FIoLCZ Lx e BVY 5lZ DCD 5Y yBwO IJeH VQsKob Yd q fCX 1to mCb Ej 5m1p Nx9p nLn5A3 g7 U v77 7YU gBR lN rTyj shaq BZXeAF tj y FlW jfc 57t 2f abx5 Ns4d clCMJc Tl q kfq uFD iSd DP eX6m YLQz JzUmH0 43 M lgF edN mXQ Pj Aoba 07MY wBaC4C nj I 4dw KCZ PO9 wx 3en8 AoqX 7JjN8K lq j Q5c bMS dhR Fs tQ8Q r2ve 2HT0uO 5W j TAi iIW n1C Wr U1BH BMvJ 3ywmAd qN D LY8 lbx XMx 0D Dvco 3RL9 Qz5eqy wV Y qEN nO8 MH0 PY zeVN i3yb 2msNYY Wz G 2DC PoG 1Vb Bx e9oZ GcTU 3AZuEK bk p 6rN eTX 0DS Mc zd91 nbSV DKEkVa zI q NKU Qap NBP 5B 32Ey prwP FLvuPi wR P l1G TdQ BZE Aw 3d90 v8P5 CPAnX4 Yo 2 q7s yr5 BW8 Hc T7tM ioha BW9U4q rb u mEQ 6Xz MKR 2B REFX k3ZO MVMYSw 9S F 5ek q0m yNK Gn H0qi vlRA 18CbEz id O iuy ZZ6 kRo oJ kLQ0 Ewmz sKlld6 Kr K JmR xls 12K G2 bv8v LxfJ wrIcU6 Hx p q6p Fy7 Oim mo dXYt Kt0V VH22OC Aj f deT BAP vPl oK QzLE OQlq dpzxJ6 JI z Ujn TqY sQ4 BD QPW6 784x NUfsk0 aM 7 8qz MuL 9Mr Ac uVVK Y55n M7WqnB 2R C pGZ vHh WUN g9 3F2e RT8U umC62V H3 Z dJX LMS csifpoierjsodfgupoefasdfgjsdfgjsdfgjsdjflxncvzxnvasdjfaopsruosihjsdfghajsdflahgfsif82}   \end{align}
by the strong and weak-* convergence in $\tilde{v}^m$  and $\bar{\sifpoierjsodfgupoefasdfgjsdfgjsdfgjsdjflxncvzxnvasdjfaopsruosihjsdighajsdflahgfsif}^m$ respectively. Dealing with the other terms similarly in the weak formulation of \eqref{sifpoierjsodfgupoefasdfgjsdfgjsdfgjsdjflxncvzxnvasdjfaopsruosihjsdfghajsdflahgfsif34},   \begin{align}   \begin{split}    &    \sifpoierjsodfgupoefasdfgjsdfgjsdfgjsdjflxncvzxnvasdjfaopsruosihjsdfghajsdflahgfsif_{0}^{t}\sifpoierjsodfgupoefasdfgjsdfgjsdfgjsdjflxncvzxnvasdjfaopsruosihjsdfghajsdflahgfsif_{\mathbb{R}^2} \bar{\sifpoierjsodfgupoefasdfgjsdfgjsdfgjsdjflxncvzxnvasdjfaopsruosihjsdighajsdflahgfsif}^{m} \sifpoierjsodfgupoefasdfgjsdfgjsdfgjsdjflxncvzxnvasdjfaopsruosihjsdkghajsdflahgfsif_t \,dxds    + \sifpoierjsodfgupoefasdfgjsdfgjsdfgjsdjflxncvzxnvasdjfaopsruosihjsdfghajsdflahgfsif_{\mathbb{R}^2}\bar\sifpoierjsodfgupoefasdfgjsdfgjsdfgjsdjflxncvzxnvasdjfaopsruosihjsdighajsdflahgfsif^{m}(0)\sifpoierjsodfgupoefasdfgjsdfgjsdfgjsdjflxncvzxnvasdjfaopsruosihjsdkghajsdflahgfsif(0) \,dxds    - \sifpoierjsodfgupoefasdfgjsdfgjsdfgjsdjflxncvzxnvasdjfaopsruosihjsdfghajsdflahgfsif_{0}^{t}\sifpoierjsodfgupoefasdfgjsdfgjsdfgjsdjflxncvzxnvasdjfaopsruosihjsdfghajsdflahgfsif_{\mathbb{R}^2} \tilde v_{j}\bar{\sifpoierjsodfgupoefasdfgjsdfgjsdfgjsdjflxncvzxnvasdjfaopsruosihjsdighajsdflahgfsif}^{m} \partial_{j}\sifpoierjsodfgupoefasdfgjsdfgjsdfgjsdjflxncvzxnvasdjfaopsruosihjsdkghajsdflahgfsif \,dxds    - \sifpoierjsodfgupoefasdfgjsdfgjsdfgjsdjflxncvzxnvasdjfaopsruosihjsdfghajsdflahgfsif_{0}^{t}\sifpoierjsodfgupoefasdfgjsdfgjsdfgjsdjflxncvzxnvasdjfaopsruosihjsdfghajsdflahgfsif_{\mathbb{R}^2} \tilde u_{2}^{m} \sifpoierjsodfgupoefasdfgjsdfgjsdfgjsdjflxncvzxnvasdjfaopsruosihjsdkghajsdflahgfsif \,dxds    = 0    ,    \end{split}    \llabel{ OVi7 FuVMLW nx p SaN omk rC5 uI ZK9C jpJy UIeO6k gb 7 tr2 SCY x5F 11 S6Xq OImr s7vv0u vA g rb9 hGP Fnk RM j92H gczJ 660kHb BB l QSI OY7 FcX 0c uyDl LjbU 3F6vZk Gb a KaM ufj uxp n4 Mi45 7MoL NW3eIm cj 6 OOS e59 afA hg lt9S BOiF cYQipj 5u N 19N KZ5 Czc 23 1wxG x1ut gJB4ue Mx x 5lr s8g VbZ s1 NEfI 02Rb pkfEOZ E4 e seo 9te NRU Ai nujf eJYa Ehns0Y 6X R UF1 PCf 5eE AL 9DL6 a2vm BAU5Au DD t yQN 5YL LWw PW GjMt 4hu4 FIoLCZ Lx e BVY 5lZ DCD 5Y yBwO IJeH VQsKob Yd q fCX 1to mCb Ej 5m1p Nx9p nLn5A3 g7 U v77 7YU gBR lN rTyj shaq BZXeAF tj y FlW jfc 57t 2f abx5 Ns4d clCMJc Tl q kfq uFD iSd DP eX6m YLQz JzUmH0 43 M lgF edN mXQ Pj Aoba 07MY wBaC4C nj I 4dw KCZ PO9 wx 3en8 AoqX 7JjN8K lq j Q5c bMS dhR Fs tQ8Q r2ve 2HT0uO 5W j TAi iIW n1C Wr U1BH BMvJ 3ywmAd qN D LY8 lbx XMx 0D Dvco 3RL9 Qz5eqy wV Y qEN nO8 MH0 PY zeVN i3yb 2msNYY Wz G 2DC PoG 1Vb Bx e9oZ GcTU 3AZuEK bk p 6rN eTX 0DS Mc zd91 nbSV DKEkVa zI q NKU Qap NBP 5B 32Ey prwP FLvuPi wR P l1G TdQ BZE Aw 3d90 v8P5 CPAnX4 Yo 2 q7s yr5 BW8 Hc T7tM ioha BW9U4q rb u mEQ 6Xz MKR 2B REFX k3ZO MVMYSw 9S F 5ek q0m yNK Gn H0qi vlRA 18CbEz id O iuy ZZ6 kRo oJ kLQ0 Ewmz sKlld6 Kr K JmR xls 12K G2 bv8v LxfJ wrIcU6 Hx p q6p Fy7 Oim mo dXYt Kt0V VH22OC Aj f deT BAP vPl oK QzLE OQlq dpzxJ6 JI z Ujn TqY sQ4 BD QPW6 784x NUfsk0 aM 7 8qz MuL 9Mr Ac uVVK Y55n M7WqnB 2R C pGZ vHh WUN g9 3F2e RT8U umC62V H3 Z dJX LMS cca 1m xoOO 6oOL OVzfpO BO X 5Ev KuL z5s EW 8a9y otqk cKbDJN Us l pYM JpJ jOW Uy 2U4Y VKH6 kVC1Vx 1u v ykO yDs zo5 bz d36q WH1k J7Jtkg V1 J xqr Fnq mcU yZ JTp9 oFIc FAk0IT A9 3 SrL axO 9oU Z3 jG6f BRL1 iZ7ZE6 zj 8 G3M Hu8 6Ay jt 3flY cmTk jiTSYv CF t JLq cJP tN7 E3 POqG OKe0 3K3WV0 ep W XDQ C97 YSb AD sifpoierjsodfgupoefasdfgjsdfgjsdfgjsdjflxncvzxnvasdjfaopsruosihjsdfghajsdflahgfsif113}
   \end{align} we get   \begin{align}   \begin{split}    &    \sifpoierjsodfgupoefasdfgjsdfgjsdfgjsdjflxncvzxnvasdjfaopsruosihjsdfghajsdflahgfsif_{0}^{t}\sifpoierjsodfgupoefasdfgjsdfgjsdfgjsdjflxncvzxnvasdjfaopsruosihjsdfghajsdflahgfsif_{\mathbb{R}^2} \bar{\sifpoierjsodfgupoefasdfgjsdfgjsdfgjsdjflxncvzxnvasdjfaopsruosihjsdighajsdflahgfsif} \sifpoierjsodfgupoefasdfgjsdfgjsdfgjsdjflxncvzxnvasdjfaopsruosihjsdkghajsdflahgfsif_t \,dxds    + \sifpoierjsodfgupoefasdfgjsdfgjsdfgjsdjflxncvzxnvasdjfaopsruosihjsdfghajsdflahgfsif_{\mathbb{R}^2}\bar\sifpoierjsodfgupoefasdfgjsdfgjsdfgjsdjflxncvzxnvasdjfaopsruosihjsdighajsdflahgfsif(0)\sifpoierjsodfgupoefasdfgjsdfgjsdfgjsdjflxncvzxnvasdjfaopsruosihjsdkghajsdflahgfsif(0) \,dxds    - \sifpoierjsodfgupoefasdfgjsdfgjsdfgjsdjflxncvzxnvasdjfaopsruosihjsdfghajsdflahgfsif_{0}^{t}\sifpoierjsodfgupoefasdfgjsdfgjsdfgjsdjflxncvzxnvasdjfaopsruosihjsdfghajsdflahgfsif_{\mathbb{R}^2} \tilde v_{j}\bar{\sifpoierjsodfgupoefasdfgjsdfgjsdfgjsdjflxncvzxnvasdjfaopsruosihjsdighajsdflahgfsif} \partial_{j}\sifpoierjsodfgupoefasdfgjsdfgjsdfgjsdjflxncvzxnvasdjfaopsruosihjsdkghajsdflahgfsif \,dxds    - \sifpoierjsodfgupoefasdfgjsdfgjsdfgjsdjflxncvzxnvasdjfaopsruosihjsdfghajsdflahgfsif_{0}^{t}\sifpoierjsodfgupoefasdfgjsdfgjsdfgjsdjflxncvzxnvasdjfaopsruosihjsdfghajsdflahgfsif_{\mathbb{R}^2} \tilde u_{2} \sifpoierjsodfgupoefasdfgjsdfgjsdfgjsdjflxncvzxnvasdjfaopsruosihjsdkghajsdflahgfsif \,dxds    = 0    ,    \end{split}    \label{sifpoierjsodfgupoefasdfgjsdfgjsdfgjsdjflxncvzxnvasdjfaopsruosihjsdfghajsdflahgfsif115}    \end{align}
and thus $\bar{\sifpoierjsodfgupoefasdfgjsdfgjsdfgjsdjflxncvzxnvasdjfaopsruosihjsdighajsdflahgfsif}$ is a weak solution to the initial value problem~\eqref{sifpoierjsodfgupoefasdfgjsdfgjsdfgjsdjflxncvzxnvasdjfaopsruosihjsdfghajsdflahgfsif34}.  Now, the restriction $\sifpoierjsodfgupoefasdfgjsdfgjsdfgjsdjflxncvzxnvasdjfaopsruosihjsdighajsdflahgfsif=\bar\sifpoierjsodfgupoefasdfgjsdfgjsdfgjsdjflxncvzxnvasdjfaopsruosihjsdighajsdflahgfsif|_{\Omega}$  solves~\eqref{sifpoierjsodfgupoefasdfgjsdfgjsdfgjsdjflxncvzxnvasdjfaopsruosihjsdfghajsdflahgfsif22}, and \eqref{sifpoierjsodfgupoefasdfgjsdfgjsdfgjsdjflxncvzxnvasdjfaopsruosihjsdfghajsdflahgfsif36} follows from \eqref{sifpoierjsodfgupoefasdfgjsdfgjsdfgjsdjflxncvzxnvasdjfaopsruosihjsdfghajsdflahgfsif14} using the continuity of the extension operator $E$ and taking the limit in $m$. Finally, such $\sifpoierjsodfgupoefasdfgjsdfgjsdfgjsdjflxncvzxnvasdjfaopsruosihjsdighajsdflahgfsif$ belongs to $H^1 L^2$ since $ \sifpoierjsodfgupoefasdfgjsdfgjsdfgjsdjflxncvzxnvasdjfaopsruosihjsdgghajsdflahgfsif \sifpoierjsodfgupoefasdfgjsdfgjsdfgjsdjflxncvzxnvasdjfaopsruosihjsdighajsdflahgfsif_t\sifpoierjsodfgupoefasdfgjsdfgjsdfgjsdjflxncvzxnvasdjfaopsruosihjsdgghajsdflahgfsif_{L^2} \lec \sifpoierjsodfgupoefasdfgjsdfgjsdfgjsdjflxncvzxnvasdjfaopsruosihjsdgghajsdflahgfsif u\sifpoierjsodfgupoefasdfgjsdfgjsdfgjsdjflxncvzxnvasdjfaopsruosihjsdgghajsdflahgfsif_{L^2} + \sifpoierjsodfgupoefasdfgjsdfgjsdfgjsdjflxncvzxnvasdjfaopsruosihjsdgghajsdflahgfsif v\sifpoierjsodfgupoefasdfgjsdfgjsdfgjsdjflxncvzxnvasdjfaopsruosihjsdgghajsdflahgfsif_{L^\infty} \sifpoierjsodfgupoefasdfgjsdfgjsdfgjsdjflxncvzxnvasdjfaopsruosihjsdgghajsdflahgfsif \nabla \sifpoierjsodfgupoefasdfgjsdfgjsdfgjsdjflxncvzxnvasdjfaopsruosihjsdighajsdflahgfsif\sifpoierjsodfgupoefasdfgjsdfgjsdfgjsdjflxncvzxnvasdjfaopsruosihjsdgghajsdflahgfsif_{L^2} $, which implies   \begin{align}   \sifpoierjsodfgupoefasdfgjsdfgjsdfgjsdjflxncvzxnvasdjfaopsruosihjsdgghajsdflahgfsif \sifpoierjsodfgupoefasdfgjsdfgjsdfgjsdjflxncvzxnvasdjfaopsruosihjsdighajsdflahgfsif_t\sifpoierjsodfgupoefasdfgjsdfgjsdfgjsdjflxncvzxnvasdjfaopsruosihjsdgghajsdflahgfsif_{L^2 L^2}
  \lec \sifpoierjsodfgupoefasdfgjsdfgjsdfgjsdjflxncvzxnvasdjfaopsruosihjsdgghajsdflahgfsif u\sifpoierjsodfgupoefasdfgjsdfgjsdfgjsdjflxncvzxnvasdjfaopsruosihjsdgghajsdflahgfsif_{L^2 L^2}   + \sifpoierjsodfgupoefasdfgjsdfgjsdfgjsdjflxncvzxnvasdjfaopsruosihjsdgghajsdflahgfsif v\sifpoierjsodfgupoefasdfgjsdfgjsdfgjsdjflxncvzxnvasdjfaopsruosihjsdgghajsdflahgfsif_{L^3 W^{2,3}}   \sifpoierjsodfgupoefasdfgjsdfgjsdfgjsdjflxncvzxnvasdjfaopsruosihjsdgghajsdflahgfsif \sifpoierjsodfgupoefasdfgjsdfgjsdfgjsdjflxncvzxnvasdjfaopsruosihjsdighajsdflahgfsif\sifpoierjsodfgupoefasdfgjsdfgjsdfgjsdjflxncvzxnvasdjfaopsruosihjsdgghajsdflahgfsif_{L^\infty H^1}   ,   \label{sifpoierjsodfgupoefasdfgjsdfgjsdfgjsdjflxncvzxnvasdjfaopsruosihjsdfghajsdflahgfsif40}     \end{align} and the proof is concluded. \end{proof} \par \startnewsection{Existence for the approximate Boussinesq system}{sec04} \par In this section, we prove that given $(u^{n-1}, \sifpoierjsodfgupoefasdfgjsdfgjsdfgjsdjflxncvzxnvasdjfaopsruosihjsdighajsdflahgfsif^{n-1}) \in \XX \times \mathcal{Y}$, the system \eqref{sifpoierjsodfgupoefasdfgjsdfgjsdfgjsdjflxncvzxnvasdjfaopsruosihjsdfghajsdflahgfsif03} has a unique solution  $(u^{n},\sifpoierjsodfgupoefasdfgjsdfgjsdfgjsdjflxncvzxnvasdjfaopsruosihjsdighajsdflahgfsif^n) \in \XX\times\mathcal{Y}$.
To achieve this, we proceed by induction.  However, we shall only justify the inductive step,  since \eqref{sifpoierjsodfgupoefasdfgjsdfgjsdfgjsdjflxncvzxnvasdjfaopsruosihjsdfghajsdflahgfsif04} is the same system as  \eqref{sifpoierjsodfgupoefasdfgjsdfgjsdfgjsdjflxncvzxnvasdjfaopsruosihjsdfghajsdflahgfsif03} with $u^{n-1} = 0$. \par \cole \begin{Proposition} \label{P03} Given $u^{n-1}\in \XX$, there exists a unique solution $(u^n,\sifpoierjsodfgupoefasdfgjsdfgjsdfgjsdjflxncvzxnvasdjfaopsruosihjsdighajsdflahgfsif^n) \in \XX\times \mathcal{Y}$ to~\eqref{sifpoierjsodfgupoefasdfgjsdfgjsdfgjsdjflxncvzxnvasdjfaopsruosihjsdfghajsdflahgfsif03}. \end{Proposition}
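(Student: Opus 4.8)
The plan is to solve \eqref{sifpoierjsodfgupoefasdfgjsdfgjsdfgjsdjflxncvzxnvasdjfaopsruosihjsdfghajsdflahgfsif03} by a fixed-point argument that alternates the two solution operators provided by Lemma~\ref{L03} and Lemma~\ref{P02}. The essential simplification is that, with $u^{n-1}\in\mathcal{X}$ held fixed, the system \eqref{sifpoierjsodfgupoefasdfgjsdfgjsdfgjsdjflxncvzxnvasdjfaopsruosihjsdfghajsdflahgfsif03} is \emph{linear} in the pair $(u^n,\theta^n)$, so the associated map is affine and no invariant-ball construction is needed. Uniqueness is then immediate and in fact exact: if $(u,\theta)$ and $(u',\theta')$ both solve \eqref{sifpoierjsodfgupoefasdfgjsdfgjsdfgjsdjflxncvzxnvasdjfaopsruosihjsdfghajsdflahgfsif03}, the differences $w=u-u'$ and $\sigma=\theta-\theta'$ satisfy the homogeneous coupled system with vanishing data; testing the velocity difference with $w$ and the density difference with $\sigma$, and using $\nabla\cdot u^{n-1}=0$ to annihilate the transport terms, gives
\[
\frac12\frac{d}{dt}\bigl(\|w\|_{L^2}^2+\|\sigma\|_{L^2}^2\bigr)+\|\nabla w\|_{L^2}^2
=\int_\Omega \sigma\,w_2\,dx-\int_\Omega w_2\,\sigma\,dx=0,
\]
the two forcing cross-terms cancelling because of the antisymmetric coupling $\theta e_2$ versus $-u\cdot e_2$. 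Hence $w\equiv\sigma\equiv0$.

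For existence, fix a finite $T>0$ and a threshold $T_0\in(0,T]$ to be chosen, and define $\Phi\colon\mathcal{Y}_{T_0}\to\mathcal{Y}_{T_0}$ as follows. Given $\theta\in\mathcal{Y}_{T_0}$, apply Lemma~\ref{L03} with drift $v=u^{n-1}\in\mathcal{X}_{T_0}$ and buoyancy $\theta$ to obtain $u=u(\theta)\in\mathcal{X}_{T_0}$; then apply Lemma~\ref{P02} with the same $v=u^{n-1}$ (which lies in $L^3W^{2,3}$ and satisfies $v\cdot n=0$, since $u^{n-1}\in L^\infty V\subseteq H^1_0$) and forcing datum $u\in L^2H^1$, producing $\Phi(\theta)=\bar\theta\in\mathcal{Y}_{T_0}$. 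A fixed point of $\Phi$, paired with its velocity, solves \eqref{sifpoierjsodfgupoefasdfgjsdfgjsdfgjsdjflxncvzxnvasdjfaopsruosihjsdfghajsdflahgfsif03}.

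By linearity, $\Phi(\theta_1)-\Phi(\theta_2)$ is governed by the velocity difference $w$, which solves the linear Navier--Stokes system with forcing $(\theta_1-\theta_2)e_2$ and zero initial data, and the density difference $\sigma$, which solves the transport equation with forcing $-w\cdot e_2$ and zero initial data. A direct energy estimate yields $\|w\|_{L^\infty L^2}+\|\nabla w\|_{L^2L^2}\lesssim\int_0^{T_0}\|\theta_1-\theta_2\|_{L^2}\,ds$, whence
\[
\int_0^{T_0}\|w\|_{H^1}\,ds\lesssim T_0\,\|\theta_1-\theta_2\|_{\mathcal{Y}_{T_0}}.
\]
Inserting this into the gradient bound \eqref{sifpoierjsodfgupoefasdfgjsdfgjsdfgjsdjflxncvzxnvasdjfaopsruosihjsdfghajsdflahgfsif36} (with zero initial density) controls $\|\sigma\|_{L^\infty H^1}$, and the equation itself controls $\|\sigma_t\|_{L^2L^2}$, so that
\[
\|\Phi(\theta_1)-\Phi(\theta_2)\|_{\mathcal{Y}_{T_0}}
\lesssim T_0\,\exp\Bigl(C\int_0^{T}\|u^{n-1}\|_{W^{1,\infty}}\,ds\Bigr)\,\|\theta_1-\theta_2\|_{\mathcal{Y}_{T_0}}.
\]
Because $u^{n-1}\in L^1_{\loc}W^{1,\infty}$ by \eqref{sifpoierjsodfgupoefasdfgjsdfgjsdfgjsdjflxncvzxnvasdjfaopsruosihjsdfghajsdflahgfsif27}, the exponential is a finite constant depending only on $u^{n-1}$; choosing $T_0$ small makes $\Phi$ a contraction, and the Banach fixed point theorem produces $(u^n,\theta^n)$ on $[0,T_0]$. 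Crucially, the threshold for $T_0$ depends only on $u^{n-1}$ (through $\int_0^T\|u^{n-1}\|_{W^{1,\infty}}$ and the $\mathcal{X}_T$-norm entering the energy estimates), not on the size of the data.

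To reach $[0,T]$ I would then globalize by patching the local solutions on consecutive subintervals, feeding the terminal values in as new initial data. The one delicate point is that restarting Lemma~\ref{L03} requires $u^n(t_k)\in D(A)$; since $u^n\in L^2D(A)$ this holds for a.e.\ $t_k$, and since $t\mapsto\int_0^t\|u^{n-1}\|_{W^{1,\infty}}$ is absolutely continuous with finite total mass, one can select a finite partition $0=t_0<\dots<t_N=T$ of admissible restart times on which each step is a contraction; equivalently, a weighted-in-time norm absorbs the growth and yields a single contraction on all of $[0,T]$. Patching gives $(u^n,\theta^n)\in\mathcal{X}_T\times\mathcal{Y}_T$ for every $T$, i.e.\ in $\mathcal{X}\times\mathcal{Y}$. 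I expect the main obstacle to be the contraction estimate in the \emph{full} $\mathcal{Y}$-norm rather than merely in $L^\infty L^2$: controlling $\nabla\sigma$ forces a bound on the velocity difference in $L^1H^1$ and brings in the factor $\exp(C\int\|u^{n-1}\|_{W^{1,\infty}})$, which is precisely where the $W^{1,\infty}$ regularity of the drift—available only through the Stokes $L^3W^{2,3}$ estimate behind \eqref{sifpoierjsodfgupoefasdfgjsdfgjsdfgjsdjflxncvzxnvasdjfaopsruosihjsdfghajsdflahgfsif27}—is indispensable, with the regularity-matching at restart times being a secondary technicality.
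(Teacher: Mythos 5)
Your proposal is correct and follows essentially the same route as the paper: uniqueness by the same cancelling energy identity, existence by a short-time Banach fixed point built from the solution operators of Lemmas~\ref{L03} and~\ref{P02} (you compose them into one map on $\YYTzero$ while the paper runs the product map $(u,\theta)\mapsto(\phi_1(\theta),\phi_2(u))$ on $\mathbb{X}\times L^\infty H^1$, with the contraction constants supplied by Lemmas~\ref{L01} and~\ref{L02}), followed by patching on intervals of the fixed length $T_0$. The only substantive deviation is at the restart: rather than selecting a.e.\ admissible times, the paper shows directly that $u^n(T_0)\in D(A)$ and $\theta^n(T_0)\in H^1$ by combining the continuity $u^n\in C([0,T_0],V)$, the equation for $Au^n$, and lower semicontinuity of norms under weak convergence — a cleaner resolution of the same technicality you flagged.
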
 \colb \par
To simplify notation, we assume that $v := u^{n-1} \in\XX$ is given and solve \eqref{sifpoierjsodfgupoefasdfgjsdfgjsdfgjsdjflxncvzxnvasdjfaopsruosihjsdfghajsdflahgfsif05} coupled with \eqref{sifpoierjsodfgupoefasdfgjsdfgjsdfgjsdjflxncvzxnvasdjfaopsruosihjsdfghajsdflahgfsif22}. Due to uniqueness, which is proven below, it is sufficient to prove the statement for a fixed finite $T\in(0,\infty)$ and assume that $v\in \XXT$. We allow all constants in this section to depend on~$\sifpoierjsodfgupoefasdfgjsdfgjsdfgjsdjflxncvzxnvasdjfaopsruosihjsdgghajsdflahgfsif v\sifpoierjsodfgupoefasdfgjsdfgjsdfgjsdjflxncvzxnvasdjfaopsruosihjsdgghajsdflahgfsif_{\XXT}$. Fixing also  $u_0 \in D(A)$, let \begin{align} \begin{split}   \sifpoierjsodfgupoefasdfgjsdfgjsdfgjsdjflxncvzxnvasdjfaopsruosihjsdkghajsdflahgfsif_1 \colon &L^\infty H^1 \to L^2 D(A) \cap L^\infty V   \\&   \sifpoierjsodfgupoefasdfgjsdfgjsdfgjsdjflxncvzxnvasdjfaopsruosihjsdighajsdflahgfsif \mapsto \text{(unique solution $u$ of \eqref{sifpoierjsodfgupoefasdfgjsdfgjsdfgjsdjflxncvzxnvasdjfaopsruosihjsdfghajsdflahgfsif05})}   .
\end{split}    \llabel{pkfEOZ E4 e seo 9te NRU Ai nujf eJYa Ehns0Y 6X R UF1 PCf 5eE AL 9DL6 a2vm BAU5Au DD t yQN 5YL LWw PW GjMt 4hu4 FIoLCZ Lx e BVY 5lZ DCD 5Y yBwO IJeH VQsKob Yd q fCX 1to mCb Ej 5m1p Nx9p nLn5A3 g7 U v77 7YU gBR lN rTyj shaq BZXeAF tj y FlW jfc 57t 2f abx5 Ns4d clCMJc Tl q kfq uFD iSd DP eX6m YLQz JzUmH0 43 M lgF edN mXQ Pj Aoba 07MY wBaC4C nj I 4dw KCZ PO9 wx 3en8 AoqX 7JjN8K lq j Q5c bMS dhR Fs tQ8Q r2ve 2HT0uO 5W j TAi iIW n1C Wr U1BH BMvJ 3ywmAd qN D LY8 lbx XMx 0D Dvco 3RL9 Qz5eqy wV Y qEN nO8 MH0 PY zeVN i3yb 2msNYY Wz G 2DC PoG 1Vb Bx e9oZ GcTU 3AZuEK bk p 6rN eTX 0DS Mc zd91 nbSV DKEkVa zI q NKU Qap NBP 5B 32Ey prwP FLvuPi wR P l1G TdQ BZE Aw 3d90 v8P5 CPAnX4 Yo 2 q7s yr5 BW8 Hc T7tM ioha BW9U4q rb u mEQ 6Xz MKR 2B REFX k3ZO MVMYSw 9S F 5ek q0m yNK Gn H0qi vlRA 18CbEz id O iuy ZZ6 kRo oJ kLQ0 Ewmz sKlld6 Kr K JmR xls 12K G2 bv8v LxfJ wrIcU6 Hx p q6p Fy7 Oim mo dXYt Kt0V VH22OC Aj f deT BAP vPl oK QzLE OQlq dpzxJ6 JI z Ujn TqY sQ4 BD QPW6 784x NUfsk0 aM 7 8qz MuL 9Mr Ac uVVK Y55n M7WqnB 2R C pGZ vHh WUN g9 3F2e RT8U umC62V H3 Z dJX LMS cca 1m xoOO 6oOL OVzfpO BO X 5Ev KuL z5s EW 8a9y otqk cKbDJN Us l pYM JpJ jOW Uy 2U4Y VKH6 kVC1Vx 1u v ykO yDs zo5 bz d36q WH1k J7Jtkg V1 J xqr Fnq mcU yZ JTp9 oFIc FAk0IT A9 3 SrL axO 9oU Z3 jG6f BRL1 iZ7ZE6 zj 8 G3M Hu8 6Ay jt 3flY cmTk jiTSYv CF t JLq cJP tN7 E3 POqG OKe0 3K3WV0 ep W XDQ C97 YSb AD ZUNp 81GF fCPbj3 iq E t0E NXy pLv fo Iz6z oFoF 9lkIun Xj Y yYL 52U bRB jx kQUS U9mm XtzIHO Cz 1 KH4 9ez 6Pz qW F223 C0Iz 3CsvuT R9 s VtQ CcM 1eo pD Py2l EEzL U0USJt Jb 9 zgy Gyf iQ4 fo Cx26 k4jL E0ula6 aS I rZQ HER 5HV CE BL55 WCtB 2LCmve TD z Vcp 7UR gI7 Qu FbFw 9VTx JwGrzs VW M 9sM JeJ Nd2 VG GFsi Wsifpoierjsodfgupoefasdfgjsdfgjsdfgjsdjflxncvzxnvasdjfaopsruosihjsdfghajsdflahgfsif83} \end{align} The existence of a unique solution in $L^2 D(A) \cap L^\infty V$ is classical; see \cite{CF,T1}. Also the solution satisfies $u\in C([0,T],V)$ with $u(0)=u_0$. Similarly, given $v \in \XX$ and $\sifpoierjsodfgupoefasdfgjsdfgjsdfgjsdjflxncvzxnvasdjfaopsruosihjsdighajsdflahgfsif_0 \in H^1$, let \begin{align} \begin{split}   \sifpoierjsodfgupoefasdfgjsdfgjsdfgjsdjflxncvzxnvasdjfaopsruosihjsdkghajsdflahgfsif_2 \colon &L^2 D(A) \cap L^\infty V \to L^\infty H^1   \\&   u \mapsto \text{(unique solution $\sifpoierjsodfgupoefasdfgjsdfgjsdfgjsdjflxncvzxnvasdjfaopsruosihjsdighajsdflahgfsif$ of  \eqref{sifpoierjsodfgupoefasdfgjsdfgjsdfgjsdjflxncvzxnvasdjfaopsruosihjsdfghajsdflahgfsif22})}.   \end{split}
  \llabel{ 43 M lgF edN mXQ Pj Aoba 07MY wBaC4C nj I 4dw KCZ PO9 wx 3en8 AoqX 7JjN8K lq j Q5c bMS dhR Fs tQ8Q r2ve 2HT0uO 5W j TAi iIW n1C Wr U1BH BMvJ 3ywmAd qN D LY8 lbx XMx 0D Dvco 3RL9 Qz5eqy wV Y qEN nO8 MH0 PY zeVN i3yb 2msNYY Wz G 2DC PoG 1Vb Bx e9oZ GcTU 3AZuEK bk p 6rN eTX 0DS Mc zd91 nbSV DKEkVa zI q NKU Qap NBP 5B 32Ey prwP FLvuPi wR P l1G TdQ BZE Aw 3d90 v8P5 CPAnX4 Yo 2 q7s yr5 BW8 Hc T7tM ioha BW9U4q rb u mEQ 6Xz MKR 2B REFX k3ZO MVMYSw 9S F 5ek q0m yNK Gn H0qi vlRA 18CbEz id O iuy ZZ6 kRo oJ kLQ0 Ewmz sKlld6 Kr K JmR xls 12K G2 bv8v LxfJ wrIcU6 Hx p q6p Fy7 Oim mo dXYt Kt0V VH22OC Aj f deT BAP vPl oK QzLE OQlq dpzxJ6 JI z Ujn TqY sQ4 BD QPW6 784x NUfsk0 aM 7 8qz MuL 9Mr Ac uVVK Y55n M7WqnB 2R C pGZ vHh WUN g9 3F2e RT8U umC62V H3 Z dJX LMS cca 1m xoOO 6oOL OVzfpO BO X 5Ev KuL z5s EW 8a9y otqk cKbDJN Us l pYM JpJ jOW Uy 2U4Y VKH6 kVC1Vx 1u v ykO yDs zo5 bz d36q WH1k J7Jtkg V1 J xqr Fnq mcU yZ JTp9 oFIc FAk0IT A9 3 SrL axO 9oU Z3 jG6f BRL1 iZ7ZE6 zj 8 G3M Hu8 6Ay jt 3flY cmTk jiTSYv CF t JLq cJP tN7 E3 POqG OKe0 3K3WV0 ep W XDQ C97 YSb AD ZUNp 81GF fCPbj3 iq E t0E NXy pLv fo Iz6z oFoF 9lkIun Xj Y yYL 52U bRB jx kQUS U9mm XtzIHO Cz 1 KH4 9ez 6Pz qW F223 C0Iz 3CsvuT R9 s VtQ CcM 1eo pD Py2l EEzL U0USJt Jb 9 zgy Gyf iQ4 fo Cx26 k4jL E0ula6 aS I rZQ HER 5HV CE BL55 WCtB 2LCmve TD z Vcp 7UR gI7 Qu FbFw 9VTx JwGrzs VW M 9sM JeJ Nd2 VG GFsi WuqC 3YxXoJ GK w Io7 1fg sGm 0P YFBz X8eX 7pf9GJ b1 o XUs 1q0 6KP Ls MucN ytQb L0Z0Qq m1 l SPj 9MT etk L6 KfsC 6Zob Yhc2qu Xy 9 GPm ZYj 1Go ei feJ3 pRAf n6Ypy6 jN s 4Y5 nSE pqN 4m Rmam AGfY HhSaBr Ls D THC SEl UyR Mh 66XU 7hNz pZVC5V nV 7 VjL 7kv WKf 7P 5hj6 t1vu gkLGdN X8 b gOX HWm 6W4 YE mxFG 4WaN Ebsifpoierjsodfgupoefasdfgjsdfgjsdfgjsdjflxncvzxnvasdjfaopsruosihjsdfghajsdflahgfsif84} \end{align} Lemma~\ref{P02} shows that the solution $\sifpoierjsodfgupoefasdfgjsdfgjsdfgjsdjflxncvzxnvasdjfaopsruosihjsdighajsdflahgfsif$ of \eqref{sifpoierjsodfgupoefasdfgjsdfgjsdfgjsdjflxncvzxnvasdjfaopsruosihjsdfghajsdflahgfsif22} satisfies $\sifpoierjsodfgupoefasdfgjsdfgjsdfgjsdjflxncvzxnvasdjfaopsruosihjsdighajsdflahgfsif\in C([0,T],L^2)$, after modification on a set of measure zero, for which we can then prove that $\sifpoierjsodfgupoefasdfgjsdfgjsdfgjsdjflxncvzxnvasdjfaopsruosihjsdighajsdflahgfsif(0)=\sifpoierjsodfgupoefasdfgjsdfgjsdfgjsdjflxncvzxnvasdjfaopsruosihjsdighajsdflahgfsif_0$ using standard arguments starting from the weak formulation~\eqref{sifpoierjsodfgupoefasdfgjsdfgjsdfgjsdjflxncvzxnvasdjfaopsruosihjsdfghajsdflahgfsif115}. \par \cole \begin{Lemma} \label{L01} There exists $T_1\in(0,T]$, depending only on $\sifpoierjsodfgupoefasdfgjsdfgjsdfgjsdjflxncvzxnvasdjfaopsruosihjsdgghajsdflahgfsif v\sifpoierjsodfgupoefasdfgjsdfgjsdfgjsdjflxncvzxnvasdjfaopsruosihjsdgghajsdflahgfsif_{\XXT}$, such that   \begin{align}   \sifpoierjsodfgupoefasdfgjsdfgjsdfgjsdjflxncvzxnvasdjfaopsruosihjsdgghajsdflahgfsif \sifpoierjsodfgupoefasdfgjsdfgjsdfgjsdjflxncvzxnvasdjfaopsruosihjsdkghajsdflahgfsif_1(\sifpoierjsodfgupoefasdfgjsdfgjsdfgjsdjflxncvzxnvasdjfaopsruosihjsdighajsdflahgfsif_1)
  - \sifpoierjsodfgupoefasdfgjsdfgjsdfgjsdjflxncvzxnvasdjfaopsruosihjsdkghajsdflahgfsif_1(\sifpoierjsodfgupoefasdfgjsdfgjsdfgjsdjflxncvzxnvasdjfaopsruosihjsdighajsdflahgfsif_2) \sifpoierjsodfgupoefasdfgjsdfgjsdfgjsdjflxncvzxnvasdjfaopsruosihjsdgghajsdflahgfsif_{L^2(0,T_1; D(A)) \cap L^\infty(0,T_1; V) }   \leq \frac{1}{2}   \sifpoierjsodfgupoefasdfgjsdfgjsdfgjsdjflxncvzxnvasdjfaopsruosihjsdgghajsdflahgfsif \sifpoierjsodfgupoefasdfgjsdfgjsdfgjsdjflxncvzxnvasdjfaopsruosihjsdighajsdflahgfsif_1   - \sifpoierjsodfgupoefasdfgjsdfgjsdfgjsdjflxncvzxnvasdjfaopsruosihjsdighajsdflahgfsif_2\sifpoierjsodfgupoefasdfgjsdfgjsdfgjsdjflxncvzxnvasdjfaopsruosihjsdgghajsdflahgfsif_{L^{\infty}(0,T_1; H^1)}   ,     \label{sifpoierjsodfgupoefasdfgjsdfgjsdfgjsdjflxncvzxnvasdjfaopsruosihjsdfghajsdflahgfsif107}   \end{align} for all $\sifpoierjsodfgupoefasdfgjsdfgjsdfgjsdjflxncvzxnvasdjfaopsruosihjsdighajsdflahgfsif_1, \sifpoierjsodfgupoefasdfgjsdfgjsdfgjsdjflxncvzxnvasdjfaopsruosihjsdighajsdflahgfsif_2 \in L^\infty(0,T_1; H^1)$.   \end{Lemma}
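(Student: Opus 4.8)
The plan is to reduce \eqref{sifpoierjsodfgupoefasdfgjsdfgjsdfgjsdjflxncvzxnvasdjfaopsruosihjsdfghajsdflahgfsif107} to a single energy estimate for the difference of two velocity solutions. Set $u_1=\phi_1(\theta_1)$, $u_2=\phi_1(\theta_2)$, $w=u_1-u_2$, and $g=\theta_1-\theta_2$. Since $u_1$ and $u_2$ solve \eqref{sifpoierjsodfgupoefasdfgjsdfgjsdfgjsdjflxncvzxnvasdjfaopsruosihjsdfghajsdflahgfsif05} with the \emph{same} drift $v$ and the \emph{same} initial datum $u_0$, applying $\mathbb{P}$ shows that $w$ solves $w_t+Aw+\mathbb{P}(v\cdot\nabla w)=\mathbb{P}(g e_2)$ with $w(0)=0$ and $w|_{\partial\Omega}=0$. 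The structural point I would exploit is precisely this vanishing initial datum: because $\phi_1$ feeds the same fixed $u_0$ into both arguments, the difference starts from rest, and this is what produces a factor of $T_1$ in the final bound. Note also that $w\in L^2(0,T_1;D(A))\cap L^\infty(0,T_1;V)$ with $w_t\in L^2 L^2$, so all the manipulations below are legitimate for $w$ itself, with no need for a separate Galerkin approximation.

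First I would test the equation for $w$ with $Aw$ and bound the transport term exactly as in the chain \eqref{sifpoierjsodfgupoefasdfgjsdfgjsdfgjsdjflxncvzxnvasdjfaopsruosihjsdfghajsdflahgfsif13} from the proof of Lemma~\ref{L03} (with $u^m$ replaced by $w$), while the forcing is handled by $(\mathbb{P}(g e_2),Aw)\le\epsilon\|Aw\|_{L^2}^2+C_\epsilon\|g\|_{L^2}^2$. This yields the differential inequality $\frac{d}{dt}\|\nabla w\|_{L^2}^2+\|Aw\|_{L^2}^2\lec\|v\|_{L^2}^2\|\nabla v\|_{L^2}^2\|\nabla w\|_{L^2}^2+\|g\|_{L^2}^2$, after absorbing the $\epsilon\|Aw\|_{L^2}^2$ terms into the left-hand side.

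Next I would apply the Gronwall inequality using $w(0)=0$, obtaining $\|\nabla w(t)\|_{L^2}^2\lec\bigl(\int_0^{T_1}\|g\|_{L^2}^2\,ds\bigr)\exp\bigl(C\int_0^{T_1}\|v\|_{L^2}^2\|\nabla v\|_{L^2}^2\,ds\bigr)$ for $t\in[0,T_1]$. The exponent is finite and controlled solely by $\|v\|_{\XXT}$, since $\int_0^T\|v\|_{L^2}^2\|\nabla v\|_{L^2}^2\,ds\le\|v\|_{L^\infty L^2}^2\int_0^T\|\nabla v\|_{L^2}^2\,ds\lec1$ for $v\in\XXT$. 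Using $\|g\|_{L^2}\le\|g\|_{H^1}$ then gives $\|\nabla w\|_{L^\infty(0,T_1;L^2)}^2\lec T_1\|g\|_{L^\infty(0,T_1;H^1)}^2$, and inserting this into the time-integral of the differential inequality produces the matching higher-order bound $\int_0^{T_1}\|Aw\|_{L^2}^2\,ds\lec T_1\|g\|_{L^\infty(0,T_1;H^1)}^2$, with both implied constants depending only on $\|v\|_{\XXT}$.

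Finally, recalling that the $V$- and $D(A)$-norms are $\|\nabla\cdot\|_{L^2}$ and $\|A\cdot\|_{L^2}$, I would combine the two estimates into $\|w\|_{L^2(0,T_1;D(A))\cap L^\infty(0,T_1;V)}\le\bigl(C(\|v\|_{\XXT})\,T_1\bigr)^{1/2}\|g\|_{L^\infty(0,T_1;H^1)}$ and then choose $T_1\in(0,T]$ so small that $C(\|v\|_{\XXT})\,T_1\le1/4$, a threshold depending only on $\|v\|_{\XXT}$; this delivers the contraction constant $1/2$ in \eqref{sifpoierjsodfgupoefasdfgjsdfgjsdfgjsdjflxncvzxnvasdjfaopsruosihjsdfghajsdflahgfsif107}. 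I do not anticipate a genuine obstacle here; the only two points needing care are confirming that the Gronwall exponent stays finite and $T$-independent apart from its dependence on $\|v\|_{\XXT}$ (guaranteed by $v\in\XXT$), and verifying that the $L^2 D(A)$ part of the norm—not merely the $L^\infty V$ part—inherits the decisive factor of $T_1$, which it does through the already-established control of $\|\nabla w\|_{L^\infty(0,T_1;L^2)}$.
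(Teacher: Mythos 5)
Your proposal is correct and follows essentially the same route as the paper: form the difference equation for $\tilde u=\phi_1(\theta_1)-\phi_1(\theta_2)$ with zero initial datum, test with $A\tilde u$, bound the transport term as in the estimate leading to \eqref{sifpoierjsodfgupoefasdfgjsdfgjsdfgjsdjflxncvzxnvasdjfaopsruosihjsdfghajsdflahgfsif13}, apply Gronwall to extract the factor $T_1$, and integrate the same differential inequality to get the matching $L^2 D(A)$ bound before choosing $T_1$ small. The paper's proof (via \eqref{sifpoierjsodfgupoefasdfgjsdfgjsdfgjsdjflxncvzxnvasdjfaopsruosihjsdfghajsdflahgfsif24}--\eqref{sifpoierjsodfgupoefasdfgjsdfgjsdfgjsdjflxncvzxnvasdjfaopsruosihjsdfghajsdflahgfsif26}) is the same argument, with your remarks on the legitimacy of testing with $Aw$ and on the role of the vanishing initial datum being accurate but left implicit there.
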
 \colb \par \begin{proof}[Proof of Lemma~\ref*{L01}] Let $T_1\in(0,T]$. Denote $u_i = \sifpoierjsodfgupoefasdfgjsdfgjsdfgjsdjflxncvzxnvasdjfaopsruosihjsdkghajsdflahgfsif_1(\sifpoierjsodfgupoefasdfgjsdfgjsdfgjsdjflxncvzxnvasdjfaopsruosihjsdighajsdflahgfsif_i)$. Also, writing
$\tilde{u} = u_1 - u_2$ and  $\tilde{\sifpoierjsodfgupoefasdfgjsdfgjsdfgjsdjflxncvzxnvasdjfaopsruosihjsdighajsdflahgfsif}= \sifpoierjsodfgupoefasdfgjsdfgjsdfgjsdjflxncvzxnvasdjfaopsruosihjsdighajsdflahgfsif_1 - \sifpoierjsodfgupoefasdfgjsdfgjsdfgjsdjflxncvzxnvasdjfaopsruosihjsdighajsdflahgfsif_2$, we have \begin{align}   \tilde{u}_t   + A\tilde{u}   = - \mathbb{P}(v \sifpoierjsodfgupoefasdfgjsdfgjsdfgjsdjflxncvzxnvasdjfaopsruosihjsdhghajsdflahgfsif \nabla \tilde{u})   + \mathbb{P}(\tilde{\sifpoierjsodfgupoefasdfgjsdfgjsdfgjsdjflxncvzxnvasdjfaopsruosihjsdighajsdflahgfsif}e_2)   .   \label{sifpoierjsodfgupoefasdfgjsdfgjsdfgjsdjflxncvzxnvasdjfaopsruosihjsdfghajsdflahgfsif24} \end{align} Testing with $A\tilde{u}$ and performing estimates similar to \eqref{sifpoierjsodfgupoefasdfgjsdfgjsdfgjsdjflxncvzxnvasdjfaopsruosihjsdfghajsdflahgfsif13} yield \begin{align}   \frac{d}{dt} \sifpoierjsodfgupoefasdfgjsdfgjsdfgjsdjflxncvzxnvasdjfaopsruosihjsdgghajsdflahgfsif \nabla \tilde{u}\sifpoierjsodfgupoefasdfgjsdfgjsdfgjsdjflxncvzxnvasdjfaopsruosihjsdgghajsdflahgfsif_{L^2}^2   + \sifpoierjsodfgupoefasdfgjsdfgjsdfgjsdjflxncvzxnvasdjfaopsruosihjsdgghajsdflahgfsif A\tilde{u}\sifpoierjsodfgupoefasdfgjsdfgjsdfgjsdjflxncvzxnvasdjfaopsruosihjsdgghajsdflahgfsif_{L^2}^2
  \lec  \sifpoierjsodfgupoefasdfgjsdfgjsdfgjsdjflxncvzxnvasdjfaopsruosihjsdgghajsdflahgfsif \tilde{\sifpoierjsodfgupoefasdfgjsdfgjsdfgjsdjflxncvzxnvasdjfaopsruosihjsdighajsdflahgfsif}\sifpoierjsodfgupoefasdfgjsdfgjsdfgjsdjflxncvzxnvasdjfaopsruosihjsdgghajsdflahgfsif_{L^2}^2   + \sifpoierjsodfgupoefasdfgjsdfgjsdfgjsdjflxncvzxnvasdjfaopsruosihjsdgghajsdflahgfsif v\sifpoierjsodfgupoefasdfgjsdfgjsdfgjsdjflxncvzxnvasdjfaopsruosihjsdgghajsdflahgfsif_{L^2}^2   \sifpoierjsodfgupoefasdfgjsdfgjsdfgjsdjflxncvzxnvasdjfaopsruosihjsdgghajsdflahgfsif \nabla v\sifpoierjsodfgupoefasdfgjsdfgjsdfgjsdjflxncvzxnvasdjfaopsruosihjsdgghajsdflahgfsif_{L^2}^2   \sifpoierjsodfgupoefasdfgjsdfgjsdfgjsdjflxncvzxnvasdjfaopsruosihjsdgghajsdflahgfsif \nabla \tilde{u}\sifpoierjsodfgupoefasdfgjsdfgjsdfgjsdjflxncvzxnvasdjfaopsruosihjsdgghajsdflahgfsif_{L^2}^2   ,   \label{sifpoierjsodfgupoefasdfgjsdfgjsdfgjsdjflxncvzxnvasdjfaopsruosihjsdfghajsdflahgfsif25} \end{align} and thus \begin{align}   \sifpoierjsodfgupoefasdfgjsdfgjsdfgjsdjflxncvzxnvasdjfaopsruosihjsdgghajsdflahgfsif  \nabla \tilde{u}(t)\sifpoierjsodfgupoefasdfgjsdfgjsdfgjsdjflxncvzxnvasdjfaopsruosihjsdgghajsdflahgfsif_{L^2}^2   \lec    \biggl(\sifpoierjsodfgupoefasdfgjsdfgjsdfgjsdjflxncvzxnvasdjfaopsruosihjsdfghajsdflahgfsif_{0}^{T_1}   \sifpoierjsodfgupoefasdfgjsdfgjsdfgjsdjflxncvzxnvasdjfaopsruosihjsdgghajsdflahgfsif \tilde{\sifpoierjsodfgupoefasdfgjsdfgjsdfgjsdjflxncvzxnvasdjfaopsruosihjsdighajsdflahgfsif}\sifpoierjsodfgupoefasdfgjsdfgjsdfgjsdjflxncvzxnvasdjfaopsruosihjsdgghajsdflahgfsif_{L^2}^2    \,ds\biggr)
  \exp\biggl(   C \sifpoierjsodfgupoefasdfgjsdfgjsdfgjsdjflxncvzxnvasdjfaopsruosihjsdfghajsdflahgfsif_{0}^{T_1}   \sifpoierjsodfgupoefasdfgjsdfgjsdfgjsdjflxncvzxnvasdjfaopsruosihjsdgghajsdflahgfsif v\sifpoierjsodfgupoefasdfgjsdfgjsdfgjsdjflxncvzxnvasdjfaopsruosihjsdgghajsdflahgfsif_{L^2}^2    \sifpoierjsodfgupoefasdfgjsdfgjsdfgjsdjflxncvzxnvasdjfaopsruosihjsdgghajsdflahgfsif v\sifpoierjsodfgupoefasdfgjsdfgjsdfgjsdjflxncvzxnvasdjfaopsruosihjsdgghajsdflahgfsif_{H^1}^2 \,ds\biggr)   \lec   T_1 \sifpoierjsodfgupoefasdfgjsdfgjsdfgjsdjflxncvzxnvasdjfaopsruosihjsdgghajsdflahgfsif \tilde{\sifpoierjsodfgupoefasdfgjsdfgjsdfgjsdjflxncvzxnvasdjfaopsruosihjsdighajsdflahgfsif}\sifpoierjsodfgupoefasdfgjsdfgjsdfgjsdjflxncvzxnvasdjfaopsruosihjsdgghajsdflahgfsif_{L^\infty L^2}^2   ,   \label{sifpoierjsodfgupoefasdfgjsdfgjsdfgjsdjflxncvzxnvasdjfaopsruosihjsdfghajsdflahgfsif26} \end{align} for $t\in[0,T_1]$, recalling the agreement that the implicit constants to depend on~$\sifpoierjsodfgupoefasdfgjsdfgjsdfgjsdjflxncvzxnvasdjfaopsruosihjsdgghajsdflahgfsif v\sifpoierjsodfgupoefasdfgjsdfgjsdfgjsdjflxncvzxnvasdjfaopsruosihjsdgghajsdflahgfsif_{\XXT}$. The inequalities \eqref{sifpoierjsodfgupoefasdfgjsdfgjsdfgjsdjflxncvzxnvasdjfaopsruosihjsdfghajsdflahgfsif25} and \eqref{sifpoierjsodfgupoefasdfgjsdfgjsdfgjsdjflxncvzxnvasdjfaopsruosihjsdfghajsdflahgfsif26} imply  \begin{align}   \sifpoierjsodfgupoefasdfgjsdfgjsdfgjsdjflxncvzxnvasdjfaopsruosihjsdgghajsdflahgfsif \tilde{u}\sifpoierjsodfgupoefasdfgjsdfgjsdfgjsdjflxncvzxnvasdjfaopsruosihjsdgghajsdflahgfsif_{L^\infty V}^2
  +    \sifpoierjsodfgupoefasdfgjsdfgjsdfgjsdjflxncvzxnvasdjfaopsruosihjsdgghajsdflahgfsif \tilde{u}\sifpoierjsodfgupoefasdfgjsdfgjsdfgjsdjflxncvzxnvasdjfaopsruosihjsdgghajsdflahgfsif_{L^2 D(A)}^2   \lec T_1 \sifpoierjsodfgupoefasdfgjsdfgjsdfgjsdjflxncvzxnvasdjfaopsruosihjsdgghajsdflahgfsif \tilde{\sifpoierjsodfgupoefasdfgjsdfgjsdfgjsdjflxncvzxnvasdjfaopsruosihjsdighajsdflahgfsif}\sifpoierjsodfgupoefasdfgjsdfgjsdfgjsdjflxncvzxnvasdjfaopsruosihjsdgghajsdflahgfsif_{L^\infty L^2}^2   ,   \llabel{NKU Qap NBP 5B 32Ey prwP FLvuPi wR P l1G TdQ BZE Aw 3d90 v8P5 CPAnX4 Yo 2 q7s yr5 BW8 Hc T7tM ioha BW9U4q rb u mEQ 6Xz MKR 2B REFX k3ZO MVMYSw 9S F 5ek q0m yNK Gn H0qi vlRA 18CbEz id O iuy ZZ6 kRo oJ kLQ0 Ewmz sKlld6 Kr K JmR xls 12K G2 bv8v LxfJ wrIcU6 Hx p q6p Fy7 Oim mo dXYt Kt0V VH22OC Aj f deT BAP vPl oK QzLE OQlq dpzxJ6 JI z Ujn TqY sQ4 BD QPW6 784x NUfsk0 aM 7 8qz MuL 9Mr Ac uVVK Y55n M7WqnB 2R C pGZ vHh WUN g9 3F2e RT8U umC62V H3 Z dJX LMS cca 1m xoOO 6oOL OVzfpO BO X 5Ev KuL z5s EW 8a9y otqk cKbDJN Us l pYM JpJ jOW Uy 2U4Y VKH6 kVC1Vx 1u v ykO yDs zo5 bz d36q WH1k J7Jtkg V1 J xqr Fnq mcU yZ JTp9 oFIc FAk0IT A9 3 SrL axO 9oU Z3 jG6f BRL1 iZ7ZE6 zj 8 G3M Hu8 6Ay jt 3flY cmTk jiTSYv CF t JLq cJP tN7 E3 POqG OKe0 3K3WV0 ep W XDQ C97 YSb AD ZUNp 81GF fCPbj3 iq E t0E NXy pLv fo Iz6z oFoF 9lkIun Xj Y yYL 52U bRB jx kQUS U9mm XtzIHO Cz 1 KH4 9ez 6Pz qW F223 C0Iz 3CsvuT R9 s VtQ CcM 1eo pD Py2l EEzL U0USJt Jb 9 zgy Gyf iQ4 fo Cx26 k4jL E0ula6 aS I rZQ HER 5HV CE BL55 WCtB 2LCmve TD z Vcp 7UR gI7 Qu FbFw 9VTx JwGrzs VW M 9sM JeJ Nd2 VG GFsi WuqC 3YxXoJ GK w Io7 1fg sGm 0P YFBz X8eX 7pf9GJ b1 o XUs 1q0 6KP Ls MucN ytQb L0Z0Qq m1 l SPj 9MT etk L6 KfsC 6Zob Yhc2qu Xy 9 GPm ZYj 1Go ei feJ3 pRAf n6Ypy6 jN s 4Y5 nSE pqN 4m Rmam AGfY HhSaBr Ls D THC SEl UyR Mh 66XU 7hNz pZVC5V nV 7 VjL 7kv WKf 7P 5hj6 t1vu gkLGdN X8 b gOX HWm 6W4 YE mxFG 4WaN EbGKsv 0p 4 OG0 Nrd uTe Za xNXq V4Bp mOdXIq 9a b PeD PbU Z4N Xt ohbY egCf xBNttE wc D YSD 637 jJ2 ms 6Ta1 J2xZ PtKnPw AX A tJA Rc8 n5d 93 TZi7 q6Wo nEDLwW Sz e Sue YFX 8cM hm Y6is 15pX aOYBbV fS C haL kBR Ks6 UO qG4j DVab fbdtny fi D BFI 7uh B39 FJ 6mYr CUUT f2X38J 43 K yZg 87i gFR 5R z1t3 jH9x lOg1h7 Psifpoierjsodfgupoefasdfgjsdfgjsdfgjsdjflxncvzxnvasdjfaopsruosihjsdfghajsdflahgfsif28} \end{align} where the domain $\Omega\times[0,T_1]$ is understood. The inequality \eqref{sifpoierjsodfgupoefasdfgjsdfgjsdfgjsdjflxncvzxnvasdjfaopsruosihjsdfghajsdflahgfsif107} then follows by choosing $T_1\in(0,T]$ sufficiently small. \end{proof} \par \cole \begin{Lemma} \label{L02} There exists $T_2\in(0,T]$, depending only on $\sifpoierjsodfgupoefasdfgjsdfgjsdfgjsdjflxncvzxnvasdjfaopsruosihjsdgghajsdflahgfsif v\sifpoierjsodfgupoefasdfgjsdfgjsdfgjsdjflxncvzxnvasdjfaopsruosihjsdgghajsdflahgfsif_{\XXT}$, such that
  \begin{equation}   \sifpoierjsodfgupoefasdfgjsdfgjsdfgjsdjflxncvzxnvasdjfaopsruosihjsdgghajsdflahgfsif \sifpoierjsodfgupoefasdfgjsdfgjsdfgjsdjflxncvzxnvasdjfaopsruosihjsdkghajsdflahgfsif_2(u_1)   - \sifpoierjsodfgupoefasdfgjsdfgjsdfgjsdjflxncvzxnvasdjfaopsruosihjsdkghajsdflahgfsif_2(u_2) \sifpoierjsodfgupoefasdfgjsdfgjsdfgjsdjflxncvzxnvasdjfaopsruosihjsdgghajsdflahgfsif_{L^{\infty}(0,T_2; H^1)}   \leq   \frac12   \sifpoierjsodfgupoefasdfgjsdfgjsdfgjsdjflxncvzxnvasdjfaopsruosihjsdgghajsdflahgfsif u_1   - u_2\sifpoierjsodfgupoefasdfgjsdfgjsdfgjsdjflxncvzxnvasdjfaopsruosihjsdgghajsdflahgfsif_{L^2(0,T_2; D(A)) \cap L^\infty(0,T_2; V) \cap L^3(0,T_2; W^{2,3})}    ,    \llabel{P vPl oK QzLE OQlq dpzxJ6 JI z Ujn TqY sQ4 BD QPW6 784x NUfsk0 aM 7 8qz MuL 9Mr Ac uVVK Y55n M7WqnB 2R C pGZ vHh WUN g9 3F2e RT8U umC62V H3 Z dJX LMS cca 1m xoOO 6oOL OVzfpO BO X 5Ev KuL z5s EW 8a9y otqk cKbDJN Us l pYM JpJ jOW Uy 2U4Y VKH6 kVC1Vx 1u v ykO yDs zo5 bz d36q WH1k J7Jtkg V1 J xqr Fnq mcU yZ JTp9 oFIc FAk0IT A9 3 SrL axO 9oU Z3 jG6f BRL1 iZ7ZE6 zj 8 G3M Hu8 6Ay jt 3flY cmTk jiTSYv CF t JLq cJP tN7 E3 POqG OKe0 3K3WV0 ep W XDQ C97 YSb AD ZUNp 81GF fCPbj3 iq E t0E NXy pLv fo Iz6z oFoF 9lkIun Xj Y yYL 52U bRB jx kQUS U9mm XtzIHO Cz 1 KH4 9ez 6Pz qW F223 C0Iz 3CsvuT R9 s VtQ CcM 1eo pD Py2l EEzL U0USJt Jb 9 zgy Gyf iQ4 fo Cx26 k4jL E0ula6 aS I rZQ HER 5HV CE BL55 WCtB 2LCmve TD z Vcp 7UR gI7 Qu FbFw 9VTx JwGrzs VW M 9sM JeJ Nd2 VG GFsi WuqC 3YxXoJ GK w Io7 1fg sGm 0P YFBz X8eX 7pf9GJ b1 o XUs 1q0 6KP Ls MucN ytQb L0Z0Qq m1 l SPj 9MT etk L6 KfsC 6Zob Yhc2qu Xy 9 GPm ZYj 1Go ei feJ3 pRAf n6Ypy6 jN s 4Y5 nSE pqN 4m Rmam AGfY HhSaBr Ls D THC SEl UyR Mh 66XU 7hNz pZVC5V nV 7 VjL 7kv WKf 7P 5hj6 t1vu gkLGdN X8 b gOX HWm 6W4 YE mxFG 4WaN EbGKsv 0p 4 OG0 Nrd uTe Za xNXq V4Bp mOdXIq 9a b PeD PbU Z4N Xt ohbY egCf xBNttE wc D YSD 637 jJ2 ms 6Ta1 J2xZ PtKnPw AX A tJA Rc8 n5d 93 TZi7 q6Wo nEDLwW Sz e Sue YFX 8cM hm Y6is 15pX aOYBbV fS C haL kBR Ks6 UO qG4j DVab fbdtny fi D BFI 7uh B39 FJ 6mYr CUUT f2X38J 43 K yZg 87i gFR 5R z1t3 jH9x lOg1h7 P7 W w8w jMJ qH3 l5 J5wU 8eH0 OogRCv L7 f JJg 1ug RfM XI GSuE Efbh 3hdNY3 x1 9 7jR qeP cdu sb fkuJ hEpw MvNBZV zL u qxJ 9b1 BTf Yk RJLj Oo1a EPIXvZ Aj v Xne fhK GsJ Ga wqjt U7r6 MPoydE H2 6 203 mGi JhF nT NCDB YlnP oKO6Pu XU 3 uu9 mSg 41v ma kk0E WUpS UtGBtD e6 d Kdx ZNT FuT i1 fMcM hq7P Ovf0hg Hl 8 fqsifpoierjsodfgupoefasdfgjsdfgjsdfgjsdjflxncvzxnvasdjfaopsruosihjsdfghajsdflahgfsif11}   \end{equation} for all $u_1, u_2 \in L^2(0,T_2; D(A)) \cap L^\infty(0,T_2; V) \cap L^3(0,T_2; W^{2,3})$. \end{Lemma}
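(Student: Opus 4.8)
The plan is to exploit that, with the transport field $v$ held fixed, the map $\phi_2$ is affine in the velocity argument: the difference of two outputs solves the same density equation but with vanishing initial data and a forcing governed by $u_1-u_2$. The estimate \eqref{sifpoierjsodfgupoefasdfgjsdfgjsdfgjsdjflxncvzxnvasdjfaopsruosihjsdfghajsdflahgfsif11} will then follow from the a~priori bound \eqref{sifpoierjsodfgupoefasdfgjsdfgjsdfgjsdjflxncvzxnvasdjfaopsruosihjsdfghajsdflahgfsif36} already proved in Lemma~\ref{P02}, together with a short-time factor that supplies the contraction constant.

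First I would write $\theta_i=\phi_2(u_i)$ for $i=1,2$, and set $\tilde\theta=\theta_1-\theta_2$, $\tilde u=u_1-u_2$. Subtracting the two instances of \eqref{sifpoierjsodfgupoefasdfgjsdfgjsdfgjsdjflxncvzxnvasdjfaopsruosihjsdfghajsdflahgfsif22} and using that $v$ is common to both, the transport terms combine and $\tilde\theta$ satisfies
\[
  \tilde\theta_t + v\cdot\nabla\tilde\theta = -\tilde u\cdot e_2, \qquad \tilde\theta(0)=0 .
\]
This is of the form \eqref{sifpoierjsodfgupoefasdfgjsdfgjsdfgjsdjflxncvzxnvasdjfaopsruosihjsdfghajsdflahgfsif22} with zero initial datum and with $\tilde u$ playing the role of $u$, so Lemma~\ref{P02} applies to $\tilde\theta$ directly.

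Next I would bound the two halves of $\|\tilde\theta\|_{H^1}$ separately. For the gradient, \eqref{sifpoierjsodfgupoefasdfgjsdfgjsdfgjsdjflxncvzxnvasdjfaopsruosihjsdfghajsdflahgfsif36} with $\tilde\theta(0)=0$ gives, for every $t\in[0,T_2]$,
\[
  \|\nabla\tilde\theta(t)\|_{L^2}
  \lec
  \Bigl(\int_0^t \|\tilde u\|_{H^1}\,ds\Bigr)
  \exp\Bigl(C\int_0^t \|v\|_{W^{1,\infty}}\,ds\Bigr) .
\]
For the $L^2$ part, testing the difference equation with $\tilde\theta$ and using $\div v=0$ in $\Omega$ together with $v\cdot n=0$ on $\partial\Omega$ (so that the transport term drops) yields $\tfrac{d}{dt}\|\tilde\theta\|_{L^2}\lec\|\tilde u\|_{L^2}$, hence $\|\tilde\theta(t)\|_{L^2}\le\int_0^t\|\tilde u\|_{L^2}\,ds$.

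Finally, the smallness is produced by the time horizon. The exponent $\int_0^{T_2}\|v\|_{W^{1,\infty}}\,ds\le\int_0^T\|v\|_{W^{1,\infty}}\,ds$ is finite and depends only on $\|v\|_{\XXT}$, since $v\in\XXT\subseteq L^1 W^{1,\infty}$ by the Gagliardo--Nirenberg bound of the type \eqref{sifpoierjsodfgupoefasdfgjsdfgjsdfgjsdjflxncvzxnvasdjfaopsruosihjsdfghajsdflahgfsif27}; I absorb it into a constant $C$. Using $\int_0^{T_2}\|\tilde u\|_{H^1}\,ds\le T_2\|\tilde u\|_{L^\infty(0,T_2;V)}$ and combining the two estimates gives
\[
  \|\tilde\theta\|_{L^\infty(0,T_2;H^1)}
  \le C\,T_2\,\|\tilde u\|_{L^\infty(0,T_2;V)}
  \le C\,T_2\,\|u_1-u_2\|_{L^2(0,T_2;D(A))\cap L^\infty(0,T_2;V)\cap L^3(0,T_2;W^{2,3})} ,
\]
the last step merely discarding the unused components of the target norm. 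Choosing $T_2\in(0,T]$ with $C\,T_2\le\tfrac12$ yields \eqref{sifpoierjsodfgupoefasdfgjsdfgjsdfgjsdjflxncvzxnvasdjfaopsruosihjsdfghajsdflahgfsif11}. The one point to watch is that $C$ must be shown to depend only on $\|v\|_{\XXT}$ and $T$, and not on $u_1,u_2$; this is guaranteed by controlling the exponential factor through \eqref{sifpoierjsodfgupoefasdfgjsdfgjsdfgjsdjflxncvzxnvasdjfaopsruosihjsdfghajsdflahgfsif27}. I expect no genuine obstacle here: the statement is essentially a corollary of Lemma~\ref{P02} plus the elementary $L^2$ energy identity, the only bookkeeping being to confirm that the single component $\|\tilde u\|_{L^\infty(0,T_2;V)}$, multiplied by $T_2$, dominates $\int_0^{T_2}\|\tilde u\|_{H^1}\,ds$.
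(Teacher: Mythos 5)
Your proposal is correct and follows essentially the same route as the paper: subtract the two instances of the density equation (the transport terms combine since $v$ is common), observe the difference has zero initial data, invoke the estimate \eqref{sifpoierjsodfgupoefasdfgjsdfgjsdfgjsdjflxncvzxnvasdjfaopsruosihjsdfghajsdflahgfsif36} to get a bound of the form $T_2\|u_1-u_2\|_{L^\infty V}$ times a constant depending only on $\|v\|_{\XXT}$, and choose $T_2$ small. Your separate treatment of the $L^2$ component of the $H^1$ norm is a small bookkeeping step the paper glosses over, not a different argument.
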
 \colb \par
\begin{proof}[Proof of Lemma~\ref*{L02}] Let $\sifpoierjsodfgupoefasdfgjsdfgjsdfgjsdjflxncvzxnvasdjfaopsruosihjsdighajsdflahgfsif_i=\sifpoierjsodfgupoefasdfgjsdfgjsdfgjsdjflxncvzxnvasdjfaopsruosihjsdkghajsdflahgfsif_2(u_i)$, and subtract the corresponding equations for $\tilde\sifpoierjsodfgupoefasdfgjsdfgjsdfgjsdjflxncvzxnvasdjfaopsruosihjsdighajsdflahgfsif_1$ and $\tilde\sifpoierjsodfgupoefasdfgjsdfgjsdfgjsdjflxncvzxnvasdjfaopsruosihjsdighajsdflahgfsif_2$ to obtain \begin{align}   \tilde{\sifpoierjsodfgupoefasdfgjsdfgjsdfgjsdjflxncvzxnvasdjfaopsruosihjsdighajsdflahgfsif}_t    + v \sifpoierjsodfgupoefasdfgjsdfgjsdfgjsdjflxncvzxnvasdjfaopsruosihjsdhghajsdflahgfsif \nabla \tilde{\sifpoierjsodfgupoefasdfgjsdfgjsdfgjsdjflxncvzxnvasdjfaopsruosihjsdighajsdflahgfsif}   = - \tilde{u} \sifpoierjsodfgupoefasdfgjsdfgjsdfgjsdjflxncvzxnvasdjfaopsruosihjsdhghajsdflahgfsif e_2   ,    \label{sifpoierjsodfgupoefasdfgjsdfgjsdfgjsdjflxncvzxnvasdjfaopsruosihjsdfghajsdflahgfsif86} \end{align} where $u=u_1-u_2$,  with $\tilde{\sifpoierjsodfgupoefasdfgjsdfgjsdfgjsdjflxncvzxnvasdjfaopsruosihjsdighajsdflahgfsif}(0) = 0$. Hence, as in \eqref{sifpoierjsodfgupoefasdfgjsdfgjsdfgjsdjflxncvzxnvasdjfaopsruosihjsdfghajsdflahgfsif36}, we have \begin{align}   \sifpoierjsodfgupoefasdfgjsdfgjsdfgjsdjflxncvzxnvasdjfaopsruosihjsdgghajsdflahgfsif \nabla \tilde{\sifpoierjsodfgupoefasdfgjsdfgjsdfgjsdjflxncvzxnvasdjfaopsruosihjsdighajsdflahgfsif}\sifpoierjsodfgupoefasdfgjsdfgjsdfgjsdjflxncvzxnvasdjfaopsruosihjsdgghajsdflahgfsif_{L^\infty H^1}
  \lec T \sifpoierjsodfgupoefasdfgjsdfgjsdfgjsdjflxncvzxnvasdjfaopsruosihjsdgghajsdflahgfsif \tilde{u}\sifpoierjsodfgupoefasdfgjsdfgjsdfgjsdjflxncvzxnvasdjfaopsruosihjsdgghajsdflahgfsif_{L^\infty V},    \llabel{yZ JTp9 oFIc FAk0IT A9 3 SrL axO 9oU Z3 jG6f BRL1 iZ7ZE6 zj 8 G3M Hu8 6Ay jt 3flY cmTk jiTSYv CF t JLq cJP tN7 E3 POqG OKe0 3K3WV0 ep W XDQ C97 YSb AD ZUNp 81GF fCPbj3 iq E t0E NXy pLv fo Iz6z oFoF 9lkIun Xj Y yYL 52U bRB jx kQUS U9mm XtzIHO Cz 1 KH4 9ez 6Pz qW F223 C0Iz 3CsvuT R9 s VtQ CcM 1eo pD Py2l EEzL U0USJt Jb 9 zgy Gyf iQ4 fo Cx26 k4jL E0ula6 aS I rZQ HER 5HV CE BL55 WCtB 2LCmve TD z Vcp 7UR gI7 Qu FbFw 9VTx JwGrzs VW M 9sM JeJ Nd2 VG GFsi WuqC 3YxXoJ GK w Io7 1fg sGm 0P YFBz X8eX 7pf9GJ b1 o XUs 1q0 6KP Ls MucN ytQb L0Z0Qq m1 l SPj 9MT etk L6 KfsC 6Zob Yhc2qu Xy 9 GPm ZYj 1Go ei feJ3 pRAf n6Ypy6 jN s 4Y5 nSE pqN 4m Rmam AGfY HhSaBr Ls D THC SEl UyR Mh 66XU 7hNz pZVC5V nV 7 VjL 7kv WKf 7P 5hj6 t1vu gkLGdN X8 b gOX HWm 6W4 YE mxFG 4WaN EbGKsv 0p 4 OG0 Nrd uTe Za xNXq V4Bp mOdXIq 9a b PeD PbU Z4N Xt ohbY egCf xBNttE wc D YSD 637 jJ2 ms 6Ta1 J2xZ PtKnPw AX A tJA Rc8 n5d 93 TZi7 q6Wo nEDLwW Sz e Sue YFX 8cM hm Y6is 15pX aOYBbV fS C haL kBR Ks6 UO qG4j DVab fbdtny fi D BFI 7uh B39 FJ 6mYr CUUT f2X38J 43 K yZg 87i gFR 5R z1t3 jH9x lOg1h7 P7 W w8w jMJ qH3 l5 J5wU 8eH0 OogRCv L7 f JJg 1ug RfM XI GSuE Efbh 3hdNY3 x1 9 7jR qeP cdu sb fkuJ hEpw MvNBZV zL u qxJ 9b1 BTf Yk RJLj Oo1a EPIXvZ Aj v Xne fhK GsJ Ga wqjt U7r6 MPoydE H2 6 203 mGi JhF nT NCDB YlnP oKO6Pu XU 3 uu9 mSg 41v ma kk0E WUpS UtGBtD e6 d Kdx ZNT FuT i1 fMcM hq7P Ovf0hg Hl 8 fqv I3R K39 fn 9MaC Zgow 6e1iXj KC 5 lHO lpG pkK Xd Dxtz 0HxE fSMjXY L8 F vh7 dmJ kE8 QA KDo1 FqML HOZ2iL 9i I m3L Kva YiN K9 sb48 NxwY NR0nx2 t5 b WCk x2a 31k a8 fUIa RGzr 7oigRX 5s m 9PQ 7Sr 5St ZE Ymp8 VIWS hdzgDI 9v R F5J 81x 33n Ne fjBT VvGP vGsxQh Al G Fbe 1bQ i6J ap OJJa ceGq 1vvb8r F2 F 3M6 8eD sifpoierjsodfgupoefasdfgjsdfgjsdfgjsdjflxncvzxnvasdjfaopsruosihjsdfghajsdflahgfsif87} \end{align} allowing the implicit constant to depend on $v$. The claim then follows upon choosing $T>0$ sufficiently small and setting it as~$T_2$. \end{proof} We are now ready to prove the main result of this section. \par \begin{proof}[Proof of Proposition~\ref*{P03}] As above, it is sufficient to prove the assertion for a fixed $T\in(0,\infty)$. For simplicity of notation, we consider the system \eqref{sifpoierjsodfgupoefasdfgjsdfgjsdfgjsdjflxncvzxnvasdjfaopsruosihjsdfghajsdflahgfsif23} with \eqref{sifpoierjsodfgupoefasdfgjsdfgjsdfgjsdjflxncvzxnvasdjfaopsruosihjsdfghajsdflahgfsif22} with $v\in \XXT$ given. \par We begin by proving uniqueness. Let $(u^1,\sifpoierjsodfgupoefasdfgjsdfgjsdfgjsdjflxncvzxnvasdjfaopsruosihjsdighajsdflahgfsif^1)$ and  
$(u^2,\sifpoierjsodfgupoefasdfgjsdfgjsdfgjsdjflxncvzxnvasdjfaopsruosihjsdighajsdflahgfsif^2)$ be any two solutions of \eqref{sifpoierjsodfgupoefasdfgjsdfgjsdfgjsdjflxncvzxnvasdjfaopsruosihjsdfghajsdflahgfsif23},\eqref{sifpoierjsodfgupoefasdfgjsdfgjsdfgjsdjflxncvzxnvasdjfaopsruosihjsdfghajsdflahgfsif22} in $\XXT \times \YYT$. Taking the difference of the corresponding equations and denoting the difference of solutions as $(\tilde{u}, \tilde{\sifpoierjsodfgupoefasdfgjsdfgjsdfgjsdjflxncvzxnvasdjfaopsruosihjsdighajsdflahgfsif})$, it is easy to check that $(\tilde{u}, \tilde{\sifpoierjsodfgupoefasdfgjsdfgjsdfgjsdjflxncvzxnvasdjfaopsruosihjsdighajsdflahgfsif})$ is a solution of \eqref{sifpoierjsodfgupoefasdfgjsdfgjsdfgjsdjflxncvzxnvasdjfaopsruosihjsdfghajsdflahgfsif24} coupled with \eqref{sifpoierjsodfgupoefasdfgjsdfgjsdfgjsdjflxncvzxnvasdjfaopsruosihjsdfghajsdflahgfsif86} with zero initial data.  Therefore, upon testing \eqref{sifpoierjsodfgupoefasdfgjsdfgjsdfgjsdjflxncvzxnvasdjfaopsruosihjsdfghajsdflahgfsif24} with $\tilde{u}$ and \eqref{sifpoierjsodfgupoefasdfgjsdfgjsdfgjsdjflxncvzxnvasdjfaopsruosihjsdfghajsdflahgfsif86} with $\tilde{\sifpoierjsodfgupoefasdfgjsdfgjsdfgjsdjflxncvzxnvasdjfaopsruosihjsdighajsdflahgfsif}$ and adding the resulting equations, we obtain \begin{align}    \frac{1}{2}\frac{d}{dt}       (\sifpoierjsodfgupoefasdfgjsdfgjsdfgjsdjflxncvzxnvasdjfaopsruosihjsdgghajsdflahgfsif \tilde{u}\sifpoierjsodfgupoefasdfgjsdfgjsdfgjsdjflxncvzxnvasdjfaopsruosihjsdgghajsdflahgfsif_{L^2}^2      + \sifpoierjsodfgupoefasdfgjsdfgjsdfgjsdjflxncvzxnvasdjfaopsruosihjsdgghajsdflahgfsif \tilde{\sifpoierjsodfgupoefasdfgjsdfgjsdfgjsdjflxncvzxnvasdjfaopsruosihjsdighajsdflahgfsif}\sifpoierjsodfgupoefasdfgjsdfgjsdfgjsdjflxncvzxnvasdjfaopsruosihjsdgghajsdflahgfsif_{L^2}^2)     = 
    - \sifpoierjsodfgupoefasdfgjsdfgjsdfgjsdjflxncvzxnvasdjfaopsruosihjsdgghajsdflahgfsif \nabla \tilde{u}\sifpoierjsodfgupoefasdfgjsdfgjsdfgjsdjflxncvzxnvasdjfaopsruosihjsdgghajsdflahgfsif_{L^2}^2     ,          \notag \end{align} from which the uniqueness follows upon integration in time. \par For the existence, we work on the time interval $[0,T_0]$, where $T_0=\min\{T_1,T_2\}$. We start by noting that  $\mathbb{X}=L^2 D(A) \cap L^\infty V $  is a Banach space with the addition of the two norms, where the domain $\Omega\times[0,T_0]$ is understood. Define $\sifpoierjsodfgupoefasdfgjsdfgjsdfgjsdjflxncvzxnvasdjfaopsruosihjsdkghajsdflahgfsif\colon \mathbb{X} \times L^\infty H^1 \to \mathbb{X} \times L^\infty H^1
$ by $\sifpoierjsodfgupoefasdfgjsdfgjsdfgjsdjflxncvzxnvasdjfaopsruosihjsdkghajsdflahgfsif(u,\sifpoierjsodfgupoefasdfgjsdfgjsdfgjsdjflxncvzxnvasdjfaopsruosihjsdighajsdflahgfsif) = (\sifpoierjsodfgupoefasdfgjsdfgjsdfgjsdjflxncvzxnvasdjfaopsruosihjsdkghajsdflahgfsif_1(\sifpoierjsodfgupoefasdfgjsdfgjsdfgjsdjflxncvzxnvasdjfaopsruosihjsdighajsdflahgfsif),\sifpoierjsodfgupoefasdfgjsdfgjsdfgjsdjflxncvzxnvasdjfaopsruosihjsdkghajsdflahgfsif_2(u))$.  Denoting the norm on the product space with $\sifpoierjsodfgupoefasdfgjsdfgjsdfgjsdjflxncvzxnvasdjfaopsruosihjsdgghajsdflahgfsif \sifpoierjsodfgupoefasdfgjsdfgjsdfgjsdjflxncvzxnvasdjfaopsruosihjsdhghajsdflahgfsif\sifpoierjsodfgupoefasdfgjsdfgjsdfgjsdjflxncvzxnvasdjfaopsruosihjsdgghajsdflahgfsif$, it follows that for each pair $(u_1,\sifpoierjsodfgupoefasdfgjsdfgjsdfgjsdjflxncvzxnvasdjfaopsruosihjsdighajsdflahgfsif_1), (u_2,\sifpoierjsodfgupoefasdfgjsdfgjsdfgjsdjflxncvzxnvasdjfaopsruosihjsdighajsdflahgfsif_2) \in \mathbb{X} \times L^\infty H^1$,  we have \begin{align} \begin{split}   &\sifpoierjsodfgupoefasdfgjsdfgjsdfgjsdjflxncvzxnvasdjfaopsruosihjsdgghajsdflahgfsif \sifpoierjsodfgupoefasdfgjsdfgjsdfgjsdjflxncvzxnvasdjfaopsruosihjsdkghajsdflahgfsif(u_1,\sifpoierjsodfgupoefasdfgjsdfgjsdfgjsdjflxncvzxnvasdjfaopsruosihjsdighajsdflahgfsif_1)    - \sifpoierjsodfgupoefasdfgjsdfgjsdfgjsdjflxncvzxnvasdjfaopsruosihjsdkghajsdflahgfsif(u_2,\sifpoierjsodfgupoefasdfgjsdfgjsdfgjsdjflxncvzxnvasdjfaopsruosihjsdighajsdflahgfsif_2)\sifpoierjsodfgupoefasdfgjsdfgjsdfgjsdjflxncvzxnvasdjfaopsruosihjsdgghajsdflahgfsif   = \sifpoierjsodfgupoefasdfgjsdfgjsdfgjsdjflxncvzxnvasdjfaopsruosihjsdgghajsdflahgfsif \sifpoierjsodfgupoefasdfgjsdfgjsdfgjsdjflxncvzxnvasdjfaopsruosihjsdkghajsdflahgfsif_1(\sifpoierjsodfgupoefasdfgjsdfgjsdfgjsdjflxncvzxnvasdjfaopsruosihjsdighajsdflahgfsif_1)   - \sifpoierjsodfgupoefasdfgjsdfgjsdfgjsdjflxncvzxnvasdjfaopsruosihjsdkghajsdflahgfsif_2(\sifpoierjsodfgupoefasdfgjsdfgjsdfgjsdjflxncvzxnvasdjfaopsruosihjsdighajsdflahgfsif_2)\sifpoierjsodfgupoefasdfgjsdfgjsdfgjsdjflxncvzxnvasdjfaopsruosihjsdgghajsdflahgfsif_{\mathbb{X}}   + \sifpoierjsodfgupoefasdfgjsdfgjsdfgjsdjflxncvzxnvasdjfaopsruosihjsdgghajsdflahgfsif \sifpoierjsodfgupoefasdfgjsdfgjsdfgjsdjflxncvzxnvasdjfaopsruosihjsdkghajsdflahgfsif_2(u_1)   - \sifpoierjsodfgupoefasdfgjsdfgjsdfgjsdjflxncvzxnvasdjfaopsruosihjsdkghajsdflahgfsif_2(u_2)\sifpoierjsodfgupoefasdfgjsdfgjsdfgjsdjflxncvzxnvasdjfaopsruosihjsdgghajsdflahgfsif_{L^\infty H^1}
  \\&\indeq    \leq \frac{1}{2} (\sifpoierjsodfgupoefasdfgjsdfgjsdfgjsdjflxncvzxnvasdjfaopsruosihjsdgghajsdflahgfsif \sifpoierjsodfgupoefasdfgjsdfgjsdfgjsdjflxncvzxnvasdjfaopsruosihjsdighajsdflahgfsif_1   - \sifpoierjsodfgupoefasdfgjsdfgjsdfgjsdjflxncvzxnvasdjfaopsruosihjsdighajsdflahgfsif_2\sifpoierjsodfgupoefasdfgjsdfgjsdfgjsdjflxncvzxnvasdjfaopsruosihjsdgghajsdflahgfsif_{L^\infty H^1}   + \sifpoierjsodfgupoefasdfgjsdfgjsdfgjsdjflxncvzxnvasdjfaopsruosihjsdgghajsdflahgfsif u_1-u_2\sifpoierjsodfgupoefasdfgjsdfgjsdfgjsdjflxncvzxnvasdjfaopsruosihjsdgghajsdflahgfsif_{\mathbb{X}})   ,   \end{split}    \llabel{l EEzL U0USJt Jb 9 zgy Gyf iQ4 fo Cx26 k4jL E0ula6 aS I rZQ HER 5HV CE BL55 WCtB 2LCmve TD z Vcp 7UR gI7 Qu FbFw 9VTx JwGrzs VW M 9sM JeJ Nd2 VG GFsi WuqC 3YxXoJ GK w Io7 1fg sGm 0P YFBz X8eX 7pf9GJ b1 o XUs 1q0 6KP Ls MucN ytQb L0Z0Qq m1 l SPj 9MT etk L6 KfsC 6Zob Yhc2qu Xy 9 GPm ZYj 1Go ei feJ3 pRAf n6Ypy6 jN s 4Y5 nSE pqN 4m Rmam AGfY HhSaBr Ls D THC SEl UyR Mh 66XU 7hNz pZVC5V nV 7 VjL 7kv WKf 7P 5hj6 t1vu gkLGdN X8 b gOX HWm 6W4 YE mxFG 4WaN EbGKsv 0p 4 OG0 Nrd uTe Za xNXq V4Bp mOdXIq 9a b PeD PbU Z4N Xt ohbY egCf xBNttE wc D YSD 637 jJ2 ms 6Ta1 J2xZ PtKnPw AX A tJA Rc8 n5d 93 TZi7 q6Wo nEDLwW Sz e Sue YFX 8cM hm Y6is 15pX aOYBbV fS C haL kBR Ks6 UO qG4j DVab fbdtny fi D BFI 7uh B39 FJ 6mYr CUUT f2X38J 43 K yZg 87i gFR 5R z1t3 jH9x lOg1h7 P7 W w8w jMJ qH3 l5 J5wU 8eH0 OogRCv L7 f JJg 1ug RfM XI GSuE Efbh 3hdNY3 x1 9 7jR qeP cdu sb fkuJ hEpw MvNBZV zL u qxJ 9b1 BTf Yk RJLj Oo1a EPIXvZ Aj v Xne fhK GsJ Ga wqjt U7r6 MPoydE H2 6 203 mGi JhF nT NCDB YlnP oKO6Pu XU 3 uu9 mSg 41v ma kk0E WUpS UtGBtD e6 d Kdx ZNT FuT i1 fMcM hq7P Ovf0hg Hl 8 fqv I3R K39 fn 9MaC Zgow 6e1iXj KC 5 lHO lpG pkK Xd Dxtz 0HxE fSMjXY L8 F vh7 dmJ kE8 QA KDo1 FqML HOZ2iL 9i I m3L Kva YiN K9 sb48 NxwY NR0nx2 t5 b WCk x2a 31k a8 fUIa RGzr 7oigRX 5s m 9PQ 7Sr 5St ZE Ymp8 VIWS hdzgDI 9v R F5J 81x 33n Ne fjBT VvGP vGsxQh Al G Fbe 1bQ i6J ap OJJa ceGq 1vvb8r F2 F 3M6 8eD lzG tX tVm5 y14v mwIXa2 OG Y hxU sXJ 0qg l5 ZGAt HPZd oDWrSb BS u NKi 6KW gr3 9s 9tc7 WM4A ws1PzI 5c C O7Z 8y9 lMT LA dwhz Mxz9 hjlWHj bJ 5 CqM jht y9l Mn 4rc7 6Amk KJimvH 9r O tbc tCK rsi B0 4cFV Dl1g cvfWh6 5n x y9Z S4W Pyo QB yr3v fBkj TZKtEZ 7r U fdM icd yCV qn D036 HJWM tYfL9f yX x O7m IcF E1O uLsifpoierjsodfgupoefasdfgjsdfgjsdfgjsdjflxncvzxnvasdjfaopsruosihjsdfghajsdflahgfsif88}   \end{align} by Lemmas~\ref*{L01} and~\ref*{L02}, showing that $\sifpoierjsodfgupoefasdfgjsdfgjsdfgjsdjflxncvzxnvasdjfaopsruosihjsdkghajsdflahgfsif$ is a contraction.  Applying the Banach fixed point theorem then gives a solution $(u,\sifpoierjsodfgupoefasdfgjsdfgjsdfgjsdjflxncvzxnvasdjfaopsruosihjsdighajsdflahgfsif)$ of \eqref{sifpoierjsodfgupoefasdfgjsdfgjsdfgjsdjflxncvzxnvasdjfaopsruosihjsdfghajsdflahgfsif05} coupled with \eqref{sifpoierjsodfgupoefasdfgjsdfgjsdfgjsdjflxncvzxnvasdjfaopsruosihjsdfghajsdflahgfsif22} on $[0,T_0]$. To check this, we set the sequence of iterates $(u^{m},\sifpoierjsodfgupoefasdfgjsdfgjsdfgjsdjflxncvzxnvasdjfaopsruosihjsdighajsdflahgfsif^{m})=\sifpoierjsodfgupoefasdfgjsdfgjsdfgjsdjflxncvzxnvasdjfaopsruosihjsdkghajsdflahgfsif(u^{m-1},\sifpoierjsodfgupoefasdfgjsdfgjsdfgjsdjflxncvzxnvasdjfaopsruosihjsdighajsdflahgfsif^{m-1})$, for $m\in\mathbb{N}$
with  $(u^{0},\sifpoierjsodfgupoefasdfgjsdfgjsdfgjsdjflxncvzxnvasdjfaopsruosihjsdighajsdflahgfsif^{0})\equiv(u_0,\sifpoierjsodfgupoefasdfgjsdfgjsdfgjsdjflxncvzxnvasdjfaopsruosihjsdighajsdflahgfsif_0)$. Then we have   \begin{align}   \begin{split}    &u^{m}_t         - \Delta u^{m}        + v \sifpoierjsodfgupoefasdfgjsdfgjsdfgjsdjflxncvzxnvasdjfaopsruosihjsdhghajsdflahgfsif \nabla u^{m}        + \nabla P = \sifpoierjsodfgupoefasdfgjsdfgjsdfgjsdjflxncvzxnvasdjfaopsruosihjsdighajsdflahgfsif^{m-1} e_2        ,    \\&    \nabla \sifpoierjsodfgupoefasdfgjsdfgjsdfgjsdjflxncvzxnvasdjfaopsruosihjsdhghajsdflahgfsif u^{m}=0    \\&
   \sifpoierjsodfgupoefasdfgjsdfgjsdfgjsdjflxncvzxnvasdjfaopsruosihjsdighajsdflahgfsif_t     + v \sifpoierjsodfgupoefasdfgjsdfgjsdfgjsdjflxncvzxnvasdjfaopsruosihjsdhghajsdflahgfsif \nabla \sifpoierjsodfgupoefasdfgjsdfgjsdfgjsdjflxncvzxnvasdjfaopsruosihjsdighajsdflahgfsif = -u \sifpoierjsodfgupoefasdfgjsdfgjsdfgjsdjflxncvzxnvasdjfaopsruosihjsdhghajsdflahgfsif e_2    ,    \end{split}    \llabel{ n6Ypy6 jN s 4Y5 nSE pqN 4m Rmam AGfY HhSaBr Ls D THC SEl UyR Mh 66XU 7hNz pZVC5V nV 7 VjL 7kv WKf 7P 5hj6 t1vu gkLGdN X8 b gOX HWm 6W4 YE mxFG 4WaN EbGKsv 0p 4 OG0 Nrd uTe Za xNXq V4Bp mOdXIq 9a b PeD PbU Z4N Xt ohbY egCf xBNttE wc D YSD 637 jJ2 ms 6Ta1 J2xZ PtKnPw AX A tJA Rc8 n5d 93 TZi7 q6Wo nEDLwW Sz e Sue YFX 8cM hm Y6is 15pX aOYBbV fS C haL kBR Ks6 UO qG4j DVab fbdtny fi D BFI 7uh B39 FJ 6mYr CUUT f2X38J 43 K yZg 87i gFR 5R z1t3 jH9x lOg1h7 P7 W w8w jMJ qH3 l5 J5wU 8eH0 OogRCv L7 f JJg 1ug RfM XI GSuE Efbh 3hdNY3 x1 9 7jR qeP cdu sb fkuJ hEpw MvNBZV zL u qxJ 9b1 BTf Yk RJLj Oo1a EPIXvZ Aj v Xne fhK GsJ Ga wqjt U7r6 MPoydE H2 6 203 mGi JhF nT NCDB YlnP oKO6Pu XU 3 uu9 mSg 41v ma kk0E WUpS UtGBtD e6 d Kdx ZNT FuT i1 fMcM hq7P Ovf0hg Hl 8 fqv I3R K39 fn 9MaC Zgow 6e1iXj KC 5 lHO lpG pkK Xd Dxtz 0HxE fSMjXY L8 F vh7 dmJ kE8 QA KDo1 FqML HOZ2iL 9i I m3L Kva YiN K9 sb48 NxwY NR0nx2 t5 b WCk x2a 31k a8 fUIa RGzr 7oigRX 5s m 9PQ 7Sr 5St ZE Ymp8 VIWS hdzgDI 9v R F5J 81x 33n Ne fjBT VvGP vGsxQh Al G Fbe 1bQ i6J ap OJJa ceGq 1vvb8r F2 F 3M6 8eD lzG tX tVm5 y14v mwIXa2 OG Y hxU sXJ 0qg l5 ZGAt HPZd oDWrSb BS u NKi 6KW gr3 9s 9tc7 WM4A ws1PzI 5c C O7Z 8y9 lMT LA dwhz Mxz9 hjlWHj bJ 5 CqM jht y9l Mn 4rc7 6Amk KJimvH 9r O tbc tCK rsi B0 4cFV Dl1g cvfWh6 5n x y9Z S4W Pyo QB yr3v fBkj TZKtEZ 7r U fdM icd yCV qn D036 HJWM tYfL9f yX x O7m IcF E1O uL QsAQ NfWv 6kV8Im 7Q 6 GsX NCV 0YP oC jnWn 6L25 qUMTe7 1v a hnH DAo XAb Tc zhPc fjrj W5M5G0 nz N M5T nlJ WOP Lh M6U2 ZFxw pg4Nej P8 U Q09 JX9 n7S kE WixE Rwgy Fvttzp 4A s v5F Tnn MzL Vh FUn5 6tFY CxZ1Bz Q3 E TfD lCa d7V fo MwPm ngrD HPfZV0 aY k Ojr ZUw 799 et oYuB MIC4 ovEY8D OL N URV Q5l ti1 iS NZAd sifpoierjsodfgupoefasdfgjsdfgjsdfgjsdjflxncvzxnvasdjfaopsruosihjsdfghajsdflahgfsif117}    \end{align} with $(u^{m}(0),\sifpoierjsodfgupoefasdfgjsdfgjsdfgjsdjflxncvzxnvasdjfaopsruosihjsdighajsdflahgfsif^{m}(0))=(u_0,\sifpoierjsodfgupoefasdfgjsdfgjsdfgjsdjflxncvzxnvasdjfaopsruosihjsdighajsdflahgfsif_0)$, while the equation $\sifpoierjsodfgupoefasdfgjsdfgjsdfgjsdjflxncvzxnvasdjfaopsruosihjsdkghajsdflahgfsif(u,\sifpoierjsodfgupoefasdfgjsdfgjsdfgjsdjflxncvzxnvasdjfaopsruosihjsdighajsdflahgfsif)=(u,\sifpoierjsodfgupoefasdfgjsdfgjsdfgjsdjflxncvzxnvasdjfaopsruosihjsdighajsdflahgfsif)$ reduces to \eqref{sifpoierjsodfgupoefasdfgjsdfgjsdfgjsdjflxncvzxnvasdjfaopsruosihjsdfghajsdflahgfsif05} coupled with \eqref{sifpoierjsodfgupoefasdfgjsdfgjsdfgjsdjflxncvzxnvasdjfaopsruosihjsdfghajsdflahgfsif22}. This concludes the existence of a solution with required properties on $[0,T_0]$. Note that, using induction and Lemmas~\ref{L03} and~\ref{P02}, we get $(u^{m},\sifpoierjsodfgupoefasdfgjsdfgjsdfgjsdjflxncvzxnvasdjfaopsruosihjsdighajsdflahgfsif^m)\in \XX\times\YY$ on the interval $[0,T_0]$. \par In order to be able to continue solution, we need to show that
$(u^m(T_0),\sifpoierjsodfgupoefasdfgjsdfgjsdfgjsdjflxncvzxnvasdjfaopsruosihjsdighajsdflahgfsif^m(T_0))\in D(A)\times H^{1}$. Observe that both $u^{m}(T_0)$ and $\sifpoierjsodfgupoefasdfgjsdfgjsdfgjsdjflxncvzxnvasdjfaopsruosihjsdighajsdflahgfsif^{m}(T_0)$ are well-defined by the continuity properties of $u^{m}$ and $\sifpoierjsodfgupoefasdfgjsdfgjsdfgjsdjflxncvzxnvasdjfaopsruosihjsdighajsdflahgfsif^{m}$ pointed out after \eqref{sifpoierjsodfgupoefasdfgjsdfgjsdfgjsdjflxncvzxnvasdjfaopsruosihjsdfghajsdflahgfsifSet2}. For $\sifpoierjsodfgupoefasdfgjsdfgjsdfgjsdjflxncvzxnvasdjfaopsruosihjsdighajsdflahgfsif^{m}$, we simply apply $\sifpoierjsodfgupoefasdfgjsdfgjsdfgjsdjflxncvzxnvasdjfaopsruosihjsdighajsdflahgfsif^{m}\in C([0,T],L^2) $ and $\sifpoierjsodfgupoefasdfgjsdfgjsdfgjsdjflxncvzxnvasdjfaopsruosihjsdighajsdflahgfsif_m\in L^{\infty}([0,T],H^1)$ and use the lower semicontinuity of the norm for weakly converging sequences. For $u^{m}$, it is sufficient to prove that $u^{m}\in L^{\infty}([0,T],D(A))$, again by the lower semicontinuity of the norm for weakly converging sequences. However, this follows from $u^{m}\in \XX_{T_0}\subseteq C([0,T_0],V)$ and   \begin{equation}  A u^m   =
   -u^m_t     - \mathbb{P}(u^{m-1} \sifpoierjsodfgupoefasdfgjsdfgjsdfgjsdjflxncvzxnvasdjfaopsruosihjsdhghajsdflahgfsif \nabla u^m)    + \mathbb{P}(\sifpoierjsodfgupoefasdfgjsdfgjsdfgjsdjflxncvzxnvasdjfaopsruosihjsdighajsdflahgfsif^m e_2)    ,    \llabel{W Sz e Sue YFX 8cM hm Y6is 15pX aOYBbV fS C haL kBR Ks6 UO qG4j DVab fbdtny fi D BFI 7uh B39 FJ 6mYr CUUT f2X38J 43 K yZg 87i gFR 5R z1t3 jH9x lOg1h7 P7 W w8w jMJ qH3 l5 J5wU 8eH0 OogRCv L7 f JJg 1ug RfM XI GSuE Efbh 3hdNY3 x1 9 7jR qeP cdu sb fkuJ hEpw MvNBZV zL u qxJ 9b1 BTf Yk RJLj Oo1a EPIXvZ Aj v Xne fhK GsJ Ga wqjt U7r6 MPoydE H2 6 203 mGi JhF nT NCDB YlnP oKO6Pu XU 3 uu9 mSg 41v ma kk0E WUpS UtGBtD e6 d Kdx ZNT FuT i1 fMcM hq7P Ovf0hg Hl 8 fqv I3R K39 fn 9MaC Zgow 6e1iXj KC 5 lHO lpG pkK Xd Dxtz 0HxE fSMjXY L8 F vh7 dmJ kE8 QA KDo1 FqML HOZ2iL 9i I m3L Kva YiN K9 sb48 NxwY NR0nx2 t5 b WCk x2a 31k a8 fUIa RGzr 7oigRX 5s m 9PQ 7Sr 5St ZE Ymp8 VIWS hdzgDI 9v R F5J 81x 33n Ne fjBT VvGP vGsxQh Al G Fbe 1bQ i6J ap OJJa ceGq 1vvb8r F2 F 3M6 8eD lzG tX tVm5 y14v mwIXa2 OG Y hxU sXJ 0qg l5 ZGAt HPZd oDWrSb BS u NKi 6KW gr3 9s 9tc7 WM4A ws1PzI 5c C O7Z 8y9 lMT LA dwhz Mxz9 hjlWHj bJ 5 CqM jht y9l Mn 4rc7 6Amk KJimvH 9r O tbc tCK rsi B0 4cFV Dl1g cvfWh6 5n x y9Z S4W Pyo QB yr3v fBkj TZKtEZ 7r U fdM icd yCV qn D036 HJWM tYfL9f yX x O7m IcF E1O uL QsAQ NfWv 6kV8Im 7Q 6 GsX NCV 0YP oC jnWn 6L25 qUMTe7 1v a hnH DAo XAb Tc zhPc fjrj W5M5G0 nz N M5T nlJ WOP Lh M6U2 ZFxw pg4Nej P8 U Q09 JX9 n7S kE WixE Rwgy Fvttzp 4A s v5F Tnn MzL Vh FUn5 6tFY CxZ1Bz Q3 E TfD lCa d7V fo MwPm ngrD HPfZV0 aY k Ojr ZUw 799 et oYuB MIC4 ovEY8D OL N URV Q5l ti1 iS NZAd wWr6 Q8oPFf ae 5 lAR 9gD RSi HO eJOW wxLv 20GoMt 2H z 7Yc aly PZx eR uFM0 7gaV 9UIz7S 43 k 5Tr ZiD Mt7 pE NCYi uHL7 gac7Gq yN 6 Z1u x56 YZh 2d yJVx 9MeU OMWBQf l0 E mIc 5Zr yfy 3i rahC y9Pi MJ7ofo Op d enn sLi xZx Jt CjC9 M71v O0fxiR 51 m FIB QRo 1oW Iq 3gDP stD2 ntfoX7 YU o S5k GuV IGM cf HZe3 7ZoG Asifpoierjsodfgupoefasdfgjsdfgjsdfgjsdjflxncvzxnvasdjfaopsruosihjsdfghajsdflahgfsif130}   \end{equation} along with bounding the right-hand side in $H$. Hence, $(u^m(T_0),\sifpoierjsodfgupoefasdfgjsdfgjsdfgjsdjflxncvzxnvasdjfaopsruosihjsdighajsdflahgfsif^m(T_0))\in D(A)\times H^{1}$, and repeating the procedure on  intervals   $[T_0, 2T_0], [2T_0, 3 T_0], \ldots$, if necessary, until reaching $T$ then finishes the proof. \end{proof} \par \startnewsection{The existence for the Boussinesq system}{sec05}
In the previous section, we have established the existence, uniqueness, and continuity properties of the sequence~\eqref{sifpoierjsodfgupoefasdfgjsdfgjsdfgjsdjflxncvzxnvasdjfaopsruosihjsdfghajsdflahgfsif03}.  The purpose of this section is to prove Theorem~\ref{T01}(i) by showing that the solution  $u^{n}$ of \eqref{sifpoierjsodfgupoefasdfgjsdfgjsdfgjsdjflxncvzxnvasdjfaopsruosihjsdfghajsdflahgfsif03}, which belongs to $\XXi\times \YYi$, is bounded by a constant independent of $n$ in the norm of $\XXT\times \YYT$ for every $T\in(0,\infty)$. \par \cole \begin{Lemma} \label{L04} Let $T\in(0,\infty)$, and
consider the sequence $u^{n}$ given in \eqref{sifpoierjsodfgupoefasdfgjsdfgjsdfgjsdjflxncvzxnvasdjfaopsruosihjsdfghajsdflahgfsif03}, with   \begin{equation}    \sifpoierjsodfgupoefasdfgjsdfgjsdfgjsdjflxncvzxnvasdjfaopsruosihjsdgghajsdflahgfsif \sifpoierjsodfgupoefasdfgjsdfgjsdfgjsdjflxncvzxnvasdjfaopsruosihjsdighajsdflahgfsif_0\sifpoierjsodfgupoefasdfgjsdfgjsdfgjsdjflxncvzxnvasdjfaopsruosihjsdgghajsdflahgfsif_{H^{1}}^2      + \sifpoierjsodfgupoefasdfgjsdfgjsdfgjsdjflxncvzxnvasdjfaopsruosihjsdgghajsdflahgfsif u_0\sifpoierjsodfgupoefasdfgjsdfgjsdfgjsdjflxncvzxnvasdjfaopsruosihjsdgghajsdflahgfsif_{D(A)}^2     \leq K_0     ,    \label{sifpoierjsodfgupoefasdfgjsdfgjsdfgjsdjflxncvzxnvasdjfaopsruosihjsdfghajsdflahgfsif49}   \end{equation} for some $K_0>0$. Then there exists a constant $K$ depending only on $K_0$  and $T$ such that $\sifpoierjsodfgupoefasdfgjsdfgjsdfgjsdjflxncvzxnvasdjfaopsruosihjsdgghajsdflahgfsif u^n\sifpoierjsodfgupoefasdfgjsdfgjsdfgjsdjflxncvzxnvasdjfaopsruosihjsdgghajsdflahgfsif_{\XX_T}\leq K$ and 
$\sifpoierjsodfgupoefasdfgjsdfgjsdfgjsdjflxncvzxnvasdjfaopsruosihjsdgghajsdflahgfsif \sifpoierjsodfgupoefasdfgjsdfgjsdfgjsdjflxncvzxnvasdjfaopsruosihjsdighajsdflahgfsif^n\sifpoierjsodfgupoefasdfgjsdfgjsdfgjsdjflxncvzxnvasdjfaopsruosihjsdgghajsdflahgfsif_{\YY_T}\leq K$ for all $n\in \mathbb{N}_0$. \end{Lemma}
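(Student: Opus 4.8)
The plan is to prove the bound through a level‑by‑level hierarchy of energy estimates in which the only mechanism that makes the constants independent of $n$ is a cancellation at the base energy level; every subsequent bound is a bootstrap whose Gronwall factors involve only quantities already controlled at lower levels. Throughout I write $v=u^{n-1}$ for the advecting field, fix $T\in(0,\infty)$, and let $C_*$ denote any constant depending only on $K_0$ and $T$. I would argue by induction on $n$, the base case $n=0$ (the forced Stokes system, with $\rho^0_t=-u^0\cdot e_2$) satisfying all the estimates below directly.

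First I would establish the base energy estimate by testing the velocity equation with $u^n$ and the density equation with $\rho^n$ and adding. Since $\nabla\cdot u^{n-1}=0$, both transport terms $(u^{n-1}\cdot\nabla u^n,u^n)$ and $(u^{n-1}\cdot\nabla\rho^n,\rho^n)$ vanish, and the buoyancy coupling cancels because $(\rho^n e_2,u^n)=(\rho^n,u^n_2)$ exactly balances $(-u^n\cdot e_2,\rho^n)=-(u^n_2,\rho^n)$. This yields $\tfrac12\tfrac{d}{dt}(\|u^n\|_{L^2}^2+\|\rho^n\|_{L^2}^2)+\|\nabla u^n\|_{L^2}^2=0$, hence $\|u^n\|_{L^\infty L^2\cap L^2V}\le C_*$ and $\|\rho^n\|_{L^\infty L^2}\le C_*$. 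This is the crux: it controls the advecting field $u^{n-1}$ at the energy level independently of $n$.

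Next I would propagate the remaining spatial regularity. Testing the velocity equation with $Au^n$ as in Lemma~\ref{L03} gives a differential inequality whose Gronwall exponent is $\int_0^T\|u^{n-1}\|_{L^2}^2\|\nabla u^{n-1}\|^2\le\|u^{n-1}\|_{L^\infty L^2}^2\|\nabla u^{n-1}\|_{L^2L^2}^2$, i.e. only base‑level norms, already bounded by $C_*$; thus $\|u^n\|_{L^\infty V\cap L^2D(A)}\le C_*$. Feeding this into the $L^3W^{2,3}$ estimate of \cite[Theorem~2.8]{GS} (as in Lemma~\ref{L03}) bounds $\|u^n\|_{L^3W^{2,3}}$ by $C_*$, since its right‑hand side is controlled by the now‑uniform $L^\infty V\cap L^2D(A)$ norms of $u^n,u^{n-1}$ and the $L^\infty L^2$ norm of $\rho^n$. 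For the density, Lemma~\ref{P02} gives $\|\rho^n\|_{L^\infty H^1}$ with factor $\exp(C\int_0^T\|u^{n-1}\|_{W^{1,\infty}})$; as $\int_0^T\|u^{n-1}\|_{W^{1,\infty}}$ is controlled by $\|u^{n-1}\|_{L^\infty V\cap L^3W^{2,3}}$, this factor is at most $C_*$, the source $\int_0^t\|u^n\|_{H^1}$ is uniform, and the identity $\rho^n_t=-u^n\cdot e_2-u^{n-1}\cdot\nabla\rho^n$ with the inequality following Lemma~\ref{P02} then yields $\|\rho^n\|_{H^1L^2}\le C_*$. This closes the induction for all of $\mathcal{Y}_T$ and for $\mathcal{X}_T$ except the time‑derivative norms.

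The main obstacle is the time derivative $u^n_t$, which must lie in $W^{1,\infty}L^2\cap H^1V$ uniformly: differentiating the velocity equation in time and testing with $u^n_t$ produces $(u^{n-1}_t\cdot\nabla u^n,u^n_t)$, which couples the top‑level norm of $u^n$ to the \emph{same} top‑level norm $\|u^{n-1}_t\|$ of the previous iterate, threatening geometric growth in $n$. I would resolve this in two moves. First, the worst transport term cancels, $(u^{n-1}\cdot\nabla u^n_t,u^n_t)=0$, and integrating by parts in the remaining term (using $\nabla\cdot u^{n-1}_t=0$) rewrites it as $-(u^{n-1}_t\cdot\nabla u^n_t,u^n)$, bounded by $\tfrac12\|\nabla u^n_t\|^2+C\|u^{n-1}_t\|_{L^2}^2\|u^n\|_{L^\infty}^2$; since $\|u^n_t\|_{L^2L^2}$ and $\|\rho^n_t\|_{L^2L^2}$ are already uniform (reconstruct $u^n_t=-Au^n-\mathbb{P}(u^{n-1}\cdot\nabla u^n)+\mathbb{P}(\rho^n e_2)$), this yields $\|u^n_t\|_{L^\infty([0,T_*];L^2)}^2\le C_*+\bigl(C\int_0^{T_*}\|u^n\|_{L^\infty}^2\bigr)\|u^{n-1}_t\|_{L^\infty([0,T_*];L^2)}^2$. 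Second, by Hölder in time and the Sobolev embedding $W^{2,3}\hookrightarrow L^\infty$ in two dimensions, $\int_0^{T_*}\|u^n\|_{L^\infty}^2\le T_*^{1/3}\|u^n\|_{L^3W^{2,3}}^2\le T_*^{1/3}C_*^2$, which is below $1/(2C)$ for $T_*$ depending only on $C_*$, hence uniform in $n$. On $[0,T_*]$ the recursion is then a genuine contraction $a_n\le C_*+\tfrac12 a_{n-1}$, giving $\sup_n\|u^n_t\|_{L^\infty([0,T_*];L^2)}\le C_*$ and the $H^1V$ bound. Reconstructing $Au^n$ from the equation and estimating $\|u^{n-1}\cdot\nabla u^n\|_{L^2}$ by Young upgrades this to $\|u^n\|_{L^\infty([0,T_*];D(A))}\le C_*$, so $(u^n(T_*),\rho^n(T_*))$ is bounded in $D(A)\times H^1$ uniformly in $n$; since $T_*$ depends only on $C_*$, one repeats the argument on $[T_*,2T_*],[2T_*,3T_*],\dots$, reaching $T$ in finitely many steps with $n$‑independent bounds at each stage. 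Finally the $H^2V'$ bound on $u^n_{tt}$ follows, as in the dual estimate of Lemma~\ref{L03}, by testing the time‑differentiated equation against $h\in V$ and using the uniform level‑zero, one, and two bounds just obtained.
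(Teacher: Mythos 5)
Your overall architecture --- uniformity in $n$ coming solely from the cancellation in the base energy identity, with every higher-order Gronwall factor depending only on already-uniform lower-level quantities --- is exactly the paper's. But there is one genuine gap, in the step where you bound $\|u^n\|_{L^3W^{2,3}}$. You claim the right-hand side of the Giga--Sohr maximal regularity estimate is controlled by the $L^\infty V\cap L^2 D(A)$ norms of $u^n,u^{n-1}$ \emph{and the $L^\infty L^2$ norm of $\rho^n$}. The nonlinear term is indeed handled by the former, but the forcing $\rho^n e_2$ must be measured in $L^3$ of space--time, and on a bounded planar domain $\|\rho^n\|_{L^3(\Omega)}$ is not dominated by $\|\rho^n\|_{L^2(\Omega)}$. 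At that point in your hierarchy you have no $H^1$ (hence no $L^3$) control of $\rho^n$, since the $L^\infty H^1$ bound of Lemma~\ref{P02} costs a factor $\exp(C\int_0^T\|u^{n-1}\|_{W^{1,\infty}})$, which itself requires the $L^3W^{2,3}$ bound you are in the middle of proving. The paper breaks this circle with a step you omit: testing the density equation with $|\rho^n|\rho^n$ (the transport term vanishing by incompressibility) gives $\|\rho^n\|_{L^\infty L^3}\lesssim\|\rho_0\|_{L^3}+\int_0^T\|u^{n-1}\|_{L^3}\,ds$, which is uniform because it uses only the energy-level bounds on $u^{n-1}$. Reordering your steps (density $H^1$ bound first, then $L^3W^{2,3}$) would not repair the argument: the exponential constant in Lemma~\ref{P02} at level $n$ would then depend on the level-$(n-1)$ density bound, producing a tower of exponentials in $n$ rather than a constant depending only on $K_0$ and $T$. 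The $L^\infty L^3$ density estimate is the hinge that keeps the bootstrap uniform, and it is missing from your proposal.

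Your treatment of $\|u^n_t\|_{L^\infty L^2}$ is a genuinely different route, and it would work once the gap above is filled. You integrate by parts in $(u^{n-1}_t\cdot\nabla u^n,u^n_t)$ and run a contraction $a_n\le C_*+\tfrac12 a_{n-1}$ on short intervals of length depending only on $C_*$, then iterate. The paper avoids the small-interval iteration entirely: it keeps the term as $\|u^{n-1}_t\|_{L^2}\|\nabla u^n\|_{L^\infty}\|u^n_t\|_{L^2}$, cancels one factor of $\|u^n_t\|_{L^2}$, and notes that after Young's inequality the dangerous factor enters only through $\|u^{n-1}_t\|_{L^2L^2}^2$, which is already uniformly bounded by reconstructing $u^{n-1}_t$ from the equation, while $\|\nabla u^n\|_{L^\infty}^2$ is integrable in time thanks to the $L^3W^{2,3}$ bound. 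That observation yields the bound on all of $[0,T]$ in one pass and is the simpler of the two arguments.
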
 \colb \par \begin{proof}[Proof of Lemma~\ref{L04}] Let $n\in\mathbb{N}_0$. We test the first equation in \eqref{sifpoierjsodfgupoefasdfgjsdfgjsdfgjsdjflxncvzxnvasdjfaopsruosihjsdfghajsdflahgfsif03} with $u^n$, the second equation with $\sifpoierjsodfgupoefasdfgjsdfgjsdfgjsdjflxncvzxnvasdjfaopsruosihjsdighajsdflahgfsif^n$, and add, obtaining \begin{align}   \frac{1}{2} \frac{d}{dt}    (   \sifpoierjsodfgupoefasdfgjsdfgjsdfgjsdjflxncvzxnvasdjfaopsruosihjsdgghajsdflahgfsif u^n\sifpoierjsodfgupoefasdfgjsdfgjsdfgjsdjflxncvzxnvasdjfaopsruosihjsdgghajsdflahgfsif_{L^2}^2   + 
  \sifpoierjsodfgupoefasdfgjsdfgjsdfgjsdjflxncvzxnvasdjfaopsruosihjsdgghajsdflahgfsif \sifpoierjsodfgupoefasdfgjsdfgjsdfgjsdjflxncvzxnvasdjfaopsruosihjsdighajsdflahgfsif^n\sifpoierjsodfgupoefasdfgjsdfgjsdfgjsdjflxncvzxnvasdjfaopsruosihjsdgghajsdflahgfsif_{L^2}^2)    =    -   \sifpoierjsodfgupoefasdfgjsdfgjsdfgjsdjflxncvzxnvasdjfaopsruosihjsdgghajsdflahgfsif \nabla u^n\sifpoierjsodfgupoefasdfgjsdfgjsdfgjsdjflxncvzxnvasdjfaopsruosihjsdgghajsdflahgfsif_{L^2}^2.     \label{sifpoierjsodfgupoefasdfgjsdfgjsdfgjsdjflxncvzxnvasdjfaopsruosihjsdfghajsdflahgfsif30} \end{align} The equation \eqref{sifpoierjsodfgupoefasdfgjsdfgjsdfgjsdjflxncvzxnvasdjfaopsruosihjsdfghajsdflahgfsif30} implies that   \begin{align}   \sifpoierjsodfgupoefasdfgjsdfgjsdfgjsdjflxncvzxnvasdjfaopsruosihjsdgghajsdflahgfsif u^n\sifpoierjsodfgupoefasdfgjsdfgjsdfgjsdjflxncvzxnvasdjfaopsruosihjsdgghajsdflahgfsif_{L^2}^2    + \sifpoierjsodfgupoefasdfgjsdfgjsdfgjsdjflxncvzxnvasdjfaopsruosihjsdgghajsdflahgfsif \sifpoierjsodfgupoefasdfgjsdfgjsdfgjsdjflxncvzxnvasdjfaopsruosihjsdighajsdflahgfsif^n\sifpoierjsodfgupoefasdfgjsdfgjsdfgjsdjflxncvzxnvasdjfaopsruosihjsdgghajsdflahgfsif_{L^2}^2   \lec 1   \label{sifpoierjsodfgupoefasdfgjsdfgjsdfgjsdjflxncvzxnvasdjfaopsruosihjsdfghajsdflahgfsif31}   \end{align}  and
  \begin{align}   \sifpoierjsodfgupoefasdfgjsdfgjsdfgjsdjflxncvzxnvasdjfaopsruosihjsdgghajsdflahgfsif \nabla u^n\sifpoierjsodfgupoefasdfgjsdfgjsdfgjsdjflxncvzxnvasdjfaopsruosihjsdgghajsdflahgfsif_{L^2 L^2}^2    \lec   1   ,   \label{sifpoierjsodfgupoefasdfgjsdfgjsdfgjsdjflxncvzxnvasdjfaopsruosihjsdfghajsdflahgfsif32}   \end{align} where all the constants are allowed to depend on $K_0$ and $T$ and thus are only independent of $n\in\mathbb{N}$. Testing the first equation in \eqref{sifpoierjsodfgupoefasdfgjsdfgjsdfgjsdjflxncvzxnvasdjfaopsruosihjsdfghajsdflahgfsif03} with $Au^n$  and by similar estimates leading to \eqref{sifpoierjsodfgupoefasdfgjsdfgjsdfgjsdjflxncvzxnvasdjfaopsruosihjsdfghajsdflahgfsif15}, we deduce that   \begin{align}   \frac{d}{dt}    \sifpoierjsodfgupoefasdfgjsdfgjsdfgjsdjflxncvzxnvasdjfaopsruosihjsdgghajsdflahgfsif \nabla u^n \sifpoierjsodfgupoefasdfgjsdfgjsdfgjsdjflxncvzxnvasdjfaopsruosihjsdgghajsdflahgfsif_{L^2}^2
  +    \sifpoierjsodfgupoefasdfgjsdfgjsdfgjsdjflxncvzxnvasdjfaopsruosihjsdgghajsdflahgfsif Au^n \sifpoierjsodfgupoefasdfgjsdfgjsdfgjsdjflxncvzxnvasdjfaopsruosihjsdgghajsdflahgfsif_{L^2}^2   \lec    \sifpoierjsodfgupoefasdfgjsdfgjsdfgjsdjflxncvzxnvasdjfaopsruosihjsdgghajsdflahgfsif u^{n-1}\sifpoierjsodfgupoefasdfgjsdfgjsdfgjsdjflxncvzxnvasdjfaopsruosihjsdgghajsdflahgfsif_{L^2}^2   \sifpoierjsodfgupoefasdfgjsdfgjsdfgjsdjflxncvzxnvasdjfaopsruosihjsdgghajsdflahgfsif \nabla u^{n-1}\sifpoierjsodfgupoefasdfgjsdfgjsdfgjsdjflxncvzxnvasdjfaopsruosihjsdgghajsdflahgfsif_{L^2}^2    \sifpoierjsodfgupoefasdfgjsdfgjsdfgjsdjflxncvzxnvasdjfaopsruosihjsdgghajsdflahgfsif \nabla u^n\sifpoierjsodfgupoefasdfgjsdfgjsdfgjsdjflxncvzxnvasdjfaopsruosihjsdgghajsdflahgfsif_{L^2}^2    +    \sifpoierjsodfgupoefasdfgjsdfgjsdfgjsdjflxncvzxnvasdjfaopsruosihjsdgghajsdflahgfsif \sifpoierjsodfgupoefasdfgjsdfgjsdfgjsdjflxncvzxnvasdjfaopsruosihjsdighajsdflahgfsif^n \sifpoierjsodfgupoefasdfgjsdfgjsdfgjsdjflxncvzxnvasdjfaopsruosihjsdgghajsdflahgfsif_{L^2}^2   .   \label{sifpoierjsodfgupoefasdfgjsdfgjsdfgjsdjflxncvzxnvasdjfaopsruosihjsdfghajsdflahgfsif38}   \end{align} By the Gronwall inequality, it follows that   \begin{align}   \begin{split}
  \sifpoierjsodfgupoefasdfgjsdfgjsdfgjsdjflxncvzxnvasdjfaopsruosihjsdgghajsdflahgfsif \nabla u^n\sifpoierjsodfgupoefasdfgjsdfgjsdfgjsdjflxncvzxnvasdjfaopsruosihjsdgghajsdflahgfsif_{L^2}^2   &\lec    \biggl( \sifpoierjsodfgupoefasdfgjsdfgjsdfgjsdjflxncvzxnvasdjfaopsruosihjsdgghajsdflahgfsif \nabla u_0\sifpoierjsodfgupoefasdfgjsdfgjsdfgjsdjflxncvzxnvasdjfaopsruosihjsdgghajsdflahgfsif_{L^2}^2   + \sifpoierjsodfgupoefasdfgjsdfgjsdfgjsdjflxncvzxnvasdjfaopsruosihjsdfghajsdflahgfsif_{0}^{T} \sifpoierjsodfgupoefasdfgjsdfgjsdfgjsdjflxncvzxnvasdjfaopsruosihjsdgghajsdflahgfsif \sifpoierjsodfgupoefasdfgjsdfgjsdfgjsdjflxncvzxnvasdjfaopsruosihjsdighajsdflahgfsif^n\sifpoierjsodfgupoefasdfgjsdfgjsdfgjsdjflxncvzxnvasdjfaopsruosihjsdgghajsdflahgfsif_{L^2}^2   \,ds \biggr)   \exp\biggl(   \sifpoierjsodfgupoefasdfgjsdfgjsdfgjsdjflxncvzxnvasdjfaopsruosihjsdfghajsdflahgfsif_{0}^{T}   \sifpoierjsodfgupoefasdfgjsdfgjsdfgjsdjflxncvzxnvasdjfaopsruosihjsdgghajsdflahgfsif u^{n-1}\sifpoierjsodfgupoefasdfgjsdfgjsdfgjsdjflxncvzxnvasdjfaopsruosihjsdgghajsdflahgfsif_{L^2}^2   \sifpoierjsodfgupoefasdfgjsdfgjsdfgjsdjflxncvzxnvasdjfaopsruosihjsdgghajsdflahgfsif \nabla u^{n-1}\sifpoierjsodfgupoefasdfgjsdfgjsdfgjsdjflxncvzxnvasdjfaopsruosihjsdgghajsdflahgfsif_{L^2}^2     \,ds\biggr)   \\&   \lec    \exp\biggl(C   \sifpoierjsodfgupoefasdfgjsdfgjsdfgjsdjflxncvzxnvasdjfaopsruosihjsdfghajsdflahgfsif_{0}^{T} 
  \sifpoierjsodfgupoefasdfgjsdfgjsdfgjsdjflxncvzxnvasdjfaopsruosihjsdgghajsdflahgfsif \nabla u^{n-1}\sifpoierjsodfgupoefasdfgjsdfgjsdfgjsdjflxncvzxnvasdjfaopsruosihjsdgghajsdflahgfsif_{L^2}^2   \,ds \biggr)   \lec 1   ,   \end{split}   \label{sifpoierjsodfgupoefasdfgjsdfgjsdfgjsdjflxncvzxnvasdjfaopsruosihjsdfghajsdflahgfsif39}   \end{align} where we have utilized \eqref{sifpoierjsodfgupoefasdfgjsdfgjsdfgjsdjflxncvzxnvasdjfaopsruosihjsdfghajsdflahgfsif31} for $k= n-1, n$ in the second and \eqref{sifpoierjsodfgupoefasdfgjsdfgjsdfgjsdjflxncvzxnvasdjfaopsruosihjsdfghajsdflahgfsif32} for $k=n-1$ in the last inequality. The inequalities \eqref{sifpoierjsodfgupoefasdfgjsdfgjsdfgjsdjflxncvzxnvasdjfaopsruosihjsdfghajsdflahgfsif38} and \eqref{sifpoierjsodfgupoefasdfgjsdfgjsdfgjsdjflxncvzxnvasdjfaopsruosihjsdfghajsdflahgfsif39} then imply   \begin{align}   \sifpoierjsodfgupoefasdfgjsdfgjsdfgjsdjflxncvzxnvasdjfaopsruosihjsdgghajsdflahgfsif A u^n\sifpoierjsodfgupoefasdfgjsdfgjsdfgjsdjflxncvzxnvasdjfaopsruosihjsdgghajsdflahgfsif_{L^2 L^2}^2   \lec \sifpoierjsodfgupoefasdfgjsdfgjsdfgjsdjflxncvzxnvasdjfaopsruosihjsdgghajsdflahgfsif \nabla u_0\sifpoierjsodfgupoefasdfgjsdfgjsdfgjsdjflxncvzxnvasdjfaopsruosihjsdgghajsdflahgfsif_{L^2}^2   + 1
  \lec 1   .    \llabel{ Xne fhK GsJ Ga wqjt U7r6 MPoydE H2 6 203 mGi JhF nT NCDB YlnP oKO6Pu XU 3 uu9 mSg 41v ma kk0E WUpS UtGBtD e6 d Kdx ZNT FuT i1 fMcM hq7P Ovf0hg Hl 8 fqv I3R K39 fn 9MaC Zgow 6e1iXj KC 5 lHO lpG pkK Xd Dxtz 0HxE fSMjXY L8 F vh7 dmJ kE8 QA KDo1 FqML HOZ2iL 9i I m3L Kva YiN K9 sb48 NxwY NR0nx2 t5 b WCk x2a 31k a8 fUIa RGzr 7oigRX 5s m 9PQ 7Sr 5St ZE Ymp8 VIWS hdzgDI 9v R F5J 81x 33n Ne fjBT VvGP vGsxQh Al G Fbe 1bQ i6J ap OJJa ceGq 1vvb8r F2 F 3M6 8eD lzG tX tVm5 y14v mwIXa2 OG Y hxU sXJ 0qg l5 ZGAt HPZd oDWrSb BS u NKi 6KW gr3 9s 9tc7 WM4A ws1PzI 5c C O7Z 8y9 lMT LA dwhz Mxz9 hjlWHj bJ 5 CqM jht y9l Mn 4rc7 6Amk KJimvH 9r O tbc tCK rsi B0 4cFV Dl1g cvfWh6 5n x y9Z S4W Pyo QB yr3v fBkj TZKtEZ 7r U fdM icd yCV qn D036 HJWM tYfL9f yX x O7m IcF E1O uL QsAQ NfWv 6kV8Im 7Q 6 GsX NCV 0YP oC jnWn 6L25 qUMTe7 1v a hnH DAo XAb Tc zhPc fjrj W5M5G0 nz N M5T nlJ WOP Lh M6U2 ZFxw pg4Nej P8 U Q09 JX9 n7S kE WixE Rwgy Fvttzp 4A s v5F Tnn MzL Vh FUn5 6tFY CxZ1Bz Q3 E TfD lCa d7V fo MwPm ngrD HPfZV0 aY k Ojr ZUw 799 et oYuB MIC4 ovEY8D OL N URV Q5l ti1 iS NZAd wWr6 Q8oPFf ae 5 lAR 9gD RSi HO eJOW wxLv 20GoMt 2H z 7Yc aly PZx eR uFM0 7gaV 9UIz7S 43 k 5Tr ZiD Mt7 pE NCYi uHL7 gac7Gq yN 6 Z1u x56 YZh 2d yJVx 9MeU OMWBQf l0 E mIc 5Zr yfy 3i rahC y9Pi MJ7ofo Op d enn sLi xZx Jt CjC9 M71v O0fxiR 51 m FIB QRo 1oW Iq 3gDP stD2 ntfoX7 YU o S5k GuV IGM cf HZe3 7ZoG A1dDmk XO 2 KYR LpJ jII om M6Nu u8O0 jO5Nab Ub R nZn 15k hG9 4S 21V4 Ip45 7ooaiP u2 j hIz osW FDu O5 HdGr djvv tTLBjo vL L iCo 6L5 Lwa Pm vD6Z pal6 9Ljn11 re T 2CP mvj rL3 xH mDYK uv5T npC1fM oU R RTo Loi lk0 FE ghak m5M9 cOIPdQ lG D LnX erC ykJ C1 0FHh vvnY aTGuqU rf T QPv wEq iHO vO hD6A nXuv GlzVAv sifpoierjsodfgupoefasdfgjsdfgjsdfgjsdjflxncvzxnvasdjfaopsruosihjsdfghajsdflahgfsif37}   \end{align} We now test the second equation in \eqref{sifpoierjsodfgupoefasdfgjsdfgjsdfgjsdjflxncvzxnvasdjfaopsruosihjsdfghajsdflahgfsif03} with $|\sifpoierjsodfgupoefasdfgjsdfgjsdfgjsdjflxncvzxnvasdjfaopsruosihjsdighajsdflahgfsif^n|\sifpoierjsodfgupoefasdfgjsdfgjsdfgjsdjflxncvzxnvasdjfaopsruosihjsdighajsdflahgfsif^{n}$ obtaining   \begin{align}   \frac{d}{dt} \sifpoierjsodfgupoefasdfgjsdfgjsdfgjsdjflxncvzxnvasdjfaopsruosihjsdgghajsdflahgfsif \sifpoierjsodfgupoefasdfgjsdfgjsdfgjsdjflxncvzxnvasdjfaopsruosihjsdighajsdflahgfsif^n\sifpoierjsodfgupoefasdfgjsdfgjsdfgjsdjflxncvzxnvasdjfaopsruosihjsdgghajsdflahgfsif_{L^3}^3   = -(u^{n-1} \sifpoierjsodfgupoefasdfgjsdfgjsdfgjsdjflxncvzxnvasdjfaopsruosihjsdhghajsdflahgfsif e_2, |\sifpoierjsodfgupoefasdfgjsdfgjsdfgjsdjflxncvzxnvasdjfaopsruosihjsdighajsdflahgfsif^n|\sifpoierjsodfgupoefasdfgjsdfgjsdfgjsdjflxncvzxnvasdjfaopsruosihjsdighajsdflahgfsif^{n})   \lec \sifpoierjsodfgupoefasdfgjsdfgjsdfgjsdjflxncvzxnvasdjfaopsruosihjsdgghajsdflahgfsif u^{n-1}\sifpoierjsodfgupoefasdfgjsdfgjsdfgjsdjflxncvzxnvasdjfaopsruosihjsdgghajsdflahgfsif_{L^3}     \sifpoierjsodfgupoefasdfgjsdfgjsdfgjsdjflxncvzxnvasdjfaopsruosihjsdgghajsdflahgfsif \sifpoierjsodfgupoefasdfgjsdfgjsdfgjsdjflxncvzxnvasdjfaopsruosihjsdighajsdflahgfsif^n\sifpoierjsodfgupoefasdfgjsdfgjsdfgjsdjflxncvzxnvasdjfaopsruosihjsdgghajsdflahgfsif_{L^3}^2   .    \llabel{2a 31k a8 fUIa RGzr 7oigRX 5s m 9PQ 7Sr 5St ZE Ymp8 VIWS hdzgDI 9v R F5J 81x 33n Ne fjBT VvGP vGsxQh Al G Fbe 1bQ i6J ap OJJa ceGq 1vvb8r F2 F 3M6 8eD lzG tX tVm5 y14v mwIXa2 OG Y hxU sXJ 0qg l5 ZGAt HPZd oDWrSb BS u NKi 6KW gr3 9s 9tc7 WM4A ws1PzI 5c C O7Z 8y9 lMT LA dwhz Mxz9 hjlWHj bJ 5 CqM jht y9l Mn 4rc7 6Amk KJimvH 9r O tbc tCK rsi B0 4cFV Dl1g cvfWh6 5n x y9Z S4W Pyo QB yr3v fBkj TZKtEZ 7r U fdM icd yCV qn D036 HJWM tYfL9f yX x O7m IcF E1O uL QsAQ NfWv 6kV8Im 7Q 6 GsX NCV 0YP oC jnWn 6L25 qUMTe7 1v a hnH DAo XAb Tc zhPc fjrj W5M5G0 nz N M5T nlJ WOP Lh M6U2 ZFxw pg4Nej P8 U Q09 JX9 n7S kE WixE Rwgy Fvttzp 4A s v5F Tnn MzL Vh FUn5 6tFY CxZ1Bz Q3 E TfD lCa d7V fo MwPm ngrD HPfZV0 aY k Ojr ZUw 799 et oYuB MIC4 ovEY8D OL N URV Q5l ti1 iS NZAd wWr6 Q8oPFf ae 5 lAR 9gD RSi HO eJOW wxLv 20GoMt 2H z 7Yc aly PZx eR uFM0 7gaV 9UIz7S 43 k 5Tr ZiD Mt7 pE NCYi uHL7 gac7Gq yN 6 Z1u x56 YZh 2d yJVx 9MeU OMWBQf l0 E mIc 5Zr yfy 3i rahC y9Pi MJ7ofo Op d enn sLi xZx Jt CjC9 M71v O0fxiR 51 m FIB QRo 1oW Iq 3gDP stD2 ntfoX7 YU o S5k GuV IGM cf HZe3 7ZoG A1dDmk XO 2 KYR LpJ jII om M6Nu u8O0 jO5Nab Ub R nZn 15k hG9 4S 21V4 Ip45 7ooaiP u2 j hIz osW FDu O5 HdGr djvv tTLBjo vL L iCo 6L5 Lwa Pm vD6Z pal6 9Ljn11 re T 2CP mvj rL3 xH mDYK uv5T npC1fM oU R RTo Loi lk0 FE ghak m5M9 cOIPdQ lG D LnX erC ykJ C1 0FHh vvnY aTGuqU rf T QPv wEq iHO vO hD6A nXuv GlzVAv pz d Ok3 6ym yUo Fb AcAA BItO es52Vq d0 Y c7U 2gB t0W fF VQZh rJHr lBLdCx 8I o dWp AlD S8C HB rNLz xWp6 ypjuwW mg X toy 1vP bra uH yMNb kUrZ D6Ee2f zI D tkZ Eti Lmg re 1woD juLB BSdasY Vc F Uhy ViC xB1 5y Ltql qoUh gL3bZN YV k orz wa3 650 qW hF22 epiX cAjA4Z V4 b cXx uB3 NQN p0 GxW2 Vs1z jtqe2p LE B isifpoierjsodfgupoefasdfgjsdfgjsdfgjsdjflxncvzxnvasdjfaopsruosihjsdfghajsdflahgfsif125}   \end{align} 
Canceling $\sifpoierjsodfgupoefasdfgjsdfgjsdfgjsdjflxncvzxnvasdjfaopsruosihjsdgghajsdflahgfsif \sifpoierjsodfgupoefasdfgjsdfgjsdfgjsdjflxncvzxnvasdjfaopsruosihjsdighajsdflahgfsif^n\sifpoierjsodfgupoefasdfgjsdfgjsdfgjsdjflxncvzxnvasdjfaopsruosihjsdgghajsdflahgfsif_{L^3}^2$,  and then integrating from $0$ to $T$, we get   \begin{align}   \begin{split}   \sifpoierjsodfgupoefasdfgjsdfgjsdfgjsdjflxncvzxnvasdjfaopsruosihjsdgghajsdflahgfsif \sifpoierjsodfgupoefasdfgjsdfgjsdfgjsdjflxncvzxnvasdjfaopsruosihjsdighajsdflahgfsif^n\sifpoierjsodfgupoefasdfgjsdfgjsdfgjsdjflxncvzxnvasdjfaopsruosihjsdgghajsdflahgfsif_{L^3}   &   \lec    \sifpoierjsodfgupoefasdfgjsdfgjsdfgjsdjflxncvzxnvasdjfaopsruosihjsdgghajsdflahgfsif \sifpoierjsodfgupoefasdfgjsdfgjsdfgjsdjflxncvzxnvasdjfaopsruosihjsdighajsdflahgfsif_0\sifpoierjsodfgupoefasdfgjsdfgjsdfgjsdjflxncvzxnvasdjfaopsruosihjsdgghajsdflahgfsif_{L^3}   +   \sifpoierjsodfgupoefasdfgjsdfgjsdfgjsdjflxncvzxnvasdjfaopsruosihjsdfghajsdflahgfsif_{0}^{T}    \sifpoierjsodfgupoefasdfgjsdfgjsdfgjsdjflxncvzxnvasdjfaopsruosihjsdgghajsdflahgfsif u^{n-1}\sifpoierjsodfgupoefasdfgjsdfgjsdfgjsdjflxncvzxnvasdjfaopsruosihjsdgghajsdflahgfsif_{L^3}\,ds   \lec    \sifpoierjsodfgupoefasdfgjsdfgjsdfgjsdjflxncvzxnvasdjfaopsruosihjsdgghajsdflahgfsif \sifpoierjsodfgupoefasdfgjsdfgjsdfgjsdjflxncvzxnvasdjfaopsruosihjsdighajsdflahgfsif_0\sifpoierjsodfgupoefasdfgjsdfgjsdfgjsdjflxncvzxnvasdjfaopsruosihjsdgghajsdflahgfsif_{L^2}^{\fractext{2}{3}}   \sifpoierjsodfgupoefasdfgjsdfgjsdfgjsdjflxncvzxnvasdjfaopsruosihjsdgghajsdflahgfsif \nabla \sifpoierjsodfgupoefasdfgjsdfgjsdfgjsdjflxncvzxnvasdjfaopsruosihjsdighajsdflahgfsif_0\sifpoierjsodfgupoefasdfgjsdfgjsdfgjsdjflxncvzxnvasdjfaopsruosihjsdgghajsdflahgfsif_{L^2}^{\fractext{1}{3}}
  +   \sifpoierjsodfgupoefasdfgjsdfgjsdfgjsdjflxncvzxnvasdjfaopsruosihjsdfghajsdflahgfsif_{0}^{T}    \sifpoierjsodfgupoefasdfgjsdfgjsdfgjsdjflxncvzxnvasdjfaopsruosihjsdgghajsdflahgfsif u^{n-1}\sifpoierjsodfgupoefasdfgjsdfgjsdfgjsdjflxncvzxnvasdjfaopsruosihjsdgghajsdflahgfsif_{L^2}^{\fractext{2}{3}}   \sifpoierjsodfgupoefasdfgjsdfgjsdfgjsdjflxncvzxnvasdjfaopsruosihjsdgghajsdflahgfsif \nabla u^{n-1}\sifpoierjsodfgupoefasdfgjsdfgjsdfgjsdjflxncvzxnvasdjfaopsruosihjsdgghajsdflahgfsif_{L^2}^{\fractext{1}{3} }   \,ds    \\&   \lec    \sifpoierjsodfgupoefasdfgjsdfgjsdfgjsdjflxncvzxnvasdjfaopsruosihjsdgghajsdflahgfsif \sifpoierjsodfgupoefasdfgjsdfgjsdfgjsdjflxncvzxnvasdjfaopsruosihjsdighajsdflahgfsif_0\sifpoierjsodfgupoefasdfgjsdfgjsdfgjsdjflxncvzxnvasdjfaopsruosihjsdgghajsdflahgfsif_{H^{1}}   +   \sifpoierjsodfgupoefasdfgjsdfgjsdfgjsdjflxncvzxnvasdjfaopsruosihjsdfghajsdflahgfsif_{0}^{T}    (\sifpoierjsodfgupoefasdfgjsdfgjsdfgjsdjflxncvzxnvasdjfaopsruosihjsdgghajsdflahgfsif u^{n-1}\sifpoierjsodfgupoefasdfgjsdfgjsdfgjsdjflxncvzxnvasdjfaopsruosihjsdgghajsdflahgfsif_{L^2}   + \sifpoierjsodfgupoefasdfgjsdfgjsdfgjsdjflxncvzxnvasdjfaopsruosihjsdgghajsdflahgfsif \nabla u^{n-1}\sifpoierjsodfgupoefasdfgjsdfgjsdfgjsdjflxncvzxnvasdjfaopsruosihjsdgghajsdflahgfsif_{L^2})\,ds   \lec   1
  .   \end{split}    \llabel{ Mn 4rc7 6Amk KJimvH 9r O tbc tCK rsi B0 4cFV Dl1g cvfWh6 5n x y9Z S4W Pyo QB yr3v fBkj TZKtEZ 7r U fdM icd yCV qn D036 HJWM tYfL9f yX x O7m IcF E1O uL QsAQ NfWv 6kV8Im 7Q 6 GsX NCV 0YP oC jnWn 6L25 qUMTe7 1v a hnH DAo XAb Tc zhPc fjrj W5M5G0 nz N M5T nlJ WOP Lh M6U2 ZFxw pg4Nej P8 U Q09 JX9 n7S kE WixE Rwgy Fvttzp 4A s v5F Tnn MzL Vh FUn5 6tFY CxZ1Bz Q3 E TfD lCa d7V fo MwPm ngrD HPfZV0 aY k Ojr ZUw 799 et oYuB MIC4 ovEY8D OL N URV Q5l ti1 iS NZAd wWr6 Q8oPFf ae 5 lAR 9gD RSi HO eJOW wxLv 20GoMt 2H z 7Yc aly PZx eR uFM0 7gaV 9UIz7S 43 k 5Tr ZiD Mt7 pE NCYi uHL7 gac7Gq yN 6 Z1u x56 YZh 2d yJVx 9MeU OMWBQf l0 E mIc 5Zr yfy 3i rahC y9Pi MJ7ofo Op d enn sLi xZx Jt CjC9 M71v O0fxiR 51 m FIB QRo 1oW Iq 3gDP stD2 ntfoX7 YU o S5k GuV IGM cf HZe3 7ZoG A1dDmk XO 2 KYR LpJ jII om M6Nu u8O0 jO5Nab Ub R nZn 15k hG9 4S 21V4 Ip45 7ooaiP u2 j hIz osW FDu O5 HdGr djvv tTLBjo vL L iCo 6L5 Lwa Pm vD6Z pal6 9Ljn11 re T 2CP mvj rL3 xH mDYK uv5T npC1fM oU R RTo Loi lk0 FE ghak m5M9 cOIPdQ lG D LnX erC ykJ C1 0FHh vvnY aTGuqU rf T QPv wEq iHO vO hD6A nXuv GlzVAv pz d Ok3 6ym yUo Fb AcAA BItO es52Vq d0 Y c7U 2gB t0W fF VQZh rJHr lBLdCx 8I o dWp AlD S8C HB rNLz xWp6 ypjuwW mg X toy 1vP bra uH yMNb kUrZ D6Ee2f zI D tkZ Eti Lmg re 1woD juLB BSdasY Vc F Uhy ViC xB1 5y Ltql qoUh gL3bZN YV k orz wa3 650 qW hF22 epiX cAjA4Z V4 b cXx uB3 NQN p0 GxW2 Vs1z jtqe2p LE B iS3 0E0 NKH gY N50v XaK6 pNpwdB X2 Y v7V 0Ud dTc Pi dRNN CLG4 7Fc3PL Bx K 3Be x1X zyX cj 0Z6a Jk0H KuQnwd Dh P Q1Q rwA 05v 9c 3pnz ttzt x2IirW CZ B oS5 xlO KCi D3 WFh4 dvCL QANAQJ Gg y vOD NTD FKj Mc 0RJP m4HU SQkLnT Q4 Y 6CC MvN jAR Zb lir7 RFsI NzHiJl cg f xSC Hts ZOG 1V uOzk 5G1C LtmRYI eD 3 5BB uxZsifpoierjsodfgupoefasdfgjsdfgjsdfgjsdjflxncvzxnvasdjfaopsruosihjsdfghajsdflahgfsif89}   \end{align} The reason behind estimating $\sifpoierjsodfgupoefasdfgjsdfgjsdfgjsdjflxncvzxnvasdjfaopsruosihjsdgghajsdflahgfsif \sifpoierjsodfgupoefasdfgjsdfgjsdfgjsdjflxncvzxnvasdjfaopsruosihjsdighajsdflahgfsif^n\sifpoierjsodfgupoefasdfgjsdfgjsdfgjsdjflxncvzxnvasdjfaopsruosihjsdgghajsdflahgfsif_{L^\infty L^3}$  is to resort to \cite[Theorem~2.7]{GS} once again to obtain \begin{align}   \begin{split}   \sifpoierjsodfgupoefasdfgjsdfgjsdfgjsdjflxncvzxnvasdjfaopsruosihjsdgghajsdflahgfsif u^n\sifpoierjsodfgupoefasdfgjsdfgjsdfgjsdjflxncvzxnvasdjfaopsruosihjsdgghajsdflahgfsif_{L^3 W^{2,3}}^3   &   \lec \sifpoierjsodfgupoefasdfgjsdfgjsdfgjsdjflxncvzxnvasdjfaopsruosihjsdgghajsdflahgfsif Au_0\sifpoierjsodfgupoefasdfgjsdfgjsdfgjsdjflxncvzxnvasdjfaopsruosihjsdgghajsdflahgfsif_{L^2}^3   + \sifpoierjsodfgupoefasdfgjsdfgjsdfgjsdjflxncvzxnvasdjfaopsruosihjsdfghajsdflahgfsif_{0}^{T}    \sifpoierjsodfgupoefasdfgjsdfgjsdfgjsdjflxncvzxnvasdjfaopsruosihjsdgghajsdflahgfsif u^{n-1}\sifpoierjsodfgupoefasdfgjsdfgjsdfgjsdjflxncvzxnvasdjfaopsruosihjsdgghajsdflahgfsif_{L^2}   \sifpoierjsodfgupoefasdfgjsdfgjsdfgjsdjflxncvzxnvasdjfaopsruosihjsdgghajsdflahgfsif \nabla u^{n-1}\sifpoierjsodfgupoefasdfgjsdfgjsdfgjsdjflxncvzxnvasdjfaopsruosihjsdgghajsdflahgfsif_{L^2}^2
  \sifpoierjsodfgupoefasdfgjsdfgjsdfgjsdjflxncvzxnvasdjfaopsruosihjsdgghajsdflahgfsif \nabla u^n\sifpoierjsodfgupoefasdfgjsdfgjsdfgjsdjflxncvzxnvasdjfaopsruosihjsdgghajsdflahgfsif_{L^2}   \sifpoierjsodfgupoefasdfgjsdfgjsdfgjsdjflxncvzxnvasdjfaopsruosihjsdgghajsdflahgfsif Au^n\sifpoierjsodfgupoefasdfgjsdfgjsdfgjsdjflxncvzxnvasdjfaopsruosihjsdgghajsdflahgfsif_{L^2}^2\,ds   + \sifpoierjsodfgupoefasdfgjsdfgjsdfgjsdjflxncvzxnvasdjfaopsruosihjsdfghajsdflahgfsif_{0}^{T}    \sifpoierjsodfgupoefasdfgjsdfgjsdfgjsdjflxncvzxnvasdjfaopsruosihjsdgghajsdflahgfsif \sifpoierjsodfgupoefasdfgjsdfgjsdfgjsdjflxncvzxnvasdjfaopsruosihjsdighajsdflahgfsif^n\sifpoierjsodfgupoefasdfgjsdfgjsdfgjsdjflxncvzxnvasdjfaopsruosihjsdgghajsdflahgfsif_{L^3}^3\,ds   \\&    \lec \sifpoierjsodfgupoefasdfgjsdfgjsdfgjsdjflxncvzxnvasdjfaopsruosihjsdgghajsdflahgfsif Au_0\sifpoierjsodfgupoefasdfgjsdfgjsdfgjsdjflxncvzxnvasdjfaopsruosihjsdgghajsdflahgfsif_{L^2}^3    + 1   \lec 1   .   \end{split}   \label{sifpoierjsodfgupoefasdfgjsdfgjsdfgjsdjflxncvzxnvasdjfaopsruosihjsdfghajsdflahgfsif65}   \end{align} Therefore, by \eqref{sifpoierjsodfgupoefasdfgjsdfgjsdfgjsdjflxncvzxnvasdjfaopsruosihjsdfghajsdflahgfsif27}   \begin{align}
  \sifpoierjsodfgupoefasdfgjsdfgjsdfgjsdjflxncvzxnvasdjfaopsruosihjsdgghajsdflahgfsif u^n\sifpoierjsodfgupoefasdfgjsdfgjsdfgjsdjflxncvzxnvasdjfaopsruosihjsdgghajsdflahgfsif_{L^1 W^{1,\infty}}   \lec 1    ,    \llabel{xE Rwgy Fvttzp 4A s v5F Tnn MzL Vh FUn5 6tFY CxZ1Bz Q3 E TfD lCa d7V fo MwPm ngrD HPfZV0 aY k Ojr ZUw 799 et oYuB MIC4 ovEY8D OL N URV Q5l ti1 iS NZAd wWr6 Q8oPFf ae 5 lAR 9gD RSi HO eJOW wxLv 20GoMt 2H z 7Yc aly PZx eR uFM0 7gaV 9UIz7S 43 k 5Tr ZiD Mt7 pE NCYi uHL7 gac7Gq yN 6 Z1u x56 YZh 2d yJVx 9MeU OMWBQf l0 E mIc 5Zr yfy 3i rahC y9Pi MJ7ofo Op d enn sLi xZx Jt CjC9 M71v O0fxiR 51 m FIB QRo 1oW Iq 3gDP stD2 ntfoX7 YU o S5k GuV IGM cf HZe3 7ZoG A1dDmk XO 2 KYR LpJ jII om M6Nu u8O0 jO5Nab Ub R nZn 15k hG9 4S 21V4 Ip45 7ooaiP u2 j hIz osW FDu O5 HdGr djvv tTLBjo vL L iCo 6L5 Lwa Pm vD6Z pal6 9Ljn11 re T 2CP mvj rL3 xH mDYK uv5T npC1fM oU R RTo Loi lk0 FE ghak m5M9 cOIPdQ lG D LnX erC ykJ C1 0FHh vvnY aTGuqU rf T QPv wEq iHO vO hD6A nXuv GlzVAv pz d Ok3 6ym yUo Fb AcAA BItO es52Vq d0 Y c7U 2gB t0W fF VQZh rJHr lBLdCx 8I o dWp AlD S8C HB rNLz xWp6 ypjuwW mg X toy 1vP bra uH yMNb kUrZ D6Ee2f zI D tkZ Eti Lmg re 1woD juLB BSdasY Vc F Uhy ViC xB1 5y Ltql qoUh gL3bZN YV k orz wa3 650 qW hF22 epiX cAjA4Z V4 b cXx uB3 NQN p0 GxW2 Vs1z jtqe2p LE B iS3 0E0 NKH gY N50v XaK6 pNpwdB X2 Y v7V 0Ud dTc Pi dRNN CLG4 7Fc3PL Bx K 3Be x1X zyX cj 0Z6a Jk0H KuQnwd Dh P Q1Q rwA 05v 9c 3pnz ttzt x2IirW CZ B oS5 xlO KCi D3 WFh4 dvCL QANAQJ Gg y vOD NTD FKj Mc 0RJP m4HU SQkLnT Q4 Y 6CC MvN jAR Zb lir7 RFsI NzHiJl cg f xSC Hts ZOG 1V uOzk 5G1C LtmRYI eD 3 5BB uxZ JdY LO CwS9 lokS NasDLj 5h 8 yni u7h u3c di zYh1 PdwE l3m8Xt yX Q RCA bwe aLi N8 qA9N 6DRE wy6gZe xs A 4fG EKH KQP PP KMbk sY1j M4h3Jj gS U One p1w RqN GA grL4 c18W v4kchD gR x 7Gj jIB zcK QV f7gA TrZx Oy6FF7 y9 3 iuu AQt 9TK Rx S5GO TFGx 4Xx1U3 R4 s 7U1 mpa bpD Hg kicx aCjk hnobr0 p4 c ody xTC kVj 8sifpoierjsodfgupoefasdfgjsdfgjsdfgjsdjflxncvzxnvasdjfaopsruosihjsdfghajsdflahgfsif90}   \end{align} while Lemma~\ref{P02}, in particular \eqref{sifpoierjsodfgupoefasdfgjsdfgjsdfgjsdjflxncvzxnvasdjfaopsruosihjsdfghajsdflahgfsif36}, then implies   \begin{align}   \sifpoierjsodfgupoefasdfgjsdfgjsdfgjsdjflxncvzxnvasdjfaopsruosihjsdgghajsdflahgfsif \nabla \sifpoierjsodfgupoefasdfgjsdfgjsdfgjsdjflxncvzxnvasdjfaopsruosihjsdighajsdflahgfsif^n\sifpoierjsodfgupoefasdfgjsdfgjsdfgjsdjflxncvzxnvasdjfaopsruosihjsdgghajsdflahgfsif_{L^\infty L^2}   \lec   1    .    \llabel{U OMWBQf l0 E mIc 5Zr yfy 3i rahC y9Pi MJ7ofo Op d enn sLi xZx Jt CjC9 M71v O0fxiR 51 m FIB QRo 1oW Iq 3gDP stD2 ntfoX7 YU o S5k GuV IGM cf HZe3 7ZoG A1dDmk XO 2 KYR LpJ jII om M6Nu u8O0 jO5Nab Ub R nZn 15k hG9 4S 21V4 Ip45 7ooaiP u2 j hIz osW FDu O5 HdGr djvv tTLBjo vL L iCo 6L5 Lwa Pm vD6Z pal6 9Ljn11 re T 2CP mvj rL3 xH mDYK uv5T npC1fM oU R RTo Loi lk0 FE ghak m5M9 cOIPdQ lG D LnX erC ykJ C1 0FHh vvnY aTGuqU rf T QPv wEq iHO vO hD6A nXuv GlzVAv pz d Ok3 6ym yUo Fb AcAA BItO es52Vq d0 Y c7U 2gB t0W fF VQZh rJHr lBLdCx 8I o dWp AlD S8C HB rNLz xWp6 ypjuwW mg X toy 1vP bra uH yMNb kUrZ D6Ee2f zI D tkZ Eti Lmg re 1woD juLB BSdasY Vc F Uhy ViC xB1 5y Ltql qoUh gL3bZN YV k orz wa3 650 qW hF22 epiX cAjA4Z V4 b cXx uB3 NQN p0 GxW2 Vs1z jtqe2p LE B iS3 0E0 NKH gY N50v XaK6 pNpwdB X2 Y v7V 0Ud dTc Pi dRNN CLG4 7Fc3PL Bx K 3Be x1X zyX cj 0Z6a Jk0H KuQnwd Dh P Q1Q rwA 05v 9c 3pnz ttzt x2IirW CZ B oS5 xlO KCi D3 WFh4 dvCL QANAQJ Gg y vOD NTD FKj Mc 0RJP m4HU SQkLnT Q4 Y 6CC MvN jAR Zb lir7 RFsI NzHiJl cg f xSC Hts ZOG 1V uOzk 5G1C LtmRYI eD 3 5BB uxZ JdY LO CwS9 lokS NasDLj 5h 8 yni u7h u3c di zYh1 PdwE l3m8Xt yX Q RCA bwe aLi N8 qA9N 6DRE wy6gZe xs A 4fG EKH KQP PP KMbk sY1j M4h3Jj gS U One p1w RqN GA grL4 c18W v4kchD gR x 7Gj jIB zcK QV f7gA TrZx Oy6FF7 y9 3 iuu AQt 9TK Rx S5GO TFGx 4Xx1U3 R4 s 7U1 mpa bpD Hg kicx aCjk hnobr0 p4 c ody xTC kVj 8t W4iP 2OhT RF6kU2 k2 o oZJ Fsq Y4B FS NI3u W2fj OMFf7x Jv e ilb UVT ArC Tv qWLi vbRp g2wpAJ On l RUE PKh j9h dG M0Mi gcqQ wkyunB Jr T LDc Pgn OSC HO sSgQ sR35 MB7Bgk Pk 6 nJh 01P Cxd Ds w514 O648 VD8iJ5 4F W 6rs 6Sy qGz MK fXop oe4e o52UNB 4Q 8 f8N Uz8 u2n GO AXHW gKtG AtGGJs bm z 2qj vSv GBu 5e 4JgLsifpoierjsodfgupoefasdfgjsdfgjsdfgjsdjflxncvzxnvasdjfaopsruosihjsdfghajsdflahgfsif91}
  \end{align} Hence, by \eqref{sifpoierjsodfgupoefasdfgjsdfgjsdfgjsdjflxncvzxnvasdjfaopsruosihjsdfghajsdflahgfsif40},   \begin{align}   \sifpoierjsodfgupoefasdfgjsdfgjsdfgjsdjflxncvzxnvasdjfaopsruosihjsdgghajsdflahgfsif \sifpoierjsodfgupoefasdfgjsdfgjsdfgjsdjflxncvzxnvasdjfaopsruosihjsdighajsdflahgfsif_t^n\sifpoierjsodfgupoefasdfgjsdfgjsdfgjsdjflxncvzxnvasdjfaopsruosihjsdgghajsdflahgfsif_{L^2 L^2}   \leq 1   .    \llabel{11 re T 2CP mvj rL3 xH mDYK uv5T npC1fM oU R RTo Loi lk0 FE ghak m5M9 cOIPdQ lG D LnX erC ykJ C1 0FHh vvnY aTGuqU rf T QPv wEq iHO vO hD6A nXuv GlzVAv pz d Ok3 6ym yUo Fb AcAA BItO es52Vq d0 Y c7U 2gB t0W fF VQZh rJHr lBLdCx 8I o dWp AlD S8C HB rNLz xWp6 ypjuwW mg X toy 1vP bra uH yMNb kUrZ D6Ee2f zI D tkZ Eti Lmg re 1woD juLB BSdasY Vc F Uhy ViC xB1 5y Ltql qoUh gL3bZN YV k orz wa3 650 qW hF22 epiX cAjA4Z V4 b cXx uB3 NQN p0 GxW2 Vs1z jtqe2p LE B iS3 0E0 NKH gY N50v XaK6 pNpwdB X2 Y v7V 0Ud dTc Pi dRNN CLG4 7Fc3PL Bx K 3Be x1X zyX cj 0Z6a Jk0H KuQnwd Dh P Q1Q rwA 05v 9c 3pnz ttzt x2IirW CZ B oS5 xlO KCi D3 WFh4 dvCL QANAQJ Gg y vOD NTD FKj Mc 0RJP m4HU SQkLnT Q4 Y 6CC MvN jAR Zb lir7 RFsI NzHiJl cg f xSC Hts ZOG 1V uOzk 5G1C LtmRYI eD 3 5BB uxZ JdY LO CwS9 lokS NasDLj 5h 8 yni u7h u3c di zYh1 PdwE l3m8Xt yX Q RCA bwe aLi N8 qA9N 6DRE wy6gZe xs A 4fG EKH KQP PP KMbk sY1j M4h3Jj gS U One p1w RqN GA grL4 c18W v4kchD gR x 7Gj jIB zcK QV f7gA TrZx Oy6FF7 y9 3 iuu AQt 9TK Rx S5GO TFGx 4Xx1U3 R4 s 7U1 mpa bpD Hg kicx aCjk hnobr0 p4 c ody xTC kVj 8t W4iP 2OhT RF6kU2 k2 o oZJ Fsq Y4B FS NI3u W2fj OMFf7x Jv e ilb UVT ArC Tv qWLi vbRp g2wpAJ On l RUE PKh j9h dG M0Mi gcqQ wkyunB Jr T LDc Pgn OSC HO sSgQ sR35 MB7Bgk Pk 6 nJh 01P Cxd Ds w514 O648 VD8iJ5 4F W 6rs 6Sy qGz MK fXop oe4e o52UNB 4Q 8 f8N Uz8 u2n GO AXHW gKtG AtGGJs bm z 2qj vSv GBu 5e 4JgL Aqrm gMmS08 ZF s xQm 28M 3z4 Ho 1xxj j8Uk bMbm8M 0c L PL5 TS2 kIQ jZ Kb9Q Ux2U i5Aflw 1S L DGI uWU dCP jy wVVM 2ct8 cmgOBS 7d Q ViX R8F bta 1m tEFj TO0k owcK2d 6M Z iW8 PrK PI1 sX WJNB cREV Y4H5QQ GH b plP bwd Txp OI 5OQZ AKyi ix7Qey YI 9 1Ea 16r KXK L2 ifQX QPdP NL6EJi Hc K rBs 2qG tQb aq edOj Lixj sifpoierjsodfgupoefasdfgjsdfgjsdfgjsdjflxncvzxnvasdjfaopsruosihjsdfghajsdflahgfsif92}   \end{align} It remains to show that $u^n_t$, $\nabla u^n_t$ and $u^n_{tt}$  are uniformly bounded in  $L^\infty L^2$, $L^2 L^2$ and $L^2 V'$, respectively. First, by $u^n_t 
= - A u^n - u^{n-1} \sifpoierjsodfgupoefasdfgjsdfgjsdfgjsdjflxncvzxnvasdjfaopsruosihjsdhghajsdflahgfsif \nabla u^n + \mathbb{P}( \sifpoierjsodfgupoefasdfgjsdfgjsdfgjsdjflxncvzxnvasdjfaopsruosihjsdighajsdflahgfsif^n e_2) $, we have \begin{align}   \begin{split}   \sifpoierjsodfgupoefasdfgjsdfgjsdfgjsdjflxncvzxnvasdjfaopsruosihjsdgghajsdflahgfsif u^n_t\sifpoierjsodfgupoefasdfgjsdfgjsdfgjsdjflxncvzxnvasdjfaopsruosihjsdgghajsdflahgfsif_{L^2 L^2}^2   &   \lec    \sifpoierjsodfgupoefasdfgjsdfgjsdfgjsdjflxncvzxnvasdjfaopsruosihjsdgghajsdflahgfsif Au^n\sifpoierjsodfgupoefasdfgjsdfgjsdfgjsdjflxncvzxnvasdjfaopsruosihjsdgghajsdflahgfsif_{L^2 L^2}^2   +    \sifpoierjsodfgupoefasdfgjsdfgjsdfgjsdjflxncvzxnvasdjfaopsruosihjsdgghajsdflahgfsif \sifpoierjsodfgupoefasdfgjsdfgjsdfgjsdjflxncvzxnvasdjfaopsruosihjsdighajsdflahgfsif^n\sifpoierjsodfgupoefasdfgjsdfgjsdfgjsdjflxncvzxnvasdjfaopsruosihjsdgghajsdflahgfsif_{L^2 L^2}^2   +
  \sifpoierjsodfgupoefasdfgjsdfgjsdfgjsdjflxncvzxnvasdjfaopsruosihjsdfghajsdflahgfsif_{0}^{T}    \sifpoierjsodfgupoefasdfgjsdfgjsdfgjsdjflxncvzxnvasdjfaopsruosihjsdgghajsdflahgfsif u^{n-1}\sifpoierjsodfgupoefasdfgjsdfgjsdfgjsdjflxncvzxnvasdjfaopsruosihjsdgghajsdflahgfsif_{L^2}   \sifpoierjsodfgupoefasdfgjsdfgjsdfgjsdjflxncvzxnvasdjfaopsruosihjsdgghajsdflahgfsif Au^{n-1}\sifpoierjsodfgupoefasdfgjsdfgjsdfgjsdjflxncvzxnvasdjfaopsruosihjsdgghajsdflahgfsif_{L^2}   \sifpoierjsodfgupoefasdfgjsdfgjsdfgjsdjflxncvzxnvasdjfaopsruosihjsdgghajsdflahgfsif \nabla u^n\sifpoierjsodfgupoefasdfgjsdfgjsdfgjsdjflxncvzxnvasdjfaopsruosihjsdgghajsdflahgfsif_{L^2}^2   \,ds   \lec   1   .   \end{split}    \llabel{D tkZ Eti Lmg re 1woD juLB BSdasY Vc F Uhy ViC xB1 5y Ltql qoUh gL3bZN YV k orz wa3 650 qW hF22 epiX cAjA4Z V4 b cXx uB3 NQN p0 GxW2 Vs1z jtqe2p LE B iS3 0E0 NKH gY N50v XaK6 pNpwdB X2 Y v7V 0Ud dTc Pi dRNN CLG4 7Fc3PL Bx K 3Be x1X zyX cj 0Z6a Jk0H KuQnwd Dh P Q1Q rwA 05v 9c 3pnz ttzt x2IirW CZ B oS5 xlO KCi D3 WFh4 dvCL QANAQJ Gg y vOD NTD FKj Mc 0RJP m4HU SQkLnT Q4 Y 6CC MvN jAR Zb lir7 RFsI NzHiJl cg f xSC Hts ZOG 1V uOzk 5G1C LtmRYI eD 3 5BB uxZ JdY LO CwS9 lokS NasDLj 5h 8 yni u7h u3c di zYh1 PdwE l3m8Xt yX Q RCA bwe aLi N8 qA9N 6DRE wy6gZe xs A 4fG EKH KQP PP KMbk sY1j M4h3Jj gS U One p1w RqN GA grL4 c18W v4kchD gR x 7Gj jIB zcK QV f7gA TrZx Oy6FF7 y9 3 iuu AQt 9TK Rx S5GO TFGx 4Xx1U3 R4 s 7U1 mpa bpD Hg kicx aCjk hnobr0 p4 c ody xTC kVj 8t W4iP 2OhT RF6kU2 k2 o oZJ Fsq Y4B FS NI3u W2fj OMFf7x Jv e ilb UVT ArC Tv qWLi vbRp g2wpAJ On l RUE PKh j9h dG M0Mi gcqQ wkyunB Jr T LDc Pgn OSC HO sSgQ sR35 MB7Bgk Pk 6 nJh 01P Cxd Ds w514 O648 VD8iJ5 4F W 6rs 6Sy qGz MK fXop oe4e o52UNB 4Q 8 f8N Uz8 u2n GO AXHW gKtG AtGGJs bm z 2qj vSv GBu 5e 4JgL Aqrm gMmS08 ZF s xQm 28M 3z4 Ho 1xxj j8Uk bMbm8M 0c L PL5 TS2 kIQ jZ Kb9Q Ux2U i5Aflw 1S L DGI uWU dCP jy wVVM 2ct8 cmgOBS 7d Q ViX R8F bta 1m tEFj TO0k owcK2d 6M Z iW8 PrK PI1 sX WJNB cREV Y4H5QQ GH b plP bwd Txp OI 5OQZ AKyi ix7Qey YI 9 1Ea 16r KXK L2 ifQX QPdP NL6EJi Hc K rBs 2qG tQb aq edOj Lixj GiNWr1 Pb Y SZe Sxx Fin aK 9Eki CHV2 a13f7G 3G 3 oDK K0i bKV y4 53E2 nFQS 8Hnqg0 E3 2 ADd dEV nmJ 7H Bc1t 2K2i hCzZuy 9k p sHn 8Ko uAR kv sHKP y8Yo dOOqBi hF 1 Z3C vUF hmj gB muZq 7ggW Lg5dQB 1k p Fxk k35 GFo dk 00YD 13qI qqbLwy QC c yZR wHA fp7 9o imtC c5CV 8cEuwU w7 k 8Q7 nCq WkM gY rtVR IySM tZUGCHsifpoierjsodfgupoefasdfgjsdfgjsdfgjsdjflxncvzxnvasdjfaopsruosihjsdfghajsdflahgfsif93}   \end{align} Now, differentiating the velocity equation from \eqref{sifpoierjsodfgupoefasdfgjsdfgjsdfgjsdjflxncvzxnvasdjfaopsruosihjsdfghajsdflahgfsif03} in time and testing it by $u_t^n$ gives
  \begin{align}   \begin{split}   &\frac{1}{2}\frac{d}{dt}\sifpoierjsodfgupoefasdfgjsdfgjsdfgjsdjflxncvzxnvasdjfaopsruosihjsdgghajsdflahgfsif u^n_t\sifpoierjsodfgupoefasdfgjsdfgjsdfgjsdjflxncvzxnvasdjfaopsruosihjsdgghajsdflahgfsif_{L^2}^2   + \sifpoierjsodfgupoefasdfgjsdfgjsdfgjsdjflxncvzxnvasdjfaopsruosihjsdgghajsdflahgfsif \nabla u^n_t\sifpoierjsodfgupoefasdfgjsdfgjsdfgjsdjflxncvzxnvasdjfaopsruosihjsdgghajsdflahgfsif_{L^2}^2   \lec    \sifpoierjsodfgupoefasdfgjsdfgjsdfgjsdjflxncvzxnvasdjfaopsruosihjsdgghajsdflahgfsif u^{n-1}_t\sifpoierjsodfgupoefasdfgjsdfgjsdfgjsdjflxncvzxnvasdjfaopsruosihjsdgghajsdflahgfsif_{L^2}   \sifpoierjsodfgupoefasdfgjsdfgjsdfgjsdjflxncvzxnvasdjfaopsruosihjsdgghajsdflahgfsif \nabla u^{n}\sifpoierjsodfgupoefasdfgjsdfgjsdfgjsdjflxncvzxnvasdjfaopsruosihjsdgghajsdflahgfsif_{L^\infty}   \sifpoierjsodfgupoefasdfgjsdfgjsdfgjsdjflxncvzxnvasdjfaopsruosihjsdgghajsdflahgfsif u^n_t\sifpoierjsodfgupoefasdfgjsdfgjsdfgjsdjflxncvzxnvasdjfaopsruosihjsdgghajsdflahgfsif_{L^2}   +  \sifpoierjsodfgupoefasdfgjsdfgjsdfgjsdjflxncvzxnvasdjfaopsruosihjsdgghajsdflahgfsif \sifpoierjsodfgupoefasdfgjsdfgjsdfgjsdjflxncvzxnvasdjfaopsruosihjsdighajsdflahgfsif^n_t\sifpoierjsodfgupoefasdfgjsdfgjsdfgjsdjflxncvzxnvasdjfaopsruosihjsdgghajsdflahgfsif_{L^2}   \sifpoierjsodfgupoefasdfgjsdfgjsdfgjsdjflxncvzxnvasdjfaopsruosihjsdgghajsdflahgfsif u^n_t\sifpoierjsodfgupoefasdfgjsdfgjsdfgjsdjflxncvzxnvasdjfaopsruosihjsdgghajsdflahgfsif_{L^2}   .   \end{split}   \label{sifpoierjsodfgupoefasdfgjsdfgjsdfgjsdjflxncvzxnvasdjfaopsruosihjsdfghajsdflahgfsif42}   \end{align}
Upon canceling $\sifpoierjsodfgupoefasdfgjsdfgjsdfgjsdjflxncvzxnvasdjfaopsruosihjsdgghajsdflahgfsif u^n_t\sifpoierjsodfgupoefasdfgjsdfgjsdfgjsdjflxncvzxnvasdjfaopsruosihjsdgghajsdflahgfsif_{L^2}$ and integrating in time this yields   \begin{align}   \begin{split}   \sifpoierjsodfgupoefasdfgjsdfgjsdfgjsdjflxncvzxnvasdjfaopsruosihjsdgghajsdflahgfsif u^n_t\sifpoierjsodfgupoefasdfgjsdfgjsdfgjsdjflxncvzxnvasdjfaopsruosihjsdgghajsdflahgfsif_{L^\infty L^2}   &   \lec \sifpoierjsodfgupoefasdfgjsdfgjsdfgjsdjflxncvzxnvasdjfaopsruosihjsdgghajsdflahgfsif u^n_t(0)\sifpoierjsodfgupoefasdfgjsdfgjsdfgjsdjflxncvzxnvasdjfaopsruosihjsdgghajsdflahgfsif_{L^2}   +   1   + \sifpoierjsodfgupoefasdfgjsdfgjsdfgjsdjflxncvzxnvasdjfaopsruosihjsdfghajsdflahgfsif_{0}^{T}    \sifpoierjsodfgupoefasdfgjsdfgjsdfgjsdjflxncvzxnvasdjfaopsruosihjsdgghajsdflahgfsif u^{n-1}_t\sifpoierjsodfgupoefasdfgjsdfgjsdfgjsdjflxncvzxnvasdjfaopsruosihjsdgghajsdflahgfsif_{L^2}   \sifpoierjsodfgupoefasdfgjsdfgjsdfgjsdjflxncvzxnvasdjfaopsruosihjsdgghajsdflahgfsif \nabla u^n\sifpoierjsodfgupoefasdfgjsdfgjsdfgjsdjflxncvzxnvasdjfaopsruosihjsdgghajsdflahgfsif_{L^\infty}   \,ds    \lec    1
  +   \sifpoierjsodfgupoefasdfgjsdfgjsdfgjsdjflxncvzxnvasdjfaopsruosihjsdfghajsdflahgfsif_{0}^{T}    (   \sifpoierjsodfgupoefasdfgjsdfgjsdfgjsdjflxncvzxnvasdjfaopsruosihjsdgghajsdflahgfsif \nabla u^n\sifpoierjsodfgupoefasdfgjsdfgjsdfgjsdjflxncvzxnvasdjfaopsruosihjsdgghajsdflahgfsif_{L^2}^2   + \sifpoierjsodfgupoefasdfgjsdfgjsdfgjsdjflxncvzxnvasdjfaopsruosihjsdgghajsdflahgfsif A_3u^n\sifpoierjsodfgupoefasdfgjsdfgjsdfgjsdjflxncvzxnvasdjfaopsruosihjsdgghajsdflahgfsif_{L^3}^2   )\,ds   \lec 1   ,   \end{split}   \label{sifpoierjsodfgupoefasdfgjsdfgjsdfgjsdjflxncvzxnvasdjfaopsruosihjsdfghajsdflahgfsif43}   \end{align} where we have used \eqref{sifpoierjsodfgupoefasdfgjsdfgjsdfgjsdjflxncvzxnvasdjfaopsruosihjsdfghajsdflahgfsif27}  and $ u^n_t(0)
= -Au_0 - \mathbb{P}(u_0 \sifpoierjsodfgupoefasdfgjsdfgjsdfgjsdjflxncvzxnvasdjfaopsruosihjsdhghajsdflahgfsif \nabla u_0) + \mathbb{P}(\sifpoierjsodfgupoefasdfgjsdfgjsdfgjsdjflxncvzxnvasdjfaopsruosihjsdighajsdflahgfsif_0 e_2) \in L^2$ in the second inequality. Next, \eqref{sifpoierjsodfgupoefasdfgjsdfgjsdfgjsdjflxncvzxnvasdjfaopsruosihjsdfghajsdflahgfsif42}, \eqref{sifpoierjsodfgupoefasdfgjsdfgjsdfgjsdjflxncvzxnvasdjfaopsruosihjsdfghajsdflahgfsif43}, and $\sifpoierjsodfgupoefasdfgjsdfgjsdfgjsdjflxncvzxnvasdjfaopsruosihjsdfghajsdflahgfsif_{0}^{T}\sifpoierjsodfgupoefasdfgjsdfgjsdfgjsdjflxncvzxnvasdjfaopsruosihjsdgghajsdflahgfsif \nabla u^n\sifpoierjsodfgupoefasdfgjsdfgjsdfgjsdjflxncvzxnvasdjfaopsruosihjsdgghajsdflahgfsif_{L^\infty}^2\lec 1$ imply    \begin{align}   \sifpoierjsodfgupoefasdfgjsdfgjsdfgjsdjflxncvzxnvasdjfaopsruosihjsdgghajsdflahgfsif \nabla u^n_t\sifpoierjsodfgupoefasdfgjsdfgjsdfgjsdjflxncvzxnvasdjfaopsruosihjsdgghajsdflahgfsif_{L^2 L^2}   \lec 1   .    \llabel{xlO KCi D3 WFh4 dvCL QANAQJ Gg y vOD NTD FKj Mc 0RJP m4HU SQkLnT Q4 Y 6CC MvN jAR Zb lir7 RFsI NzHiJl cg f xSC Hts ZOG 1V uOzk 5G1C LtmRYI eD 3 5BB uxZ JdY LO CwS9 lokS NasDLj 5h 8 yni u7h u3c di zYh1 PdwE l3m8Xt yX Q RCA bwe aLi N8 qA9N 6DRE wy6gZe xs A 4fG EKH KQP PP KMbk sY1j M4h3Jj gS U One p1w RqN GA grL4 c18W v4kchD gR x 7Gj jIB zcK QV f7gA TrZx Oy6FF7 y9 3 iuu AQt 9TK Rx S5GO TFGx 4Xx1U3 R4 s 7U1 mpa bpD Hg kicx aCjk hnobr0 p4 c ody xTC kVj 8t W4iP 2OhT RF6kU2 k2 o oZJ Fsq Y4B FS NI3u W2fj OMFf7x Jv e ilb UVT ArC Tv qWLi vbRp g2wpAJ On l RUE PKh j9h dG M0Mi gcqQ wkyunB Jr T LDc Pgn OSC HO sSgQ sR35 MB7Bgk Pk 6 nJh 01P Cxd Ds w514 O648 VD8iJ5 4F W 6rs 6Sy qGz MK fXop oe4e o52UNB 4Q 8 f8N Uz8 u2n GO AXHW gKtG AtGGJs bm z 2qj vSv GBu 5e 4JgL Aqrm gMmS08 ZF s xQm 28M 3z4 Ho 1xxj j8Uk bMbm8M 0c L PL5 TS2 kIQ jZ Kb9Q Ux2U i5Aflw 1S L DGI uWU dCP jy wVVM 2ct8 cmgOBS 7d Q ViX R8F bta 1m tEFj TO0k owcK2d 6M Z iW8 PrK PI1 sX WJNB cREV Y4H5QQ GH b plP bwd Txp OI 5OQZ AKyi ix7Qey YI 9 1Ea 16r KXK L2 ifQX QPdP NL6EJi Hc K rBs 2qG tQb aq edOj Lixj GiNWr1 Pb Y SZe Sxx Fin aK 9Eki CHV2 a13f7G 3G 3 oDK K0i bKV y4 53E2 nFQS 8Hnqg0 E3 2 ADd dEV nmJ 7H Bc1t 2K2i hCzZuy 9k p sHn 8Ko uAR kv sHKP y8Yo dOOqBi hF 1 Z3C vUF hmj gB muZq 7ggW Lg5dQB 1k p Fxk k35 GFo dk 00YD 13qI qqbLwy QC c yZR wHA fp7 9o imtC c5CV 8cEuwU w7 k 8Q7 nCq WkM gY rtVR IySM tZUGCH XV 9 mr9 GHZ ol0 VE eIjQ vwgw 17pDhX JS F UcY bqU gnG V8 IFWb S1GX az0ZTt 81 w 7En IhF F72 v2 PkWO Xlkr w6IPu5 67 9 vcW 1f6 z99 lM 2LI1 Y6Na axfl18 gT 0 gDp tVl CN4 jf GSbC ro5D v78Cxa uk Y iUI WWy YDR w8 z7Kj Px7C hC7zJv b1 b 0rF d7n Mxk 09 1wHv y4u5 vLLsJ8 Nm A kWt xuf 4P5 Nw P23b 06sF NQ6xgD hu R sifpoierjsodfgupoefasdfgjsdfgjsdfgjsdjflxncvzxnvasdjfaopsruosihjsdfghajsdflahgfsif94}   \end{align}
Finally, as \eqref{sifpoierjsodfgupoefasdfgjsdfgjsdfgjsdjflxncvzxnvasdjfaopsruosihjsdfghajsdflahgfsif41}, we can obtain   \begin{align}   \sifpoierjsodfgupoefasdfgjsdfgjsdfgjsdjflxncvzxnvasdjfaopsruosihjsdgghajsdflahgfsif u^n_{tt}\sifpoierjsodfgupoefasdfgjsdfgjsdfgjsdjflxncvzxnvasdjfaopsruosihjsdgghajsdflahgfsif_{L^2 V'}   \lec 1   ,    \llabel{N GA grL4 c18W v4kchD gR x 7Gj jIB zcK QV f7gA TrZx Oy6FF7 y9 3 iuu AQt 9TK Rx S5GO TFGx 4Xx1U3 R4 s 7U1 mpa bpD Hg kicx aCjk hnobr0 p4 c ody xTC kVj 8t W4iP 2OhT RF6kU2 k2 o oZJ Fsq Y4B FS NI3u W2fj OMFf7x Jv e ilb UVT ArC Tv qWLi vbRp g2wpAJ On l RUE PKh j9h dG M0Mi gcqQ wkyunB Jr T LDc Pgn OSC HO sSgQ sR35 MB7Bgk Pk 6 nJh 01P Cxd Ds w514 O648 VD8iJ5 4F W 6rs 6Sy qGz MK fXop oe4e o52UNB 4Q 8 f8N Uz8 u2n GO AXHW gKtG AtGGJs bm z 2qj vSv GBu 5e 4JgL Aqrm gMmS08 ZF s xQm 28M 3z4 Ho 1xxj j8Uk bMbm8M 0c L PL5 TS2 kIQ jZ Kb9Q Ux2U i5Aflw 1S L DGI uWU dCP jy wVVM 2ct8 cmgOBS 7d Q ViX R8F bta 1m tEFj TO0k owcK2d 6M Z iW8 PrK PI1 sX WJNB cREV Y4H5QQ GH b plP bwd Txp OI 5OQZ AKyi ix7Qey YI 9 1Ea 16r KXK L2 ifQX QPdP NL6EJi Hc K rBs 2qG tQb aq edOj Lixj GiNWr1 Pb Y SZe Sxx Fin aK 9Eki CHV2 a13f7G 3G 3 oDK K0i bKV y4 53E2 nFQS 8Hnqg0 E3 2 ADd dEV nmJ 7H Bc1t 2K2i hCzZuy 9k p sHn 8Ko uAR kv sHKP y8Yo dOOqBi hF 1 Z3C vUF hmj gB muZq 7ggW Lg5dQB 1k p Fxk k35 GFo dk 00YD 13qI qqbLwy QC c yZR wHA fp7 9o imtC c5CV 8cEuwU w7 k 8Q7 nCq WkM gY rtVR IySM tZUGCH XV 9 mr9 GHZ ol0 VE eIjQ vwgw 17pDhX JS F UcY bqU gnG V8 IFWb S1GX az0ZTt 81 w 7En IhF F72 v2 PkWO Xlkr w6IPu5 67 9 vcW 1f6 z99 lM 2LI1 Y6Na axfl18 gT 0 gDp tVl CN4 jf GSbC ro5D v78Cxa uk Y iUI WWy YDR w8 z7Kj Px7C hC7zJv b1 b 0rF d7n Mxk 09 1wHv y4u5 vLLsJ8 Nm A kWt xuf 4P5 Nw P23b 06sF NQ6xgD hu R GbK 7j2 O4g y4 p4BL top3 h2kfyI 9w O 4Aa EWb 36Y yH YiI1 S3CO J7aN1r 0s Q OrC AC4 vL7 yr CGkI RlNu GbOuuk 1a w LDK 2zl Ka4 0h yJnD V4iF xsqO00 1r q CeO AO2 es7 DR aCpU G54F 2i97xS Qr c bPZ 6K8 Kud n9 e6SY o396 Fr8LUx yX O jdF sMr l54 Eh T8vr xxF2 phKPbs zr l pMA ubE RMG QA aCBu 2Lqw Gasprf IZ O iKV Vbsifpoierjsodfgupoefasdfgjsdfgjsdfgjsdjflxncvzxnvasdjfaopsruosihjsdfghajsdflahgfsif95}   \end{align} concluding our arguments on the uniform boundedness of the approximate solutions. \end{proof} \par \begin{proof}[Proof of Theorem~\ref{T01}(i)] Again, it is sufficient to consider a fixed finite $T>0$. As above, we allow all  constants to depend on
$K_0$, defined in \eqref{sifpoierjsodfgupoefasdfgjsdfgjsdfgjsdjflxncvzxnvasdjfaopsruosihjsdfghajsdflahgfsif49}, and~$T$. Lemma~\ref{L04} provides a constant upper bound on a $\sifpoierjsodfgupoefasdfgjsdfgjsdfgjsdjflxncvzxnvasdjfaopsruosihjsdgghajsdflahgfsif u^n\sifpoierjsodfgupoefasdfgjsdfgjsdfgjsdjflxncvzxnvasdjfaopsruosihjsdgghajsdflahgfsif_{\XX_T}$ and $\sifpoierjsodfgupoefasdfgjsdfgjsdfgjsdjflxncvzxnvasdjfaopsruosihjsdgghajsdflahgfsif \sifpoierjsodfgupoefasdfgjsdfgjsdfgjsdjflxncvzxnvasdjfaopsruosihjsdighajsdflahgfsif^n\sifpoierjsodfgupoefasdfgjsdfgjsdfgjsdjflxncvzxnvasdjfaopsruosihjsdgghajsdflahgfsif_{\YY_T}$. Next, we show that the sequence $(u^n,\sifpoierjsodfgupoefasdfgjsdfgjsdfgjsdjflxncvzxnvasdjfaopsruosihjsdighajsdflahgfsif^n)$ is contractive in  $(L^2 D(A) \cap L^\infty V) \times L^\infty L^2$ on a sufficiently small time interval  $[0,T_0]$, where $T_0$ is a constant, i.e., it depends only on $K_0$ and $T$. Denote $U^n = u^n - u^{n-1}$ and  $\sifpoierjsodfgupoefasdfgjsdfgjsdfgjsdjflxncvzxnvasdjfaopsruosihjsdighajsdflahgfsif^n = \sifpoierjsodfgupoefasdfgjsdfgjsdfgjsdjflxncvzxnvasdjfaopsruosihjsdighajsdflahgfsif^n - \sifpoierjsodfgupoefasdfgjsdfgjsdfgjsdjflxncvzxnvasdjfaopsruosihjsdighajsdflahgfsif^{n-1}$. For a fixed $n\in\mathbb{N}$,
the functions $U^{n+1}$ and $\sifpoierjsodfgupoefasdfgjsdfgjsdfgjsdjflxncvzxnvasdjfaopsruosihjsdighajsdflahgfsif^{n+1}$ satisfy   \begin{align}   \begin{split}   &   U^{n+1}_t   + AU^{n+1}   = \mathbb{P}(\sifpoierjsodfgupoefasdfgjsdfgjsdfgjsdjflxncvzxnvasdjfaopsruosihjsdighajsdflahgfsif^{n+1}e_2)   - \mathbb{P}(u^n \sifpoierjsodfgupoefasdfgjsdfgjsdfgjsdjflxncvzxnvasdjfaopsruosihjsdhghajsdflahgfsif \nabla U^{n+1})   - \mathbb{P}(U^n \sifpoierjsodfgupoefasdfgjsdfgjsdfgjsdjflxncvzxnvasdjfaopsruosihjsdhghajsdflahgfsif \nabla u^n)   ,   \\&   \sifpoierjsodfgupoefasdfgjsdfgjsdfgjsdjflxncvzxnvasdjfaopsruosihjsdighajsdflahgfsif^{n+1}_t   = -U^{n+1} \sifpoierjsodfgupoefasdfgjsdfgjsdfgjsdjflxncvzxnvasdjfaopsruosihjsdhghajsdflahgfsif e_2   - u^n \sifpoierjsodfgupoefasdfgjsdfgjsdfgjsdjflxncvzxnvasdjfaopsruosihjsdhghajsdflahgfsif \nabla \sifpoierjsodfgupoefasdfgjsdfgjsdfgjsdjflxncvzxnvasdjfaopsruosihjsdighajsdflahgfsif^{n+1}
  - U^n \sifpoierjsodfgupoefasdfgjsdfgjsdfgjsdjflxncvzxnvasdjfaopsruosihjsdhghajsdflahgfsif \nabla \sifpoierjsodfgupoefasdfgjsdfgjsdfgjsdjflxncvzxnvasdjfaopsruosihjsdighajsdflahgfsif^n,   \end{split}   \label{sifpoierjsodfgupoefasdfgjsdfgjsdfgjsdjflxncvzxnvasdjfaopsruosihjsdfghajsdflahgfsif45}   \end{align} with the zero initial data, i.e., $(U^{n+1}(0),\sifpoierjsodfgupoefasdfgjsdfgjsdfgjsdjflxncvzxnvasdjfaopsruosihjsdighajsdflahgfsif^{n+1}(0))=(0,0)$. Testing the first equation in \eqref{sifpoierjsodfgupoefasdfgjsdfgjsdfgjsdjflxncvzxnvasdjfaopsruosihjsdfghajsdflahgfsif45} with $AU^{n+1}$, the second by $\sifpoierjsodfgupoefasdfgjsdfgjsdfgjsdjflxncvzxnvasdjfaopsruosihjsdighajsdflahgfsif^{n+1}$, and adding yields   \begin{align}   \begin{split}   &   \frac{1}{2}\frac{d}{dt}   (
  \sifpoierjsodfgupoefasdfgjsdfgjsdfgjsdjflxncvzxnvasdjfaopsruosihjsdgghajsdflahgfsif \nabla U^{n+1}\sifpoierjsodfgupoefasdfgjsdfgjsdfgjsdjflxncvzxnvasdjfaopsruosihjsdgghajsdflahgfsif_{L^2}^2   + \sifpoierjsodfgupoefasdfgjsdfgjsdfgjsdjflxncvzxnvasdjfaopsruosihjsdgghajsdflahgfsif \sifpoierjsodfgupoefasdfgjsdfgjsdfgjsdjflxncvzxnvasdjfaopsruosihjsdighajsdflahgfsif^{n+1}\sifpoierjsodfgupoefasdfgjsdfgjsdfgjsdjflxncvzxnvasdjfaopsruosihjsdgghajsdflahgfsif_{L^2}^2)    + \sifpoierjsodfgupoefasdfgjsdfgjsdfgjsdjflxncvzxnvasdjfaopsruosihjsdgghajsdflahgfsif AU^{n+1}\sifpoierjsodfgupoefasdfgjsdfgjsdfgjsdjflxncvzxnvasdjfaopsruosihjsdgghajsdflahgfsif_{L^2}^2   \\&\indeq   =    (\sifpoierjsodfgupoefasdfgjsdfgjsdfgjsdjflxncvzxnvasdjfaopsruosihjsdighajsdflahgfsif^{n+1}e_2, AU^{n+1})   -   (u^n \sifpoierjsodfgupoefasdfgjsdfgjsdfgjsdjflxncvzxnvasdjfaopsruosihjsdhghajsdflahgfsif \nabla U^{n+1}, AU^{n+1})   \\&\indeq\indeq   -   (U^n \sifpoierjsodfgupoefasdfgjsdfgjsdfgjsdjflxncvzxnvasdjfaopsruosihjsdhghajsdflahgfsif \nabla u^n, AU^{n+1})   - (U^{n+1}\sifpoierjsodfgupoefasdfgjsdfgjsdfgjsdjflxncvzxnvasdjfaopsruosihjsdhghajsdflahgfsif e_2,\sifpoierjsodfgupoefasdfgjsdfgjsdfgjsdjflxncvzxnvasdjfaopsruosihjsdighajsdflahgfsif^{n+1})   - (U^n \sifpoierjsodfgupoefasdfgjsdfgjsdfgjsdjflxncvzxnvasdjfaopsruosihjsdhghajsdflahgfsif \nabla \sifpoierjsodfgupoefasdfgjsdfgjsdfgjsdjflxncvzxnvasdjfaopsruosihjsdighajsdflahgfsif^n,\sifpoierjsodfgupoefasdfgjsdfgjsdfgjsdjflxncvzxnvasdjfaopsruosihjsdighajsdflahgfsif^{n+1})   ,
  \end{split}   \llabel{SgQ sR35 MB7Bgk Pk 6 nJh 01P Cxd Ds w514 O648 VD8iJ5 4F W 6rs 6Sy qGz MK fXop oe4e o52UNB 4Q 8 f8N Uz8 u2n GO AXHW gKtG AtGGJs bm z 2qj vSv GBu 5e 4JgL Aqrm gMmS08 ZF s xQm 28M 3z4 Ho 1xxj j8Uk bMbm8M 0c L PL5 TS2 kIQ jZ Kb9Q Ux2U i5Aflw 1S L DGI uWU dCP jy wVVM 2ct8 cmgOBS 7d Q ViX R8F bta 1m tEFj TO0k owcK2d 6M Z iW8 PrK PI1 sX WJNB cREV Y4H5QQ GH b plP bwd Txp OI 5OQZ AKyi ix7Qey YI 9 1Ea 16r KXK L2 ifQX QPdP NL6EJi Hc K rBs 2qG tQb aq edOj Lixj GiNWr1 Pb Y SZe Sxx Fin aK 9Eki CHV2 a13f7G 3G 3 oDK K0i bKV y4 53E2 nFQS 8Hnqg0 E3 2 ADd dEV nmJ 7H Bc1t 2K2i hCzZuy 9k p sHn 8Ko uAR kv sHKP y8Yo dOOqBi hF 1 Z3C vUF hmj gB muZq 7ggW Lg5dQB 1k p Fxk k35 GFo dk 00YD 13qI qqbLwy QC c yZR wHA fp7 9o imtC c5CV 8cEuwU w7 k 8Q7 nCq WkM gY rtVR IySM tZUGCH XV 9 mr9 GHZ ol0 VE eIjQ vwgw 17pDhX JS F UcY bqU gnG V8 IFWb S1GX az0ZTt 81 w 7En IhF F72 v2 PkWO Xlkr w6IPu5 67 9 vcW 1f6 z99 lM 2LI1 Y6Na axfl18 gT 0 gDp tVl CN4 jf GSbC ro5D v78Cxa uk Y iUI WWy YDR w8 z7Kj Px7C hC7zJv b1 b 0rF d7n Mxk 09 1wHv y4u5 vLLsJ8 Nm A kWt xuf 4P5 Nw P23b 06sF NQ6xgD hu R GbK 7j2 O4g y4 p4BL top3 h2kfyI 9w O 4Aa EWb 36Y yH YiI1 S3CO J7aN1r 0s Q OrC AC4 vL7 yr CGkI RlNu GbOuuk 1a w LDK 2zl Ka4 0h yJnD V4iF xsqO00 1r q CeO AO2 es7 DR aCpU G54F 2i97xS Qr c bPZ 6K8 Kud n9 e6SY o396 Fr8LUx yX O jdF sMr l54 Eh T8vr xxF2 phKPbs zr l pMA ubE RMG QA aCBu 2Lqw Gasprf IZ O iKV Vbu Vae 6a bauf y9Kc Fk6cBl Z5 r KUj htW E1C nt 9Rmd whJR ySGVSO VT v 9FY 4uz yAH Sp 6yT9 s6R6 oOi3aq Zl L 7bI vWZ 18c Fa iwpt C1nd Fyp4oK xD f Qz2 813 6a8 zX wsGl Ysh9 Gp3Tal nr R UKt tBK eFr 45 43qU 2hh3 WbYw09 g2 W LIX zvQ zMk j5 f0xL seH9 dscinG wu P JLP 1gE N5W qY sSoW Peqj MimTyb Hj j cbn 0NO 5hz sifpoierjsodfgupoefasdfgjsdfgjsdfgjsdjflxncvzxnvasdjfaopsruosihjsdfghajsdflahgfsif51}   \end{align} where the scalar product is understood to be in~$L^2(\Omega)$. Using the energy estimates, we obtain from here (omitting the details since the inequalities are similar to above)   \begin{align}   \begin{split}    &    \frac{d}{dt}    (\sifpoierjsodfgupoefasdfgjsdfgjsdfgjsdjflxncvzxnvasdjfaopsruosihjsdgghajsdflahgfsif \nabla U^{n+1}\sifpoierjsodfgupoefasdfgjsdfgjsdfgjsdjflxncvzxnvasdjfaopsruosihjsdgghajsdflahgfsif_{L^2}^2     + \sifpoierjsodfgupoefasdfgjsdfgjsdfgjsdjflxncvzxnvasdjfaopsruosihjsdgghajsdflahgfsif \sifpoierjsodfgupoefasdfgjsdfgjsdfgjsdjflxncvzxnvasdjfaopsruosihjsdighajsdflahgfsif^{n+1}\sifpoierjsodfgupoefasdfgjsdfgjsdfgjsdjflxncvzxnvasdjfaopsruosihjsdgghajsdflahgfsif_{L^2}^2    )    + \sifpoierjsodfgupoefasdfgjsdfgjsdfgjsdjflxncvzxnvasdjfaopsruosihjsdgghajsdflahgfsif A U^{n+1}\sifpoierjsodfgupoefasdfgjsdfgjsdfgjsdjflxncvzxnvasdjfaopsruosihjsdgghajsdflahgfsif_{L^2}^2 
   \\&\indeq    \lec      C_{\epsilon}  \sifpoierjsodfgupoefasdfgjsdfgjsdfgjsdjflxncvzxnvasdjfaopsruosihjsdgghajsdflahgfsif \nabla U^{n}\sifpoierjsodfgupoefasdfgjsdfgjsdfgjsdjflxncvzxnvasdjfaopsruosihjsdgghajsdflahgfsif_{L^2}^2    +  C_{\epsilon}  \sifpoierjsodfgupoefasdfgjsdfgjsdfgjsdjflxncvzxnvasdjfaopsruosihjsdgghajsdflahgfsif \nabla U^{n+1}\sifpoierjsodfgupoefasdfgjsdfgjsdfgjsdjflxncvzxnvasdjfaopsruosihjsdgghajsdflahgfsif_{L^2}^2    + \sifpoierjsodfgupoefasdfgjsdfgjsdfgjsdjflxncvzxnvasdjfaopsruosihjsdgghajsdflahgfsif \sifpoierjsodfgupoefasdfgjsdfgjsdfgjsdjflxncvzxnvasdjfaopsruosihjsdighajsdflahgfsif^{n+1}\sifpoierjsodfgupoefasdfgjsdfgjsdfgjsdjflxncvzxnvasdjfaopsruosihjsdgghajsdflahgfsif_{L^2}^2    + \epsilon \sifpoierjsodfgupoefasdfgjsdfgjsdfgjsdjflxncvzxnvasdjfaopsruosihjsdgghajsdflahgfsif AU^n\sifpoierjsodfgupoefasdfgjsdfgjsdfgjsdjflxncvzxnvasdjfaopsruosihjsdgghajsdflahgfsif_{L^2}^2    ;    \end{split}    \label{sifpoierjsodfgupoefasdfgjsdfgjsdfgjsdjflxncvzxnvasdjfaopsruosihjsdfghajsdflahgfsif111} \end{align} in particular, we estimated   \begin{align}     \begin{split}    &- (U^n \sifpoierjsodfgupoefasdfgjsdfgjsdfgjsdjflxncvzxnvasdjfaopsruosihjsdhghajsdflahgfsif \nabla \sifpoierjsodfgupoefasdfgjsdfgjsdfgjsdjflxncvzxnvasdjfaopsruosihjsdighajsdflahgfsif^n,\sifpoierjsodfgupoefasdfgjsdfgjsdfgjsdjflxncvzxnvasdjfaopsruosihjsdighajsdflahgfsif^{n+1})
   \lec      \sifpoierjsodfgupoefasdfgjsdfgjsdfgjsdjflxncvzxnvasdjfaopsruosihjsdgghajsdflahgfsif U^{n}\sifpoierjsodfgupoefasdfgjsdfgjsdfgjsdjflxncvzxnvasdjfaopsruosihjsdgghajsdflahgfsif_{L^\infty}      \sifpoierjsodfgupoefasdfgjsdfgjsdfgjsdjflxncvzxnvasdjfaopsruosihjsdgghajsdflahgfsif\nabla\sifpoierjsodfgupoefasdfgjsdfgjsdfgjsdjflxncvzxnvasdjfaopsruosihjsdighajsdflahgfsif^{n}\sifpoierjsodfgupoefasdfgjsdfgjsdfgjsdjflxncvzxnvasdjfaopsruosihjsdgghajsdflahgfsif_{L^2}      \sifpoierjsodfgupoefasdfgjsdfgjsdfgjsdjflxncvzxnvasdjfaopsruosihjsdgghajsdflahgfsif \sifpoierjsodfgupoefasdfgjsdfgjsdfgjsdjflxncvzxnvasdjfaopsruosihjsdighajsdflahgfsif^{n+1}\sifpoierjsodfgupoefasdfgjsdfgjsdfgjsdjflxncvzxnvasdjfaopsruosihjsdgghajsdflahgfsif_{L^2}    \lec      \sifpoierjsodfgupoefasdfgjsdfgjsdfgjsdjflxncvzxnvasdjfaopsruosihjsdgghajsdflahgfsif U^{n}\sifpoierjsodfgupoefasdfgjsdfgjsdfgjsdjflxncvzxnvasdjfaopsruosihjsdgghajsdflahgfsif_{L^2}^{1/2}      \sifpoierjsodfgupoefasdfgjsdfgjsdfgjsdjflxncvzxnvasdjfaopsruosihjsdgghajsdflahgfsif A U^{n}\sifpoierjsodfgupoefasdfgjsdfgjsdfgjsdjflxncvzxnvasdjfaopsruosihjsdgghajsdflahgfsif_{L^2}^{1/2}      \sifpoierjsodfgupoefasdfgjsdfgjsdfgjsdjflxncvzxnvasdjfaopsruosihjsdgghajsdflahgfsif \sifpoierjsodfgupoefasdfgjsdfgjsdfgjsdjflxncvzxnvasdjfaopsruosihjsdighajsdflahgfsif\sifpoierjsodfgupoefasdfgjsdfgjsdfgjsdjflxncvzxnvasdjfaopsruosihjsdgghajsdflahgfsif_{L^2}    \\&\indeq    \lec     \epsilon   \sifpoierjsodfgupoefasdfgjsdfgjsdfgjsdjflxncvzxnvasdjfaopsruosihjsdgghajsdflahgfsif A U^{n}\sifpoierjsodfgupoefasdfgjsdfgjsdfgjsdjflxncvzxnvasdjfaopsruosihjsdgghajsdflahgfsif_{L^2}^2     + \sifpoierjsodfgupoefasdfgjsdfgjsdfgjsdjflxncvzxnvasdjfaopsruosihjsdgghajsdflahgfsif \sifpoierjsodfgupoefasdfgjsdfgjsdfgjsdjflxncvzxnvasdjfaopsruosihjsdighajsdflahgfsif^{n+1}\sifpoierjsodfgupoefasdfgjsdfgjsdfgjsdjflxncvzxnvasdjfaopsruosihjsdgghajsdflahgfsif_{L^2}^2     + C_{\epsilon} \sifpoierjsodfgupoefasdfgjsdfgjsdfgjsdjflxncvzxnvasdjfaopsruosihjsdgghajsdflahgfsif U^{n}\sifpoierjsodfgupoefasdfgjsdfgjsdfgjsdjflxncvzxnvasdjfaopsruosihjsdgghajsdflahgfsif_{L^2}^2     .
  \end{split}    \llabel{0k owcK2d 6M Z iW8 PrK PI1 sX WJNB cREV Y4H5QQ GH b plP bwd Txp OI 5OQZ AKyi ix7Qey YI 9 1Ea 16r KXK L2 ifQX QPdP NL6EJi Hc K rBs 2qG tQb aq edOj Lixj GiNWr1 Pb Y SZe Sxx Fin aK 9Eki CHV2 a13f7G 3G 3 oDK K0i bKV y4 53E2 nFQS 8Hnqg0 E3 2 ADd dEV nmJ 7H Bc1t 2K2i hCzZuy 9k p sHn 8Ko uAR kv sHKP y8Yo dOOqBi hF 1 Z3C vUF hmj gB muZq 7ggW Lg5dQB 1k p Fxk k35 GFo dk 00YD 13qI qqbLwy QC c yZR wHA fp7 9o imtC c5CV 8cEuwU w7 k 8Q7 nCq WkM gY rtVR IySM tZUGCH XV 9 mr9 GHZ ol0 VE eIjQ vwgw 17pDhX JS F UcY bqU gnG V8 IFWb S1GX az0ZTt 81 w 7En IhF F72 v2 PkWO Xlkr w6IPu5 67 9 vcW 1f6 z99 lM 2LI1 Y6Na axfl18 gT 0 gDp tVl CN4 jf GSbC ro5D v78Cxa uk Y iUI WWy YDR w8 z7Kj Px7C hC7zJv b1 b 0rF d7n Mxk 09 1wHv y4u5 vLLsJ8 Nm A kWt xuf 4P5 Nw P23b 06sF NQ6xgD hu R GbK 7j2 O4g y4 p4BL top3 h2kfyI 9w O 4Aa EWb 36Y yH YiI1 S3CO J7aN1r 0s Q OrC AC4 vL7 yr CGkI RlNu GbOuuk 1a w LDK 2zl Ka4 0h yJnD V4iF xsqO00 1r q CeO AO2 es7 DR aCpU G54F 2i97xS Qr c bPZ 6K8 Kud n9 e6SY o396 Fr8LUx yX O jdF sMr l54 Eh T8vr xxF2 phKPbs zr l pMA ubE RMG QA aCBu 2Lqw Gasprf IZ O iKV Vbu Vae 6a bauf y9Kc Fk6cBl Z5 r KUj htW E1C nt 9Rmd whJR ySGVSO VT v 9FY 4uz yAH Sp 6yT9 s6R6 oOi3aq Zl L 7bI vWZ 18c Fa iwpt C1nd Fyp4oK xD f Qz2 813 6a8 zX wsGl Ysh9 Gp3Tal nr R UKt tBK eFr 45 43qU 2hh3 WbYw09 g2 W LIX zvQ zMk j5 f0xL seH9 dscinG wu P JLP 1gE N5W qY sSoW Peqj MimTyb Hj j cbn 0NO 5hz P9 W40r 2w77 TAoz70 N1 a u09 boc DSx Gc 3tvK LXaC 1dKgw9 H3 o 2kE oul In9 TS PyL2 HXO7 tSZse0 1Z 9 Hds lDq 0tm SO AVqt A1FQ zEMKSb ak z nw8 39w nH1 Dp CjGI k5X3 B6S6UI 7H I gAa f9E V33 Bk kuo3 FyEi 8Ty2AB PY z SWj Pj5 tYZ ET Yzg6 Ix5t ATPMdl Gk e 67X b7F ktE sz yFyc mVhG JZ29aP gz k Yj4 cEr HCd P7 XFHsifpoierjsodfgupoefasdfgjsdfgjsdfgjsdjflxncvzxnvasdjfaopsruosihjsdfghajsdflahgfsif70}   \end{align} Applying the Gronwall lemma on $[0,T_0]$, where $T_0\in[0,T]$ is to be determined, we get   \begin{align}   \begin{split}    &    \sifpoierjsodfgupoefasdfgjsdfgjsdfgjsdjflxncvzxnvasdjfaopsruosihjsdgghajsdflahgfsif \nabla U^{n+1}\sifpoierjsodfgupoefasdfgjsdfgjsdfgjsdjflxncvzxnvasdjfaopsruosihjsdgghajsdflahgfsif_{L^2}^2     + \sifpoierjsodfgupoefasdfgjsdfgjsdfgjsdjflxncvzxnvasdjfaopsruosihjsdgghajsdflahgfsif \sifpoierjsodfgupoefasdfgjsdfgjsdfgjsdjflxncvzxnvasdjfaopsruosihjsdighajsdflahgfsif^{n+1}\sifpoierjsodfgupoefasdfgjsdfgjsdfgjsdjflxncvzxnvasdjfaopsruosihjsdgghajsdflahgfsif_{L^2}^2    \lec     \bigl(     C_{\epsilon}  T_0\sifpoierjsodfgupoefasdfgjsdfgjsdfgjsdjflxncvzxnvasdjfaopsruosihjsdgghajsdflahgfsif \nabla U^{n}\sifpoierjsodfgupoefasdfgjsdfgjsdfgjsdjflxncvzxnvasdjfaopsruosihjsdgghajsdflahgfsif_{L^{\infty}L^2}^2     + \epsilon \sifpoierjsodfgupoefasdfgjsdfgjsdfgjsdjflxncvzxnvasdjfaopsruosihjsdgghajsdflahgfsif AU^n\sifpoierjsodfgupoefasdfgjsdfgjsdfgjsdjflxncvzxnvasdjfaopsruosihjsdgghajsdflahgfsif_{L^2L^2}^2
    \bigr)     e^{C_{\epsilon}T_0}     ,    \end{split}    \llabel{qBi hF 1 Z3C vUF hmj gB muZq 7ggW Lg5dQB 1k p Fxk k35 GFo dk 00YD 13qI qqbLwy QC c yZR wHA fp7 9o imtC c5CV 8cEuwU w7 k 8Q7 nCq WkM gY rtVR IySM tZUGCH XV 9 mr9 GHZ ol0 VE eIjQ vwgw 17pDhX JS F UcY bqU gnG V8 IFWb S1GX az0ZTt 81 w 7En IhF F72 v2 PkWO Xlkr w6IPu5 67 9 vcW 1f6 z99 lM 2LI1 Y6Na axfl18 gT 0 gDp tVl CN4 jf GSbC ro5D v78Cxa uk Y iUI WWy YDR w8 z7Kj Px7C hC7zJv b1 b 0rF d7n Mxk 09 1wHv y4u5 vLLsJ8 Nm A kWt xuf 4P5 Nw P23b 06sF NQ6xgD hu R GbK 7j2 O4g y4 p4BL top3 h2kfyI 9w O 4Aa EWb 36Y yH YiI1 S3CO J7aN1r 0s Q OrC AC4 vL7 yr CGkI RlNu GbOuuk 1a w LDK 2zl Ka4 0h yJnD V4iF xsqO00 1r q CeO AO2 es7 DR aCpU G54F 2i97xS Qr c bPZ 6K8 Kud n9 e6SY o396 Fr8LUx yX O jdF sMr l54 Eh T8vr xxF2 phKPbs zr l pMA ubE RMG QA aCBu 2Lqw Gasprf IZ O iKV Vbu Vae 6a bauf y9Kc Fk6cBl Z5 r KUj htW E1C nt 9Rmd whJR ySGVSO VT v 9FY 4uz yAH Sp 6yT9 s6R6 oOi3aq Zl L 7bI vWZ 18c Fa iwpt C1nd Fyp4oK xD f Qz2 813 6a8 zX wsGl Ysh9 Gp3Tal nr R UKt tBK eFr 45 43qU 2hh3 WbYw09 g2 W LIX zvQ zMk j5 f0xL seH9 dscinG wu P JLP 1gE N5W qY sSoW Peqj MimTyb Hj j cbn 0NO 5hz P9 W40r 2w77 TAoz70 N1 a u09 boc DSx Gc 3tvK LXaC 1dKgw9 H3 o 2kE oul In9 TS PyL2 HXO7 tSZse0 1Z 9 Hds lDq 0tm SO AVqt A1FQ zEMKSb ak z nw8 39w nH1 Dp CjGI k5X3 B6S6UI 7H I gAa f9E V33 Bk kuo3 FyEi 8Ty2AB PY z SWj Pj5 tYZ ET Yzg6 Ix5t ATPMdl Gk e 67X b7F ktE sz yFyc mVhG JZ29aP gz k Yj4 cEr HCd P7 XFHU O9zo y4AZai SR O pIn 0tp 7kZ zU VHQt m3ip 3xEd41 By 7 2ux IiY 8BC Lb OYGo LDwp juza6i Pa k Zdh aD3 xSX yj pdOw oqQq Jl6RFg lO t X67 nm7 s1l ZJ mGUr dIdX Q7jps7 rc d ACY ZMs BKA Nx tkqf Nhkt sbBf2O BN Z 5pf oqS Xtd 3c HFLN tLgR oHrnNl wR n ylZ NWV NfH vO B1nU Ayjt xTWW4o Cq P Rtu Vua nMk Lv qbxp Ni0xsifpoierjsodfgupoefasdfgjsdfgjsdfgjsdjflxncvzxnvasdjfaopsruosihjsdfghajsdflahgfsif46}    \end{align} where the space norms are understood to be over $\Omega$ and the space-time norms over $\Omega\times[0,T_0]$. Then, integrating \eqref{sifpoierjsodfgupoefasdfgjsdfgjsdfgjsdjflxncvzxnvasdjfaopsruosihjsdfghajsdflahgfsif111} in time, we obtain, in addition   \begin{align}   \begin{split}    \sifpoierjsodfgupoefasdfgjsdfgjsdfgjsdjflxncvzxnvasdjfaopsruosihjsdgghajsdflahgfsif AU^{n+1}\sifpoierjsodfgupoefasdfgjsdfgjsdfgjsdjflxncvzxnvasdjfaopsruosihjsdgghajsdflahgfsif_{L^2L^2}^2    &\lec      C_{\epsilon}  \sifpoierjsodfgupoefasdfgjsdfgjsdfgjsdjflxncvzxnvasdjfaopsruosihjsdgghajsdflahgfsif \nabla U^{n}\sifpoierjsodfgupoefasdfgjsdfgjsdfgjsdjflxncvzxnvasdjfaopsruosihjsdgghajsdflahgfsif_{L^2L^2}^2
     + \epsilon \sifpoierjsodfgupoefasdfgjsdfgjsdfgjsdjflxncvzxnvasdjfaopsruosihjsdgghajsdflahgfsif AU^n\sifpoierjsodfgupoefasdfgjsdfgjsdfgjsdjflxncvzxnvasdjfaopsruosihjsdgghajsdflahgfsif_{L^2L^2}^2      \\&\indeq      +      T_0     \bigl(     C_{\epsilon}  T_0\sifpoierjsodfgupoefasdfgjsdfgjsdfgjsdjflxncvzxnvasdjfaopsruosihjsdgghajsdflahgfsif \nabla U^{n}\sifpoierjsodfgupoefasdfgjsdfgjsdfgjsdjflxncvzxnvasdjfaopsruosihjsdgghajsdflahgfsif_{L^{\infty}L^2}^2     + \epsilon \sifpoierjsodfgupoefasdfgjsdfgjsdfgjsdjflxncvzxnvasdjfaopsruosihjsdgghajsdflahgfsif AU^n\sifpoierjsodfgupoefasdfgjsdfgjsdfgjsdjflxncvzxnvasdjfaopsruosihjsdgghajsdflahgfsif_{L^2L^2}^2     \bigr)     e^{C_{\epsilon}T_0}     .    \end{split}    \llabel{ 0 gDp tVl CN4 jf GSbC ro5D v78Cxa uk Y iUI WWy YDR w8 z7Kj Px7C hC7zJv b1 b 0rF d7n Mxk 09 1wHv y4u5 vLLsJ8 Nm A kWt xuf 4P5 Nw P23b 06sF NQ6xgD hu R GbK 7j2 O4g y4 p4BL top3 h2kfyI 9w O 4Aa EWb 36Y yH YiI1 S3CO J7aN1r 0s Q OrC AC4 vL7 yr CGkI RlNu GbOuuk 1a w LDK 2zl Ka4 0h yJnD V4iF xsqO00 1r q CeO AO2 es7 DR aCpU G54F 2i97xS Qr c bPZ 6K8 Kud n9 e6SY o396 Fr8LUx yX O jdF sMr l54 Eh T8vr xxF2 phKPbs zr l pMA ubE RMG QA aCBu 2Lqw Gasprf IZ O iKV Vbu Vae 6a bauf y9Kc Fk6cBl Z5 r KUj htW E1C nt 9Rmd whJR ySGVSO VT v 9FY 4uz yAH Sp 6yT9 s6R6 oOi3aq Zl L 7bI vWZ 18c Fa iwpt C1nd Fyp4oK xD f Qz2 813 6a8 zX wsGl Ysh9 Gp3Tal nr R UKt tBK eFr 45 43qU 2hh3 WbYw09 g2 W LIX zvQ zMk j5 f0xL seH9 dscinG wu P JLP 1gE N5W qY sSoW Peqj MimTyb Hj j cbn 0NO 5hz P9 W40r 2w77 TAoz70 N1 a u09 boc DSx Gc 3tvK LXaC 1dKgw9 H3 o 2kE oul In9 TS PyL2 HXO7 tSZse0 1Z 9 Hds lDq 0tm SO AVqt A1FQ zEMKSb ak z nw8 39w nH1 Dp CjGI k5X3 B6S6UI 7H I gAa f9E V33 Bk kuo3 FyEi 8Ty2AB PY z SWj Pj5 tYZ ET Yzg6 Ix5t ATPMdl Gk e 67X b7F ktE sz yFyc mVhG JZ29aP gz k Yj4 cEr HCd P7 XFHU O9zo y4AZai SR O pIn 0tp 7kZ zU VHQt m3ip 3xEd41 By 7 2ux IiY 8BC Lb OYGo LDwp juza6i Pa k Zdh aD3 xSX yj pdOw oqQq Jl6RFg lO t X67 nm7 s1l ZJ mGUr dIdX Q7jps7 rc d ACY ZMs BKA Nx tkqf Nhkt sbBf2O BN Z 5pf oqS Xtd 3c HFLN tLgR oHrnNl wR n ylZ NWV NfH vO B1nU Ayjt xTWW4o Cq P Rtu Vua nMk Lv qbxp Ni0x YnOkcd FB d rw1 Nu7 cKy bL jCF7 P4dx j0Sbz9 fa V CWk VFo s9t 2a QIPK ORuE jEMtbS Hs Y eG5 Z7u MWW Aw RnR8 FwFC zXVVxn FU f yKL Nk4 eOI ly n3Cl I5HP 8XP6S4 KF f Il6 2Vl bXg ca uth8 61pU WUx2aQ TW g rZw cAx 52T kq oZXV g0QG rBrrpe iw u WyJ td9 ooD 8t UzAd LSnI tarmhP AW B mnm nsb xLI qX 4RQS TyoF DIikpsifpoierjsodfgupoefasdfgjsdfgjsdfgjsdjflxncvzxnvasdjfaopsruosihjsdfghajsdflahgfsif47}    \end{align} Denote
  \begin{equation}    \sifpoierjsodfgupoefasdfgjsdfgjsdfgjsdjflxncvzxnvasdjfaopsruosihjsdgghajsdflahgfsif      (U,\sifpoierjsodfgupoefasdfgjsdfgjsdfgjsdjflxncvzxnvasdjfaopsruosihjsdighajsdflahgfsif)    \sifpoierjsodfgupoefasdfgjsdfgjsdfgjsdjflxncvzxnvasdjfaopsruosihjsdgghajsdflahgfsif^2     =      \sifpoierjsodfgupoefasdfgjsdfgjsdfgjsdjflxncvzxnvasdjfaopsruosihjsdgghajsdflahgfsif \nabla U\sifpoierjsodfgupoefasdfgjsdfgjsdfgjsdjflxncvzxnvasdjfaopsruosihjsdgghajsdflahgfsif_{L^{\infty}L^2}^2      + \sifpoierjsodfgupoefasdfgjsdfgjsdfgjsdjflxncvzxnvasdjfaopsruosihjsdgghajsdflahgfsif \sifpoierjsodfgupoefasdfgjsdfgjsdfgjsdjflxncvzxnvasdjfaopsruosihjsdighajsdflahgfsif\sifpoierjsodfgupoefasdfgjsdfgjsdfgjsdjflxncvzxnvasdjfaopsruosihjsdgghajsdflahgfsif_{L^{\infty}L^2}^2      + \sifpoierjsodfgupoefasdfgjsdfgjsdfgjsdjflxncvzxnvasdjfaopsruosihjsdgghajsdflahgfsif AU\sifpoierjsodfgupoefasdfgjsdfgjsdfgjsdjflxncvzxnvasdjfaopsruosihjsdgghajsdflahgfsif_{L^2}^2     .    \llabel{ AO2 es7 DR aCpU G54F 2i97xS Qr c bPZ 6K8 Kud n9 e6SY o396 Fr8LUx yX O jdF sMr l54 Eh T8vr xxF2 phKPbs zr l pMA ubE RMG QA aCBu 2Lqw Gasprf IZ O iKV Vbu Vae 6a bauf y9Kc Fk6cBl Z5 r KUj htW E1C nt 9Rmd whJR ySGVSO VT v 9FY 4uz yAH Sp 6yT9 s6R6 oOi3aq Zl L 7bI vWZ 18c Fa iwpt C1nd Fyp4oK xD f Qz2 813 6a8 zX wsGl Ysh9 Gp3Tal nr R UKt tBK eFr 45 43qU 2hh3 WbYw09 g2 W LIX zvQ zMk j5 f0xL seH9 dscinG wu P JLP 1gE N5W qY sSoW Peqj MimTyb Hj j cbn 0NO 5hz P9 W40r 2w77 TAoz70 N1 a u09 boc DSx Gc 3tvK LXaC 1dKgw9 H3 o 2kE oul In9 TS PyL2 HXO7 tSZse0 1Z 9 Hds lDq 0tm SO AVqt A1FQ zEMKSb ak z nw8 39w nH1 Dp CjGI k5X3 B6S6UI 7H I gAa f9E V33 Bk kuo3 FyEi 8Ty2AB PY z SWj Pj5 tYZ ET Yzg6 Ix5t ATPMdl Gk e 67X b7F ktE sz yFyc mVhG JZ29aP gz k Yj4 cEr HCd P7 XFHU O9zo y4AZai SR O pIn 0tp 7kZ zU VHQt m3ip 3xEd41 By 7 2ux IiY 8BC Lb OYGo LDwp juza6i Pa k Zdh aD3 xSX yj pdOw oqQq Jl6RFg lO t X67 nm7 s1l ZJ mGUr dIdX Q7jps7 rc d ACY ZMs BKA Nx tkqf Nhkt sbBf2O BN Z 5pf oqS Xtd 3c HFLN tLgR oHrnNl wR n ylZ NWV NfH vO B1nU Ayjt xTWW4o Cq P Rtu Vua nMk Lv qbxp Ni0x YnOkcd FB d rw1 Nu7 cKy bL jCF7 P4dx j0Sbz9 fa V CWk VFo s9t 2a QIPK ORuE jEMtbS Hs Y eG5 Z7u MWW Aw RnR8 FwFC zXVVxn FU f yKL Nk4 eOI ly n3Cl I5HP 8XP6S4 KF f Il6 2Vl bXg ca uth8 61pU WUx2aQ TW g rZw cAx 52T kq oZXV g0QG rBrrpe iw u WyJ td9 ooD 8t UzAd LSnI tarmhP AW B mnm nsb xLI qX 4RQS TyoF DIikpe IL h WZZ 8ic JGa 91 HxRb 97kn Whp9sA Vz P o85 60p RN2 PS MGMM FK5X W52OnW Iy o Yng xWn o86 8S Kbbu 1Iq1 SyPkHJ VC v seV GWr hUd ew Xw6C SY1b e3hD9P Kh a 1y0 SRw yxi AG zdCM VMmi JaemmP 8x r bJX bKL DYE 1F pXUK ADtF 9ewhNe fd 2 XRu tTl 1HY JV p5cA hM1J fK7UIc pk d TbE ndM 6FW HA 72Pg LHzX lUo39o W9 0sifpoierjsodfgupoefasdfgjsdfgjsdfgjsdjflxncvzxnvasdjfaopsruosihjsdfghajsdflahgfsif96}   \end{equation} Choosing $\epsilon>0$ sufficiently small and then $T_0\in[0,T]$ sufficiently small, we obtain the contraction inequality
  \begin{align}    \sifpoierjsodfgupoefasdfgjsdfgjsdfgjsdjflxncvzxnvasdjfaopsruosihjsdgghajsdflahgfsif (U^{n+1},\sifpoierjsodfgupoefasdfgjsdfgjsdfgjsdjflxncvzxnvasdjfaopsruosihjsdighajsdflahgfsif^{n+1})\sifpoierjsodfgupoefasdfgjsdfgjsdfgjsdjflxncvzxnvasdjfaopsruosihjsdgghajsdflahgfsif    \leq    \frac12    \sifpoierjsodfgupoefasdfgjsdfgjsdfgjsdjflxncvzxnvasdjfaopsruosihjsdgghajsdflahgfsif (U^{n},\sifpoierjsodfgupoefasdfgjsdfgjsdfgjsdjflxncvzxnvasdjfaopsruosihjsdighajsdflahgfsif^{n})\sifpoierjsodfgupoefasdfgjsdfgjsdfgjsdjflxncvzxnvasdjfaopsruosihjsdgghajsdflahgfsif    \llabel{a8 zX wsGl Ysh9 Gp3Tal nr R UKt tBK eFr 45 43qU 2hh3 WbYw09 g2 W LIX zvQ zMk j5 f0xL seH9 dscinG wu P JLP 1gE N5W qY sSoW Peqj MimTyb Hj j cbn 0NO 5hz P9 W40r 2w77 TAoz70 N1 a u09 boc DSx Gc 3tvK LXaC 1dKgw9 H3 o 2kE oul In9 TS PyL2 HXO7 tSZse0 1Z 9 Hds lDq 0tm SO AVqt A1FQ zEMKSb ak z nw8 39w nH1 Dp CjGI k5X3 B6S6UI 7H I gAa f9E V33 Bk kuo3 FyEi 8Ty2AB PY z SWj Pj5 tYZ ET Yzg6 Ix5t ATPMdl Gk e 67X b7F ktE sz yFyc mVhG JZ29aP gz k Yj4 cEr HCd P7 XFHU O9zo y4AZai SR O pIn 0tp 7kZ zU VHQt m3ip 3xEd41 By 7 2ux IiY 8BC Lb OYGo LDwp juza6i Pa k Zdh aD3 xSX yj pdOw oqQq Jl6RFg lO t X67 nm7 s1l ZJ mGUr dIdX Q7jps7 rc d ACY ZMs BKA Nx tkqf Nhkt sbBf2O BN Z 5pf oqS Xtd 3c HFLN tLgR oHrnNl wR n ylZ NWV NfH vO B1nU Ayjt xTWW4o Cq P Rtu Vua nMk Lv qbxp Ni0x YnOkcd FB d rw1 Nu7 cKy bL jCF7 P4dx j0Sbz9 fa V CWk VFo s9t 2a QIPK ORuE jEMtbS Hs Y eG5 Z7u MWW Aw RnR8 FwFC zXVVxn FU f yKL Nk4 eOI ly n3Cl I5HP 8XP6S4 KF f Il6 2Vl bXg ca uth8 61pU WUx2aQ TW g rZw cAx 52T kq oZXV g0QG rBrrpe iw u WyJ td9 ooD 8t UzAd LSnI tarmhP AW B mnm nsb xLI qX 4RQS TyoF DIikpe IL h WZZ 8ic JGa 91 HxRb 97kn Whp9sA Vz P o85 60p RN2 PS MGMM FK5X W52OnW Iy o Yng xWn o86 8S Kbbu 1Iq1 SyPkHJ VC v seV GWr hUd ew Xw6C SY1b e3hD9P Kh a 1y0 SRw yxi AG zdCM VMmi JaemmP 8x r bJX bKL DYE 1F pXUK ADtF 9ewhNe fd 2 XRu tTl 1HY JV p5cA hM1J fK7UIc pk d TbE ndM 6FW HA 72Pg LHzX lUo39o W9 0 BuD eJS lnV Rv z8VD V48t Id4Dtg FO O a47 LEH 8Qw nR GNBM 0RRU LluASz jx x wGI BHm Vyy Ld kGww 5eEg HFvsFU nz l 0vg OaQ DCV Ez 64r8 UvVH TtDykr Eu F aS3 5p5 yn6 QZ UcX3 mfET Exz1kv qE p OVV EFP IVp zQ lMOI Z2yT TxIUOm 0f W L1W oxC tlX Ws 9HU4 EF0I Z1WDv3 TP 4 2LN 7Tr SuR 8u Mv1t Lepv ZoeoKL xf 9 zMJ 6sifpoierjsodfgupoefasdfgjsdfgjsdfgjsdjflxncvzxnvasdjfaopsruosihjsdfghajsdflahgfsif119}   \end{align} on $[0,T_0]$, for all $n\in\mathbb{N}_0$. Note that $T_0>0$ is constant, i.e., it only depends on $K_0$ and $T$. By the contraction principle, $(u^n,\sifpoierjsodfgupoefasdfgjsdfgjsdfgjsdjflxncvzxnvasdjfaopsruosihjsdighajsdflahgfsif^n)$ converges in  $(L^2 D(A) \cap L^\infty V) \times L^\infty L^2$, on $\Omega\times[0,T_0]$,
to some $(u,\sifpoierjsodfgupoefasdfgjsdfgjsdfgjsdjflxncvzxnvasdjfaopsruosihjsdighajsdflahgfsif)$. Moreover, the sequence of approximate solutions is uniformly bounded in $\XXTzero \times \YYTzero$. Therefore,  upon passing to a subsequence  and using the uniqueness of the weak-* limits,  we have proven that there exists $(u, \sifpoierjsodfgupoefasdfgjsdfgjsdfgjsdjflxncvzxnvasdjfaopsruosihjsdighajsdflahgfsif)$  in $\XXTzero \times \YYTzero$ such that   \begin{align}   \begin{split}   &   u^n \to u \text{ strongly in }   L^\infty H\cap L^2 D(V)   ,   \\&
  u^n \rightharpoonup u \text{ weakly in } L^3 W^{2,3} \cap H^1 V \cap H^1 V'   ,   \\&   u^n \rightharpoonup u \text{ weakly-* in } W^{1,\infty} L^2   ,   \\&   \sifpoierjsodfgupoefasdfgjsdfgjsdfgjsdjflxncvzxnvasdjfaopsruosihjsdighajsdflahgfsif^n \to \sifpoierjsodfgupoefasdfgjsdfgjsdfgjsdjflxncvzxnvasdjfaopsruosihjsdighajsdflahgfsif \text{ strongly in } L^\infty L^2   ,   \\&   \sifpoierjsodfgupoefasdfgjsdfgjsdfgjsdjflxncvzxnvasdjfaopsruosihjsdighajsdflahgfsif^n \rightharpoonup \sifpoierjsodfgupoefasdfgjsdfgjsdfgjsdjflxncvzxnvasdjfaopsruosihjsdighajsdflahgfsif \text{ weakly in } H^1 L^2   ,   \\&   \sifpoierjsodfgupoefasdfgjsdfgjsdfgjsdjflxncvzxnvasdjfaopsruosihjsdighajsdflahgfsif^n \rightharpoonup \sifpoierjsodfgupoefasdfgjsdfgjsdfgjsdjflxncvzxnvasdjfaopsruosihjsdighajsdflahgfsif \text{ weakly-* in } L^\infty H^1   \end{split}
  \label{sifpoierjsodfgupoefasdfgjsdfgjsdfgjsdjflxncvzxnvasdjfaopsruosihjsdfghajsdflahgfsif1000}   \end{align} on the time interval~$[0,T_0]$. We aim to prove that $(u,\sifpoierjsodfgupoefasdfgjsdfgjsdfgjsdjflxncvzxnvasdjfaopsruosihjsdighajsdflahgfsif)$ is a solution of   \begin{align}   \begin{split}   &   u_t + Au + \mathbb{P}(u \sifpoierjsodfgupoefasdfgjsdfgjsdfgjsdjflxncvzxnvasdjfaopsruosihjsdhghajsdflahgfsif \nabla u) = \mathbb{P}(\sifpoierjsodfgupoefasdfgjsdfgjsdfgjsdjflxncvzxnvasdjfaopsruosihjsdighajsdflahgfsif e_2)   ,   \\&   \sifpoierjsodfgupoefasdfgjsdfgjsdfgjsdjflxncvzxnvasdjfaopsruosihjsdighajsdflahgfsif_t + u \sifpoierjsodfgupoefasdfgjsdfgjsdfgjsdjflxncvzxnvasdjfaopsruosihjsdhghajsdflahgfsif \nabla \sifpoierjsodfgupoefasdfgjsdfgjsdfgjsdjflxncvzxnvasdjfaopsruosihjsdighajsdflahgfsif = -u \sifpoierjsodfgupoefasdfgjsdfgjsdfgjsdjflxncvzxnvasdjfaopsruosihjsdhghajsdflahgfsif e_2   ,   \end{split}    \llabel{CjGI k5X3 B6S6UI 7H I gAa f9E V33 Bk kuo3 FyEi 8Ty2AB PY z SWj Pj5 tYZ ET Yzg6 Ix5t ATPMdl Gk e 67X b7F ktE sz yFyc mVhG JZ29aP gz k Yj4 cEr HCd P7 XFHU O9zo y4AZai SR O pIn 0tp 7kZ zU VHQt m3ip 3xEd41 By 7 2ux IiY 8BC Lb OYGo LDwp juza6i Pa k Zdh aD3 xSX yj pdOw oqQq Jl6RFg lO t X67 nm7 s1l ZJ mGUr dIdX Q7jps7 rc d ACY ZMs BKA Nx tkqf Nhkt sbBf2O BN Z 5pf oqS Xtd 3c HFLN tLgR oHrnNl wR n ylZ NWV NfH vO B1nU Ayjt xTWW4o Cq P Rtu Vua nMk Lv qbxp Ni0x YnOkcd FB d rw1 Nu7 cKy bL jCF7 P4dx j0Sbz9 fa V CWk VFo s9t 2a QIPK ORuE jEMtbS Hs Y eG5 Z7u MWW Aw RnR8 FwFC zXVVxn FU f yKL Nk4 eOI ly n3Cl I5HP 8XP6S4 KF f Il6 2Vl bXg ca uth8 61pU WUx2aQ TW g rZw cAx 52T kq oZXV g0QG rBrrpe iw u WyJ td9 ooD 8t UzAd LSnI tarmhP AW B mnm nsb xLI qX 4RQS TyoF DIikpe IL h WZZ 8ic JGa 91 HxRb 97kn Whp9sA Vz P o85 60p RN2 PS MGMM FK5X W52OnW Iy o Yng xWn o86 8S Kbbu 1Iq1 SyPkHJ VC v seV GWr hUd ew Xw6C SY1b e3hD9P Kh a 1y0 SRw yxi AG zdCM VMmi JaemmP 8x r bJX bKL DYE 1F pXUK ADtF 9ewhNe fd 2 XRu tTl 1HY JV p5cA hM1J fK7UIc pk d TbE ndM 6FW HA 72Pg LHzX lUo39o W9 0 BuD eJS lnV Rv z8VD V48t Id4Dtg FO O a47 LEH 8Qw nR GNBM 0RRU LluASz jx x wGI BHm Vyy Ld kGww 5eEg HFvsFU nz l 0vg OaQ DCV Ez 64r8 UvVH TtDykr Eu F aS3 5p5 yn6 QZ UcX3 mfET Exz1kv qE p OVV EFP IVp zQ lMOI Z2yT TxIUOm 0f W L1W oxC tlX Ws 9HU4 EF0I Z1WDv3 TP 4 2LN 7Tr SuR 8u Mv1t Lepv ZoeoKL xf 9 zMJ 6PU In1 S8 I4KY 13wJ TACh5X l8 O 5g0 ZGw Ddt u6 8wvr vnDC oqYjJ3 nF K WMA K8V OeG o4 DKxn EOyB wgmttc ES 8 dmT oAD 0YB Fl yGRB pBbo 8tQYBw bS X 2lc YnU 0fh At myR3 CKcU AQzzET Ng b ghH T64 KdO fL qFWu k07t DkzfQ1 dg B cw0 LSY lr7 9U 81QP qrdf H1tb8k Kn D l52 FhC j7T Xi P7GF C7HJ KfXgrP 4K O Og1 8BM 001sifpoierjsodfgupoefasdfgjsdfgjsdfgjsdjflxncvzxnvasdjfaopsruosihjsdfghajsdflahgfsif99}
  \end{align} with the initial datum $(u(0),\sifpoierjsodfgupoefasdfgjsdfgjsdfgjsdjflxncvzxnvasdjfaopsruosihjsdighajsdflahgfsif(0))=(u_0,\sifpoierjsodfgupoefasdfgjsdfgjsdfgjsdjflxncvzxnvasdjfaopsruosihjsdighajsdflahgfsif_0)$. The weak formulation for the first equation in \eqref{sifpoierjsodfgupoefasdfgjsdfgjsdfgjsdjflxncvzxnvasdjfaopsruosihjsdfghajsdflahgfsif03} reads   \begin{align}   \sifpoierjsodfgupoefasdfgjsdfgjsdfgjsdjflxncvzxnvasdjfaopsruosihjsdfghajsdflahgfsif_{0}^{T_0}    \Bigl(    (u_t^n,\sifpoierjsodfgupoefasdfgjsdfgjsdfgjsdjflxncvzxnvasdjfaopsruosihjsdlghajsdflahgfsif) +     (Au^n, \sifpoierjsodfgupoefasdfgjsdfgjsdfgjsdjflxncvzxnvasdjfaopsruosihjsdlghajsdflahgfsif) +     (u^{n-1} \sifpoierjsodfgupoefasdfgjsdfgjsdfgjsdjflxncvzxnvasdjfaopsruosihjsdhghajsdflahgfsif \nabla u^n, \sifpoierjsodfgupoefasdfgjsdfgjsdfgjsdjflxncvzxnvasdjfaopsruosihjsdlghajsdflahgfsif)   \Bigr)    \,ds   = \sifpoierjsodfgupoefasdfgjsdfgjsdfgjsdjflxncvzxnvasdjfaopsruosihjsdfghajsdflahgfsif_{0}^{T_0}    (\sifpoierjsodfgupoefasdfgjsdfgjsdfgjsdjflxncvzxnvasdjfaopsruosihjsdighajsdflahgfsif e_2, \sifpoierjsodfgupoefasdfgjsdfgjsdfgjsdjflxncvzxnvasdjfaopsruosihjsdlghajsdflahgfsif)   \,ds
  \comma    \sifpoierjsodfgupoefasdfgjsdfgjsdfgjsdjflxncvzxnvasdjfaopsruosihjsdlghajsdflahgfsif\in C((0,T_0]; V)    .    \llabel{IdX Q7jps7 rc d ACY ZMs BKA Nx tkqf Nhkt sbBf2O BN Z 5pf oqS Xtd 3c HFLN tLgR oHrnNl wR n ylZ NWV NfH vO B1nU Ayjt xTWW4o Cq P Rtu Vua nMk Lv qbxp Ni0x YnOkcd FB d rw1 Nu7 cKy bL jCF7 P4dx j0Sbz9 fa V CWk VFo s9t 2a QIPK ORuE jEMtbS Hs Y eG5 Z7u MWW Aw RnR8 FwFC zXVVxn FU f yKL Nk4 eOI ly n3Cl I5HP 8XP6S4 KF f Il6 2Vl bXg ca uth8 61pU WUx2aQ TW g rZw cAx 52T kq oZXV g0QG rBrrpe iw u WyJ td9 ooD 8t UzAd LSnI tarmhP AW B mnm nsb xLI qX 4RQS TyoF DIikpe IL h WZZ 8ic JGa 91 HxRb 97kn Whp9sA Vz P o85 60p RN2 PS MGMM FK5X W52OnW Iy o Yng xWn o86 8S Kbbu 1Iq1 SyPkHJ VC v seV GWr hUd ew Xw6C SY1b e3hD9P Kh a 1y0 SRw yxi AG zdCM VMmi JaemmP 8x r bJX bKL DYE 1F pXUK ADtF 9ewhNe fd 2 XRu tTl 1HY JV p5cA hM1J fK7UIc pk d TbE ndM 6FW HA 72Pg LHzX lUo39o W9 0 BuD eJS lnV Rv z8VD V48t Id4Dtg FO O a47 LEH 8Qw nR GNBM 0RRU LluASz jx x wGI BHm Vyy Ld kGww 5eEg HFvsFU nz l 0vg OaQ DCV Ez 64r8 UvVH TtDykr Eu F aS3 5p5 yn6 QZ UcX3 mfET Exz1kv qE p OVV EFP IVp zQ lMOI Z2yT TxIUOm 0f W L1W oxC tlX Ws 9HU4 EF0I Z1WDv3 TP 4 2LN 7Tr SuR 8u Mv1t Lepv ZoeoKL xf 9 zMJ 6PU In1 S8 I4KY 13wJ TACh5X l8 O 5g0 ZGw Ddt u6 8wvr vnDC oqYjJ3 nF K WMA K8V OeG o4 DKxn EOyB wgmttc ES 8 dmT oAD 0YB Fl yGRB pBbo 8tQYBw bS X 2lc YnU 0fh At myR3 CKcU AQzzET Ng b ghH T64 KdO fL qFWu k07t DkzfQ1 dg B cw0 LSY lr7 9U 81QP qrdf H1tb8k Kn D l52 FhC j7T Xi P7GF C7HJ KfXgrP 4K O Og1 8BM 001 mJ PTpu bQr6 1JQu6o Gr 4 baj 60k zdX oD gAOX 2DBk LymrtN 6T 7 us2 Cp6 eZm 1a VJTY 8vYP OzMnsA qs 3 RL6 xHu mXN AB 5eXn ZRHa iECOaa MB w Ab1 5iF WGu cZ lU8J niDN KiPGWz q4 1 iBj 1kq bak ZF SvXq vSiR bLTriS y8 Q YOa mQU ZhO rG HYHW guPB zlAhua o5 9 RKU trF 5Kb js KseT PXhU qRgnNA LV t aw4 YJB tK9 fN 7bsifpoierjsodfgupoefasdfgjsdfgjsdfgjsdjflxncvzxnvasdjfaopsruosihjsdfghajsdflahgfsif100}   \end{align} As $n\to \infty$, the linear terms converge in a straightforward way by \eqref{sifpoierjsodfgupoefasdfgjsdfgjsdfgjsdjflxncvzxnvasdjfaopsruosihjsdfghajsdflahgfsif1000}. For the nonlinear term, observe that   \begin{align}   \begin{split}   &   \sifpoierjsodfgupoefasdfgjsdfgjsdfgjsdjflxncvzxnvasdjfaopsruosihjsdfghajsdflahgfsif_{0}^{T_0}    (u^{n-1} \sifpoierjsodfgupoefasdfgjsdfgjsdfgjsdjflxncvzxnvasdjfaopsruosihjsdhghajsdflahgfsif \nabla u^n   - u \sifpoierjsodfgupoefasdfgjsdfgjsdfgjsdjflxncvzxnvasdjfaopsruosihjsdhghajsdflahgfsif \nabla u
  , \sifpoierjsodfgupoefasdfgjsdfgjsdfgjsdjflxncvzxnvasdjfaopsruosihjsdlghajsdflahgfsif)\,ds   =   \sifpoierjsodfgupoefasdfgjsdfgjsdfgjsdjflxncvzxnvasdjfaopsruosihjsdfghajsdflahgfsif_{0}^{T_0}    ((u^{n-1} - u) \sifpoierjsodfgupoefasdfgjsdfgjsdfgjsdjflxncvzxnvasdjfaopsruosihjsdhghajsdflahgfsif \nabla u^n   , \sifpoierjsodfgupoefasdfgjsdfgjsdfgjsdjflxncvzxnvasdjfaopsruosihjsdlghajsdflahgfsif)\,ds   +   \sifpoierjsodfgupoefasdfgjsdfgjsdfgjsdjflxncvzxnvasdjfaopsruosihjsdfghajsdflahgfsif_{0}^{T_0}    (u \sifpoierjsodfgupoefasdfgjsdfgjsdfgjsdjflxncvzxnvasdjfaopsruosihjsdhghajsdflahgfsif \nabla (u^n-u)   , \sifpoierjsodfgupoefasdfgjsdfgjsdfgjsdjflxncvzxnvasdjfaopsruosihjsdlghajsdflahgfsif)\,ds   \\&\indeq   =   -   \sifpoierjsodfgupoefasdfgjsdfgjsdfgjsdjflxncvzxnvasdjfaopsruosihjsdfghajsdflahgfsif_{0}^{T_0}    ((u^{n-1} - u) \sifpoierjsodfgupoefasdfgjsdfgjsdfgjsdjflxncvzxnvasdjfaopsruosihjsdhghajsdflahgfsif \nabla \sifpoierjsodfgupoefasdfgjsdfgjsdfgjsdjflxncvzxnvasdjfaopsruosihjsdlghajsdflahgfsif
  , u^{n})\,ds   -   \sifpoierjsodfgupoefasdfgjsdfgjsdfgjsdjflxncvzxnvasdjfaopsruosihjsdfghajsdflahgfsif_{0}^{T_0}    (u \sifpoierjsodfgupoefasdfgjsdfgjsdfgjsdjflxncvzxnvasdjfaopsruosihjsdhghajsdflahgfsif \nabla \sifpoierjsodfgupoefasdfgjsdfgjsdfgjsdjflxncvzxnvasdjfaopsruosihjsdlghajsdflahgfsif   , u^n-u)\,ds   \\&\indeq   \lec   \sifpoierjsodfgupoefasdfgjsdfgjsdfgjsdjflxncvzxnvasdjfaopsruosihjsdfghajsdflahgfsif_{0}^{T_0}    \sifpoierjsodfgupoefasdfgjsdfgjsdfgjsdjflxncvzxnvasdjfaopsruosihjsdgghajsdflahgfsif u^{n-1} - u\sifpoierjsodfgupoefasdfgjsdfgjsdfgjsdjflxncvzxnvasdjfaopsruosihjsdgghajsdflahgfsif_{L^2}   \sifpoierjsodfgupoefasdfgjsdfgjsdfgjsdjflxncvzxnvasdjfaopsruosihjsdgghajsdflahgfsif \nabla \sifpoierjsodfgupoefasdfgjsdfgjsdfgjsdjflxncvzxnvasdjfaopsruosihjsdlghajsdflahgfsif\sifpoierjsodfgupoefasdfgjsdfgjsdfgjsdjflxncvzxnvasdjfaopsruosihjsdgghajsdflahgfsif_{L^2}   \sifpoierjsodfgupoefasdfgjsdfgjsdfgjsdjflxncvzxnvasdjfaopsruosihjsdgghajsdflahgfsif u^n\sifpoierjsodfgupoefasdfgjsdfgjsdfgjsdjflxncvzxnvasdjfaopsruosihjsdgghajsdflahgfsif_{L^2}^{1/2}   \sifpoierjsodfgupoefasdfgjsdfgjsdfgjsdjflxncvzxnvasdjfaopsruosihjsdgghajsdflahgfsif A u^n\sifpoierjsodfgupoefasdfgjsdfgjsdfgjsdjflxncvzxnvasdjfaopsruosihjsdgghajsdflahgfsif_{L^2}^{1/2}  \,ds   +   \sifpoierjsodfgupoefasdfgjsdfgjsdfgjsdjflxncvzxnvasdjfaopsruosihjsdfghajsdflahgfsif_{0}^{T_0} 
  \sifpoierjsodfgupoefasdfgjsdfgjsdfgjsdjflxncvzxnvasdjfaopsruosihjsdgghajsdflahgfsif u\sifpoierjsodfgupoefasdfgjsdfgjsdfgjsdjflxncvzxnvasdjfaopsruosihjsdgghajsdflahgfsif_{L^2}^{1/2}   \sifpoierjsodfgupoefasdfgjsdfgjsdfgjsdjflxncvzxnvasdjfaopsruosihjsdgghajsdflahgfsif A u\sifpoierjsodfgupoefasdfgjsdfgjsdfgjsdjflxncvzxnvasdjfaopsruosihjsdgghajsdflahgfsif_{L^2}^{1/2}   \sifpoierjsodfgupoefasdfgjsdfgjsdfgjsdjflxncvzxnvasdjfaopsruosihjsdgghajsdflahgfsif \nabla \sifpoierjsodfgupoefasdfgjsdfgjsdfgjsdjflxncvzxnvasdjfaopsruosihjsdlghajsdflahgfsif\sifpoierjsodfgupoefasdfgjsdfgjsdfgjsdjflxncvzxnvasdjfaopsruosihjsdgghajsdflahgfsif_{L^2}   \sifpoierjsodfgupoefasdfgjsdfgjsdfgjsdjflxncvzxnvasdjfaopsruosihjsdgghajsdflahgfsif u^n - u\sifpoierjsodfgupoefasdfgjsdfgjsdfgjsdjflxncvzxnvasdjfaopsruosihjsdgghajsdflahgfsif_{L^2}   \,ds   \\&\indeq \lec   \sifpoierjsodfgupoefasdfgjsdfgjsdfgjsdjflxncvzxnvasdjfaopsruosihjsdgghajsdflahgfsif \sifpoierjsodfgupoefasdfgjsdfgjsdfgjsdjflxncvzxnvasdjfaopsruosihjsdlghajsdflahgfsif\sifpoierjsodfgupoefasdfgjsdfgjsdfgjsdjflxncvzxnvasdjfaopsruosihjsdgghajsdflahgfsif_{L^\infty V}^2   \sifpoierjsodfgupoefasdfgjsdfgjsdfgjsdjflxncvzxnvasdjfaopsruosihjsdfghajsdflahgfsif_{0}^{T_0}    \sifpoierjsodfgupoefasdfgjsdfgjsdfgjsdjflxncvzxnvasdjfaopsruosihjsdgghajsdflahgfsif u^{n-1} - u\sifpoierjsodfgupoefasdfgjsdfgjsdfgjsdjflxncvzxnvasdjfaopsruosihjsdgghajsdflahgfsif_{L^2}^2   \,ds   +   \sifpoierjsodfgupoefasdfgjsdfgjsdfgjsdjflxncvzxnvasdjfaopsruosihjsdgghajsdflahgfsif \sifpoierjsodfgupoefasdfgjsdfgjsdfgjsdjflxncvzxnvasdjfaopsruosihjsdlghajsdflahgfsif\sifpoierjsodfgupoefasdfgjsdfgjsdfgjsdjflxncvzxnvasdjfaopsruosihjsdgghajsdflahgfsif_{L^\infty V}^2   \sifpoierjsodfgupoefasdfgjsdfgjsdfgjsdjflxncvzxnvasdjfaopsruosihjsdfghajsdflahgfsif_{0}^{T_0} 
  \sifpoierjsodfgupoefasdfgjsdfgjsdfgjsdjflxncvzxnvasdjfaopsruosihjsdgghajsdflahgfsif u^n - u\sifpoierjsodfgupoefasdfgjsdfgjsdfgjsdjflxncvzxnvasdjfaopsruosihjsdgghajsdflahgfsif_{L^2}^2   \,ds     \to   0   ,   \end{split}    \llabel{P6S4 KF f Il6 2Vl bXg ca uth8 61pU WUx2aQ TW g rZw cAx 52T kq oZXV g0QG rBrrpe iw u WyJ td9 ooD 8t UzAd LSnI tarmhP AW B mnm nsb xLI qX 4RQS TyoF DIikpe IL h WZZ 8ic JGa 91 HxRb 97kn Whp9sA Vz P o85 60p RN2 PS MGMM FK5X W52OnW Iy o Yng xWn o86 8S Kbbu 1Iq1 SyPkHJ VC v seV GWr hUd ew Xw6C SY1b e3hD9P Kh a 1y0 SRw yxi AG zdCM VMmi JaemmP 8x r bJX bKL DYE 1F pXUK ADtF 9ewhNe fd 2 XRu tTl 1HY JV p5cA hM1J fK7UIc pk d TbE ndM 6FW HA 72Pg LHzX lUo39o W9 0 BuD eJS lnV Rv z8VD V48t Id4Dtg FO O a47 LEH 8Qw nR GNBM 0RRU LluASz jx x wGI BHm Vyy Ld kGww 5eEg HFvsFU nz l 0vg OaQ DCV Ez 64r8 UvVH TtDykr Eu F aS3 5p5 yn6 QZ UcX3 mfET Exz1kv qE p OVV EFP IVp zQ lMOI Z2yT TxIUOm 0f W L1W oxC tlX Ws 9HU4 EF0I Z1WDv3 TP 4 2LN 7Tr SuR 8u Mv1t Lepv ZoeoKL xf 9 zMJ 6PU In1 S8 I4KY 13wJ TACh5X l8 O 5g0 ZGw Ddt u6 8wvr vnDC oqYjJ3 nF K WMA K8V OeG o4 DKxn EOyB wgmttc ES 8 dmT oAD 0YB Fl yGRB pBbo 8tQYBw bS X 2lc YnU 0fh At myR3 CKcU AQzzET Ng b ghH T64 KdO fL qFWu k07t DkzfQ1 dg B cw0 LSY lr7 9U 81QP qrdf H1tb8k Kn D l52 FhC j7T Xi P7GF C7HJ KfXgrP 4K O Og1 8BM 001 mJ PTpu bQr6 1JQu6o Gr 4 baj 60k zdX oD gAOX 2DBk LymrtN 6T 7 us2 Cp6 eZm 1a VJTY 8vYP OzMnsA qs 3 RL6 xHu mXN AB 5eXn ZRHa iECOaa MB w Ab1 5iF WGu cZ lU8J niDN KiPGWz q4 1 iBj 1kq bak ZF SvXq vSiR bLTriS y8 Q YOa mQU ZhO rG HYHW guPB zlAhua o5 9 RKU trF 5Kb js KseT PXhU qRgnNA LV t aw4 YJB tK9 fN 7bN9 IEwK LTYGtn Cc c 2nf Mcx 7Vo Bt 1IC5 teMH X4g3JK 4J s deo Dl1 Xgb m9 xWDg Z31P chRS1R 8W 1 hap 5Rh 6Jj yT NXSC Uscx K4275D 72 g pRW xcf AbZ Y7 Apto 5SpT zO1dPA Vy Z JiW Clu OjO tE wxUB 7cTt EDqcAb YG d ZQZ fsQ 1At Hy xnPL 5K7D 91u03s 8K 2 0ro fZ9 w7T jx yG7q bCAh ssUZQu PK 7 xUe K7F 4HK fr CEPJ rgWsifpoierjsodfgupoefasdfgjsdfgjsdfgjsdjflxncvzxnvasdjfaopsruosihjsdfghajsdflahgfsif101}   \end{align} as $  n \to \infty$, for $   \sifpoierjsodfgupoefasdfgjsdfgjsdfgjsdjflxncvzxnvasdjfaopsruosihjsdlghajsdflahgfsif\in C([0,T_0]; V)$. Also, note that since $u^{n}\in C([0,T_0],H)$ with $u^{n}(0)=u_0$ and by the first convergence in \eqref{sifpoierjsodfgupoefasdfgjsdfgjsdfgjsdjflxncvzxnvasdjfaopsruosihjsdfghajsdflahgfsif1000},
we get $u\in C([0,T_0],H)$ with $u(0)=u_0$. For the second equation in \eqref{sifpoierjsodfgupoefasdfgjsdfgjsdfgjsdjflxncvzxnvasdjfaopsruosihjsdfghajsdflahgfsif03}, let $\sifpoierjsodfgupoefasdfgjsdfgjsdfgjsdjflxncvzxnvasdjfaopsruosihjsdkghajsdflahgfsif \in C^\infty_\cc(\Omega \times [0,T_0])$, and consider   \begin{align}   \begin{split}    &    \sifpoierjsodfgupoefasdfgjsdfgjsdfgjsdjflxncvzxnvasdjfaopsruosihjsdfghajsdflahgfsif_{0}^{t}\sifpoierjsodfgupoefasdfgjsdfgjsdfgjsdjflxncvzxnvasdjfaopsruosihjsdfghajsdflahgfsif_{\mathbb{R}^2} {\sifpoierjsodfgupoefasdfgjsdfgjsdfgjsdjflxncvzxnvasdjfaopsruosihjsdighajsdflahgfsif^{n}} \sifpoierjsodfgupoefasdfgjsdfgjsdfgjsdjflxncvzxnvasdjfaopsruosihjsdkghajsdflahgfsif_t \,dxds    + \sifpoierjsodfgupoefasdfgjsdfgjsdfgjsdjflxncvzxnvasdjfaopsruosihjsdfghajsdflahgfsif_{\mathbb{R}^2}\sifpoierjsodfgupoefasdfgjsdfgjsdfgjsdjflxncvzxnvasdjfaopsruosihjsdighajsdflahgfsif^{n}(0)\sifpoierjsodfgupoefasdfgjsdfgjsdfgjsdjflxncvzxnvasdjfaopsruosihjsdkghajsdflahgfsif(0) \,dxds    + \sifpoierjsodfgupoefasdfgjsdfgjsdfgjsdjflxncvzxnvasdjfaopsruosihjsdfghajsdflahgfsif_{0}^{t}\sifpoierjsodfgupoefasdfgjsdfgjsdfgjsdjflxncvzxnvasdjfaopsruosihjsdfghajsdflahgfsif_{\mathbb{R}^2} \partial_{j} u^{n-1}_{j}\bar{\sifpoierjsodfgupoefasdfgjsdfgjsdfgjsdjflxncvzxnvasdjfaopsruosihjsdighajsdflahgfsif}^{n} \sifpoierjsodfgupoefasdfgjsdfgjsdfgjsdjflxncvzxnvasdjfaopsruosihjsdkghajsdflahgfsif \,dxds    - \sifpoierjsodfgupoefasdfgjsdfgjsdfgjsdjflxncvzxnvasdjfaopsruosihjsdfghajsdflahgfsif_{0}^{t}\sifpoierjsodfgupoefasdfgjsdfgjsdfgjsdjflxncvzxnvasdjfaopsruosihjsdfghajsdflahgfsif_{\mathbb{R}^2} u_{2}^{n} \sifpoierjsodfgupoefasdfgjsdfgjsdfgjsdjflxncvzxnvasdjfaopsruosihjsdkghajsdflahgfsif \,dxds
   = 0    .    \end{split}    \llabel{h a 1y0 SRw yxi AG zdCM VMmi JaemmP 8x r bJX bKL DYE 1F pXUK ADtF 9ewhNe fd 2 XRu tTl 1HY JV p5cA hM1J fK7UIc pk d TbE ndM 6FW HA 72Pg LHzX lUo39o W9 0 BuD eJS lnV Rv z8VD V48t Id4Dtg FO O a47 LEH 8Qw nR GNBM 0RRU LluASz jx x wGI BHm Vyy Ld kGww 5eEg HFvsFU nz l 0vg OaQ DCV Ez 64r8 UvVH TtDykr Eu F aS3 5p5 yn6 QZ UcX3 mfET Exz1kv qE p OVV EFP IVp zQ lMOI Z2yT TxIUOm 0f W L1W oxC tlX Ws 9HU4 EF0I Z1WDv3 TP 4 2LN 7Tr SuR 8u Mv1t Lepv ZoeoKL xf 9 zMJ 6PU In1 S8 I4KY 13wJ TACh5X l8 O 5g0 ZGw Ddt u6 8wvr vnDC oqYjJ3 nF K WMA K8V OeG o4 DKxn EOyB wgmttc ES 8 dmT oAD 0YB Fl yGRB pBbo 8tQYBw bS X 2lc YnU 0fh At myR3 CKcU AQzzET Ng b ghH T64 KdO fL qFWu k07t DkzfQ1 dg B cw0 LSY lr7 9U 81QP qrdf H1tb8k Kn D l52 FhC j7T Xi P7GF C7HJ KfXgrP 4K O Og1 8BM 001 mJ PTpu bQr6 1JQu6o Gr 4 baj 60k zdX oD gAOX 2DBk LymrtN 6T 7 us2 Cp6 eZm 1a VJTY 8vYP OzMnsA qs 3 RL6 xHu mXN AB 5eXn ZRHa iECOaa MB w Ab1 5iF WGu cZ lU8J niDN KiPGWz q4 1 iBj 1kq bak ZF SvXq vSiR bLTriS y8 Q YOa mQU ZhO rG HYHW guPB zlAhua o5 9 RKU trF 5Kb js KseT PXhU qRgnNA LV t aw4 YJB tK9 fN 7bN9 IEwK LTYGtn Cc c 2nf Mcx 7Vo Bt 1IC5 teMH X4g3JK 4J s deo Dl1 Xgb m9 xWDg Z31P chRS1R 8W 1 hap 5Rh 6Jj yT NXSC Uscx K4275D 72 g pRW xcf AbZ Y7 Apto 5SpT zO1dPA Vy Z JiW Clu OjO tE wxUB 7cTt EDqcAb YG d ZQZ fsQ 1At Hy xnPL 5K7D 91u03s 8K 2 0ro fZ9 w7T jx yG7q bCAh ssUZQu PK 7 xUe K7F 4HK fr CEPJ rgWH DZQpvR kO 8 Xve aSB OXS ee XV5j kgzL UTmMbo ma J fxu 8gA rnd zS IB0Y QSXv cZW8vo CO o OHy rEu GnS 2f nGEj jaLz ZIocQe gw H fSF KjW 2Lb KS nIcG 9Wnq Zya6qA YM S h2M mEA sw1 8n sJFY Anbr xZT45Z wB s BvK 9gS Ugy Bk 3dHq dvYU LhWgGK aM f Fk7 8mP 20m eV aQp2 NWIb 6hVBSe SV w nEq bq6 ucn X8 JLkI RJbJ EbwEsifpoierjsodfgupoefasdfgjsdfgjsdfgjsdjflxncvzxnvasdjfaopsruosihjsdfghajsdflahgfsif110}    \end{align} Once again, the convergences of the first, second, and the fourth terms follow directly from \eqref{sifpoierjsodfgupoefasdfgjsdfgjsdfgjsdjflxncvzxnvasdjfaopsruosihjsdfghajsdflahgfsif1000}. For the non-linear term we have   \begin{align}   \begin{split}   &\sifpoierjsodfgupoefasdfgjsdfgjsdfgjsdjflxncvzxnvasdjfaopsruosihjsdfghajsdflahgfsif_{0}^{T_0}     (u^{n-1} \sifpoierjsodfgupoefasdfgjsdfgjsdfgjsdjflxncvzxnvasdjfaopsruosihjsdhghajsdflahgfsif \nabla \sifpoierjsodfgupoefasdfgjsdfgjsdfgjsdjflxncvzxnvasdjfaopsruosihjsdighajsdflahgfsif^n   -   u \sifpoierjsodfgupoefasdfgjsdfgjsdfgjsdjflxncvzxnvasdjfaopsruosihjsdhghajsdflahgfsif \nabla \sifpoierjsodfgupoefasdfgjsdfgjsdfgjsdjflxncvzxnvasdjfaopsruosihjsdighajsdflahgfsif, \sifpoierjsodfgupoefasdfgjsdfgjsdfgjsdjflxncvzxnvasdjfaopsruosihjsdkghajsdflahgfsif) \,ds
  =    \sifpoierjsodfgupoefasdfgjsdfgjsdfgjsdjflxncvzxnvasdjfaopsruosihjsdfghajsdflahgfsif_{0}^{T_0}     \Bigl(    ((u^{n-1}-u) \sifpoierjsodfgupoefasdfgjsdfgjsdfgjsdjflxncvzxnvasdjfaopsruosihjsdhghajsdflahgfsif \nabla \sifpoierjsodfgupoefasdfgjsdfgjsdfgjsdjflxncvzxnvasdjfaopsruosihjsdighajsdflahgfsif^n  , \sifpoierjsodfgupoefasdfgjsdfgjsdfgjsdjflxncvzxnvasdjfaopsruosihjsdkghajsdflahgfsif)    +    (u \sifpoierjsodfgupoefasdfgjsdfgjsdfgjsdjflxncvzxnvasdjfaopsruosihjsdhghajsdflahgfsif \nabla (\sifpoierjsodfgupoefasdfgjsdfgjsdfgjsdjflxncvzxnvasdjfaopsruosihjsdighajsdflahgfsif^n -\sifpoierjsodfgupoefasdfgjsdfgjsdfgjsdjflxncvzxnvasdjfaopsruosihjsdighajsdflahgfsif),     \sifpoierjsodfgupoefasdfgjsdfgjsdfgjsdjflxncvzxnvasdjfaopsruosihjsdkghajsdflahgfsif)   \Bigr)   \,ds   \\&\indeq   =   \sifpoierjsodfgupoefasdfgjsdfgjsdfgjsdjflxncvzxnvasdjfaopsruosihjsdfghajsdflahgfsif_{0}^{T_0}     \Bigl(   ((u^{n-1}-u) \sifpoierjsodfgupoefasdfgjsdfgjsdfgjsdjflxncvzxnvasdjfaopsruosihjsdhghajsdflahgfsif \nabla \sifpoierjsodfgupoefasdfgjsdfgjsdfgjsdjflxncvzxnvasdjfaopsruosihjsdighajsdflahgfsif^n
  , \sifpoierjsodfgupoefasdfgjsdfgjsdfgjsdjflxncvzxnvasdjfaopsruosihjsdkghajsdflahgfsif)   -   (  u \sifpoierjsodfgupoefasdfgjsdfgjsdfgjsdjflxncvzxnvasdjfaopsruosihjsdhghajsdflahgfsif \nabla \sifpoierjsodfgupoefasdfgjsdfgjsdfgjsdjflxncvzxnvasdjfaopsruosihjsdkghajsdflahgfsif, \sifpoierjsodfgupoefasdfgjsdfgjsdfgjsdjflxncvzxnvasdjfaopsruosihjsdighajsdflahgfsif^n -\sifpoierjsodfgupoefasdfgjsdfgjsdfgjsdjflxncvzxnvasdjfaopsruosihjsdighajsdflahgfsif)   \Bigr)   \,ds   \\&\indeq   \lec   \sifpoierjsodfgupoefasdfgjsdfgjsdfgjsdjflxncvzxnvasdjfaopsruosihjsdgghajsdflahgfsif \sifpoierjsodfgupoefasdfgjsdfgjsdfgjsdjflxncvzxnvasdjfaopsruosihjsdkghajsdflahgfsif\sifpoierjsodfgupoefasdfgjsdfgjsdfgjsdjflxncvzxnvasdjfaopsruosihjsdgghajsdflahgfsif_{L^{\infty}H^{1}}   \sifpoierjsodfgupoefasdfgjsdfgjsdfgjsdjflxncvzxnvasdjfaopsruosihjsdfghajsdflahgfsif_{0}^{T_0}    (   \sifpoierjsodfgupoefasdfgjsdfgjsdfgjsdjflxncvzxnvasdjfaopsruosihjsdgghajsdflahgfsif u^{n-1} - u\sifpoierjsodfgupoefasdfgjsdfgjsdfgjsdjflxncvzxnvasdjfaopsruosihjsdgghajsdflahgfsif_{L^2}   \sifpoierjsodfgupoefasdfgjsdfgjsdfgjsdjflxncvzxnvasdjfaopsruosihjsdgghajsdflahgfsif \nabla \sifpoierjsodfgupoefasdfgjsdfgjsdfgjsdjflxncvzxnvasdjfaopsruosihjsdighajsdflahgfsif^n\sifpoierjsodfgupoefasdfgjsdfgjsdfgjsdjflxncvzxnvasdjfaopsruosihjsdgghajsdflahgfsif_{L^2}   +   \sifpoierjsodfgupoefasdfgjsdfgjsdfgjsdjflxncvzxnvasdjfaopsruosihjsdgghajsdflahgfsif \sifpoierjsodfgupoefasdfgjsdfgjsdfgjsdjflxncvzxnvasdjfaopsruosihjsdighajsdflahgfsif^n - \sifpoierjsodfgupoefasdfgjsdfgjsdfgjsdjflxncvzxnvasdjfaopsruosihjsdighajsdflahgfsif\sifpoierjsodfgupoefasdfgjsdfgjsdfgjsdjflxncvzxnvasdjfaopsruosihjsdgghajsdflahgfsif_{L^2}
  \sifpoierjsodfgupoefasdfgjsdfgjsdfgjsdjflxncvzxnvasdjfaopsruosihjsdgghajsdflahgfsif u\sifpoierjsodfgupoefasdfgjsdfgjsdfgjsdjflxncvzxnvasdjfaopsruosihjsdgghajsdflahgfsif_{L^2}   )   \,ds     \\&\indeq   \lec   \sifpoierjsodfgupoefasdfgjsdfgjsdfgjsdjflxncvzxnvasdjfaopsruosihjsdgghajsdflahgfsif \sifpoierjsodfgupoefasdfgjsdfgjsdfgjsdjflxncvzxnvasdjfaopsruosihjsdkghajsdflahgfsif\sifpoierjsodfgupoefasdfgjsdfgjsdfgjsdjflxncvzxnvasdjfaopsruosihjsdgghajsdflahgfsif_{L^{\infty}H^{1}}   \sifpoierjsodfgupoefasdfgjsdfgjsdfgjsdjflxncvzxnvasdjfaopsruosihjsdfghajsdflahgfsif_{0}^{T_0}    \Bigl(   \sifpoierjsodfgupoefasdfgjsdfgjsdfgjsdjflxncvzxnvasdjfaopsruosihjsdgghajsdflahgfsif u^{n-1} - u\sifpoierjsodfgupoefasdfgjsdfgjsdfgjsdjflxncvzxnvasdjfaopsruosihjsdgghajsdflahgfsif_{L^2}   +   \sifpoierjsodfgupoefasdfgjsdfgjsdfgjsdjflxncvzxnvasdjfaopsruosihjsdgghajsdflahgfsif \sifpoierjsodfgupoefasdfgjsdfgjsdfgjsdjflxncvzxnvasdjfaopsruosihjsdighajsdflahgfsif^n - \sifpoierjsodfgupoefasdfgjsdfgjsdfgjsdjflxncvzxnvasdjfaopsruosihjsdighajsdflahgfsif\sifpoierjsodfgupoefasdfgjsdfgjsdfgjsdjflxncvzxnvasdjfaopsruosihjsdgghajsdflahgfsif_{L^2}   \Bigr)   \,ds   \to 0
  ,   \end{split}    \llabel{3 5p5 yn6 QZ UcX3 mfET Exz1kv qE p OVV EFP IVp zQ lMOI Z2yT TxIUOm 0f W L1W oxC tlX Ws 9HU4 EF0I Z1WDv3 TP 4 2LN 7Tr SuR 8u Mv1t Lepv ZoeoKL xf 9 zMJ 6PU In1 S8 I4KY 13wJ TACh5X l8 O 5g0 ZGw Ddt u6 8wvr vnDC oqYjJ3 nF K WMA K8V OeG o4 DKxn EOyB wgmttc ES 8 dmT oAD 0YB Fl yGRB pBbo 8tQYBw bS X 2lc YnU 0fh At myR3 CKcU AQzzET Ng b ghH T64 KdO fL qFWu k07t DkzfQ1 dg B cw0 LSY lr7 9U 81QP qrdf H1tb8k Kn D l52 FhC j7T Xi P7GF C7HJ KfXgrP 4K O Og1 8BM 001 mJ PTpu bQr6 1JQu6o Gr 4 baj 60k zdX oD gAOX 2DBk LymrtN 6T 7 us2 Cp6 eZm 1a VJTY 8vYP OzMnsA qs 3 RL6 xHu mXN AB 5eXn ZRHa iECOaa MB w Ab1 5iF WGu cZ lU8J niDN KiPGWz q4 1 iBj 1kq bak ZF SvXq vSiR bLTriS y8 Q YOa mQU ZhO rG HYHW guPB zlAhua o5 9 RKU trF 5Kb js KseT PXhU qRgnNA LV t aw4 YJB tK9 fN 7bN9 IEwK LTYGtn Cc c 2nf Mcx 7Vo Bt 1IC5 teMH X4g3JK 4J s deo Dl1 Xgb m9 xWDg Z31P chRS1R 8W 1 hap 5Rh 6Jj yT NXSC Uscx K4275D 72 g pRW xcf AbZ Y7 Apto 5SpT zO1dPA Vy Z JiW Clu OjO tE wxUB 7cTt EDqcAb YG d ZQZ fsQ 1At Hy xnPL 5K7D 91u03s 8K 2 0ro fZ9 w7T jx yG7q bCAh ssUZQu PK 7 xUe K7F 4HK fr CEPJ rgWH DZQpvR kO 8 Xve aSB OXS ee XV5j kgzL UTmMbo ma J fxu 8gA rnd zS IB0Y QSXv cZW8vo CO o OHy rEu GnS 2f nGEj jaLz ZIocQe gw H fSF KjW 2Lb KS nIcG 9Wnq Zya6qA YM S h2M mEA sw1 8n sJFY Anbr xZT45Z wB s BvK 9gS Ugy Bk 3dHq dvYU LhWgGK aM f Fk7 8mP 20m eV aQp2 NWIb 6hVBSe SV w nEq bq6 ucn X8 JLkI RJbJ EbwEYw nv L BgM 94G plc lu 2s3U m15E YAjs1G Ln h zG8 vmh ghs Qc EDE1 KnaH wtuxOg UD L BE5 9FL xIp vu KfJE UTQS EaZ6hu BC a KXr lni r1X mL KH3h VPrq ixmTkR zh 0 OGp Obo N6K LC E0Ga Udta nZ9Lvt 1K Z eN5 GQc LQL L0 P9GX uakH m6kqk7 qm X UVH 2bU Hga v0 Wp6Q 8JyI TzlpqW 0Y k 1fX 8gj Gci bR arme Si8l w03Win NX sifpoierjsodfgupoefasdfgjsdfgjsdfgjsdjflxncvzxnvasdjfaopsruosihjsdfghajsdflahgfsif103}   \end{align} as $   n \to \infty $, for $\sifpoierjsodfgupoefasdfgjsdfgjsdfgjsdjflxncvzxnvasdjfaopsruosihjsdkghajsdflahgfsif\in C([0,T_0]; H^{1})$. \par Next, we prove uniqueness. Letting $(u,\sifpoierjsodfgupoefasdfgjsdfgjsdfgjsdjflxncvzxnvasdjfaopsruosihjsdighajsdflahgfsif)$ and $(\tilde{u},\tilde{\sifpoierjsodfgupoefasdfgjsdfgjsdfgjsdjflxncvzxnvasdjfaopsruosihjsdighajsdflahgfsif})$ be two solutions of the Boussinesq system with the same initial data, denote by  $(U, \sifpoierjsodfgupoefasdfgjsdfgjsdfgjsdjflxncvzxnvasdjfaopsruosihjsdighajsdflahgfsif)=(u,\sifpoierjsodfgupoefasdfgjsdfgjsdfgjsdjflxncvzxnvasdjfaopsruosihjsdighajsdflahgfsif)-(\tilde{u},\tilde{\sifpoierjsodfgupoefasdfgjsdfgjsdfgjsdjflxncvzxnvasdjfaopsruosihjsdighajsdflahgfsif})$ the difference. Upon subtracting
the first two equations in \eqref{sifpoierjsodfgupoefasdfgjsdfgjsdfgjsdjflxncvzxnvasdjfaopsruosihjsdfghajsdflahgfsif02} for $(u,\eta)$ and the same equations for $(\tilde u, \tilde \sifpoierjsodfgupoefasdfgjsdfgjsdfgjsdjflxncvzxnvasdjfaopsruosihjsdighajsdflahgfsif)$, we obtain that  the pair $(U, \sifpoierjsodfgupoefasdfgjsdfgjsdfgjsdjflxncvzxnvasdjfaopsruosihjsdighajsdflahgfsif)$ satisfies \begin{align}   \begin{split}   &   U_t   +    AU   +    \mathbb{P}(U \sifpoierjsodfgupoefasdfgjsdfgjsdfgjsdjflxncvzxnvasdjfaopsruosihjsdhghajsdflahgfsif \nabla u)   +   \mathbb{P}(\tilde{u}\sifpoierjsodfgupoefasdfgjsdfgjsdfgjsdjflxncvzxnvasdjfaopsruosihjsdhghajsdflahgfsif \nabla U)   =
  \mathbb{P}(\sifpoierjsodfgupoefasdfgjsdfgjsdfgjsdjflxncvzxnvasdjfaopsruosihjsdighajsdflahgfsif e_2)   ,   \\&   \sifpoierjsodfgupoefasdfgjsdfgjsdfgjsdjflxncvzxnvasdjfaopsruosihjsdighajsdflahgfsif_t   + U \sifpoierjsodfgupoefasdfgjsdfgjsdfgjsdjflxncvzxnvasdjfaopsruosihjsdhghajsdflahgfsif \nabla \sifpoierjsodfgupoefasdfgjsdfgjsdfgjsdjflxncvzxnvasdjfaopsruosihjsdighajsdflahgfsif   + \tilde{u} \sifpoierjsodfgupoefasdfgjsdfgjsdfgjsdjflxncvzxnvasdjfaopsruosihjsdhghajsdflahgfsif \nabla \sifpoierjsodfgupoefasdfgjsdfgjsdfgjsdjflxncvzxnvasdjfaopsruosihjsdighajsdflahgfsif   =   -U \sifpoierjsodfgupoefasdfgjsdfgjsdfgjsdjflxncvzxnvasdjfaopsruosihjsdhghajsdflahgfsif e_2   .   \end{split}   \label{sifpoierjsodfgupoefasdfgjsdfgjsdfgjsdjflxncvzxnvasdjfaopsruosihjsdfghajsdflahgfsif72}   \end{align} We test the first equation in \eqref{sifpoierjsodfgupoefasdfgjsdfgjsdfgjsdjflxncvzxnvasdjfaopsruosihjsdfghajsdflahgfsif72} by $AU$, the second by $\sifpoierjsodfgupoefasdfgjsdfgjsdfgjsdjflxncvzxnvasdjfaopsruosihjsdighajsdflahgfsif$, and add them to get
  \begin{align}   \begin{split}   &   \frac{1}{2}\frac{d}{dt}   (\sifpoierjsodfgupoefasdfgjsdfgjsdfgjsdjflxncvzxnvasdjfaopsruosihjsdgghajsdflahgfsif \nabla U\sifpoierjsodfgupoefasdfgjsdfgjsdfgjsdjflxncvzxnvasdjfaopsruosihjsdgghajsdflahgfsif_{L^2}^2   +    \sifpoierjsodfgupoefasdfgjsdfgjsdfgjsdjflxncvzxnvasdjfaopsruosihjsdgghajsdflahgfsif \sifpoierjsodfgupoefasdfgjsdfgjsdfgjsdjflxncvzxnvasdjfaopsruosihjsdighajsdflahgfsif\sifpoierjsodfgupoefasdfgjsdfgjsdfgjsdjflxncvzxnvasdjfaopsruosihjsdgghajsdflahgfsif_{L^2}^2)   +   \sifpoierjsodfgupoefasdfgjsdfgjsdfgjsdjflxncvzxnvasdjfaopsruosihjsdgghajsdflahgfsif AU\sifpoierjsodfgupoefasdfgjsdfgjsdfgjsdjflxncvzxnvasdjfaopsruosihjsdgghajsdflahgfsif_{L^2}^2   \\&\indeq   =   (\sifpoierjsodfgupoefasdfgjsdfgjsdfgjsdjflxncvzxnvasdjfaopsruosihjsdighajsdflahgfsif e_2, AU)   -    (U\sifpoierjsodfgupoefasdfgjsdfgjsdfgjsdjflxncvzxnvasdjfaopsruosihjsdhghajsdflahgfsif \nabla u, AU)
  -    (\tilde{u} \sifpoierjsodfgupoefasdfgjsdfgjsdfgjsdjflxncvzxnvasdjfaopsruosihjsdhghajsdflahgfsif \nabla U, AU)   -   (U \sifpoierjsodfgupoefasdfgjsdfgjsdfgjsdjflxncvzxnvasdjfaopsruosihjsdhghajsdflahgfsif \nabla \sifpoierjsodfgupoefasdfgjsdfgjsdfgjsdjflxncvzxnvasdjfaopsruosihjsdighajsdflahgfsif, \sifpoierjsodfgupoefasdfgjsdfgjsdfgjsdjflxncvzxnvasdjfaopsruosihjsdighajsdflahgfsif)   -   (U \sifpoierjsodfgupoefasdfgjsdfgjsdfgjsdjflxncvzxnvasdjfaopsruosihjsdhghajsdflahgfsif e_2, \sifpoierjsodfgupoefasdfgjsdfgjsdfgjsdjflxncvzxnvasdjfaopsruosihjsdighajsdflahgfsif)   .   \end{split}   \llabel{0fh At myR3 CKcU AQzzET Ng b ghH T64 KdO fL qFWu k07t DkzfQ1 dg B cw0 LSY lr7 9U 81QP qrdf H1tb8k Kn D l52 FhC j7T Xi P7GF C7HJ KfXgrP 4K O Og1 8BM 001 mJ PTpu bQr6 1JQu6o Gr 4 baj 60k zdX oD gAOX 2DBk LymrtN 6T 7 us2 Cp6 eZm 1a VJTY 8vYP OzMnsA qs 3 RL6 xHu mXN AB 5eXn ZRHa iECOaa MB w Ab1 5iF WGu cZ lU8J niDN KiPGWz q4 1 iBj 1kq bak ZF SvXq vSiR bLTriS y8 Q YOa mQU ZhO rG HYHW guPB zlAhua o5 9 RKU trF 5Kb js KseT PXhU qRgnNA LV t aw4 YJB tK9 fN 7bN9 IEwK LTYGtn Cc c 2nf Mcx 7Vo Bt 1IC5 teMH X4g3JK 4J s deo Dl1 Xgb m9 xWDg Z31P chRS1R 8W 1 hap 5Rh 6Jj yT NXSC Uscx K4275D 72 g pRW xcf AbZ Y7 Apto 5SpT zO1dPA Vy Z JiW Clu OjO tE wxUB 7cTt EDqcAb YG d ZQZ fsQ 1At Hy xnPL 5K7D 91u03s 8K 2 0ro fZ9 w7T jx yG7q bCAh ssUZQu PK 7 xUe K7F 4HK fr CEPJ rgWH DZQpvR kO 8 Xve aSB OXS ee XV5j kgzL UTmMbo ma J fxu 8gA rnd zS IB0Y QSXv cZW8vo CO o OHy rEu GnS 2f nGEj jaLz ZIocQe gw H fSF KjW 2Lb KS nIcG 9Wnq Zya6qA YM S h2M mEA sw1 8n sJFY Anbr xZT45Z wB s BvK 9gS Ugy Bk 3dHq dvYU LhWgGK aM f Fk7 8mP 20m eV aQp2 NWIb 6hVBSe SV w nEq bq6 ucn X8 JLkI RJbJ EbwEYw nv L BgM 94G plc lu 2s3U m15E YAjs1G Ln h zG8 vmh ghs Qc EDE1 KnaH wtuxOg UD L BE5 9FL xIp vu KfJE UTQS EaZ6hu BC a KXr lni r1X mL KH3h VPrq ixmTkR zh 0 OGp Obo N6K LC E0Ga Udta nZ9Lvt 1K Z eN5 GQc LQL L0 P9GX uakH m6kqk7 qm X UVH 2bU Hga v0 Wp6Q 8JyI TzlpqW 0Y k 1fX 8gj Gci bR arme Si8l w03Win NX w 1gv vcD eDP Sa bsVw Zu4h aO1V2D qw k JoR Shj MBg ry glA9 3DBd S0mYAc El 5 aEd pII DT5 mb SVuX o8Nl Y24WCA 6d f CVF 6Al a6i Ns 7GCh OvFA hbxw9Q 71 Z RC8 yRi 1zZ dM rpt7 3dou ogkAkG GE 4 87V ii4 Ofw Je sXUR dzVL HU0zms 8W 2 Ztz iY5 mw9 aB ZIwk 5WNm vNM2Hd jn e wMR 8qp 2Vv up cV4P cjOG eu35u5 cQ X NTy sifpoierjsodfgupoefasdfgjsdfgjsdfgjsdjflxncvzxnvasdjfaopsruosihjsdfghajsdflahgfsif73}   \end{align} Bounding he terms and absorbing the factors of  $\sifpoierjsodfgupoefasdfgjsdfgjsdfgjsdjflxncvzxnvasdjfaopsruosihjsdgghajsdflahgfsif AU\sifpoierjsodfgupoefasdfgjsdfgjsdfgjsdjflxncvzxnvasdjfaopsruosihjsdgghajsdflahgfsif_{L^2}^2$ using the $\epsilon$-Young inequality yields   \begin{align}   \frac{d}{dt}
  (\sifpoierjsodfgupoefasdfgjsdfgjsdfgjsdjflxncvzxnvasdjfaopsruosihjsdgghajsdflahgfsif \nabla U\sifpoierjsodfgupoefasdfgjsdfgjsdfgjsdjflxncvzxnvasdjfaopsruosihjsdgghajsdflahgfsif_{L^2}^2   +    \sifpoierjsodfgupoefasdfgjsdfgjsdfgjsdjflxncvzxnvasdjfaopsruosihjsdgghajsdflahgfsif \sifpoierjsodfgupoefasdfgjsdfgjsdfgjsdjflxncvzxnvasdjfaopsruosihjsdighajsdflahgfsif\sifpoierjsodfgupoefasdfgjsdfgjsdfgjsdjflxncvzxnvasdjfaopsruosihjsdgghajsdflahgfsif_{L^2}^2)   +   \sifpoierjsodfgupoefasdfgjsdfgjsdfgjsdjflxncvzxnvasdjfaopsruosihjsdgghajsdflahgfsif AU\sifpoierjsodfgupoefasdfgjsdfgjsdfgjsdjflxncvzxnvasdjfaopsruosihjsdgghajsdflahgfsif_{L^2}^2   \lec   (1   +   \sifpoierjsodfgupoefasdfgjsdfgjsdfgjsdjflxncvzxnvasdjfaopsruosihjsdgghajsdflahgfsif \nabla u\sifpoierjsodfgupoefasdfgjsdfgjsdfgjsdjflxncvzxnvasdjfaopsruosihjsdgghajsdflahgfsif_{L^\infty}^2   +   \sifpoierjsodfgupoefasdfgjsdfgjsdfgjsdjflxncvzxnvasdjfaopsruosihjsdgghajsdflahgfsif \tilde{u}\sifpoierjsodfgupoefasdfgjsdfgjsdfgjsdjflxncvzxnvasdjfaopsruosihjsdgghajsdflahgfsif_{L^\infty}^2   +   \sifpoierjsodfgupoefasdfgjsdfgjsdfgjsdjflxncvzxnvasdjfaopsruosihjsdgghajsdflahgfsif \nabla \sifpoierjsodfgupoefasdfgjsdfgjsdfgjsdjflxncvzxnvasdjfaopsruosihjsdighajsdflahgfsif\sifpoierjsodfgupoefasdfgjsdfgjsdfgjsdjflxncvzxnvasdjfaopsruosihjsdgghajsdflahgfsif_{L^2}^4   )
  \sifpoierjsodfgupoefasdfgjsdfgjsdfgjsdjflxncvzxnvasdjfaopsruosihjsdgghajsdflahgfsif \nabla U\sifpoierjsodfgupoefasdfgjsdfgjsdfgjsdjflxncvzxnvasdjfaopsruosihjsdgghajsdflahgfsif_{L^2}^2   +   \sifpoierjsodfgupoefasdfgjsdfgjsdfgjsdjflxncvzxnvasdjfaopsruosihjsdgghajsdflahgfsif \sifpoierjsodfgupoefasdfgjsdfgjsdfgjsdjflxncvzxnvasdjfaopsruosihjsdighajsdflahgfsif\sifpoierjsodfgupoefasdfgjsdfgjsdfgjsdjflxncvzxnvasdjfaopsruosihjsdgghajsdflahgfsif_{L^2}^2   ,   \llabel{ lU8J niDN KiPGWz q4 1 iBj 1kq bak ZF SvXq vSiR bLTriS y8 Q YOa mQU ZhO rG HYHW guPB zlAhua o5 9 RKU trF 5Kb js KseT PXhU qRgnNA LV t aw4 YJB tK9 fN 7bN9 IEwK LTYGtn Cc c 2nf Mcx 7Vo Bt 1IC5 teMH X4g3JK 4J s deo Dl1 Xgb m9 xWDg Z31P chRS1R 8W 1 hap 5Rh 6Jj yT NXSC Uscx K4275D 72 g pRW xcf AbZ Y7 Apto 5SpT zO1dPA Vy Z JiW Clu OjO tE wxUB 7cTt EDqcAb YG d ZQZ fsQ 1At Hy xnPL 5K7D 91u03s 8K 2 0ro fZ9 w7T jx yG7q bCAh ssUZQu PK 7 xUe K7F 4HK fr CEPJ rgWH DZQpvR kO 8 Xve aSB OXS ee XV5j kgzL UTmMbo ma J fxu 8gA rnd zS IB0Y QSXv cZW8vo CO o OHy rEu GnS 2f nGEj jaLz ZIocQe gw H fSF KjW 2Lb KS nIcG 9Wnq Zya6qA YM S h2M mEA sw1 8n sJFY Anbr xZT45Z wB s BvK 9gS Ugy Bk 3dHq dvYU LhWgGK aM f Fk7 8mP 20m eV aQp2 NWIb 6hVBSe SV w nEq bq6 ucn X8 JLkI RJbJ EbwEYw nv L BgM 94G plc lu 2s3U m15E YAjs1G Ln h zG8 vmh ghs Qc EDE1 KnaH wtuxOg UD L BE5 9FL xIp vu KfJE UTQS EaZ6hu BC a KXr lni r1X mL KH3h VPrq ixmTkR zh 0 OGp Obo N6K LC E0Ga Udta nZ9Lvt 1K Z eN5 GQc LQL L0 P9GX uakH m6kqk7 qm X UVH 2bU Hga v0 Wp6Q 8JyI TzlpqW 0Y k 1fX 8gj Gci bR arme Si8l w03Win NX w 1gv vcD eDP Sa bsVw Zu4h aO1V2D qw k JoR Shj MBg ry glA9 3DBd S0mYAc El 5 aEd pII DT5 mb SVuX o8Nl Y24WCA 6d f CVF 6Al a6i Ns 7GCh OvFA hbxw9Q 71 Z RC8 yRi 1zZ dM rpt7 3dou ogkAkG GE 4 87V ii4 Ofw Je sXUR dzVL HU0zms 8W 2 Ztz iY5 mw9 aB ZIwk 5WNm vNM2Hd jn e wMR 8qp 2Vv up cV4P cjOG eu35u5 cQ X NTy kfT ZXA JH UnSs 4zxf Hwf10r it J Yox Rto 5OM FP hakR gzDY Pm02mG 18 v mfV 11N n87 zS X59D E0cN 99uEUz 2r T h1F P8x jrm q2 Z7ut pdRJ 2DdYkj y9 J Yko c38 Kdu Z9 vydO wkO0 djhXSx Sv H wJo XE7 9f8 qh iBr8 KYTx OfcYYF sM y j0H vK3 ayU wt 4nA5 H76b wUqyJQ od O u8U Gjb t6v lc xYZt 6AUx wpYr18 uO v 62v jnw Frsifpoierjsodfgupoefasdfgjsdfgjsdfgjsdjflxncvzxnvasdjfaopsruosihjsdfghajsdflahgfsif75}   \end{align} and the uniqueness of $(u,\sifpoierjsodfgupoefasdfgjsdfgjsdfgjsdjflxncvzxnvasdjfaopsruosihjsdighajsdflahgfsif)$ follows by applying the Gronwall's inequality.  \par It remains to prove that $(u^n,\sifpoierjsodfgupoefasdfgjsdfgjsdfgjsdjflxncvzxnvasdjfaopsruosihjsdighajsdflahgfsif^n) \in \XX\times \mathcal{Y}$. The fact $u^{n}\in \XX$ follows from Lemma~\ref{L03} and $\sifpoierjsodfgupoefasdfgjsdfgjsdfgjsdjflxncvzxnvasdjfaopsruosihjsdighajsdflahgfsif^{n}\in \YY$ is obtained from Lemma~\ref{P02}. \end{proof}
\par \startnewsection{Asymptotic properties for the Boussinesq system}{sec06} \par Now, we are in a position to  recover the asymptotic properties of the constructed solutions from Theorem~\ref{T01}(i). First, we recall a statement from \cite{KMZ} needed in the proof. \par \cole \begin{Lemma} \label{LA} (i) Let $f \colon [0,\infty) \to [0,\infty)$ be a differentiable function in $L^1(0,\infty)$
such that $f' \in L^\infty(0,\infty)$. Then $\lim_{t\to \infty} f(t) =~0$.  \\ (ii)   Let $f,g \colon [0,\infty) \to [0,\infty)$   be measurable with f is  differentiable   and g in $L^1(0,\infty)$.   Suppose that    there exists $C>0$ such that   $\dot{f} + g \le C(f^2+1)$   and $f \le Cg$. Then   $\sifpoierjsodfgupoefasdfgjsdfgjsdfgjsdjflxncvzxnvasdjfaopsruosihjsdgghajsdflahgfsif f\sifpoierjsodfgupoefasdfgjsdfgjsdfgjsdjflxncvzxnvasdjfaopsruosihjsdgghajsdflahgfsif_{L^\infty} \le C$,   and   $\lim_{t\to \infty} f(t) =~0$.\\ (iii)
  Let $f,g,h \colon [0,\infty) \to [0,\infty)$   be measurable with g differentiable   and $\sifpoierjsodfgupoefasdfgjsdfgjsdfgjsdjflxncvzxnvasdjfaopsruosihjsdgghajsdflahgfsif h\sifpoierjsodfgupoefasdfgjsdfgjsdfgjsdjflxncvzxnvasdjfaopsruosihjsdgghajsdflahgfsif_{L^\infty} \le C$   for some $C>0$. Moreover, assume that   $\lim_{t\to \infty} h =~0$.   If $\dot{f} + g \le h(f+1)$, with $f \le Cg$   and $f(0) \le C$, then   $f \in L^\infty(0,\infty)$ with   $\lim_{t\to \infty} f =0$.   \end{Lemma}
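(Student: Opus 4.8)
The plan is to handle the three parts in increasing order of difficulty, establishing part~(i) first as the basic decay mechanism and reserving the quadratic inequality of part~(ii) for last. For part~(i) I would argue by contradiction: if $f\not\to 0$, there are $\epsilon>0$ and times $t_n\to\infty$ with $f(t_n)\ge\epsilon$, and after passing to a subsequence we may assume the $t_n$ are spaced more than $1$ apart. Writing $M=\|f'\|_{L^\infty}$, the mean value theorem gives $f(t)\ge\epsilon/2$ whenever $|t-t_n|\le\epsilon/(2M)$, so each such interval contributes at least $\epsilon^2/(2M)$ to $\int_0^\infty f$. Since these intervals are disjoint and infinite in number, this contradicts $f\in L^1(0,\infty)$, forcing $\lim_{t\to\infty}f(t)=0$. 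This ``fixed positive area over a non-shrinking (or area-bounded) bump'' idea drives all three parts.

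For part~(iii) I would convert the hypotheses into a stable linear differential inequality. From $f\le Cg$ we get $g\ge f/C$, so $\dot f+f/C\le\dot f+g\le h(f+1)$, that is, $\dot f\le(h-1/C)f+h$. Because $h\to 0$, there is $t_1$ with $h(t)\le 1/(2C)$ for $t\ge t_1$, whence $\dot f\le-f/(2C)+h$ on $[t_1,\infty)$. Using the integrating factor $e^{t/(2C)}$ and integrating yields $f(t)\le f(t_1)e^{-(t-t_1)/(2C)}+\int_{t_1}^t e^{-(t-s)/(2C)}h(s)\,ds$; the first term decays, and the convolution of the exponentially decaying kernel against $h\to0$ also tends to $0$ by the standard splitting argument, so $\lim_{t\to\infty}f=0$. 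Boundedness on the initial interval $[0,t_1]$ comes from Gronwall applied to $\dot f\le\|h\|_{L^\infty}(f+1)$ together with $f(0)\le C$, giving $f\in L^\infty(0,\infty)$.

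Part~(ii) is the main obstacle, precisely because of the quadratic term: the comparison equation $\dot y=C(y^2+1)$ blows up in finite time, so the linear stabilization of part~(iii) is unavailable, and part~(i) cannot be applied directly since the inequality supplies no lower bound on $\dot f$. My plan is a crossing-time argument anchored on the fact that $f\in L^1(0,\infty)$, which holds because $f\le Cg$ with $g\in L^1$; in particular $\liminf_{t\to\infty}f=0$ and, since $f$ is differentiable, hence continuous, the intermediate value theorem applies. On any time interval over which $f$ increases across a value band $[a,2a]$ with $a\ge 1$, the bound $\dot f\le C(f^2+1)\le 5Ca^2$ forces the crossing to last at least $1/(5Ca)$, so it contributes at least $a\cdot(5Ca)^{-1}=1/(5C)$ to $\int f$. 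If $f$ were unbounded it would have to cross each of the disjoint bands $[2^k,2^{k+1}]$ for all large $k$; since at any instant $f(t)$ lies in at most one band, these crossing intervals are disjoint in time and each carries $f$-mass $\ge 1/(5C)$, so summing contradicts $f\in L^1$, giving $\|f\|_{L^\infty}\le C$. The same mechanism yields decay: if $\limsup f=\delta>0$, then because $\liminf f=0$ the function crosses the fixed band $[\delta/2,\delta]$ infinitely often, and on each upcrossing $\dot f\le C(\delta^2+1)$ forces a duration at least $(\delta/2)/(C(\delta^2+1))$ and hence an $f$-mass $\gtrsim \delta^2/(C(\delta^2+1))$; infinitely many disjoint such contributions again contradict integrability, so $\lim_{t\to\infty}f=0$. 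The delicate points I expect to need care with are the disjointness of the crossing intervals (secured by working with disjoint value bands and separating successive upcrossings via the returns toward $0$) and verifying that each crossing genuinely occurs, which is where continuity of $f$ and $\liminf f=0$ enter.
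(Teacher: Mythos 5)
Your three arguments are correct in substance, but there is nothing in the paper to compare them against: the authors simply declare part~(i) elementary and refer to the appendix of the cited work of Kukavica--Massatt--Ziane for (ii) and (iii), so your write-up supplies a self-contained proof where the paper gives none. Your mechanism --- a bump or band crossing must occupy a definite amount of time because $\dot f$ is controlled, hence carries a definite amount of $L^1$-mass, and infinitely many disjoint such contributions contradict integrability --- is exactly the right elementary engine for (i) and (ii), and your observation that in (ii) the quadratic comparison ODE blows up, so one must exploit $f\in L^1$ (via $f\le Cg$) rather than a comparison principle, is the key point; the disjoint dyadic bands $[2^k,2^{k+1}]$ neatly handle the disjointness issue for boundedness. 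For (iii) the reduction $\dot f\le (h-1/C)f+h$, absorption of $h$ below $1/(2C)$ for large $t$, Duhamel with the kernel $e^{-(t-s)/(2C)}$, and Gronwall on the initial interval is the standard and correct route. Two small repairs are needed. In (i), spacing the $t_n$ ``more than $1$ apart'' does not guarantee the intervals $[t_n-\epsilon/(2M),\,t_n+\epsilon/(2M)]$ are disjoint when $\epsilon/M>1$; pass to a further subsequence with gaps exceeding $\epsilon/M$ (and dispose of the trivial case $M=0$ separately, where $f$ is constant and hence identically zero). In the decay step of (ii), if $\limsup f=\delta$ the function need not attain the value $\delta$, so run the upcrossing argument on a band strictly inside $(0,\delta)$, say $[\delta/4,\delta/2]$, interleaving times where $f<\delta/4$ (from $\liminf f=0$) with times where $f>\delta/2$ (from $\limsup f=\delta$) to produce infinitely many disjoint upcrossing intervals. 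With these adjustments the proof is complete.
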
 \colb \par \begin{proof}[Proof of Lemma~\ref{LA}] The part~(i) is elementary. For the proofs of (ii) and (iii), see the appendix in~\cite{KMZ}.
\end{proof} \par \begin{proof}[Proof of Theorem~\ref{T01}(ii)] Without loss of generality, we work with  \eqref{sifpoierjsodfgupoefasdfgjsdfgjsdfgjsdjflxncvzxnvasdjfaopsruosihjsdfghajsdflahgfsif02} due to its equivalence to \eqref{sifpoierjsodfgupoefasdfgjsdfgjsdfgjsdjflxncvzxnvasdjfaopsruosihjsdfghajsdflahgfsif01}. We begin by testing the velocity equation  by $u$ and the density equation by $\sifpoierjsodfgupoefasdfgjsdfgjsdfgjsdjflxncvzxnvasdjfaopsruosihjsdighajsdflahgfsif$, and then adding them to get   \begin{align}   \frac{1}{2}\frac{d}{dt}(   \sifpoierjsodfgupoefasdfgjsdfgjsdfgjsdjflxncvzxnvasdjfaopsruosihjsdgghajsdflahgfsif u\sifpoierjsodfgupoefasdfgjsdfgjsdfgjsdjflxncvzxnvasdjfaopsruosihjsdgghajsdflahgfsif_{L^2}^2   +\sifpoierjsodfgupoefasdfgjsdfgjsdfgjsdjflxncvzxnvasdjfaopsruosihjsdgghajsdflahgfsif \sifpoierjsodfgupoefasdfgjsdfgjsdfgjsdjflxncvzxnvasdjfaopsruosihjsdighajsdflahgfsif\sifpoierjsodfgupoefasdfgjsdfgjsdfgjsdjflxncvzxnvasdjfaopsruosihjsdgghajsdflahgfsif_{L^2})   + \sifpoierjsodfgupoefasdfgjsdfgjsdfgjsdjflxncvzxnvasdjfaopsruosihjsdgghajsdflahgfsif \nabla u\sifpoierjsodfgupoefasdfgjsdfgjsdfgjsdjflxncvzxnvasdjfaopsruosihjsdgghajsdflahgfsif_{L^2}^2   =
  0   ,   \llabel{5SpT zO1dPA Vy Z JiW Clu OjO tE wxUB 7cTt EDqcAb YG d ZQZ fsQ 1At Hy xnPL 5K7D 91u03s 8K 2 0ro fZ9 w7T jx yG7q bCAh ssUZQu PK 7 xUe K7F 4HK fr CEPJ rgWH DZQpvR kO 8 Xve aSB OXS ee XV5j kgzL UTmMbo ma J fxu 8gA rnd zS IB0Y QSXv cZW8vo CO o OHy rEu GnS 2f nGEj jaLz ZIocQe gw H fSF KjW 2Lb KS nIcG 9Wnq Zya6qA YM S h2M mEA sw1 8n sJFY Anbr xZT45Z wB s BvK 9gS Ugy Bk 3dHq dvYU LhWgGK aM f Fk7 8mP 20m eV aQp2 NWIb 6hVBSe SV w nEq bq6 ucn X8 JLkI RJbJ EbwEYw nv L BgM 94G plc lu 2s3U m15E YAjs1G Ln h zG8 vmh ghs Qc EDE1 KnaH wtuxOg UD L BE5 9FL xIp vu KfJE UTQS EaZ6hu BC a KXr lni r1X mL KH3h VPrq ixmTkR zh 0 OGp Obo N6K LC E0Ga Udta nZ9Lvt 1K Z eN5 GQc LQL L0 P9GX uakH m6kqk7 qm X UVH 2bU Hga v0 Wp6Q 8JyI TzlpqW 0Y k 1fX 8gj Gci bR arme Si8l w03Win NX w 1gv vcD eDP Sa bsVw Zu4h aO1V2D qw k JoR Shj MBg ry glA9 3DBd S0mYAc El 5 aEd pII DT5 mb SVuX o8Nl Y24WCA 6d f CVF 6Al a6i Ns 7GCh OvFA hbxw9Q 71 Z RC8 yRi 1zZ dM rpt7 3dou ogkAkG GE 4 87V ii4 Ofw Je sXUR dzVL HU0zms 8W 2 Ztz iY5 mw9 aB ZIwk 5WNm vNM2Hd jn e wMR 8qp 2Vv up cV4P cjOG eu35u5 cQ X NTy kfT ZXA JH UnSs 4zxf Hwf10r it J Yox Rto 5OM FP hakR gzDY Pm02mG 18 v mfV 11N n87 zS X59D E0cN 99uEUz 2r T h1F P8x jrm q2 Z7ut pdRJ 2DdYkj y9 J Yko c38 Kdu Z9 vydO wkO0 djhXSx Sv H wJo XE7 9f8 qh iBr8 KYTx OfcYYF sM y j0H vK3 ayU wt 4nA5 H76b wUqyJQ od O u8U Gjb t6v lc xYZt 6AUx wpYr18 uO v 62v jnw FrC rf Z4nl vJuh 2SpVLO vp O lZn PTG 07V Re ixBm XBxO BzpFW5 iB I O7R Vmo GnJ u8 Axol YAxl JUrYKV Kk p aIk VCu PiD O8 IHPU ndze LPTILB P5 B qYy DLZ DZa db jcJA T644 Vp6byb 1g 4 dE7 Ydz keO YL hCRe Ommx F9zsu0 rp 8 Ajz d2v Heo 7L 5zVn L8IQ WnYATK KV 1 f14 s2J geC b3 v9UJ djNN VBINix 1q 5 oyr SBM 2Xt gr vsifpoierjsodfgupoefasdfgjsdfgjsdfgjsdjflxncvzxnvasdjfaopsruosihjsdfghajsdflahgfsif62}   \end{align} which implies the global in time boundedness of the $L^2$-norms of $u$ and $\sifpoierjsodfgupoefasdfgjsdfgjsdfgjsdjflxncvzxnvasdjfaopsruosihjsdighajsdflahgfsif$, as well as  the global in time integrability of the $V$-norm of $u$. Upon testing the velocity equation with $u$, it is immediate that  $\frac{d}{dt}\sifpoierjsodfgupoefasdfgjsdfgjsdfgjsdjflxncvzxnvasdjfaopsruosihjsdgghajsdflahgfsif u\sifpoierjsodfgupoefasdfgjsdfgjsdfgjsdjflxncvzxnvasdjfaopsruosihjsdgghajsdflahgfsif_{L^2}^2 + \sifpoierjsodfgupoefasdfgjsdfgjsdfgjsdjflxncvzxnvasdjfaopsruosihjsdgghajsdflahgfsif \nabla u\sifpoierjsodfgupoefasdfgjsdfgjsdfgjsdjflxncvzxnvasdjfaopsruosihjsdgghajsdflahgfsif_{L^2}^2 \lec 1$, for all $t \ge 0$.  Therefore, by Lemma~\ref{LA}(i) the
$L^2$-norm of $u$ converges to~$0$. Next, we show that the $V$-norm of the velocity decays to~$0$. To achieve this, we test the velocity equation by $Au$, and perform similar estimates leading to \eqref{sifpoierjsodfgupoefasdfgjsdfgjsdfgjsdjflxncvzxnvasdjfaopsruosihjsdfghajsdflahgfsif15} with $u^m$ and $v$ taken as $u$, obtaining $  \frac{d}{dt} \sifpoierjsodfgupoefasdfgjsdfgjsdfgjsdjflxncvzxnvasdjfaopsruosihjsdgghajsdflahgfsif \nabla u\sifpoierjsodfgupoefasdfgjsdfgjsdfgjsdjflxncvzxnvasdjfaopsruosihjsdgghajsdflahgfsif_{L^2}^2 + \sifpoierjsodfgupoefasdfgjsdfgjsdfgjsdjflxncvzxnvasdjfaopsruosihjsdgghajsdflahgfsif Au \sifpoierjsodfgupoefasdfgjsdfgjsdfgjsdjflxncvzxnvasdjfaopsruosihjsdgghajsdflahgfsif_{L^2}^2  \lec \sifpoierjsodfgupoefasdfgjsdfgjsdfgjsdjflxncvzxnvasdjfaopsruosihjsdgghajsdflahgfsif \nabla u\sifpoierjsodfgupoefasdfgjsdfgjsdfgjsdjflxncvzxnvasdjfaopsruosihjsdgghajsdflahgfsif_{L^2}^{4} + 1. $ \def\tdot{{\color{green}{\hskip-.0truecm\rule[-.5mm]{3mm}{3mm}\hskip.2truecm}}\hskip-.1truecm} Consequently, by Lemma~\ref{LA}(ii), we conclude~\eqref{sifpoierjsodfgupoefasdfgjsdfgjsdfgjsdjflxncvzxnvasdjfaopsruosihjsdfghajsdflahgfsif57}. 
Next, we show that the $L^2$-norm of $u_t$ converges to~$0$. We start by taking the time derivative of the velocity equation, and then test it  with $u_t$ obtaining \begin{align}   \begin{split}   &\frac{1}{2} \frac{d}{dt} \sifpoierjsodfgupoefasdfgjsdfgjsdfgjsdjflxncvzxnvasdjfaopsruosihjsdgghajsdflahgfsif u_t \sifpoierjsodfgupoefasdfgjsdfgjsdfgjsdjflxncvzxnvasdjfaopsruosihjsdgghajsdflahgfsif_{L^2}^2    + \sifpoierjsodfgupoefasdfgjsdfgjsdfgjsdjflxncvzxnvasdjfaopsruosihjsdgghajsdflahgfsif \nabla u_t \sifpoierjsodfgupoefasdfgjsdfgjsdfgjsdjflxncvzxnvasdjfaopsruosihjsdgghajsdflahgfsif_{L^2}^2    =  (\sifpoierjsodfgupoefasdfgjsdfgjsdfgjsdjflxncvzxnvasdjfaopsruosihjsdighajsdflahgfsif_t e_2, u_t)    -  (u_t \sifpoierjsodfgupoefasdfgjsdfgjsdfgjsdjflxncvzxnvasdjfaopsruosihjsdhghajsdflahgfsif \nabla u, u_t)   .   \end{split}   \label{sifpoierjsodfgupoefasdfgjsdfgjsdfgjsdjflxncvzxnvasdjfaopsruosihjsdfghajsdflahgfsif61}
  \end{align} Observe that when we use the density equation for the  first term on the right-hand side, we get   \begin{align}   \begin{split}   (\sifpoierjsodfgupoefasdfgjsdfgjsdfgjsdjflxncvzxnvasdjfaopsruosihjsdighajsdflahgfsif_t e_2, u_t)    &= -\sifpoierjsodfgupoefasdfgjsdfgjsdfgjsdjflxncvzxnvasdjfaopsruosihjsdfghajsdflahgfsif_{\Omega} (u \sifpoierjsodfgupoefasdfgjsdfgjsdfgjsdjflxncvzxnvasdjfaopsruosihjsdhghajsdflahgfsif \nabla \sifpoierjsodfgupoefasdfgjsdfgjsdfgjsdjflxncvzxnvasdjfaopsruosihjsdighajsdflahgfsif)(\partial_{t}u_2)   - \sifpoierjsodfgupoefasdfgjsdfgjsdfgjsdjflxncvzxnvasdjfaopsruosihjsdfghajsdflahgfsif_{\Omega} u_2 \partial_{t} u_2   = \sifpoierjsodfgupoefasdfgjsdfgjsdfgjsdjflxncvzxnvasdjfaopsruosihjsdfghajsdflahgfsif_{\Omega} \sifpoierjsodfgupoefasdfgjsdfgjsdfgjsdjflxncvzxnvasdjfaopsruosihjsdighajsdflahgfsif u \sifpoierjsodfgupoefasdfgjsdfgjsdfgjsdjflxncvzxnvasdjfaopsruosihjsdhghajsdflahgfsif \nabla \partial_{t}u_2   - \sifpoierjsodfgupoefasdfgjsdfgjsdfgjsdjflxncvzxnvasdjfaopsruosihjsdfghajsdflahgfsif_{\Omega} u_2 \partial_{t} u_2   \\& \lec   \sifpoierjsodfgupoefasdfgjsdfgjsdfgjsdjflxncvzxnvasdjfaopsruosihjsdgghajsdflahgfsif \sifpoierjsodfgupoefasdfgjsdfgjsdfgjsdjflxncvzxnvasdjfaopsruosihjsdighajsdflahgfsif \sifpoierjsodfgupoefasdfgjsdfgjsdfgjsdjflxncvzxnvasdjfaopsruosihjsdgghajsdflahgfsif_{L^4} 
  \sifpoierjsodfgupoefasdfgjsdfgjsdfgjsdjflxncvzxnvasdjfaopsruosihjsdgghajsdflahgfsif u\sifpoierjsodfgupoefasdfgjsdfgjsdfgjsdjflxncvzxnvasdjfaopsruosihjsdgghajsdflahgfsif_{L^{2}}^{1/2}   \sifpoierjsodfgupoefasdfgjsdfgjsdfgjsdjflxncvzxnvasdjfaopsruosihjsdgghajsdflahgfsif \nabla u\sifpoierjsodfgupoefasdfgjsdfgjsdfgjsdjflxncvzxnvasdjfaopsruosihjsdgghajsdflahgfsif_{L^{2}}^{1/2}   \sifpoierjsodfgupoefasdfgjsdfgjsdfgjsdjflxncvzxnvasdjfaopsruosihjsdgghajsdflahgfsif \nabla u_t \sifpoierjsodfgupoefasdfgjsdfgjsdfgjsdjflxncvzxnvasdjfaopsruosihjsdgghajsdflahgfsif_{L^2}    + \sifpoierjsodfgupoefasdfgjsdfgjsdfgjsdjflxncvzxnvasdjfaopsruosihjsdgghajsdflahgfsif u \sifpoierjsodfgupoefasdfgjsdfgjsdfgjsdjflxncvzxnvasdjfaopsruosihjsdgghajsdflahgfsif_{L^2} \sifpoierjsodfgupoefasdfgjsdfgjsdfgjsdjflxncvzxnvasdjfaopsruosihjsdgghajsdflahgfsif u_t \sifpoierjsodfgupoefasdfgjsdfgjsdfgjsdjflxncvzxnvasdjfaopsruosihjsdgghajsdflahgfsif_{L^2}   \\&   \lec \sifpoierjsodfgupoefasdfgjsdfgjsdfgjsdjflxncvzxnvasdjfaopsruosihjsdgghajsdflahgfsif \nabla u\sifpoierjsodfgupoefasdfgjsdfgjsdfgjsdjflxncvzxnvasdjfaopsruosihjsdgghajsdflahgfsif_{L^{2}}^{1/2}   \sifpoierjsodfgupoefasdfgjsdfgjsdfgjsdjflxncvzxnvasdjfaopsruosihjsdgghajsdflahgfsif \nabla u_t \sifpoierjsodfgupoefasdfgjsdfgjsdfgjsdjflxncvzxnvasdjfaopsruosihjsdgghajsdflahgfsif_{L^2}    +    \sifpoierjsodfgupoefasdfgjsdfgjsdfgjsdjflxncvzxnvasdjfaopsruosihjsdgghajsdflahgfsif u\sifpoierjsodfgupoefasdfgjsdfgjsdfgjsdjflxncvzxnvasdjfaopsruosihjsdgghajsdflahgfsif_{L^{2}}   \sifpoierjsodfgupoefasdfgjsdfgjsdfgjsdjflxncvzxnvasdjfaopsruosihjsdgghajsdflahgfsif \nabla u_t \sifpoierjsodfgupoefasdfgjsdfgjsdfgjsdjflxncvzxnvasdjfaopsruosihjsdgghajsdflahgfsif_{L^2}   ,   \end{split}    \llabel{ya6qA YM S h2M mEA sw1 8n sJFY Anbr xZT45Z wB s BvK 9gS Ugy Bk 3dHq dvYU LhWgGK aM f Fk7 8mP 20m eV aQp2 NWIb 6hVBSe SV w nEq bq6 ucn X8 JLkI RJbJ EbwEYw nv L BgM 94G plc lu 2s3U m15E YAjs1G Ln h zG8 vmh ghs Qc EDE1 KnaH wtuxOg UD L BE5 9FL xIp vu KfJE UTQS EaZ6hu BC a KXr lni r1X mL KH3h VPrq ixmTkR zh 0 OGp Obo N6K LC E0Ga Udta nZ9Lvt 1K Z eN5 GQc LQL L0 P9GX uakH m6kqk7 qm X UVH 2bU Hga v0 Wp6Q 8JyI TzlpqW 0Y k 1fX 8gj Gci bR arme Si8l w03Win NX w 1gv vcD eDP Sa bsVw Zu4h aO1V2D qw k JoR Shj MBg ry glA9 3DBd S0mYAc El 5 aEd pII DT5 mb SVuX o8Nl Y24WCA 6d f CVF 6Al a6i Ns 7GCh OvFA hbxw9Q 71 Z RC8 yRi 1zZ dM rpt7 3dou ogkAkG GE 4 87V ii4 Ofw Je sXUR dzVL HU0zms 8W 2 Ztz iY5 mw9 aB ZIwk 5WNm vNM2Hd jn e wMR 8qp 2Vv up cV4P cjOG eu35u5 cQ X NTy kfT ZXA JH UnSs 4zxf Hwf10r it J Yox Rto 5OM FP hakR gzDY Pm02mG 18 v mfV 11N n87 zS X59D E0cN 99uEUz 2r T h1F P8x jrm q2 Z7ut pdRJ 2DdYkj y9 J Yko c38 Kdu Z9 vydO wkO0 djhXSx Sv H wJo XE7 9f8 qh iBr8 KYTx OfcYYF sM y j0H vK3 ayU wt 4nA5 H76b wUqyJQ od O u8U Gjb t6v lc xYZt 6AUx wpYr18 uO v 62v jnw FrC rf Z4nl vJuh 2SpVLO vp O lZn PTG 07V Re ixBm XBxO BzpFW5 iB I O7R Vmo GnJ u8 Axol YAxl JUrYKV Kk p aIk VCu PiD O8 IHPU ndze LPTILB P5 B qYy DLZ DZa db jcJA T644 Vp6byb 1g 4 dE7 Ydz keO YL hCRe Ommx F9zsu0 rp 8 Ajz d2v Heo 7L 5zVn L8IQ WnYATK KV 1 f14 s2J geC b3 v9UJ djNN VBINix 1q 5 oyr SBM 2Xt gr v8RQ MaXk a4AN9i Ni n zfH xGp A57 uA E4jM fg6S 6eNGKv JL 3 tyH 3qw dPr x2 jFXW 2Wih pSSxDr aA 7 PXg jK6 GGl Og 5PkR d2n5 3eEx4N yG h d8Z RkO NMQ qL q4sE RG0C ssQkdZ Ua O vWr pla BOW rS wSG1 SM8I z9qkpd v0 C RMs GcZ LAz 4G k70e O7k6 df4uYn R6 T 5Du KOT say 0D awWQ vn2U OOPNqQ T7 H 4Hf iKY Jcl Rq M2g9 lcsifpoierjsodfgupoefasdfgjsdfgjsdfgjsdjflxncvzxnvasdjfaopsruosihjsdfghajsdflahgfsif105}
  \end{align} allowing all constants to depend on $\sifpoierjsodfgupoefasdfgjsdfgjsdfgjsdjflxncvzxnvasdjfaopsruosihjsdgghajsdflahgfsif A u_0\sifpoierjsodfgupoefasdfgjsdfgjsdfgjsdjflxncvzxnvasdjfaopsruosihjsdgghajsdflahgfsif_{L^2}$ and $\sifpoierjsodfgupoefasdfgjsdfgjsdfgjsdjflxncvzxnvasdjfaopsruosihjsdgghajsdflahgfsif\sifpoierjsodfgupoefasdfgjsdfgjsdfgjsdjflxncvzxnvasdjfaopsruosihjsdighajsdflahgfsif_0\sifpoierjsodfgupoefasdfgjsdfgjsdfgjsdjflxncvzxnvasdjfaopsruosihjsdgghajsdflahgfsif_{L^2}$. Note that for the last inequality,  one can justify the boundedness  of $\sifpoierjsodfgupoefasdfgjsdfgjsdfgjsdjflxncvzxnvasdjfaopsruosihjsdgghajsdflahgfsif \sifpoierjsodfgupoefasdfgjsdfgjsdfgjsdjflxncvzxnvasdjfaopsruosihjsdighajsdflahgfsif\sifpoierjsodfgupoefasdfgjsdfgjsdfgjsdjflxncvzxnvasdjfaopsruosihjsdgghajsdflahgfsif_{L^4}$ by testing  the density equation with $\sifpoierjsodfgupoefasdfgjsdfgjsdfgjsdjflxncvzxnvasdjfaopsruosihjsdighajsdflahgfsif^{3}$. Finally, the last term on the right-hand side of \eqref{sifpoierjsodfgupoefasdfgjsdfgjsdfgjsdjflxncvzxnvasdjfaopsruosihjsdfghajsdflahgfsif61} is bounded by  $ \sifpoierjsodfgupoefasdfgjsdfgjsdfgjsdjflxncvzxnvasdjfaopsruosihjsdgghajsdflahgfsif u_t\sifpoierjsodfgupoefasdfgjsdfgjsdfgjsdjflxncvzxnvasdjfaopsruosihjsdgghajsdflahgfsif_{L^2} \sifpoierjsodfgupoefasdfgjsdfgjsdfgjsdjflxncvzxnvasdjfaopsruosihjsdgghajsdflahgfsif \nabla u\sifpoierjsodfgupoefasdfgjsdfgjsdfgjsdjflxncvzxnvasdjfaopsruosihjsdgghajsdflahgfsif_{L^2} \sifpoierjsodfgupoefasdfgjsdfgjsdfgjsdjflxncvzxnvasdjfaopsruosihjsdgghajsdflahgfsif \nabla u_t\sifpoierjsodfgupoefasdfgjsdfgjsdfgjsdjflxncvzxnvasdjfaopsruosihjsdgghajsdflahgfsif_{L^2}  $. Therefore, it follows from \eqref{sifpoierjsodfgupoefasdfgjsdfgjsdfgjsdjflxncvzxnvasdjfaopsruosihjsdfghajsdflahgfsif61} that
  \begin{align}   \begin{split}   & \frac{d}{dt} \sifpoierjsodfgupoefasdfgjsdfgjsdfgjsdjflxncvzxnvasdjfaopsruosihjsdgghajsdflahgfsif u_t \sifpoierjsodfgupoefasdfgjsdfgjsdfgjsdjflxncvzxnvasdjfaopsruosihjsdgghajsdflahgfsif_{L^2}^2    + \sifpoierjsodfgupoefasdfgjsdfgjsdfgjsdjflxncvzxnvasdjfaopsruosihjsdgghajsdflahgfsif \nabla u_t \sifpoierjsodfgupoefasdfgjsdfgjsdfgjsdjflxncvzxnvasdjfaopsruosihjsdgghajsdflahgfsif_{L^2}^2    \lec   \sifpoierjsodfgupoefasdfgjsdfgjsdfgjsdjflxncvzxnvasdjfaopsruosihjsdgghajsdflahgfsif \nabla u\sifpoierjsodfgupoefasdfgjsdfgjsdfgjsdjflxncvzxnvasdjfaopsruosihjsdgghajsdflahgfsif_{L^{2}}   + \sifpoierjsodfgupoefasdfgjsdfgjsdfgjsdjflxncvzxnvasdjfaopsruosihjsdgghajsdflahgfsif u \sifpoierjsodfgupoefasdfgjsdfgjsdfgjsdjflxncvzxnvasdjfaopsruosihjsdgghajsdflahgfsif_{L^2}^2   + \sifpoierjsodfgupoefasdfgjsdfgjsdfgjsdjflxncvzxnvasdjfaopsruosihjsdgghajsdflahgfsif u_t\sifpoierjsodfgupoefasdfgjsdfgjsdfgjsdjflxncvzxnvasdjfaopsruosihjsdgghajsdflahgfsif_{L^2}^2   \sifpoierjsodfgupoefasdfgjsdfgjsdfgjsdjflxncvzxnvasdjfaopsruosihjsdgghajsdflahgfsif \nabla u\sifpoierjsodfgupoefasdfgjsdfgjsdfgjsdjflxncvzxnvasdjfaopsruosihjsdgghajsdflahgfsif_{L^2}^2   \lec   \sifpoierjsodfgupoefasdfgjsdfgjsdfgjsdjflxncvzxnvasdjfaopsruosihjsdkghajsdflahgfsif(t) (1+\sifpoierjsodfgupoefasdfgjsdfgjsdfgjsdjflxncvzxnvasdjfaopsruosihjsdgghajsdflahgfsif u_t\sifpoierjsodfgupoefasdfgjsdfgjsdfgjsdjflxncvzxnvasdjfaopsruosihjsdgghajsdflahgfsif_{L^2}^2)   ,   \end{split}    \llabel{zh 0 OGp Obo N6K LC E0Ga Udta nZ9Lvt 1K Z eN5 GQc LQL L0 P9GX uakH m6kqk7 qm X UVH 2bU Hga v0 Wp6Q 8JyI TzlpqW 0Y k 1fX 8gj Gci bR arme Si8l w03Win NX w 1gv vcD eDP Sa bsVw Zu4h aO1V2D qw k JoR Shj MBg ry glA9 3DBd S0mYAc El 5 aEd pII DT5 mb SVuX o8Nl Y24WCA 6d f CVF 6Al a6i Ns 7GCh OvFA hbxw9Q 71 Z RC8 yRi 1zZ dM rpt7 3dou ogkAkG GE 4 87V ii4 Ofw Je sXUR dzVL HU0zms 8W 2 Ztz iY5 mw9 aB ZIwk 5WNm vNM2Hd jn e wMR 8qp 2Vv up cV4P cjOG eu35u5 cQ X NTy kfT ZXA JH UnSs 4zxf Hwf10r it J Yox Rto 5OM FP hakR gzDY Pm02mG 18 v mfV 11N n87 zS X59D E0cN 99uEUz 2r T h1F P8x jrm q2 Z7ut pdRJ 2DdYkj y9 J Yko c38 Kdu Z9 vydO wkO0 djhXSx Sv H wJo XE7 9f8 qh iBr8 KYTx OfcYYF sM y j0H vK3 ayU wt 4nA5 H76b wUqyJQ od O u8U Gjb t6v lc xYZt 6AUx wpYr18 uO v 62v jnw FrC rf Z4nl vJuh 2SpVLO vp O lZn PTG 07V Re ixBm XBxO BzpFW5 iB I O7R Vmo GnJ u8 Axol YAxl JUrYKV Kk p aIk VCu PiD O8 IHPU ndze LPTILB P5 B qYy DLZ DZa db jcJA T644 Vp6byb 1g 4 dE7 Ydz keO YL hCRe Ommx F9zsu0 rp 8 Ajz d2v Heo 7L 5zVn L8IQ WnYATK KV 1 f14 s2J geC b3 v9UJ djNN VBINix 1q 5 oyr SBM 2Xt gr v8RQ MaXk a4AN9i Ni n zfH xGp A57 uA E4jM fg6S 6eNGKv JL 3 tyH 3qw dPr x2 jFXW 2Wih pSSxDr aA 7 PXg jK6 GGl Og 5PkR d2n5 3eEx4N yG h d8Z RkO NMQ qL q4sE RG0C ssQkdZ Ua O vWr pla BOW rS wSG1 SM8I z9qkpd v0 C RMs GcZ LAz 4G k70e O7k6 df4uYn R6 T 5Du KOT say 0D awWQ vn2U OOPNqQ T7 H 4Hf iKY Jcl Rq M2g9 lcQZ cvCNBP 2B b tjv VYj ojr rh 78tW R886 ANdxeA SV P hK3 uPr QRs 6O SW1B wWM0 yNG9iB RI 7 opG CXk hZp Eo 2JNt kyYO pCY9HL 3o 7 Zu0 J9F Tz6 tZ GLn8 HAes o9umpy uc s 4l3 CA6 DCQ 0m 0llF Pbc8 z5Ad2l GN w SgA XeN HTN pw dS6e 3ila 2tlbXN 7c 1 itX aDZ Fak df Jkz7 TzaO 4kbVhn YH f Tda 9C3 WCb tw MXHW xoCC c4Wsifpoierjsodfgupoefasdfgjsdfgjsdfgjsdjflxncvzxnvasdjfaopsruosihjsdfghajsdflahgfsif106}
  \end{align} where $\sifpoierjsodfgupoefasdfgjsdfgjsdfgjsdjflxncvzxnvasdjfaopsruosihjsdkghajsdflahgfsif=\sifpoierjsodfgupoefasdfgjsdfgjsdfgjsdjflxncvzxnvasdjfaopsruosihjsdgghajsdflahgfsif u\sifpoierjsodfgupoefasdfgjsdfgjsdfgjsdjflxncvzxnvasdjfaopsruosihjsdgghajsdflahgfsif_{V}^2+\sifpoierjsodfgupoefasdfgjsdfgjsdfgjsdjflxncvzxnvasdjfaopsruosihjsdgghajsdflahgfsif u\sifpoierjsodfgupoefasdfgjsdfgjsdfgjsdjflxncvzxnvasdjfaopsruosihjsdgghajsdflahgfsif_{V}$ satisfies the assumptions of Lemma~\ref{LA}(iii). Consequently, $\lim_{t\to \infty} \sifpoierjsodfgupoefasdfgjsdfgjsdfgjsdjflxncvzxnvasdjfaopsruosihjsdgghajsdflahgfsif u_t(t)\sifpoierjsodfgupoefasdfgjsdfgjsdfgjsdjflxncvzxnvasdjfaopsruosihjsdgghajsdflahgfsif_{L^2} = 0$. Furthermore, estimating $Au$ using the velocity equation, we deduce that $   \sifpoierjsodfgupoefasdfgjsdfgjsdfgjsdjflxncvzxnvasdjfaopsruosihjsdgghajsdflahgfsif Au\sifpoierjsodfgupoefasdfgjsdfgjsdfgjsdjflxncvzxnvasdjfaopsruosihjsdgghajsdflahgfsif_{L^2}   \lec   \sifpoierjsodfgupoefasdfgjsdfgjsdfgjsdjflxncvzxnvasdjfaopsruosihjsdgghajsdflahgfsif u_t\sifpoierjsodfgupoefasdfgjsdfgjsdfgjsdjflxncvzxnvasdjfaopsruosihjsdgghajsdflahgfsif_{L^2}   + \sifpoierjsodfgupoefasdfgjsdfgjsdfgjsdjflxncvzxnvasdjfaopsruosihjsdgghajsdflahgfsif u\sifpoierjsodfgupoefasdfgjsdfgjsdfgjsdjflxncvzxnvasdjfaopsruosihjsdgghajsdflahgfsif_{L^2}
  \sifpoierjsodfgupoefasdfgjsdfgjsdfgjsdjflxncvzxnvasdjfaopsruosihjsdgghajsdflahgfsif \nabla u\sifpoierjsodfgupoefasdfgjsdfgjsdfgjsdjflxncvzxnvasdjfaopsruosihjsdgghajsdflahgfsif_{L^2}^2   +1 $, which implies that $\sifpoierjsodfgupoefasdfgjsdfgjsdfgjsdjflxncvzxnvasdjfaopsruosihjsdgghajsdflahgfsif Au(t)\sifpoierjsodfgupoefasdfgjsdfgjsdfgjsdjflxncvzxnvasdjfaopsruosihjsdgghajsdflahgfsif_{L^2}$ is bounded for all $t \ge 0$. Now, for \eqref{sifpoierjsodfgupoefasdfgjsdfgjsdfgjsdjflxncvzxnvasdjfaopsruosihjsdfghajsdflahgfsif58} observe that the  decays of the $L^2$-norms of $u$, $\nabla u$, and $u_t$ are sufficient  since $   \sifpoierjsodfgupoefasdfgjsdfgjsdfgjsdjflxncvzxnvasdjfaopsruosihjsdgghajsdflahgfsif Au -    \mathbb{P}(\sifpoierjsodfgupoefasdfgjsdfgjsdfgjsdjflxncvzxnvasdjfaopsruosihjsdighajsdflahgfsif e_2)\sifpoierjsodfgupoefasdfgjsdfgjsdfgjsdjflxncvzxnvasdjfaopsruosihjsdgghajsdflahgfsif_{L^2}   \lec   \sifpoierjsodfgupoefasdfgjsdfgjsdfgjsdjflxncvzxnvasdjfaopsruosihjsdgghajsdflahgfsif u_t\sifpoierjsodfgupoefasdfgjsdfgjsdfgjsdjflxncvzxnvasdjfaopsruosihjsdgghajsdflahgfsif_{L^2}   + \sifpoierjsodfgupoefasdfgjsdfgjsdfgjsdjflxncvzxnvasdjfaopsruosihjsdgghajsdflahgfsif u\sifpoierjsodfgupoefasdfgjsdfgjsdfgjsdjflxncvzxnvasdjfaopsruosihjsdgghajsdflahgfsif_{L^2}^{1/2}
  \sifpoierjsodfgupoefasdfgjsdfgjsdfgjsdjflxncvzxnvasdjfaopsruosihjsdgghajsdflahgfsif \nabla u\sifpoierjsodfgupoefasdfgjsdfgjsdfgjsdjflxncvzxnvasdjfaopsruosihjsdgghajsdflahgfsif_{L^2}   \sifpoierjsodfgupoefasdfgjsdfgjsdfgjsdjflxncvzxnvasdjfaopsruosihjsdgghajsdflahgfsif A u\sifpoierjsodfgupoefasdfgjsdfgjsdfgjsdjflxncvzxnvasdjfaopsruosihjsdgghajsdflahgfsif_{L^2}^{1/2} $, and $\sifpoierjsodfgupoefasdfgjsdfgjsdfgjsdjflxncvzxnvasdjfaopsruosihjsdgghajsdflahgfsif Au\sifpoierjsodfgupoefasdfgjsdfgjsdfgjsdjflxncvzxnvasdjfaopsruosihjsdgghajsdflahgfsif_{L^2}$ is bounded. \par It only remains to show that \eqref{sifpoierjsodfgupoefasdfgjsdfgjsdfgjsdjflxncvzxnvasdjfaopsruosihjsdfghajsdflahgfsif60} holds. To this end, let $\epsilon >0$ and $2 \le t_0 \le t$ where $t_0$ is to be determined depending on $\epsilon$. Similarly to \eqref{sifpoierjsodfgupoefasdfgjsdfgjsdfgjsdjflxncvzxnvasdjfaopsruosihjsdfghajsdflahgfsif27}, we have   \begin{align}   \begin{split}   \sifpoierjsodfgupoefasdfgjsdfgjsdfgjsdjflxncvzxnvasdjfaopsruosihjsdfghajsdflahgfsif_{t_1}^{t_1+1}    \sifpoierjsodfgupoefasdfgjsdfgjsdfgjsdjflxncvzxnvasdjfaopsruosihjsdgghajsdflahgfsif u \sifpoierjsodfgupoefasdfgjsdfgjsdfgjsdjflxncvzxnvasdjfaopsruosihjsdgghajsdflahgfsif_{W^{1,\infty}}   &\leq 
  \sifpoierjsodfgupoefasdfgjsdfgjsdfgjsdjflxncvzxnvasdjfaopsruosihjsdfghajsdflahgfsif_{t_1}^{t_1+1}    \Bigl(   \sifpoierjsodfgupoefasdfgjsdfgjsdfgjsdjflxncvzxnvasdjfaopsruosihjsdgghajsdflahgfsif \nabla u \sifpoierjsodfgupoefasdfgjsdfgjsdfgjsdjflxncvzxnvasdjfaopsruosihjsdgghajsdflahgfsif_{L^2}^{1/4}   \sifpoierjsodfgupoefasdfgjsdfgjsdfgjsdjflxncvzxnvasdjfaopsruosihjsdgghajsdflahgfsif A_3 u \sifpoierjsodfgupoefasdfgjsdfgjsdfgjsdjflxncvzxnvasdjfaopsruosihjsdgghajsdflahgfsif_{L^3}^{3/4}   +   \sifpoierjsodfgupoefasdfgjsdfgjsdfgjsdjflxncvzxnvasdjfaopsruosihjsdgghajsdflahgfsif \nabla u \sifpoierjsodfgupoefasdfgjsdfgjsdfgjsdjflxncvzxnvasdjfaopsruosihjsdgghajsdflahgfsif_{L^2}   +   \sifpoierjsodfgupoefasdfgjsdfgjsdfgjsdjflxncvzxnvasdjfaopsruosihjsdgghajsdflahgfsif u \sifpoierjsodfgupoefasdfgjsdfgjsdfgjsdjflxncvzxnvasdjfaopsruosihjsdgghajsdflahgfsif_{L^2}^{\fractext{1}{2}}   \sifpoierjsodfgupoefasdfgjsdfgjsdfgjsdjflxncvzxnvasdjfaopsruosihjsdgghajsdflahgfsif Au \sifpoierjsodfgupoefasdfgjsdfgjsdfgjsdjflxncvzxnvasdjfaopsruosihjsdgghajsdflahgfsif_{L^2}^{\fractext{1}{2}}   \Bigr)   \,ds   \\&   \lec \epsilon
  +   \biggl( \sifpoierjsodfgupoefasdfgjsdfgjsdfgjsdjflxncvzxnvasdjfaopsruosihjsdfghajsdflahgfsif_{t_1}^{t_1+1}    \sifpoierjsodfgupoefasdfgjsdfgjsdfgjsdjflxncvzxnvasdjfaopsruosihjsdgghajsdflahgfsif \nabla u \sifpoierjsodfgupoefasdfgjsdfgjsdfgjsdjflxncvzxnvasdjfaopsruosihjsdgghajsdflahgfsif_{L^2}^{1/3} \,ds\biggr)^{3/4}   \biggl( \sifpoierjsodfgupoefasdfgjsdfgjsdfgjsdjflxncvzxnvasdjfaopsruosihjsdfghajsdflahgfsif_{t_1}^{t_1+1}    \sifpoierjsodfgupoefasdfgjsdfgjsdfgjsdjflxncvzxnvasdjfaopsruosihjsdgghajsdflahgfsif A_3u \sifpoierjsodfgupoefasdfgjsdfgjsdfgjsdjflxncvzxnvasdjfaopsruosihjsdgghajsdflahgfsif_{L^3}^3  \,ds\biggr)^{1/4}   ,   \end{split}    \llabel{C8 yRi 1zZ dM rpt7 3dou ogkAkG GE 4 87V ii4 Ofw Je sXUR dzVL HU0zms 8W 2 Ztz iY5 mw9 aB ZIwk 5WNm vNM2Hd jn e wMR 8qp 2Vv up cV4P cjOG eu35u5 cQ X NTy kfT ZXA JH UnSs 4zxf Hwf10r it J Yox Rto 5OM FP hakR gzDY Pm02mG 18 v mfV 11N n87 zS X59D E0cN 99uEUz 2r T h1F P8x jrm q2 Z7ut pdRJ 2DdYkj y9 J Yko c38 Kdu Z9 vydO wkO0 djhXSx Sv H wJo XE7 9f8 qh iBr8 KYTx OfcYYF sM y j0H vK3 ayU wt 4nA5 H76b wUqyJQ od O u8U Gjb t6v lc xYZt 6AUx wpYr18 uO v 62v jnw FrC rf Z4nl vJuh 2SpVLO vp O lZn PTG 07V Re ixBm XBxO BzpFW5 iB I O7R Vmo GnJ u8 Axol YAxl JUrYKV Kk p aIk VCu PiD O8 IHPU ndze LPTILB P5 B qYy DLZ DZa db jcJA T644 Vp6byb 1g 4 dE7 Ydz keO YL hCRe Ommx F9zsu0 rp 8 Ajz d2v Heo 7L 5zVn L8IQ WnYATK KV 1 f14 s2J geC b3 v9UJ djNN VBINix 1q 5 oyr SBM 2Xt gr v8RQ MaXk a4AN9i Ni n zfH xGp A57 uA E4jM fg6S 6eNGKv JL 3 tyH 3qw dPr x2 jFXW 2Wih pSSxDr aA 7 PXg jK6 GGl Og 5PkR d2n5 3eEx4N yG h d8Z RkO NMQ qL q4sE RG0C ssQkdZ Ua O vWr pla BOW rS wSG1 SM8I z9qkpd v0 C RMs GcZ LAz 4G k70e O7k6 df4uYn R6 T 5Du KOT say 0D awWQ vn2U OOPNqQ T7 H 4Hf iKY Jcl Rq M2g9 lcQZ cvCNBP 2B b tjv VYj ojr rh 78tW R886 ANdxeA SV P hK3 uPr QRs 6O SW1B wWM0 yNG9iB RI 7 opG CXk hZp Eo 2JNt kyYO pCY9HL 3o 7 Zu0 J9F Tz6 tZ GLn8 HAes o9umpy uc s 4l3 CA6 DCQ 0m 0llF Pbc8 z5Ad2l GN w SgA XeN HTN pw dS6e 3ila 2tlbXN 7c 1 itX aDZ Fak df Jkz7 TzaO 4kbVhn YH f Tda 9C3 WCb tw MXHW xoCC c4Ws2C UH B sNL FEf jS4 SG I4I4 hqHh 2nCaQ4 nM p nzY oYE 5fD sX hCHJ zTQO cbKmvE pl W Und VUo rrq iJ zRqT dIWS QBL96D FU d 64k 5gv Qh0 dj rGlw 795x V6KzhT l5 Y FtC rpy bHH 86 h3qn Lyzy ycGoqm Cb f h9h prB CQp Fe CxhU Z2oJ F3aKgQ H8 R yIm F9t Eks gP FMMJ TAIy z3ohWj Hx M R86 KJO NKT c3 uyRN nSKH lhb11Q 9Csifpoierjsodfgupoefasdfgjsdfgjsdfgjsdjflxncvzxnvasdjfaopsruosihjsdfghajsdflahgfsif109}   \end{align} when $t_0$ is sufficiently large. Now, we apply estimates similar to \eqref{sifpoierjsodfgupoefasdfgjsdfgjsdfgjsdjflxncvzxnvasdjfaopsruosihjsdfghajsdflahgfsif65} on the time domain $[t_1,t_1+1]$ by taking $u^n = u = u^{n-1}$, so that $\sifpoierjsodfgupoefasdfgjsdfgjsdfgjsdjflxncvzxnvasdjfaopsruosihjsdgghajsdflahgfsif A_3 u\sifpoierjsodfgupoefasdfgjsdfgjsdfgjsdjflxncvzxnvasdjfaopsruosihjsdgghajsdflahgfsif_{L^3(t_1,t_1+1)L^3}^3 \lec 1$.
In fact, the implicit constant  in this inequality does not depend on time due to the uniform in time boundedness of $\sifpoierjsodfgupoefasdfgjsdfgjsdfgjsdjflxncvzxnvasdjfaopsruosihjsdgghajsdflahgfsif \sifpoierjsodfgupoefasdfgjsdfgjsdfgjsdjflxncvzxnvasdjfaopsruosihjsdighajsdflahgfsif\sifpoierjsodfgupoefasdfgjsdfgjsdfgjsdjflxncvzxnvasdjfaopsruosihjsdgghajsdflahgfsif_{L^3}$ and $\sifpoierjsodfgupoefasdfgjsdfgjsdfgjsdjflxncvzxnvasdjfaopsruosihjsdgghajsdflahgfsif Au\sifpoierjsodfgupoefasdfgjsdfgjsdfgjsdjflxncvzxnvasdjfaopsruosihjsdgghajsdflahgfsif_{L^2}$, and the  decay and integrability properties of $\sifpoierjsodfgupoefasdfgjsdfgjsdfgjsdjflxncvzxnvasdjfaopsruosihjsdgghajsdflahgfsif u\sifpoierjsodfgupoefasdfgjsdfgjsdfgjsdjflxncvzxnvasdjfaopsruosihjsdgghajsdflahgfsif_{L^2}$ and $\sifpoierjsodfgupoefasdfgjsdfgjsdfgjsdjflxncvzxnvasdjfaopsruosihjsdgghajsdflahgfsif \nabla u\sifpoierjsodfgupoefasdfgjsdfgjsdfgjsdjflxncvzxnvasdjfaopsruosihjsdgghajsdflahgfsif_{L^2}$.  Finally, observe that for all $\epsilon_0 > 0$ there exists $t_0$ such that $ \left( \sifpoierjsodfgupoefasdfgjsdfgjsdfgjsdjflxncvzxnvasdjfaopsruosihjsdfghajsdflahgfsif_{t_1-1}^{t_1+1} \sifpoierjsodfgupoefasdfgjsdfgjsdfgjsdjflxncvzxnvasdjfaopsruosihjsdgghajsdflahgfsif \nabla u \sifpoierjsodfgupoefasdfgjsdfgjsdfgjsdjflxncvzxnvasdjfaopsruosihjsdgghajsdflahgfsif_{L^2}^{1/3} dt\right)^{3/4}  \leq \epsilon_0 \epsilon  . $
Therefore, choosing $\epsilon_0 > 0$ sufficiently small,  subsequently letting $t_0$ sufficiently large, and finally adding intervals of length one, it follows that $   \sifpoierjsodfgupoefasdfgjsdfgjsdfgjsdjflxncvzxnvasdjfaopsruosihjsdfghajsdflahgfsif_{t_0}^{t}   \sifpoierjsodfgupoefasdfgjsdfgjsdfgjsdjflxncvzxnvasdjfaopsruosihjsdgghajsdflahgfsif u\sifpoierjsodfgupoefasdfgjsdfgjsdfgjsdjflxncvzxnvasdjfaopsruosihjsdgghajsdflahgfsif_{W^{1,\infty}} \,ds   \le   \epsilon   (t-t_0) $, for all $t \ge t_0$. Hence,  \eqref{sifpoierjsodfgupoefasdfgjsdfgjsdfgjsdjflxncvzxnvasdjfaopsruosihjsdfghajsdflahgfsif60} follows from \eqref{sifpoierjsodfgupoefasdfgjsdfgjsdfgjsdjflxncvzxnvasdjfaopsruosihjsdfghajsdflahgfsif36}, concluding the proof. \end{proof}
\par Finally we address the asymptotic behavior of the density. \par \begin{proof}[Proof of Theorem~\ref{T01}(iii)] Since $u\to0$ as $t\to\infty$ in $V$, we get $A u\to 0$ weakly in $H$. By \eqref{sifpoierjsodfgupoefasdfgjsdfgjsdfgjsdjflxncvzxnvasdjfaopsruosihjsdfghajsdflahgfsif58}, we get $\mathbb{P}(\sifpoierjsodfgupoefasdfgjsdfgjsdfgjsdjflxncvzxnvasdjfaopsruosihjsdjghajsdflahgfsif e_2)\to0$ weakly in $H$, and then since $\mathbb{P}(\sifpoierjsodfgupoefasdfgjsdfgjsdfgjsdjflxncvzxnvasdjfaopsruosihjsdighajsdflahgfsif e_2) = \mathbb{P}(\sifpoierjsodfgupoefasdfgjsdfgjsdfgjsdjflxncvzxnvasdjfaopsruosihjsdjghajsdflahgfsif e_2)$, due to $x_2 e_2 = \nabla (x_2^2/2)$, we also obtain $\mathbb{P}(\sifpoierjsodfgupoefasdfgjsdfgjsdfgjsdjflxncvzxnvasdjfaopsruosihjsdighajsdflahgfsif e_2)\to0$ weakly in $H$. 
\end{proof} \par \section*{Acknowledgments} M.S.A.\ and I.K.\ were supported in part by the NSF grant DMS-2205493. \par  \end{document}